\documentclass[11pt,reqno]{amsart}

\usepackage{a4wide}
%\usepackage[english]{babel}
%\selectlanguage{english}
\usepackage{amsmath}
\usepackage{psfrag}
\usepackage{etex,hyperref}
\usepackage[T1]{fontenc}
\usepackage{amsthm}
\usepackage{amssymb,amsxtra}
\usepackage{appendix}
\usepackage[usenames]{color}
\usepackage{stmaryrd}
\usepackage{mathrsfs}
\usepackage{upgreek}
\usepackage{ytableau}
\usepackage{arydshln}
\usepackage{comment,mathbbol}

\usepackage{paralist}
\usepackage{graphicx}
\usepackage[all]{xy}
\usepackage{tikz}
\usetikzlibrary{matrix,fit}
\usepackage{geometry}
\geometry{top=3.5cm, left=3cm, right=3cm, bottom=3cm}

\makeindex

 \allowdisplaybreaks

\DeclareSymbolFontAlphabet{\mathbbl}{bbold}

%\usepackage{amssymb,amsmath,tabularx}
%\usepackage{amsthm,verbatim,mathrsfs}
%\usepackage{marginnote}

%%%%%%%%%%%%%%%%%%%%%%%%%%%%%%%%%%%%%%%%%%
%\usepackage{geometry}

%\usepackage{pstricks,pst-node,pst-plot}

%\DeclareMathAlphabet{\mathpzc}{OT1}{pzc}{m}{it}
%\geometry{a4paper, top=2cm, left=3cm, right=3cm, bottom=3cm}

\newcommand{\NN}{\mathbb{N}}
\newcommand{\Z}{\mathbb{Z}}

\DeclareMathOperator{\supp}{supp}

\renewcommand{\b}{\boldsymbol}

\newcommand{\sym}[1]{\mathfrak{S}_{#1}}

\newcommand{\SYT}{\mathrm{SYT}}
\newcommand{\maj}{\mathrm{maj}}
\newcommand{\ch}{\mathrm{ch}}
\newcommand{\syt}{\mathrm{syt}}
\renewcommand{\P}{\Lambda^+}
\newcommand{\pReg}{\Lambda^+_p}
\newcommand{\ccl}[1]{\mathscr{C}_{#1}}
\newcommand{\Ccl}[2]{\mathcal{C}_{#1,#2}}

\newcommand{\B}{\mathcal{B}}
\newcommand{\R}{\mathcal{C}}
\newcommand{\Base}{\mathscr{B}}
\newcommand{\sgn}{\operatorname{\mathrm{sgn}}}
\newcommand{\C}{\Lambda}
\newcommand{\F}{\mathscr{F}}

\newcommand{\N}{\mathrm{N}}

\renewcommand{\t}{\mathfrak{t}}

\newcommand{\GL}{\mathrm{GL}}

\newcommand{\OO}{\mathcal{O}}
\newcommand{\bk}{\hat{\OO}}
\newcommand{\sk}{\bar{\OO}}
\newcommand{\cont}{\text{{\tiny $\#$}}}

\newcommand{\im}{\mathrm{im}}
\newcommand{\wleq}{\leq^{\mathrm{rco}}}
\newcommand{\swleq}{<^{\mathrm{rco}}}
\newcommand{\wgeq}{\geq^{\mathrm{rco}}}
\newcommand{\sref}{\leqslant}
\newcommand{\wref}{\preccurlyeq}
\newcommand{\swref}[2]{#1\prec #2}
\renewcommand{\k}{\ell}
\newcommand{\facmulti}[1]{{{#1}\rotatebox{180}{\raisebox{-1.5ex}{$!$}}}}
\newcommand{\torder}{>}
\newcommand{\Char}{\mathrm{char}}
\newcommand{\Q}{\mathbb{Q}}
\newcommand{\Fp}{\mathbb{F}}
\newcommand{\Ind}{\mathrm{ind}}
\newcommand{\Res}{\mathrm{res}}
\newcommand{\ind}{\mathrm{ind}}
\renewcommand{\inf}{\mathrm{inf}}
\newcommand{\res}{\mathrm{res}}
\newcommand{\Lie}{\mathrm{Lie}}
\newcommand{\sch}{\mathrm{s}}
\newcommand{\rad}{\mathrm{rad}}
\newcommand{\Des}[2]{\mathscr{D}_{#1,#2}}
\newcommand{\des}{\mathrm{Des}}

\newcommand{\proj}{\text{(proj)}}
\newcommand{\comp}[1]{#1^*}
\newcommand{\block}[2]{#1_{#2}}
\newcommand{\scont}{\sharp}
\newcommand{\Symm}{\Upupsilon}

%\swapnumbers
\setcounter{MaxMatrixCols}{20}

\theoremstyle{definition}

\newtheorem{defn}{Definition}[section]
\newtheorem{rem}[defn]{Remark}
\newtheorem{ques}[defn]{Question}
\newtheorem{eg}[defn]{Example}
\newtheorem{conj}[defn]{Conjecture}

\theoremstyle{plain}

\newtheorem{prop}[defn]{Proposition}
\newtheorem{lem}[defn]{Lemma}
\newtheorem{cor}[defn]{Corollary}
\newtheorem{thm}[defn]{Theorem}
\newtheorem*{Main Result A}{Main Result A}
\newtheorem*{Main Result B}{Main Result B}
\newtheorem*{Main Result C}{Main Result C}

\numberwithin{equation}{section}

\allowdisplaybreaks

\newcommand{\txtblue}[1]{\textcolor{blue}{#1}}

\begin{document}
\title[Descent Algebras of Type A]{Modular Idempotents for the Descent Algebras of Type A and Higher Lie Powers and Modules}
\author{Kay Jin Lim}
\address[K. J. Lim]{Division of Mathematical Sciences, Nanyang Technological University, SPMS-05-16, 21 Nanyang Link, Singapore 637371.}
\email{limkj@ntu.edu.sg}

\begin{abstract} The article focuses on four aspects related to the descent algebras of type $A$. They are modular idempotents, higher Lie powers, higher Lie modules and the right ideals of the symmetric group algebras generated by the Solomon's descent elements. More precisely, we give a construction for the modular idempotents, describe the dimension and character for higher Lie powers and study the structures of the higher Lie modules and the right ideals both in the ordinary and modular cases.
\end{abstract}

\subjclass[2010]{17B01, 05E10, 20C30, 20C20}
\thanks{The author would like to thank Kai Meng Tan and Mark Wildon for fruitful discussions and is also grateful to the referee for useful suggestion and comments. The author is supported by Singapore Ministry of Education AcRF Tier 1 grant RG17/20. }

\maketitle

\section{Introduction}

Let $V$ be a finite dimensional vector space over an infinite field $F$ of characteristic $p$ (either $p=0$ or $p>0$) and $T(V)=\bigoplus_{n\in\NN_0}T^n(V)$ be the tensor algebra which is an associative $F$-algebra with unit where $T^n(V)$ denotes the $n$-fold tensor product of $V$. It can be made into a Lie algebra by means of the Lie bracket \[[v,w]=v\otimes w-w\otimes v\] for all $v,w\in T(V)$. The Lie subalgebra of $T(V)$ generated by $V$ is the free Lie algebra $L(V)$.  The subspace of $L(V)$ containing the homogeneous elements of degree $n$ is $L^n(V)$ and it is called the $n$th Lie power of $V$. Moreover, we have $L(V)=\bigoplus_{n\in\NN_0} L^n(V)$.  When $V$ is a right $FG$-module, $T^n(V)$ is naturally an $FG$-module and $L^n(V)$ is a submodule of $T^n(V)$.  For example, if $G=\GL(V)$ and the dimension of $V$ is at least $n$, applying the Schur functor $f$, we obtain the Lie module $\Lie_F(n)\cong f(L^n(V))$. The study of the Lie powers and Lie modules both in the zero and positive characteristic cases has drawn great attention. The dimension formula of the Lie powers is known by the work of Witt \cite{Witt}. In the classical case when $p=0$ and $G=\GL(V)$, the structure of $L^n(V)$ is well-studied in the earlier work of Thrall \cite{Thrall}, Brandt \cite{Brandt}, Wever \cite{Wever}, Klyachko \cite{Klya} and Kraskiewicz-Weyman \cite{KW}. In the case when $p>0$, the study of these two objects has become significantly more difficult and relatively unknown. Many authors have contributed to this case and we name a few which are by no means complete. When $G=\GL(V)$, the cases $p\nmid n$, $n=p$ and $n=pk$ with $p\nmid k$ have been studied by Donkin-Erdmann \cite{DE}, Bryant-St\"{o}hr \cite{BS05} and Erdmann-Schocker \cite{ES} respectively.  We should also like to mention the decomposition of the Lie powers obtained by Bryant-Schocker \cite{BS06}. The motivation of the study of Lie modules in the modular case also arises from algebraic topology in the work of Selick-Wu \cite{SW00}. For instance, they are interested in the maximal projective submodule of a Lie module.

In the celebrated paper of Solomon \cite{Sol}, he showed that the integral group algebra of a (finite) Coxeter group $G$ has a subalgebra now widely known as the Solomon's descent algebra $\Des{G}{\Z}$.  In that paper, he gave a $\Z$-basis for the descent algebra and studied its structure, especially its radical over the rational field $\Q$, via a ring epimorphism. The results in \cite{Sol} have been extended to the modular case by Atkinson-van Willigenburg \cite{AW} when $G$ is a symmetric group and Atkinson-Pfeiffer-van Willigenburg \cite{APW} when $G$ is an arbitrary Coxeter group. Let $\C(n)$, $\P(n)$ and $\pReg(n)$ be the sets consisting of all compositions, partitions and $p$-regular partitions of $n$ respectively. In the symmetric group $\sym{n}$ case, we have the descent algebra $\Des{n}{F}$ of type A and the $\Z$-basis introduced by Solomon is $\{\Xi^q:q\in\C(n)\}$. The descent algebra $\Des{n}{F}$ has deep connection with the Lie powers and Lie modules as follows. Let $V$ be a right $FG$-module. The symmetric group $\sym{n}$ acts on $T^n(V)$ on the left via the P\'{o}lya action and therefore $T^n(V)$ can be viewed as an ($F\sym{n}$-$FG$)-bimodule. The classical Lie power is $L^n(V)=\omega_n\cdot T^n(V)$, where $\omega_n$ is the Dynkin-Specht-Wever element, and therefore, when $G=\GL(V)$ and $\dim_FV\geq n$, applying the Schur functor$f$, $\Lie_F(n):=\omega_nF\sym{n}\cong f(L^n(V))$. The element $\omega_n$ belongs to $\Des{n}{\Z}$ and is almost an idempotent in the sense that $\omega_n^2=n\omega_n$ and called a Lie idempotent. Blessenohl-Laue \cite{BL} generalized it to obtain a set $\{\omega_q:q\in\C(n)\}$ and this set forms a basis for $\Des{n}{F}$ if $p=0$. The elements in this set are called higher Lie idempotents. In the case of $p=0$, Garsia-Reutenauer \cite{GR} also constructed another basis $\{I_q:q\in\C(n)\}$ for $\Des{n}{F}$ and obtained a complete set of orthogonal primitive idempotents $\{E_\lambda:\lambda\in\P(n)\}$ for $\Des{n}{F}$.  In the case of $p>0$, Erdmann-Schocker \cite{ES} showed that there is a complete set of orthogonal primitive idempotents $\{e_{\lambda,F}:\lambda\in\pReg(n)\}$ for $\Des{n}{F}$ and the image of $e_{\lambda,F}$ under the Solomon's epimorphism in the modular case is the characteristic function on the $p$-equivalent conjugacy class labelled by $\lambda\in \pReg(n)$. However, the modular idempotents of $\Des{n}{F}$ are relatively unknown and it is not clear how one can construct them.

Let $q\in\C(n)$ and $\lambda(q)$ be the partition obtained from $q$ by rearranging its parts. We define $L^q(V):=\omega_q\cdot T^n(V)$ as the higher Lie power so that $L^{(n)}(V)=L^n(V)$. The higher Lie module is defined as $\Lie_F(q):=\omega_qF\sym{n}$ so that, when $V$ is considered as $F\GL(V)$-module and $\dim_FV\geq n$, we have $\Lie_F(q)\cong f(L^q(V))$. In this paper, we focus on four aspects related to $\Des{n}{F}$. They are the modular idempotents, higher Lie powers, higher Lie modules and the right ideals $X^q_R:=\Xi^qR\sym{n}$'s where $q\in\C(n)$ and $R$ is any commutative ring with 1. More precisely, we give a construction for a complete set of orthogonal primitive idempotents of $\Des{n}{F}$ in the modular case, obtain the dimension and character formulae for higher Lie powers $L^q(V)$ when $q$ is coprime to $p$ (that is, $(q,p)=1$ as defined in Subsection \ref{SS: generality}) and $\lambda(q)$ is $p$-regular, study the decomposition problem for projective higher Lie modules, study the Heller translates and periods of non-projective periodic higher Lie modules and investigate the basic properties for $X^q_F$ (its dimension and decomposition in both the ordinary and modular cases). These  objects are interrelated in intricate ways. For example, see Theorem \ref{T: proj summand Xi}.

Whilst our paper focuses on both the higher Lie powers and higher Lie modules, we now give a quick summary of the results on the latter. For simplicity, assume for the moment that $F$ is algebraically closed. Let $q$ be a composition and $\lambda=\lambda(q)$. We have \[\xymatrix{X^q_F\ar@{->>}[r]^-{\text{\ref{L: XiMod surj Lie}}}&\Lie_F(q)\ar@{^(->}[r]_{\lambda\in\pReg(n)}^{\text{\ref{C: higher lie inj in proj}}}\ar[d]^\cong_{{\substack{(q,p)=1\\ \text{\ref{T: higher lie mod}}}}}&e_{\lambda,F}F\sym{n}\\ &\Ind^{\sym{n}}_{\prod^n_{i=1}(C_i\wr \sym{m_i})}F_\delta&}\] Furthermore, if both conditions $(q,p)=1$ and $\lambda\in\pReg(n)$ are satisfied, we have
\[\begin{tikzpicture}
  \node at (0,0) {$e_{\lambda,F}F\sym{n}\overset{\text{\ref{C: Lie iso proj}}}{\cong}\Lie_F(q)\overset{\text{\ref{T: proj summand Xi}}}{\cong} \Xi^qe_{\lambda,F}F\sym{n}$};
  \node at (0,-.6) {$\rotatebox{90}{{$\cong$\ }}$\ {\tiny\ref{C: proj higher Lie decomp}}};
  \node at (-.2,-1.6) {$\displaystyle\bigoplus_{\gamma\in\pReg(n)}m_\gamma P^\gamma$};
\end{tikzpicture}\]  and they are direct summands of $X^q_F$.

In the next section, we collate together the necessary background material and prove some preliminary results along the way. In Section \ref{S: mod idem}, we give a construction for the modular idempotents $\{e_{\lambda,F}:\lambda\in\pReg(n)\}$ as mentioned earlier. The main result is Theorem \ref{T: modular idem} which shows that $e_{\lambda,F}$ has the `leading term' $\frac{1}{\facmulti{\lambda}}\Xi^\lambda$ (see Subsection \ref{SS: generality} for the notation $\facmulti{\lambda}$). In Section \ref{S: higher lie}, we turn our attention to higher Lie modules. Under the assumption that $q$ is coprime to $p$, we construct an explicit basis for $\Lie_F(q)$ and show that it is isomorphic to certain induced module involving $\Lie_F(i)$ where $p\nmid i$ (see Theorems \ref{T: Lie mod basis} and \ref{T: higher lie mod}). Since $\Lie_F(i)$ is well-studied in the case of $p\nmid i$, we obtain a few corollaries; namely, when $q$ is coprime to $p$, $\Lie_F(q)$ is a $p$-permutation module, its ordinary character can be computed using a result Schocker \cite{Schoc03}, its support variety and complexity are known and, when $\lambda\in\pReg(n)$ is coprime to $p$, $\Lie_F(\lambda)$ is isomorphic to $e_{\lambda,F}F\sym{n}$. In Section \ref{S: ch and dim}, we study the structure of $L^q(V)$ when $(q,p)=1$ and $\lambda(q)$ is $p$-regular. In particular, we obtain the dimension formula  for $L^q(V)$ and, when $G=\GL(V)$, its character formula and decomposition in terms of the tilting modules. The multiplicities in the decomposition are described in terms of the irreducible Brauer characters of the symmetric groups. As a direct consequence, we obtain a decomposition of the projective higher Lie module $\Lie_F(q)$. In the next section, Section \ref{S: period}, we study the non-projective periodic higher Lie modules. In this case, they are labelled by $q$ such that $\lambda(q)$ is no longer $p$-regular but there is a unique $k$ such that $\lambda(q)$ has parts of size $k$ with multiplicity at least $p$ but not more than $2p-1$ and the remaining parts still form a $p$-regular partition. More precisely, we compute their Heller translates and periods. Section \ref{S: pivot} is devoted to the combinatorics to set the scene for Section \ref{S: Xi module}. In Section \ref{S: pivot}, we study the permutations in $\sym{n}$ as words and, for each $q\in\C(n)$, define a subset $\Base_q$ of $\sym{n}$. In Section \ref{S: Xi module}, we show that $\{\Xi^qw:w\in\Base_q\}$ is a free basis for $X^q_R$ (see Theorem \ref{T: basis Xiq}) and study the module $X^q_F$ both in the cases when $p=0$ and $p>0$. When $p=0$, in Theorem \ref{T: Xi as higher Lie}, we show that $X^q_F$ is a direct sum of the higher Lie modules $\Lie_F(\lambda)$'s one for each partition $\lambda$ of $n$ such that $\lambda$ is a weak refinement of $q$. When $p>0$ and $q$ is coprime to $p$, if $\lambda=\lambda(q)$, in Theorem \ref{T: proj summand Xi}, we show that $X^q_F$ has a projective summand $\Xi^qe_{\lambda,F}F\sym{n}$ which is also isomorphic to both $\Lie_F(q)$ and $e_{\lambda,F}F\sym{n}$. %Section \ref{S: ques and conj} consists of some questions and conjectures arise and supported by the examples we have presented in the paper.

\section{Preliminary}\label{S: prelim}

Throughout this article, let $\NN_0$ and $\NN$ be the sets of non-negative and positive integers respectively. Furthermore, $R$ is a commutative ring with 1 \index{$R$}and $F$ \index{$F$}is an infinite field of characteristic $p$ (either $p=0$ or $p>0$). We will consider both the ordinary $p=0$ and modular $p>0$ cases. The reader will be reminded when the condition on $p$ is imposed. %The readers may further assume that $F$ is algebraically closed to avoid such technical issues.
All the infinite (direct) sums and infinite (direct) products encountered in this paper are indeed finite (direct) sums and finite (direct) products respectively. For example, $\sum^\infty_{i=1}\delta_i$ means there exists $r\in\NN_0$ such that $\delta_i=0$ for all $i>r$ and hence $\sum^\infty_{i=1}\delta_i=\sum^r_{i=1}\delta_i$.
% When the condition on $p$ is not mentioned, we tacitly assume that it holds true for arbitrary $p$.

\subsection{Generalities}\label{SS: generality} For any integers $a\leq b$, the set $\{a,a+1,\ldots,b\}$ of consecutive integers is denoted as $[a,b]$\index{$[a,b]$}. For a totally order set $S$, we write $S_1\sqcup \cdots\sqcup S_k=S$ \index{$\sqcup$} if $S$ is a disjoint union of the ordered subsets $S_1,\ldots,S_k$. In this case, we denote $S_i\sqsubseteq S$ \index{$\sqsubseteq$}for the ordered subset $S_i$ in $S$. Sometimes, we view $[a,b]$ as an ordered set, in the obvious order, and consider its ordered subsets.

A composition $q$ of $n$ is a finite sequence $(q_1,\ldots,q_k)$ in $\NN$ such that $\sum_{i=1}^kq_i=n$ (or, by our convention, $\sum^\infty_{i=1}q_i=n$ where $q_i=0$ for all $i>k$). In this case, we write $q\vDash n$\index{$\vDash$} and $\ell(q)=k$\index{$\ell(q)$}. We say that $q$ is coprime to $p$ if $p\nmid q_i$ for all $i\in [1,k]$ or simply write $(q,p)=1$\index{$(q,p)$}. For instance, any composition is coprime to $0$ and, as such, the condition $(q,p)=1$ is superfluous when $p=0$. Furthermore, for each $j\in [1,k]$, we write \[q^+_j=\sum^{j}_{i=1}q_i,\] i.e., \index{$q_j^+$}the sum of the first $j$ parts of $q$. By convention, $q^+_0=0$. The composition $q$ is a partition if $q_1\geq q_2\geq \cdots\geq q_k$ and we write $q\vdash n$. The conjugate partition of a partition $\lambda$ is denoted as $\lambda'$\index{$\lambda'$}. We denote the set of all compositions and partitions of $n$ by $\C(n)$ and $\P(n)$ respectively\index{$\C(n)$}\index{$\P(n)$}. The concatenation of two compositions $r,q$ (not necessarily of the same integer) is denoted as $r\cont q$, i.e., \[r\cont q=(r_1,\ldots,r_{\ell(r)},q_1,\ldots,q_{\ell(q)}).\index{$\cont$}\]

Let $q,r\in\C(n)$. If the parts of $q$ can be rearranged to $r$ then we write $q\approx r$\index{$\approx$}. Clearly, this is an equivalence relation and the equivalence classes are represented by $\P(n)$. As such, we write $\lambda(q)$ for the partition such that $q\approx \lambda(q)$\index{$\lambda(q)$}. The composition $r$ is a (strong) refinement of $q$ if there are integers $0=i_0<i_1<\cdots<i_k$ where $k=\ell(q)$ such that, for each $j\in [1,k]$, \[r^{(j)}:=(r_{i_{j-1}+1},r_{i_{j-1}+2},\ldots,r_{i_j})\vDash q_j,\] and we denote it as $r\sref q$\index{$r\sref q$}. In this case, we define \index{$\ell"!(r,q)$}\begin{align*}
\ell(r,q)&=\prod^k_{j=1}\ell(r^{(j)}),\ \  \ell!(r,q)=\prod^k_{j=1}\ell(r^{(j)})!,\ \  F_{q}(r)=\prod_{j=1}^kr_{i_j}.
\end{align*}\index{$F_q(r)$} \index{$\ell(r,q)$} If $r\sref q$ but $r\neq q$, we write $r<q$\index{$r<q$}.  On the other hand, the composition $r$ is a weak refinement of $q$ if there is a rearrangement of $r$ which is a refinement of $q$, i.e., $r\approx s\sref q$ for some $s$ and we denote this as \index{$r\wref q$}$r\wref q$. In the case $r\wref q$ but $r\not\approx q$, we write \index{$r\prec q$}$\swref{r}{q}$.

Let $q\vDash n$. For each $i\in \NN$, we denote $m_i(q)=|\{j:q_j=i\}|$, i.e., $m_i(q)$ is the number of parts of $q$ of size $i$. Notice that $m_i(q)=0$ for all $i>n$\index{$m_i(q)$}. Furthermore, we let \[m(q)=(\ldots,m_2(q),m_1(q)).\index{$m(q)$}\] In the case when $\lambda\vdash n$, we often write $\lambda=(\ldots,2^{m_2(\lambda)},1^{m_1(\lambda)})$. The partition $\lambda$ is called $p$-regular if either $p=0$ or, $p>0$ and $m_i(\lambda)<p$ for all $i\in\NN$. We denote the set of all $p$-regular partitions by $\pReg(n)$\index{$\pReg(n)$} (so $\P_0(n)=\P(n)$). Furthermore, let $\facmulti{q}=\prod_{i=1}^\infty m_i(q)!$ \index{$q\rotatebox{180}{\raisebox{-1.5ex}{"!}}$}and  \[q?=\prod_{i=1}^\infty i^{m_i(q)}m_i(q)!=\facmulti{q}\prod_{j=1}^{\ell(q)}q_j.\index{$q?$}\] For example, $\facmulti{(n)}=1$ and $\facmulti{(1^n)}=n!$. Notice that $\facmulti{\lambda}\neq 0$ in the field $F$ if and only if $\lambda\in\pReg(n)$. Also, if $q\approx r$, then we have both $\facmulti{q}=\facmulti{r}$ and $q?=r?$.

For a matrix $A$, we denote the $i$th row and $j$th column of $A$ by $r_i(A)$ and $c_j(A)$ respectively, i.e.,
\begin{align*}
  r_i(A)&=(A_{i1},\ldots,A_{in}),\index{$r_i(A)$}\\
  c_j(A)&=(A_{1j},\ldots,A_{mj})\index{$c_j(A)$},
\end{align*} if $A$ is an $(m\times n)$-matrix. Let $q,r,s\in \C(n)$. We shall now define the sets $N^s_{r,q}$ and $\overline{N^s_{r,q}}$ whose cardinalities play an important role in the descent algebras of type A. Let $N_{r,q}^s$ \index{$N_{r,q}^s$}be the set consisting of all the $(\ell(r)\times \ell(q))$-matrices $A$ with entries in $\NN_0$ such that
\begin{enumerate}[(a)]
\item for each $i\in [1,\ell(r)]$, $\comp{r_i(A)}$ is a composition of $r_i$,
\item for each $j\in [1,\ell(q)]$, $\comp{c_j(A)}$ is a composition of $q_j$, and
\item $s=\comp{(r_1(A)\cont\cdots\cont r_{\ell(q)}(A))}$. %$s$ is the composition obtained from the sequence of non-negative integers \[r_1(A)\cont\cdots\cont r_{\ell(q)}(A)=(A_{1,1},\ldots,A_{1,\ell(q)},\ldots, A_{\ell(r),1},\ldots,A_{\ell(r),\ell(q)})\] by deleting all zero entries. Here, for illustration purpose, we have used $\cont$ to denote the concatenation of sequences.
\end{enumerate} Here (and in the proof of Lemma \ref{L: basic coef approx}), $\comp{\delta}$ \index{$\comp{\delta}$}denotes the composition obtained from a sequence $\delta$ in $\NN_0$ by deleting all the zero entries. Furthermore, we let $\overline{N_{r,q}^s}$ \index{$\overline{N_{r,q}^s}$}be the subset consisting of $A\in N_{r,q}^s$ such that each column of $A$ contains exactly one nonzero entry.

\begin{eg} Let $q=(2,1)$, $r=(1,2)$ and $s=(1,1,1)$. We have
\begin{align*}
  N^{q}_{q,r}&=\left \{\begin{pmatrix}
    0&2\\ 1&0
  \end{pmatrix}\right \},& N^{r}_{r,q}&=\left \{\begin{pmatrix}
    0&1\\ 2&0
  \end{pmatrix}\right \},& N^{s}_{q,r}&=\left \{\begin{pmatrix}
    1&1\\ 0&1
  \end{pmatrix}\right \},& N^{s}_{r,q}&=\left \{\begin{pmatrix}
    1&0\\ 1&1
  \end{pmatrix}\right \}.
\end{align*} Therefore $\overline{N^{q}_{q,r}}=N^{q}_{q,r}$, $\overline{N^{r}_{r,q}}=N^{r}_{r,q}$ but $\overline{N^{s}_{q,r}}=\emptyset=\overline{N^{s}_{r,q}}$.
\end{eg}

\begin{lem}\label{L: basic coef approx} Let $q,r\in\C(n)$ such that $q\approx r$. We have $|\overline{N^r_{r,q}}|=\facmulti{r}=\facmulti{q}$.
\end{lem}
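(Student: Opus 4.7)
The plan is to exploit both the column-uniqueness condition defining $\overline{N^r_{r,q}}$ and the global concatenation condition (c) to force each matrix in $\overline{N^r_{r,q}}$ to be a (weighted) permutation matrix. First I would note that since $q\approx r$ we have $\ell(q)=\ell(r)=:k$, so every $A\in\overline{N^r_{r,q}}$ is a $(k\times k)$-matrix. By the defining property of $\overline{N^r_{r,q}}$, each column of $A$ has a unique nonzero entry, and condition (b) identifies that entry in column $j$ as $q_j$ (the only one-part composition of $q_j$). In particular, $A$ has exactly $k$ nonzero entries in total.

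The main step is to show that each row of $A$ likewise contains exactly one nonzero entry. Let $b_i$ denote the number of nonzero entries in row $i$, so $\sum_i b_i=k$. By condition (c), reading the nonzero entries of $A$ in row-major order produces the sequence $(r_1,r_2,\ldots,r_k)$. The first $b_1$ entries of that sequence form $\comp{r_1(A)}$, which by condition (a) sums to $r_1$; but they are also the first $b_1$ parts of $r$, summing to $r_1+r_2+\cdots+r_{b_1}$. Since all parts of $r$ are positive integers, this forces $b_1=1$. The same argument applied iteratively to rows $2,3,\ldots,k$ then yields $b_i=1$ for every $i$. This small induction is the only non-cosmetic step, and I expect it to be the main (minor) obstacle.

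With that established, each $A\in\overline{N^r_{r,q}}$ corresponds to a bijection $\sigma\colon [1,k]\to [1,k]$, where row $i$'s unique nonzero entry sits in column $\sigma(i)$; conditions (a) and (b) together identify that entry both as $r_i$ and as $q_{\sigma(i)}$, so $r_i=q_{\sigma(i)}$. Conversely, any such $\sigma$ determines a valid matrix in $\overline{N^r_{r,q}}$. Thus $|\overline{N^r_{r,q}}|$ equals the number of bijections $\sigma$ of $[1,k]$ with $r_i=q_{\sigma(i)}$. Since $\lambda(r)=\lambda(q)$, this count is simply $\prod_{i\geq 1}m_i(r)!=\facmulti{r}$, and the equality $\facmulti{r}=\facmulti{q}$ is immediate from $r\approx q$, completing the proof.
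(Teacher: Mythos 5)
Your proof is correct and takes essentially the same approach as the paper's (terse) argument: establish that the elements of $\overline{N^r_{r,q}}$ are square permutation matrices with the nonzero entry of row $i$ equal to $r_i$, then count the bijections $\sigma$ of $[1,\ell(r)]$ with $r_i=q_{\sigma(i)}$, which is $\facmulti{r}$. The paper simply asserts the permutation-matrix structure, whereas you derive the one-nonzero-entry-per-row fact carefully from condition (c); a slightly quicker route to that same fact is pigeonhole, since each of the $\ell(r)$ rows must contain at least one of the $\ell(q)=\ell(r)$ nonzero entries because $r_i>0$.
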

\begin{proof} Since $q\approx r$, the set $\overline{N^r_{r,q}}$ consists of `permutation matrices' where the nonzero entries are precisely the  components of $q$. Notice that, for each $i\in [1,n]$, any permutation of the $j$th rows of $A\in \overline{N^r_{r,q}}$ such that $\comp{r_j(A)}=(i)$ yields another element in $\overline{N^r_{r,q}}$.
%For each $i$, any permutation of the rows of $A\in \overline{N^r_{r,q}}$ labelled by $r_j=i$ yields another element in $\overline{N^r_{r,q}}$.
\end{proof}

Let $\sym{A}$ be the symmetric group acting on a finite subset $A$ of $\NN$. By convention, $\sym{\emptyset}$ is the trivial group. For $n\in\NN$, we write $\sym{n}$ for $\sym{[1,n]}$. In this article, the composition of the permutations is read from left to right. For example, $(1,2)(2,3)=(1,3,2)$. The conjugacy class of $\sym{n}$ labelled by $\lambda\in\P(n)$ is denoted by $\ccl{\lambda}$\index{$\ccl{\lambda}$}. Then we have \[|\ccl{\lambda}|=\frac{n!}{\lambda?}.\] Two partitions $\lambda,\mu\in\P(n)$ are $p$-equivalent if the $p'$-parts of any $\sigma\in\ccl{\lambda}$ and $\tau\in\ccl{\mu}$ are conjugate in $\sym{n}$. In this case, we write $\lambda\sim_p\mu$ \index{$\sim_p$}for the equivalence relation. Notice that $\lambda\sim_0\mu$ if and only if $\lambda=\mu$. The $p$-equivalent classes are represented by the $p$-regular partitions in $\pReg(n)$. For $\lambda\in\pReg(n)$, we write $\ccl{\lambda,p}$ \index{$\ccl{\lambda,p}$}for the union of the conjugacy classes $\ccl{\mu}$ such that $\mu\sim_p\lambda$. We record an easy lemma which we will need later.

\begin{lem}\label{L: p-equiv conj} Suppose that $\lambda\in\pReg(n)$ and $(\lambda,p)=1$. We have $\ccl{\lambda,p}=\ccl{\lambda}$.
\end{lem}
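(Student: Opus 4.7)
The inclusion $\ccl{\lambda}\subseteq \ccl{\lambda,p}$ is free because $\lambda\sim_p\lambda$, so the content of the lemma lies in the reverse inclusion. My plan is to show that under the stated hypotheses, the only $\mu\vdash n$ satisfying $\mu\sim_p\lambda$ is $\mu=\lambda$ itself.

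The key ingredient is the explicit cycle type of the $p'$-part of a permutation. If $\sigma\in\sym{n}$ is a single cycle of length $\ell$ and we write $\ell=p^am$ with $(m,p)=1$, then $\sigma_{p'}=\sigma^{p^a}$ is the product of $p^a$ disjoint cycles, each of length $m$. Applying this to each cycle, if $\tau\in\ccl{\mu}$ and each part is written $\mu_j=p^{b_j}n_j$ with $(n_j,p)=1$, then the cycle type of $\tau_{p'}$ is
\[
\bigsqcup_j \underbrace{(n_j,n_j,\ldots,n_j)}_{p^{b_j}\text{ copies}}
\]
regarded as a multiset. First I would record this computation as a short standalone observation.

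Next, I would specialise to $\sigma\in\ccl{\lambda}$: since $(\lambda,p)=1$, each part of $\lambda$ is already coprime to $p$, so every $b_j$ in the recipe is $0$ and $\sigma_{p'}$ has cycle type $\lambda$. Therefore the condition $\mu\sim_p\lambda$ translates exactly into the multiset identity
\[
\bigsqcup_j \underbrace{(n_j,\ldots,n_j)}_{p^{b_j}}\;=\;\lambda
\]
where $\mu_j=p^{b_j}n_j$, $(n_j,p)=1$.

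Finally, I would exploit $p$-regularity of $\lambda$ to rule out any $b_j\geq 1$. Indeed, if some $b_j\geq 1$, then the integer $n_j$ occurs in the left-hand multiset with multiplicity at least $p^{b_j}\geq p$, forcing $m_{n_j}(\lambda)\geq p$, contradicting $\lambda\in\pReg(n)$. Hence every $b_j=0$, giving $\mu_j=n_j$ coprime to $p$, and the multiset equality reduces to $\{\mu_j\}=\{\lambda_i\}$, so $\mu=\lambda$ as partitions. The only real ``obstacle'' is just keeping the cycle-type bookkeeping for the $p'$-part straight; once that is pinned down, the $p$-regularity hypothesis kills the problematic case in one line.
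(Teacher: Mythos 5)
Your argument is correct and follows the same route as the paper: compute the cycle type of the $p'$-part via the replacement rule $\mu_j = p^{b_j}n_j \mapsto (n_j^{p^{b_j}})$, note that $(\lambda,p)=1$ makes the $p'$-part of $\ccl{\lambda}$ have cycle type $\lambda$, and use $p$-regularity of $\lambda$ to force all $b_j=0$. (One small imprecision: $\sigma_{p'}$ is $\sigma^{up^a}$ for a suitable unit $u$ mod $m$ rather than literally $\sigma^{p^a}$, but the two have the same cycle type, which is all that is used.)
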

\begin{proof} We only need to check $\ccl{\lambda,p}\subseteq \ccl{\lambda}$. Let $\sigma\in\ccl{\mu}$ such that $\mu\sim_p\lambda$. By definition, given that $p\nmid \lambda_i$ for all $i\in [1,\ell(\lambda)]$, the $p'$-part of $\sigma$ is conjugate to $\lambda$. As noted in \cite[\S2]{ES}, the cycle type of the $p'$-part of an element in $\ccl{\mu}$ is obtained by replacing each entry $\mu_i=kp^m$ where $p\nmid k$ of $\mu$ by $(k,\ldots,k)=(k^{p^m})\vdash kp^m$. Since $\lambda$ is $p$-regular, the cycle type of the $p'$-part of $\sigma$ is $\mu$. Therefore, $\mu=\lambda$.
\end{proof}

For any $r\in\NN_0$ and $\sigma\in\sym{n}$, we write $\sigma^{+r}$ \index{$\sigma^{+r}$}for the permutation such that $(i+r)\sigma^{+r}=(i)\sigma+r$ if $i\in[1,n]$ and fixes the remaining numbers. For $q\vDash n$ (or more generally, a sequence $q$ in $\NN_0$ such that $n=\sum^\infty_{i=1}q_i$), the Young subgroup of $\sym{n}$ with respect to $q$ is \[\sym{q}=\prod^\infty_{j=1}\sym{[1+q^+_{j-1},q^+_j]}.\index{$\sym{q}$}\] We refer the reader to \cite[\S2.4]{LT16} for the precise definitions of the following subgroups. For any subgroup $H$ of $\sym{n}$ and $m\in\NN$, let $\Delta_m(H)$ \index{$\Delta_m(H)$}be the diagonal subgroup of $\sym{mn}$ isomorphic to $H$. On the other hand, we denote $H^{[m]}$ \index{$H^{[m]}$}the subgroup of $\sym{mn}$ consisting of permutations acting on the $n$ successive blocks of size $m$ according to the elements in $H$.

In this article, we will also use another presentation for permutations, the word or one-line notation, which we shall now describe. Let $A$ be a set consisting of distinct elements called the alphabets. A word in $A$ is $w=w_1w_2\ldots w_n$ where $w_1,w_2,\ldots,w_n\in A$. In this case, the length of the word $w$ is $n$ and we write $|w|=n$\index{$\mid \hspace{-2pt}w\hspace{-2pt}\mid$}. A subword of $w$ is of the form $w_{i_1}\ldots w_{i_k}$ for some $1\leq i_1<\cdots<i_k\leq n$ which is not necessarily contiguous. The group $\sym{n}$ acts on the words of length $n$ via the P\'{o}lya action; namely, \[\tau\cdot w_1\ldots w_n=w_{(1)\tau}\ldots w_{(n)\tau}.\index{$\tau\cdot w_1\ldots w_n$}\] In the case $A=\NN$, we can identify a word $w=w_1\ldots w_n$ such that $\{w_1,\ldots,w_n\}=[1,n]$ with the permutation $\sigma$ whereby $(i)\sigma=w_i$ for all $i\in [1,n]$. As such, notice that \[\tau\sigma=\tau w_1\ldots w_n=w_{(1)\tau}\cdots w_{(n)\tau}=\tau\cdot w\] for another element $\tau\in\sym{n}$. In other words, the multiplication in $\sym{n}$ coincides with the P\'{o}lya action. With this identification, $\sym{n}$ can be totally ordered by the reverse colexicographic order $\wleq$; \index{$\wleq$} namely $w\wleq v$ whenever either $w=v$ or, if $j$ is the least positive integer such that $w_{j+1}=v_{j+1},\ldots,w_n=v_n$, we have $w_j>v_j$. In the case $w\wleq v$ and $w\neq v$, we write $w\swleq v$. For example, in $\sym{3}$, omitting the superscript $\mathrm{rco}$, we have \[123<213<132<312<231<321.\]

\subsection{Modules for algebras} Let $G$ be a finite group. We denote the group algebra over $R$ by $RG$. By an $RG$-module, we refer to right $RG$-module unless otherwise stated. The trivial $RG$-module is denoted as $R$. %Let $H$ be a subgroup of $G$, $M$ be an $RG$-module and $N$ be an $RH$-module. We denote the induction and restriction of modules by $\Ind^G_HN$ and $\Res^G_HM$ respectively.
 For any $RG$-module $M$ and non-negative integer $n$, we write $M^{\wr n}$ \index{$M^{\wr n}$}for the $R[G\wr\sym{n}]$-module which is the $R$-module $M^{\otimes n}$ such that $G\wr \sym{n}$ acts via \[(m_1\otimes\cdots\otimes m_n)(\tau;g_1,\ldots,g_n)=m_{(1)\tau^{-1}}g_1\otimes\cdots\otimes m_{(n)\tau^{-1}}g_n\] where $m_1,\ldots,m_n\in M$, $\tau\in \sym{n}$ and $g_1,\ldots,g_n\in G$. By convention, $M^{\wr 0}=R$. For another $RG'$-module $M'$, we write $M\boxtimes M'$ \index{$M\boxtimes M'$}for the outer tensor product of $M$ and $M'$ which is an $R[G\times G']$-module. The induction and restriction functors with respect to a subgroup $H$ of a group $G$ are denoted as $\ind^G_H$ and $\res^G_H$ \index{$\ind^G_H$}\index{$\res^G_H$}respectively. Suppose that $H\lhd G$ and let $N$ be an $R[G/H]$-module. We denote the inflation of $N$ from $G/H$ to $G$ (via the canonical surjection $G\twoheadrightarrow G/H$) by $\inf^G_{G/H} N$.

The group algebra $FG$ can be identified with $\Z G\otimes_\Z F$. For convenience, for any $x\in \Z G$, we also write $x\in FG$ for its `reduction modulo $p$' under this identification. We have the following basic lemma.% in which $\Q$ is the rational field.

\begin{lem}\label{L: dim mod p} Let $G$ be a finite group and $x\in \Z G$. Then, taking reduced modulo $p$, we have \[\dim_F xFG\leq \dim_{\Q}x\Q G.\]
\end{lem}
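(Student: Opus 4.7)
The plan is to realise both sides of the claimed inequality as the rank of one and the same integer matrix---over $\Q$ on the right and over $F$ on the left---and then invoke the elementary fact that the rank of an integer matrix cannot increase upon reduction modulo $p$.

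First, I would fix an enumeration $g_1,\dots,g_{|G|}$ of $G$ and let $M_x\in\mathrm{Mat}_{|G|}(\Z)$ denote the matrix, in this basis, of the right-multiplication endomorphism $\mu_x\colon \Z G\to \Z G$, $y\mapsto xy$. Since $x\Z G=\mathrm{im}\,\mu_x$ is a $\Z$-submodule of the free $\Z$-module $\Z G$, it is itself free. Smith normal form (equivalently, flatness of $\Q$ over $\Z$) gives $\mathrm{rank}_{\Z}(x\Z G)=\mathrm{rank}_{\Q}(M_x)$, and tensoring the inclusion $x\Z G\hookrightarrow \Z G$ with $\Q$ identifies $x\Z G\otimes_{\Z}\Q$ with $x\Q G$ inside $\Q G$. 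This yields $\dim_{\Q} x\Q G=\mathrm{rank}_{\Q}(M_x)$. On the other hand, reducing $M_x$ modulo $p$ produces, in the same basis, the matrix of right multiplication by $\bar x\in FG$, whose image is $xFG$, so $\dim_F xFG=\mathrm{rank}_F(M_x\bmod p)$.

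The last step is to compare these two ranks. For any integer matrix $M$, the rank over $\Q$ is the largest $k$ for which some $k\times k$ minor is a nonzero integer, while the rank of $M\bmod p$ over $F$ is the largest $k$ for which some $k\times k$ minor is nonzero modulo $p$. Since an integer that is nonzero modulo $p$ is a fortiori nonzero, the latter is at most the former, which closes the argument. There is no real obstacle here; the only conceptual point worth flagging is that the natural surjection $x\Z G\otimes_{\Z} F\twoheadrightarrow xFG$ may have a nontrivial kernel in characteristic $p$, and this is precisely the source of the possible strict inequality in the lemma.
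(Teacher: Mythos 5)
Your proof is correct and is essentially the same argument as the paper's, just phrased via the matrix $M_x$ of right multiplication and its minors rather than by picking a $\Z$-basis $B$ of the free module $x\Z G$ directly. Both arguments rest on the same fact: the $\Z$-rank of $x\Z G$ equals $\dim_\Q x\Q G$, and reduction modulo $p$ can only collapse, never enlarge, that rank.
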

\begin{proof} The $\Z$-module $x\Z G$ is torsion free and therefore it has a $\Z$-basis $B$. The dimension $d:=\dim_\Q x\Q G$ is obviously the cardinality of $B$. Upon taking reduction modulo $p$, the set $B$ clearly spans $xFG$ but could be linearly dependent. Therefore $\dim_F xFG$ is not larger than $d$.
\end{proof}

For the remainder of this subsection, assume that $p>0$.
\smallskip

Let $M$ be an indecomposable $FG$-module. A vertex of $M$ is a minimal subgroup $H$ such that $M\mid \Ind^G_HN$ for some $FH$-module $N$. In this case, if $N$ is indecomposable, it is called an $FH$-source of $M$. By a result of Green \cite{Green}, a vertex of $M$ must be a $p$-subgroup, say $P$, all vertices of $M$ are $G$-conjugate and all $FP$-sources of $M$ are $\N_G(P)$-conjugate.

Let $M$ be an $FG$-module.  The module $M$ has trivial source if every indecomposable summand of $M$ has trivial module as its source.  The module $M$ is projective if it is a direct summand of a direct sum of regular modules, or equivalently, every indecomposable summand of the module has the trivial subgroup $\{1\}$ as its vertex.  The module $M$ is a $p$-permutation module if, for every Sylow $p$-subgroup $P$ of $G$, there exists a $F$-basis of $M$ that is permuted by $P$.

The following theorem gives a characterization of $p$-permutation modules.

\begin{thm}[{\cite[(0.4)]{Brou}}]\label{T: Broue} An indecomposable $FG$-module $M$ is a $p$-permutation module if and only if there exists a $p$-subgroup $P$ of $G$ such that $M\mid \Ind^G_PF$; equivalently, $M$ has trivial source.
\end{thm}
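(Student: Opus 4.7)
The plan is to break Broué's theorem into three implications forming an equivalence, tackling the two easier ones first. The statement chains together three properties of an indecomposable $FG$-module $M$: (i) $M$ is a $p$-permutation module; (ii) $M \mid \ind^G_P F$ for some $p$-subgroup $P$; (iii) $M$ has trivial source. The equivalence (ii) $\Leftrightarrow$ (iii) follows almost immediately from Green's structure theorem for vertices and sources. Indeed, if $M$ has trivial source, then by definition $M \mid \ind^G_Q N$ where $Q$ is a vertex of $M$ (automatically a $p$-subgroup by Green) and $N$ is an $FQ$-source; triviality means $N \cong F$, giving (ii). Conversely, if $M \mid \ind^G_P F$, the source of $M$ must divide $\res^G_P \ind^G_P F$, and by Mackey plus indecomposability I can identify the source as a summand of $F$, forcing it to be $F$ itself.

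For the implication (ii) $\Rightarrow$ (i), I would start from the canonical $F$-basis $G/P$ of $\ind^G_P F$, which is permuted by every Sylow $p$-subgroup $S \geq P$ of $G$. To transfer this Sylow-invariant basis to the indecomposable summand $M$, I would use the standard fact that the class of $p$-permutation $FG$-modules is closed under direct summands: given an idempotent $e \in \mathrm{End}_{FG}(\ind^G_P F)$ cutting out $M$, Krull--Schmidt applied over the subalgebra $\mathrm{End}_{FS}(\ind^G_P F)$ lifts to an $S$-equivariant idempotent decomposition, which extracts an $S$-stable basis of $M$. The cleanest packaging of this step is via the Brauer construction $(-)(S)$, but it can also be argued directly from the observation that any indecomposable summand of a finitely generated permutation $FS$-module is itself a permutation $FS$-module.

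The substantive implication is (i) $\Rightarrow$ (iii). Choose an $F$-basis $B$ of $M$ permuted by a Sylow $p$-subgroup $S$, and let $Q$ be a vertex of $M$; by Green I may conjugate so that $Q \leq S$. Restricting the action along $Q \leq S$, the basis $B$ is still permuted by $Q$, so $\res^G_Q M$ is a permutation $FQ$-module, which decomposes into a direct sum of transitive permutation modules:
\[
\res^G_Q M \cong \bigoplus_{i} \ind^Q_{R_i} F,
\]
where $R_i$ are the $Q$-stabilizers of orbit representatives in $B$. By definition the $FQ$-source $N$ of $M$ is an indecomposable summand of $\res^G_Q M$, so by Krull--Schmidt $N \cong \ind^Q_R F$ for some subgroup $R \leq Q$. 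Now invoke the vertex constraint: as an $FQ$-module, $N$ has vertex $Q$, while $\ind^Q_R F$ (induced from the trivial module of $R$) has vertex $R$. Hence $R = Q$, and therefore $N \cong F$, i.e.\ $M$ has trivial source.

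The main obstacle is the passage from "$\res^G_Q M$ has a $Q$-invariant basis" to "every indecomposable summand of $\res^G_Q M$ is of the form $\ind^Q_R F$". This rests on the classification of indecomposable permutation modules over a $p$-group, which in turn uses Krull--Schmidt together with the fact that $F$ has no nontrivial idempotents in the endomorphism ring of a transitive $FQ$-set when $Q$ is a $p$-group (a consequence of $FQ$ being local). Granted this standard classification and Green's vertex/source theory, the three implications close up to yield the equivalence.
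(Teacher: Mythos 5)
Your argument is correct: the paper does not prove this statement at all (it is quoted directly from Brou\'e's paper, \cite[(0.4)]{Brou}), and what you have written is the standard textbook proof via Green's vertex--source theory together with the classification of indecomposable permutation modules over a $p$-group. The only small imprecisions are that the source being a summand of $\res^G_Q M$ is not ``by definition'' but follows from Higman's criterion ($M$ relatively $Q$-projective implies $M\mid \Ind^G_Q\res^G_Q M$, then Krull--Schmidt), and the indecomposability of $\Ind^Q_R F$ for a $p$-group $Q$ is most cleanly seen from the one-dimensionality of its $Q$-fixed points (equivalently its socle), rather than from locality of $FQ$ itself; neither affects the validity of the proof.
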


Let $M$ be an $F G$-module. Let $P(M)$ be the projective cover of $M$. The kernel of the epimorphism from $P(M)$ to $M$ is denoted as $\Omega(M)$. Similarly, let $I(M)$ be the injective hull of $M$. The cokernel of the monomorphism from $M$ into $I(M)$ is denoted as $\Omega^{-1}(M)$. It is well-known that $\Omega(M)$ and $\Omega^{-1}(M)$ are well-defined up to isomorphism. For $m\geq 2$, we denote $\Omega^m(M)=\Omega(\Omega^{m-1}(M))$ and $\Omega^{-m}(M)=\Omega^{-1}(\Omega^{-(m-1)}(M))$. Furthermore, we write $\Omega^0(M)$ for the non-projective part of $M$. These are called the Heller translates of $M$\index{$\Omega^m(M)$}. If there exists a positive integer $m$ such that $\Omega^m(M)\cong\Omega^0(M)$, we say that $M$ is periodic and its period is the least such positive integer.

Consider a $p$-modular system $(\bk,\OO,\sk)$. That is, $\OO$ is a complete discrete valuation ring with the maximal ideal $(p)$, with quotient field $\bk$ of characteristic 0 and with residue field $\sk$. It is well-known that every trivial source $\sk G$-module $M$ lifts uniquely to an $\OO G$-module $M_\OO$ such that $M_\OO\otimes \sk\cong M$ (see, for example, \cite[Corollary 2.6.3]{Ben84}). In this case, the ordinary character $\ch(M)$ of $M$ is defined to be the character of $M_\OO\otimes \bk$. In particular, according to Theorem \ref{T: Broue}, $p$-permutation modules have ordinary characters.

\medskip

We now restrict ourselves to modules for the Schur algebras and symmetric groups. Let $q\in\C(n)$ and $M^q_R$ \index{$M^q_R$}be the permutation module induced from the trivial $R\sym{q}$-module to $\sym{n}$, that is \[M^q_R=\ind^{\sym{n}}_{\sym{q}}R.\] If $q\approx q'$ then $M^q_R\cong M^{q'}_R$. For each $\lambda\in\P(n)$, following James \cite{James}, we have the Specht module $S^\lambda_R$ \index{$S^\lambda_R$}which is a submodule of $M^\lambda_R$. For simplicity, we shall drop the suffix $R$ if there is no ambiguity. %When $p=0$, the set $\{S^\lambda:\lambda\in\P(n)\}$ is complete set of non-isomorphic simple $F\sym{n}$-modules.

Over $F$, the indecomposable summands of the permutation modules are the Young modules $Y^\lambda$'s, \index{$Y^\lambda$}one for each $\lambda\in\P(n)$ (see \cite{GJ84}). The Young module $Y^\lambda$ is a summand of $M^\lambda$ with multiplicity one and the remaining summands of $M^\lambda$ are of the form $Y^\mu$ where $\mu$ dominates $\lambda$ in the usual dominance order. By the Submodule Theorem (see \cite{James}), $Y^\lambda$ is, up to isomorphism, the unique summand of $M^\lambda$ containing $S^\lambda$ as a submodule. Let $P^\lambda=Y^{\lambda'}\otimes\sgn$ \index{$P^\lambda$}where $\sgn$ is the signature representation for $\sym{n}$.  When $\lambda\in\pReg(n)$, $S^\lambda$ has a simple head $D^\lambda$\index{$D^\lambda$}. Moreover, the set $\{D^\lambda:\lambda\in\P_p(n)\}$ is a complete set of non-isomorphic simple $F\sym{n}$-modules. It turns out that $P^\lambda$ is isomorphic to the projective cover of $D^\lambda$ and therefore the set $\{P^\lambda:\lambda\in\P_p(n)\}$ is a complete set of non-isomorphic projective indecomposable $F\sym{n}$-modules. Let $\beta^\lambda$ \index{$\beta^\lambda$} denote the Brauer character of $D^\lambda$ and let $\zeta^\lambda_F$ \index{$\zeta^\lambda_F$}(or simply, $\zeta^\lambda$) be the ordinary character of $S^\lambda_F$.

%\subsection{The Schur algebras}
We refer the reader to \cite{DE,Green830} for the representation theory of Schur algebras we shall need here. Let $m,n\in\NN$, $\P(m,n)$ \index{$\P(m,n)$}be the set of all partitions $\lambda$ of $n$ such that $\ell(\lambda)\leq m$, \[\P_p(m,n)=\P(m,n)\cap \P_p(n) \index{$\P_p(m,n)$}\]and $S(m,n)$ \index{$S(m,n)$}be the Schur algebra over $ F$. The category of the polynomial representations (right modules) of $\GL_m( F)$ of degree $n$ is equivalent to the category of (right) $S(m,n)$-modules. The simple modules of $S(m,n)$ are parameterised by $\P(m,n)$ such that, for each $\lambda\in\P(m,n)$, $L(\lambda)$ \index{$L(\lambda)$}has highest weight $\lambda$. For each $\lambda\in\P(m,n)$, there is a distinguished $S(m,n)$-module, the tilting module, $T(\lambda)$ \index{$T(\lambda)$}with highest weight $\lambda$ and contains $L(\lambda)$ as a composition factor. Moreover, if $\lambda$ is $p$-regular, $T(\lambda)$ is both projective and injective with simple head and socle both isomorphic to $L(\lambda)$.

Suppose further that $m\geq n$. We have the Schur functor $f$ \index{$f$}from the category of $S(m,n)$-modules to the category of $F\sym{n}$-modules. The functor $f$ maps $T(\lambda)$ to $P^\lambda$ and $L(\lambda')$ to $D^\lambda\otimes\sgn$ if $\lambda\in\P_p(n)$ and $0$ otherwise. %otherwise.????}

\subsection{The descent algebras}\label{SS: descent} Consider the symmetric group algebra $R\sym{n}$. Notice that the operation $\sigma^{+r}$ for $\sigma\in\sym{n}$ and $r\in\NN_0$, we introduced earlier, can be extended linearly to $R\sym{n}$. A permutation $\sigma$ is said to have a descent at $i\in [1,n-1]$ if $(i)\sigma>(i+1)\sigma$. We write \[\des(\sigma)=\{i\in[1,n-1]:(i)\sigma>(i+1)\sigma\}.\index{$\des(\sigma)$}\] For each $q\in\C(n)$, we define the Solomon's descent element \[\Xi^q=\sum_{\des(\sigma)\subseteq \{q^+_i:i\in [1,\ell(q)]\}}\sigma\in R\sym{n}.\index{$\Xi^q$}\] The element $\Xi^q$ is easy to be written as a sum of words associated to the row standard $q$-tableaux. For example, $\Xi^{(n)}=1$, $\Xi^{(1^n)}=\sum_{\sigma\in\sym{n}}\sigma$ and \[\Xi^{(2,2)}=1234+1324+1423+2314+2413+3412.\]

Let $r\in\C(n)$ and $\tau\in \sym{k}$ where $k=\ell(r)$ and $q=(r_{(1)\tau},\ldots,r_{(k)\tau})$. We write, in two-line form,
\begin{equation}\label{Eq: tau_r}
\block{\tau}{r}=\begin{pmatrix}1&\cdots&q_1&\cdots&q_{k-1}^++1&\cdots&q_k^+\\
r^+_{(1)\tau-1}+1&\cdots&r^+_{(1)\tau-1}+q_1&\cdots&r^+_{(k)\tau-1}+1&\cdots&r^+_{(k)\tau-1}+q_k
\end{pmatrix}\index{$\block{\tau}{r}$}.
\end{equation} Notice that $\Xi^q=\block{\tau}{r}\Xi^r$. For example, if $r=(2,1,1)$, $q=(1,2,1)$, we could take $\tau=213$ (swapping the first and second components) and \[\block{\tau}{r}=\begin{pmatrix}
  1&2&3&4\\ 3&1&2&4
\end{pmatrix}=3124.\] Furthermore, if $s\sref q$, let $\sym{s}\backslash\sym{q}$ denote the set consisting of minimal length right coset representatives of $\sym{s}$ in $\sym{q}$, then we have $\Xi^s=\kappa\Xi^q$ where $\kappa=\sum_{\pi\in\sym{s}\backslash\sym{q}}\pi$. Therefore, we get the following lemma.

\begin{lem}\label{L: basic Xi} Let $r\wref q$ be compositions of $n$. Then there is a surjection $\Xi^qR\sym{n}\twoheadrightarrow \Xi^rR\sym{n}$ given by the left multiplication of $\tau_s\kappa$ where $r\approx s\sref q$ and $\kappa=\sum_{\pi\in\sym{s}\backslash \sym{q}}\pi$.
\end{lem}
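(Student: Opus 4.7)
The plan is simply to assemble the lemma from the two identities recorded in the paragraph immediately preceding it, namely $\Xi^q=\block{\tau}{r}\Xi^r$ (for a rearrangement $\tau$ of parts) and $\Xi^s=\kappa\Xi^q$ (for a strong refinement $s\sref q$, with $\kappa=\sum_{\pi\in\sym{s}\backslash\sym{q}}\pi$). The hypothesis $r\wref q$ yields, by definition, a composition $s$ with $r\approx s\sref q$, and the strategy is to first pass from $\Xi^q$ to $\Xi^s$ using the second identity and then from $\Xi^s$ to $\Xi^r$ using the first.

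First I would use $s\approx r$ (so $\ell(s)=\ell(r)=:k$) to choose $\tau\in\sym{k}$ with $r_i=s_{(i)\tau}$ for all $i\in[1,k]$. Applying formula \eqref{Eq: tau_r} with $s$ playing the role of $r$ and $r$ playing the role of $q$ produces the element $\tau_s:=\block{\tau}{s}\in\sym{n}$ and the identity $\Xi^r=\tau_s\Xi^s$. Combining this with $\Xi^s=\kappa\Xi^q$, valid because $s\sref q$, gives
\[
\Xi^r=\tau_s\,\kappa\,\Xi^q.
\]

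Next I would define $\varphi\colon \Xi^qR\sym{n}\to R\sym{n}$ by left multiplication by $\tau_s\kappa$; this is patently a homomorphism of right $R\sym{n}$-modules since left and right multiplications commute. The image is $\tau_s\kappa\Xi^qR\sym{n}=\Xi^rR\sym{n}$ by the identity above, so $\varphi$ lands in $\Xi^rR\sym{n}$ and is surjective onto it. This gives exactly the map asserted by the lemma.

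I do not expect a real obstacle: the entire statement is a direct concatenation of the two displayed identities. The only minor bookkeeping is to keep track of which composition plays which role in \eqref{Eq: tau_r}, and to observe that the statement only requires existence of a surjection, so any valid choice of $\tau$ making $r$ a reordering of $s$ suffices.
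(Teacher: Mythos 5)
Your argument is correct and follows exactly the route the paper intends: the lemma is stated as an immediate consequence of the two identities $\Xi^q=\tau_r\Xi^r$ and $\Xi^s=\kappa\Xi^q$ recorded in the paragraph before it, and you combine them correctly (with $s$ and $r$ in the appropriate roles in \eqref{Eq: tau_r}) to get $\Xi^r=\tau_s\kappa\Xi^q$ and hence the surjection by left multiplication.
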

%\begin{proof} Suppose that $r\approx s\sref q$ and let $\Xi^r=\sigma \Xi^s$. Since $\Xi^s=\kappa \Xi^q$ where %$\kappa=\sum_{\tau\in\sym{q}/\sym{s}}\tau$, we have the surjection through the left multiplication by $\sigma\kappa$.
%\end{proof}

Let $\Xi(n)=\{\Xi^q:q\in\C(n)\}\subseteq R\sym{n}$\index{$\Xi(n)$} where we have deliberately suppressed the involvement of $R$ in $\Xi(n)$. Let $\Des{n}{R}$ \index{$\Des{n}{R}$}be the $R$-linear span of $\Xi(n)$. In \cite{Sol}, Solomon proved that $\Des{n}{\Z}$ (more generally, for a Coxeter group) is an $\Z$-algebra and $\Xi(n)$ forms a $\Q$-basis for $\Des{n}{\Q}$. For the modular case, we refer the readers to \cite{AW,APW}. The algebra $\Des{n}{R}$ is now commonly known as the Solomon's descent algebra (of type A). In particular, the product $\Xi^q\Xi^r\in\Des{n}{R}$ can be written explicitly as an $R$-linear combination of the elements in the set $\Xi(n)$. There is an explicit combinatorial description for the structure constants as below.

%the following far-reaching theorem in the case $R=\Z$ but the general case is obtained by tensoring with the general ring $R$.

\begin{thm}[{\cite[Proposition 1.1]{GR}}]\label{T: GR 1.1} For $q,r\in\C(n)$, we have \[\Xi^r\Xi^q=\sum_{s\in\C(n)}|N^s_{r,q}|\Xi^s.\] Furthermore, the set $\Xi(n)$ is an $R$-basis for the ring $\Des{n}{R}$.
\end{thm}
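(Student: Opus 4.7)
The plan is to establish both assertions in one sweep by setting up an explicit combinatorial bijection, and then to deduce the linear independence of $\Xi(n)$ from a triangularity observation. For brevity, write $S_u = \{u^+_i : i \in [1, \ell(u)-1]\}$ for a composition $u \vDash n$, so that $\Xi^u = \sum_{\des(\sigma) \subseteq S_u} \sigma$. First I would expand
\[
\Xi^r \Xi^q = \sum_{\sigma,\tau} \sigma\tau,
\]
where $\sigma$ ranges over permutations with $\des(\sigma) \subseteq S_r$ and $\tau$ over those with $\des(\tau) \subseteq S_q$. Such a $\tau$ is equivalently encoded by an ordered set partition $(C_1,\ldots,C_{\ell(q)})$ of $[1,n]$ with $|C_h| = q_h$: the word of $\tau$ restricted to positions $[q^+_{h-1}+1, q^+_h]$ lists $C_h$ in increasing order.

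The key combinatorial step is a bijection
\[
\{(\sigma,\tau) : \des(\sigma) \subseteq S_r,\ \des(\tau) \subseteq S_q\}
\;\longleftrightarrow\;
\bigsqcup_{s \in \C(n)} \{(A,\pi) : A \in N^s_{r,q},\ \des(\pi) \subseteq S_s\}
\]
sending $(\sigma,\tau)$ to $(A(\sigma,\tau), \sigma\tau)$, where $B_j = \{(i)\sigma : i \in [r^+_{j-1}+1, r^+_j]\}$ is the $j$-th block of $\sigma$ and $A_{j,h} = |B_j \cap [q^+_{h-1}+1, q^+_h]|$. Using $(i)(\sigma\tau) = ((i)\sigma)\tau$ together with the fact that $\tau$'s word is increasing on each $[q^+_{h-1}+1, q^+_h]$, I would verify that the word of $\sigma\tau$ restricted to the $j$-th super-block $[r^+_{j-1}+1, r^+_j]$ splits into increasing runs of lengths $A_{j,1}, A_{j,2}, \ldots$ (with zero entries omitted). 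Concatenating across rows then gives $\des(\sigma\tau) \subseteq S_s$ with $s = \comp{(r_1(A) \cont \cdots \cont r_{\ell(r)}(A))}$, so indeed $A \in N^s_{r,q}$. The inverse map, given $(A,\pi)$, recovers $\tau$ uniquely by grouping the length-$s_i$ increasing runs of $\pi$'s word according to the column index of $A$ from which they arose, sorting each column-$h$ aggregate to produce $\tau$'s word on $[q^+_{h-1}+1, q^+_h]$, and finally setting $\sigma := \pi\tau^{-1}$. Summing over the bijection yields $\Xi^r\Xi^q = \sum_{s \in \C(n)} |N^s_{r,q}|\, \Xi^s$, and in particular $\Des{n}{R}$ is closed under multiplication.

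Linear independence of $\Xi(n)$ over $R$ is then immediate from triangularity: for each subset $S \subseteq [1,n-1]$ pick any permutation $\omega_S \in \sym{n}$ with $\des(\omega_S) = S$; the coefficient of $\omega_S$ in $\Xi^q$ is $1$ if $S \subseteq S_q$ and $0$ otherwise, producing a unitriangular transition matrix (indexed by inclusion of descent sets) between the $\Xi^q$ and the $\omega_S$. I expect the main obstacle to be verifying the inverse direction of the bijection cleanly: one must check that the $\sigma$ reconstructed as $\pi\tau^{-1}$ actually has $\des(\sigma) \subseteq S_r$ and that its block-intersection matrix is the prescribed $A$. This reduces to carefully retracing how the prescribed increasing order of each $B_j$ is forced by the column-sorting step in $\tau$, which is where the combinatorics is tightest.
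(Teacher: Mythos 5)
Your proof is correct. The paper does not actually prove this statement — it is cited directly from Garsia--Reutenauer \cite{GR} — so there is no in-paper argument to compare against; your combinatorial bijection $(\sigma,\tau)\leftrightarrow(A,\pi)$ is exactly the one underlying that citation (and Solomon's original double-coset/Mackey argument), and your triangularity observation for linear independence is the standard one: the coefficient in $\Xi^q$ of any fixed permutation with descent set $S$ is $1$ precisely when $S\subseteq S_q$, and since $q\mapsto S_q$ is a bijection onto subsets of $[1,n-1]$, the resulting incidence matrix is unitriangular with respect to inclusion. The subtlety you flag in the inverse direction resolves exactly as you anticipate: $\des(\tau)\subseteq S_q$ means $\tau$ is increasing on each $q$-block of positions, hence $\tau^{-1}$ is order-preserving on each value set $\{(b)\tau : b\in[q^+_{h-1}+1,q^+_h]\}$; applied group-by-group to the increasing runs of $\pi$ inside an $r$-block this forces $\sigma=\pi\tau^{-1}$ to be increasing there, and the block-intersection counts of this $\sigma$ reproduce $A$ by construction.
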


For $q\in\C(n)$, let $Y_{q}$, $Y_{q}^{\circ}$ \index{$Y_{q}$}\index{$Y_{q}^{\circ}$}be the subsets of $\Des{n}{R}$ which are $R$-spanned by
\begin{align*}
  B_{q}:&=\{\Xi^\xi:\C(n)\ni \xi\wref q\},\index{$B_q$}\\
  B_{q}^{\circ}:&=\{\Xi^\xi:\C(n)\ni\swref{\xi}{q}\},\index{$B_{q}^{\circ}$}
\end{align*} respectively. As a consequence of the theorem, we have the following corollary.

\begin{cor}\label{C: product in Descent}  Let $q,r\in\C(n)$.
\begin{enumerate}[(i)]
  \item The product $\Xi^r\Xi^q$ is a linear combination of some $\Xi^s$ such that $s$ is both a strong and a weak refinements of $r$ and $q$ respectively, i.e., $r\geqslant s\wref q$. In particular, both $Y_{q}$ and $Y_{q}^{\circ}$ are two-sided ideals of $\Des{n}{R}$.
  \item The coefficient of $\Xi^r$ in $(\Xi^r)^2$ is $\facmulti{r}$.
\end{enumerate}
\end{cor}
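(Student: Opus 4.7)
The plan is to deduce both parts from Theorem \ref{T: GR 1.1}, which reduces everything to combinatorial analysis of the sets $N^s_{r,q}$.

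For part (i), I would fix $A\in N^s_{r,q}$ and read off the refinement relations from the row and column structure of $A$. The row side is immediate from the definition: by (a) each $\comp{r_i(A)}$ composes $r_i$, and by (c) their concatenation in order is exactly $s$, so $s$ is a strong refinement of $r$, i.e.\ $r\geqslant s$. For the column side, (b) gives that $\comp{c_j(A)}$ composes $q_j$; concatenating these column compositions left to right yields some $s'\leqslant q$, and since $s$ and $s'$ list the same multiset of nonzero entries of $A$ (merely row-by-row versus column-by-column), we have $s\approx s'$, whence $s\wref q$.

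To upgrade this into the statement that $Y_q$ and $Y_q^\circ$ are two-sided ideals, I would establish two compatibility facts: (1) $\wref$ is transitive, and (2) $s\leqslant \xi$ together with $\xi\wref q$ implies $s\wref q$. Fact (2) is the technical heart and the step I expect to be the main obstacle. My plan is to fix a rearrangement $\xi''\approx \xi$ with $\xi''\leqslant q$, decompose $s$ as the blockwise concatenation $s^{(1)}\cont\cdots\cont s^{(\ell(\xi))}$ with $s^{(i)}\vDash \xi_i$, and apply to these blocks the same permutation that carries $\xi$ to $\xi''$; the resulting rearrangement of $s$ then strongly refines $\xi''$, and hence $q$. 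Granted (1) and (2), for any $\xi\wref q$ both $\Xi^r\Xi^\xi$ and $\Xi^\xi\Xi^r$ expand into $\Xi^s$ with $s\wref q$, so $Y_q$ is closed on both sides. For $Y_q^\circ$ I would additionally observe that $\xi\wref q$ forces $\ell(\xi)\geq \ell(q)$ with equality iff $\xi\approx q$ (a strong refinement of the same length is the identity); thus $\xi\prec q$ gives $\ell(\xi)>\ell(q)$, and combined with $\ell(s)\geq \ell(\xi)$ one forces $s\not\approx q$, i.e.\ $s\prec q$.

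For part (ii), the coefficient of $\Xi^r$ in $(\Xi^r)^2$ is $|N^r_{r,r}|$, and the plan is to prove $N^r_{r,r}=\overline{N^r_{r,r}}$ and then invoke Lemma \ref{L: basic coef approx}. Given $A\in N^r_{r,r}$, condition (c) with $s=r$ of length $\ell(r)$ forces $A$ to have exactly $\ell(r)$ nonzero entries; since each of the $\ell(r)$ rows contributes at least one nonzero entry (to sum to $r_i\geq 1$), each row contains exactly one, necessarily equal to $r_i$. Applying the same pigeonhole to columns (each column must contain a composition of $r_j\geq 1$, yet only $\ell(r)$ nonzero entries are available in total) forces a single nonzero entry per column, so $A\in \overline{N^r_{r,r}}$, and $|N^r_{r,r}|=\facmulti{r}$ follows from Lemma \ref{L: basic coef approx}.
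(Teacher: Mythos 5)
Your proof is correct and follows essentially the same approach as the paper: both deduce the refinement relations $r\geqslant s$ and $s\wref q$ by reading rows and columns of $A\in N^s_{r,q}$ via Theorem~\ref{T: GR 1.1}, and both prove (ii) by showing every matrix in $N^r_{r,r}$ has exactly one nonzero entry per row and column and then invoking Lemma~\ref{L: basic coef approx}. The paper states the ``In particular'' ideal claims without proof; your explicit verification of transitivity of $\wref$, the closure fact that $s\leqslant\xi\wref q$ implies $s\wref q$, and the length inequality distinguishing $Y_q$ from $Y_q^\circ$ are correct and fill in exactly the details the paper leaves to the reader.
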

\begin{proof} For part (i), any composition $s$ with $|N^s_{rq}|\neq 0$ is obtained by reading the entries of $A\in N^s_{rq}$ as in the beginning of Subsection \ref{SS: generality}. Therefore $s=s^{(1)}\cont \cdots\cont s^{\ell(r)}$ where each $s^{(i)}$ is a composition of $r_i$, i.e., $s\leqslant r$. Considering the column sums of $A$ which gives $q$, we get $s\wref q$. For part (ii), notice that each row and column of a matrix in $N^r_{r,r}$ has exactly one nonzero entry. Now use Lemma \ref{L: basic coef approx}.
\end{proof}

The Dynkin-Specht-Wever element $\omega_n$ \index{$\omega_n$}for the group algebra $R\sym{n}$ is defined as \[\omega_n=(1-c_n)(1-c_{n-1})\cdots (1-c_2)\in R\sym{n}\] where, for each $i\in [2,n]$, $c_i=(i,i-1,\ldots,1)$ is the descending cycle of length $i$. Notice that \[\omega_n=\sum(-1)^{|\mathbf{j}|}s_{\mathbf{j}}\] where the sum is taken over all subsets $\mathbf{j}=\{j_1<\cdots<j_t\}\sqsubseteq [2,n]$ where $\{1=k_1<\cdots<k_{n-t}\}\sqcup \mathbf{j}=[1,n]$ and \[s_\mathbf{j}=j_t\cdots j_1k_1\ldots k_{n-t}=\begin{pmatrix}
  1&\cdots&t&t+1&\cdots&n\\ j_t&\cdots&j_1&k_1&\cdots&k_{n-t}
\end{pmatrix}=c_{j_t}\cdots c_{j_1}.\index{$s_{\mathbf{j}}$}\] For any word $w=w_1\ldots w_n$ or ordered set $S=\{w_1,\ldots,w_n\}$, we denote \[Q_S=Q_w=\omega_n\cdot w=\sum_{\mathbf{j}\sqsubseteq [2,n]}(-1)^{|\mathbf{j}|}s_{\mathbf{j}}\cdot w.\index{$Q_S$}\index{$Q_w$}\] Notice that $w$ is the unique word involved in the summand of $Q_w$ such that the first alphabet is $w_1$. Furthermore, the coefficient of $w$ in $Q_w$ is 1. For  example, $\omega_3\cdot abc=abc-bac-cab+cba$. It is well-known that $\omega_n\in\Des{n}{R}$ and $\omega_n^2=n\omega_n$. Therefore, if $n$ is a unit in $R$, then $\frac{1}{n}\omega_n$ is an idempotent in $\Des{n}{R}$.

The Lie module for the group algebra $R\sym{n}$ is defined as the right ideal \[\Lie_R(n):=\omega_nR\sym{n}.\index{$\Lie_R(n)$}\] The following proposition is well-known.%or simply denoted as $\Lie(n)$ if the underlying commutative ring $R$ is clear.

\begin{prop}\label{P: Lie(n) basis} The set $\{\omega_n\sigma:(1)\sigma=1\}$ is an $R$-basis for $\Lie_R(n)$.
\end{prop}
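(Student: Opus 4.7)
The plan is to prove the proposition by separately establishing linear independence and spanning of $\{\omega_n\sigma : \sigma \in H\}$, where $H := \{\sigma \in \sym{n} : (1)\sigma = 1\}$. The central tool in both halves will be the $R$-linear projection $\pi : R\sym{n} \to RH$ that retains those group-algebra terms indexed by permutations fixing $1$ and sends the rest to $0$.

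For linear independence, the idea is to read off coefficients via $\pi$. Using the formula $\omega_n\sigma = Q_\sigma = \sum_{\mathbf{j}\sqsubseteq[2,n]}(-1)^{|\mathbf{j}|} s_\mathbf{j}\sigma$ recorded in the excerpt, I will observe that $\sigma$ (arising from $\mathbf{j} = \emptyset$) is the unique summand whose word begins with $(1)\sigma$: for any nonempty $\mathbf{j}$, the word $s_\mathbf{j}\sigma$ starts with $\sigma(\max\mathbf{j})$, and $\max\mathbf{j}\ge 2$ together with the bijectivity of $\sigma$ forces this letter to differ from $(1)\sigma$. Hence for $\sigma\in H$ one has $\pi(\omega_n\sigma) = \sigma$. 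Applying $\pi$ to any relation $\sum_{\sigma\in H} c_\sigma\omega_n\sigma = 0$ then yields $\sum c_\sigma\sigma = 0$, which forces every $c_\sigma = 0$ since $H$ is $R$-linearly independent in $R\sym{n}$.

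For spanning, I will first prove it for $R = \mathbb{Z}$, from which the general case follows immediately: any integer identity $\omega_n\tau = \sum c_\sigma\omega_n\sigma$ holds a fortiori in $R\sym{n}$. To handle the integer case I pass through $\mathbb{Q}$: the identity $\omega_n^2 = n\omega_n$ makes $e := \omega_n/n$ an idempotent in $\mathbb{Q}\sym{n}$, and since left multiplication by an idempotent is a projection, $\dim_\mathbb{Q}\omega_n\mathbb{Q}\sym{n} = \operatorname{tr}(L_e)$ equals $n!$ times the identity coefficient of $e$. The identity permutation appears in $\omega_n$ only via $\mathbf{j} = \emptyset$ (for any nonempty $\mathbf{j}$, $(1)s_\mathbf{j} = \max\mathbf{j}\ge 2$ so $s_\mathbf{j}$ cannot be the identity), so this coefficient equals $1/n$ and $\dim_\mathbb{Q}\omega_n\mathbb{Q}\sym{n} = (n-1)!$; combined with linear independence, $\{\omega_n\sigma : \sigma\in H\}$ becomes a $\mathbb{Q}$-basis. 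For any $\tau\in\sym{n}$ I will then write $\omega_n\tau = \sum c_\sigma\omega_n\sigma$ with $c_\sigma\in\mathbb{Q}$ and apply $\pi$ to obtain $\pi(\omega_n\tau) = \sum c_\sigma\sigma$; since the left-hand side has integer coefficients (as $\omega_n\tau\in\mathbb{Z}\sym{n}$), each $c_\sigma$ must be an integer. The main obstacle is exactly this integrality refinement, and the $\pi$-trick converts it into a one-line consequence of integrality of $\omega_n\tau$.
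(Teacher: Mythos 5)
Your proof is correct. The paper itself offers no argument for this proposition (it is flagged as ``well-known''), so there is nothing to compare against. Both halves of your argument check out: the projection $\pi$ onto terms fixing $1$ picks out the $\mathbf j=\emptyset$ summand of $\omega_n\sigma$ with coefficient $1$ precisely because $(1)s_{\mathbf j}=\max\mathbf j\ge 2$ for $\mathbf j\neq\emptyset$, which gives linear independence; and your spanning argument correctly reduces to $R=\mathbb Z$, computes $\dim_{\mathbb Q}\omega_n\mathbb Q\sym n=(n-1)!$ via $\operatorname{tr}(L_{\omega_n/n})=n!\cdot\frac1n$, and then promotes a rational dependency relation to an integral one by applying $\pi$ and reading off coefficients against the basis $H$ inside $\mathbb Z H$. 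A more common route in the literature establishes the spanning step directly by a triangularity or rewriting argument in terms of Lyndon-word or left-normed bracket bases rather than through a characteristic-zero dimension count, which avoids the detour through $\mathbb Q$ but requires more combinatorial setup; your version is shorter and still furnishes the required integral identities.
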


\subsection{The higher Lie modules} In the case $p=0$, there is another basis for the descent algebra $\Des{n}{F}$ found by Blessenohl-Laue \cite{BL}. In fact, in their paper, some results and proofs hold over arbitrary commutative ring $R$ with 1. We have summarized the results we shall need in Theorem \ref{T: BL results} below and refer the readers to the proofs in that paper.

For each $q\in\C(n)$, we define \[\omega_q=\omega^q\Xi^q\in\Des{n}{R}\index{$\omega_q$}\index{$\omega^q$}\]  where $\omega^q:=\omega_{q_1}^{+q^+_0}\cdots\omega_{q_k}^{+q^+_{k-1}}\in R\sym{q}$ (see \cite[Proposition 1.1]{BL}). Clearly, $\omega_{(n)}=\omega_n$. The element $\omega_q$ can also be defined using the convolution product for the symmetric group but we have chosen this presentation which suits us best. By \cite[Proposition 1.2]{BL}, when $p=0$, the set $\{\omega_q:q\in \C(n)\}$ is an $F$-basis for $\Des{n}{F}$. This is not true when $p>0$ (see Example \ref{Eg: p=2, (2,1)} below).

The following theorem summarizes a few properties which we will be using throughout. %In the case $p=0$, the change-of-base matrix from the basis $\Omega(n)$ to $\Xi(n)$ is given in Theorem \ref{T: BL results}(i) below.

\begin{thm}[\cite{BL}]\label{T: BL results} Let $r,q\in\C(n)$. In $\Des{n}{R}$, we have
\begin{enumerate}[(i)]
  \item $\omega_q=\sum_{s\sref q}(-1)^{\ell(s)-\ell(q)}F_q(s)\Xi^s$,
  \item $\Xi^r\omega_q=\sum_{q\approx s\sref r}|\overline{N_{r,q}^s}|\omega_s$,
  \item $\omega_q\omega_r=q?\omega_q$ if $q\approx r$,
  \item $\omega_q\omega_r=0$ unless $q\wref r$.
\end{enumerate}
\end{thm}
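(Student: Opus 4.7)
\emph{Plan for part (i).} The approach is direct expansion of $\omega_q=\omega^q\Xi^q$. I would first establish the base case $q=(m)$: by induction on $m$ using the recursion $\omega_m=(1-c_m)\omega_{m-1}$ (with $\omega_{m-1}$ regarded in $R\sym{m}$ as fixing $m$), one verifies the closed form $\omega_m=\sum_{s\vDash m}(-1)^{\ell(s)-1}s_{\ell(s)}\Xi^s$. For general $q=(q_1,\ldots,q_k)$, the element $\omega^q=\omega_{q_1}^{+q_0^+}\cdots\omega_{q_k}^{+q_{k-1}^+}$ is a commuting product of Dynkin elements acting in disjoint Young-subgroup blocks. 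Expanding each factor via the base case and using that, for any tuple $(s^{(1)},\ldots,s^{(k)})$ with $s^{(i)}\vDash q_i$, the product $\prod_i(\Xi^{s^{(i)}})^{+q_{i-1}^+}\cdot\Xi^q$ collects to $\Xi^{s^{(1)}\cont\cdots\cont s^{(k)}}$ (because the first factor sweeps over the minimum-length coset representatives of $\sym{s}\backslash\sym{q}$ while $\Xi^q$ sweeps over those of $\sym{q}\backslash\sym{n}$), the signs $\prod_i(-1)^{\ell(s^{(i)})-1}$ and ``last-part'' factors $\prod_i(s^{(i)})_{\ell(s^{(i)})}$ combine into $(-1)^{\ell(s)-\ell(q)}F_q(s)$, yielding (i).

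\emph{Plan for part (ii), the main hurdle.} I would substitute (i) into $\Xi^r\omega_q$ and apply Theorem \ref{T: GR 1.1} to obtain
\[\Xi^r\omega_q=\sum_{s\sref q}\sum_{t\in\C(n)}(-1)^{\ell(s)-\ell(q)}F_q(s)\,|N^t_{r,s}|\,\Xi^t.\]
The combinatorial heart is to reorganize this sum by grouping triples $(s,t,A)$ with $A\in N^t_{r,s}$, $s\sref q$ according to their ``collapse'' $\overline{A}\in\overline{N^u_{r,q}}$: merging the columns of $A$ that belong to the same block of the refinement $s\sref q$ produces a matrix with exactly one nonzero entry per column, whose associated composition $u\sref r$ satisfies $u\approx q$. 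For a fixed $\overline{A}$ and a fixed $t\sref u$, the signed sum of $F_q(s)$ over all admissible uncollapses $s$ telescopes, via a straightforward inclusion–exclusion on the refinement of $u$ down to $t$, to $(-1)^{\ell(t)-\ell(u)}F_u(t)$. The total therefore collapses to $\sum_{q\approx u\sref r}|\overline{N^u_{r,q}}|\sum_{t\sref u}(-1)^{\ell(t)-\ell(u)}F_u(t)\Xi^t$, which by (i) equals $\sum_{q\approx u\sref r}|\overline{N^u_{r,q}}|\omega_u$. Establishing the telescoping identity cleanly is the main obstacle; once it is in hand, the remaining parts are short algebraic consequences.

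\emph{Plans for parts (iii) and (iv).} Both follow from (ii) applied with $r$ and $q$ exchanged, giving $\Xi^q\omega_r=\sum_{r\approx s\sref q}|\overline{N^s_{q,r}}|\omega_s$. For (iii), when $q\approx r$ the only $s\sref q$ with $\lambda(s)=\lambda(q)$ is $s=q$ itself (the equality $\lambda(s)=\lambda(q)$ forces $\ell(s)=\ell(q)$, and together with $s\sref q$ this forces each refined block $s^{(i)}$ to equal $(q_i)$), so the sum reduces to $|\overline{N^q_{q,r}}|\omega_q=\facmulti{q}\omega_q$ by Lemma \ref{L: basic coef approx}. Hence $\omega_q\omega_r=\omega^q\Xi^q\omega_r=\facmulti{q}\omega^q\omega_q$. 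Since the block factors of $\omega^q$ commute and $(\omega_m)^2=m\omega_m$, we obtain $(\omega^q)^2=\bigl(\prod_i q_i\bigr)\omega^q$, so $\omega^q\omega_q=\bigl(\prod_i q_i\bigr)\omega_q$, and combining gives $\omega_q\omega_r=\facmulti{q}\prod_i q_i\cdot\omega_q=q?\,\omega_q$. For (iv), if the relevant weak-refinement hypothesis fails, then the index set $\{s:r\approx s\sref q\}$ is empty, so $\Xi^q\omega_r=0$ and consequently $\omega_q\omega_r=\omega^q\Xi^q\omega_r=0$.
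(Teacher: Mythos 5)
The paper does not prove Theorem \ref{T: BL results}; it is cited without proof from Blessenohl--Laue \cite{BL}, so your proposal is a reconstruction from scratch.

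Parts (i) and (iii). Your plan for (i) is sound: the closed form for $\omega_m$ follows by induction on $\omega_m=(1-c_m)\omega_{m-1}$, and the identity $\prod_i(\Xi^{s^{(i)}})^{+q^+_{i-1}}\Xi^q=\Xi^{s^{(1)}\cont\cdots\cont s^{(k)}}$ is the standard factorisation of minimal parabolic coset representatives (compare the paragraph around Lemma \ref{L: basic Xi}); the sign and $F_q(s)$ bookkeeping then works. Part (iii) is a short and correct consequence of (ii) combined with $\omega^q\omega_q=\bigl(\prod_i q_i\bigr)\omega_q$.

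Part (ii). There is a genuine gap. After substituting (i) and applying Theorem \ref{T: GR 1.1}, you must show for each $t$ that $\sum_{s\sref q}(-1)^{\ell(s)-\ell(q)}F_q(s)\,|N^t_{r,s}|$ equals the coefficient of $\Xi^t$ in $\sum_{q\approx u\sref r}|\overline{N^u_{r,q}}|\omega_u$. You propose grouping matrices $A\in N^t_{r,s}$ by ``collapse'', asserting that merging the columns of $A$ lying over one block of $s\sref q$ ``produces a matrix with exactly one nonzero entry per column''; this is false in general, since a merged column can carry nonzero entries in several rows. The heart of the lemma is precisely that contributions whose collapse has a column with two or more nonzero entries telescope to zero, while the remaining ones (whose collapse does lie in some $\overline{N^u_{r,q}}$) assemble into the stated sum. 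Your sketch records only the second half and omits the cancellation argument for the first, which is where all the work is; until that is supplied, (ii) is not proved.

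Part (iv). Your argument is correct, but notice that what it proves is ``$\omega_q\omega_r=0$ unless $r\wref q$'': the index set $\{s:r\approx s\sref q\}$ is nonempty exactly when $r\wref q$. The printed statement ``unless $q\wref r$'' is in fact backwards; for example $\omega_{(3)}\omega_{(2,1)}=4(132+231-213-312)\neq 0$ in $\Q\sym{3}$ even though $(3)\not\wref(2,1)$, whereas $\omega_{(2,1)}\omega_{(3)}=0$. The paper's own remark immediately after Theorem \ref{T: GR 2.1}, that $\Xi^q\omega^r=0$ whenever $r\not\wref q$, gives a quicker and independent proof of (iv): $\omega_q\omega_r=\omega^q(\Xi^q\omega^r)\Xi^r=0$. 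That route does not rely on (ii) at all and so sidesteps the gap noted above for this part.
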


When $q?$ is a unit in $R$, the element $\nu_q=\frac{1}{q?}\omega_q$ \index{$\nu_q$}in $\Des{n}{R}$ is called a higher Lie idempotent due to Theorem \ref{T: BL results}(iii). In particular, when $p=0$, any idempotent $e\in \Des{n}{F}$ is called a higher Lie idempotent if $eF\sym{n}=\nu_qF\sym{n}$ for some $q\vDash n$.

For any $q\in\C(n)$, we define the higher Lie module with respect to $q$ as the right ideal \[\Lie_R(q)=\omega_qR\sym{n}.\index{$\Lie_R(q)$}\] We have the following basic property.

\begin{lem}\label{L: translate w} Suppose that $q,r$ are compositions of $n$ and $q\approx r$. We have $\omega_q=\sigma\omega_r$ for some $\sigma\in \sym{n}$. In particular, $\Lie_R(q)\cong \Lie_R(r)$.
\end{lem}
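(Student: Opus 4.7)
Since $q\approx r$, pick a permutation $\pi\in\sym{\ell(q)}$ (with $k:=\ell(q)=\ell(r)$) such that $q_j=r_{(j)\pi}$ for each $j\in[1,k]$, and let $\sigma=\block{\pi}{r}$ be the permutation defined in (\ref{Eq: tau_r}). The defining observation after (\ref{Eq: tau_r}) gives
\[\Xi^q=\sigma\,\Xi^r,\]
so in view of the factorization $\omega_q=\omega^q\Xi^q$ and $\omega_r=\omega^r\Xi^r$, it suffices to establish the intertwining identity
\[\omega^q\,\sigma=\sigma\,\omega^r,\qquad\text{equivalently}\qquad \sigma^{-1}\omega^q\sigma=\omega^r,\]
after which $\omega_q=\omega^q\sigma\Xi^r=\sigma\omega^r\Xi^r=\sigma\omega_r$.

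The intertwining identity I plan to prove factor-by-factor. Recall $\omega^q=\prod_{i=1}^k\omega_{q_i}^{+q_{i-1}^+}$, whose factors commute since they act on the disjoint blocks $B_i^q:=[q_{i-1}^++1,q_i^+]$. By construction $\sigma$ sends $B_i^q$ bijectively and order-preservingly onto $B_{(i)\pi}^r=[r_{(i)\pi-1}^++1,r_{(i)\pi}^+]$, using $q_i=r_{(i)\pi}$. A direct unwinding of the P\'olya action, exactly as in the computation
\[(k')\,\sigma^{-1}\omega_{q_i}^{+q_{i-1}^+}\sigma=r_{(i)\pi-1}^++\bigl((k'-r_{(i)\pi-1}^+)\bigr)\omega_{q_i}\qquad(k'\in B_{(i)\pi}^r),\]
yields $\sigma^{-1}\omega_{q_i}^{+q_{i-1}^+}\sigma=\omega_{r_{(i)\pi}}^{+r_{(i)\pi-1}^+}$. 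Taking the product over $i$ and reindexing $j=(i)\pi$ (which is legitimate because the factors on the right again act on disjoint blocks and hence commute) gives $\sigma^{-1}\omega^q\sigma=\prod_{j=1}^k\omega_{r_j}^{+r_{j-1}^+}=\omega^r$, as required.

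For the final clause, left multiplication by $\sigma$ is an $R$-linear automorphism of $R\sym{n}$ commuting with the right $R\sym{n}$-action; its inverse is left multiplication by $\sigma^{-1}$. Restricted to the right ideal $\Lie_R(r)=\omega_rR\sym{n}$, it identifies $\Lie_R(r)$ with $\sigma\omega_rR\sym{n}=\omega_qR\sym{n}=\Lie_R(q)$ as right $R\sym{n}$-modules.

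The only delicate point is the block-conjugation identity; everything else is a formal consequence of the definitions and of $\Xi^q=\sigma\Xi^r$. Since the $B_i^q$'s are disjoint and $\sigma$ is order-preserving within each block, the conjugation bookkeeping is routine but must be carried out with care, which is where I expect to spend the main effort.
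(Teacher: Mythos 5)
Your proposal is correct and matches the paper's proof in both structure and key ideas: pick $\sigma=\block{\pi}{r}$ so that $\Xi^q=\sigma\Xi^r$, establish the conjugation identity $\sigma^{-1}\omega^q\sigma=\omega^r$ (the paper states this by observing $\sigma$ permutes the disjoint supports of the Dynkin--Specht--Wever factors; you carry out the same block-by-block bookkeeping explicitly), and then combine the two to get $\omega_q=\sigma\omega_r$, from which the isomorphism of right ideals by left multiplication by $\sigma$ is immediate.
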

%\begin{proof} Let $\sigma \Xi^q=\Xi^r$. Then, for any $s\sref q$, we have $\sigma \Xi^s=\Xi^{s'}$ such that $s'\sref r$ and $F_q(s)=F_r(s')$. %The proof is now complete using Theorem \ref{T: BL results}(ii). The isomorphism of the right $F\sym{n}$-modules is given by left multiplication %by $\sigma$.
%\end{proof}
\begin{proof} Let $k=\ell(q)$, $\tau\in \sym{k}$ such that $q=(r_{(1)\tau},\ldots,r_{(k)\tau})$ and $\sigma=\tau_r$ be given as in Equation \ref{Eq: tau_r}. So $\Xi^q=\sigma\Xi^r$. Also, $\sigma\omega^r\sigma^{-1}=\omega^q$ as $\omega^r$ (respectively, $\omega^q)$ is a product of Dynkin-Specht-Wever elements on disjoint supports and $\sigma$ acts by permuting the supports according to $\tau_r$. Therefore, \[\sigma\omega_r=\sigma\omega^r\sigma^{-1}\sigma\Xi^r=\omega^q\Xi^q=\omega_q.\] The isomorphism is therefore given by the left multiplication by $\sigma$.
\end{proof}

In the case when $p=0$, it is well-known that $\dim_F\Lie_F(q)=\frac{n!}{q?}$ (see, for example, \cite[Theorem 8.24]{Reu}). Therefore, using Lemma \ref{L: dim mod p}, we obtain the following inequality. %see We record some basic properties for the higher Lie modules. %This is one of the central objects for our study in this paper.

\begin{prop}\label{P: dim higher lie 0} For any $q\in\C(n)$, we have $\dim_F\Lie_F(q)\leq \frac{n!}{q?}$.
\end{prop}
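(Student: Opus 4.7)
The plan is a direct application of Lemma \ref{L: dim mod p} combined with the known characteristic zero dimension formula. First I would observe that the element $\omega_q = \omega^q \Xi^q$ lies in $\Z\sym{n}$: indeed, $\Xi^q$ is, by definition, a sum of permutations, and $\omega^q$ is a product of Dynkin--Specht--Wever elements $\omega_{q_i}^{+q_{i-1}^+}$, each of which is a signed sum of permutations, so $\omega_q$ has integer coefficients. Hence the reduction-mod-$p$ framework of Lemma \ref{L: dim mod p} applies to $x = \omega_q$ with $G = \sym{n}$.

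Next, I would invoke Lemma \ref{L: dim mod p} to conclude
\[
\dim_F \Lie_F(q) = \dim_F \omega_q F\sym{n} \leq \dim_\Q \omega_q \Q\sym{n} = \dim_\Q \Lie_\Q(q).
\]
Finally, the characteristic zero dimension is known (as mentioned in the paragraph preceding the proposition, citing \cite[Theorem 8.24]{Reu}): $\dim_\Q \Lie_\Q(q) = n!/q?$. Chaining these gives the desired inequality.

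There is essentially no obstacle here --- the statement is a one-line reduction from characteristic zero to characteristic $p$, and both ingredients (the lemma and the classical formula) are already available in the paper. The only thing to be careful about is making sure the identification $\Lie_F(q) = \omega_q F\sym{n}$ really arises from the integral element $\omega_q \in \Z\sym{n}$ via the isomorphism $F\sym{n} \cong \Z\sym{n} \otimes_\Z F$, which is immediate from the definitions given in Subsection \ref{SS: descent} and the definition of $\omega_q$ preceding Theorem \ref{T: BL results}.
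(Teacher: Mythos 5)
Your proof is correct and matches the paper's own (implicit) argument: the paper derives the proposition immediately from the characteristic-zero dimension formula $\dim_\Q\Lie_\Q(q)=n!/q?$ together with Lemma \ref{L: dim mod p}, exactly as you do. Your additional remark verifying that $\omega_q\in\Z\sym{n}$ is a sensible check but not a new idea — it is implicit in the paper's setup.
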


The inequality in Proposition \ref{P: dim higher lie 0} is not an equality in general. We give an example below and also refer the reader to Appendix \ref{Appen C} for the computational data using Magma \cite{Magma}.

\begin{eg}\label{Eg: p=2, (2,1)} Let $p=2$. Using Theorem \ref{T: BL results}(i), in $\Des{3}{F}$, we have \[\omega_{(2,1)}=2\Xi^{(2,1)}-\Xi^{(1,1,1)}=\Xi^{(1,1,1)}=\sum_{\sigma\in\sym{3}}\sigma.\] Therefore $\Lie_F((2,1))\cong F$ and the inequality in Proposition \ref{P: dim higher lie 0} is strict in this case. Similarly, we have $\omega_{(1,2)}=\omega_{(2,1)}=\omega_{(1,1,1)}=\Xi^{(1,1,1)}$ and $\omega_{(3)}=\Xi^{(3)}+\Xi^{(2,1)}+\Xi^{(1,1,1)}$. Therefore, $\{\omega_q:q\in \C(3)\}$ cannot be a basis for $\Des{3}{F}$.
\end{eg}

Continuing with the case when $p=0$, the ordinary irreducible constituents of $\Lie_F(q)$ has also been computed in \cite{Schoc03} (see also \cite[Theorem 5.11]{AS}). The computation is done by reducing the general case to the case when $q=(d^k)\vdash dk$ for some $d,k\in\NN$. Since we do not need the exact combinatorial description of the multiplicity, we simply denote the multiplicity of the irreducible character $\zeta^\mu$ in $\Lie_F((d^k))$ given in \cite[Main Theorem 3.1]{Schoc03} as $\sch^\mu_{d,k}$\index{$\sch^\mu_{d,k}$}. Furthermore, for partitions $\mu(1),\ldots,\mu(k)$ such that $\sum_{i=1}^k |\mu(i)|=|\lambda|=n$, let $c^\lambda_{\mu(1),\ldots,\mu(k)}$ \index{$c^\lambda_{\mu(1),\ldots,\mu(k)}$}be the multiplicity of the irreducible character $\zeta^\lambda$ in the induced module $\Ind^{\sym{n}}_{\sym{r}}(S^{\mu(1)}\boxtimes \cdots\boxtimes S^{\mu(k)})$ where $r=(|\mu(1)|,\ldots,|\mu(k)|)$; namely, it is just the number obtained using Littlewood-Richardson Rule repeatedly (see \cite{JamesKerber}). %Indeed, let the multiplicity of the irreducible character $\zeta^\mu$ in $\Lie_F(d^k)$ be $\syt^\mu_{d,k}$, which depends on the notion of the major of index, let \[\sch^\mu_{d,k}=\frac{1}{k!}\sum_{r\in \P(k)}|\ccl{r}|\sum_{s\mid r}\mu(r/s)\syt^\mu_{dr,s}\] where $s\mid r$ means $s=(s_1,\ldots,s_{\ell(r)})$ a composition such that $s_i\mid r_i$ for each $i\in [1,\ell(r)]$, $\mu(r/s)=\prod_{i=1}^{\ell(r)}\mu(r_i/s_i)$ where $\mu$ on the right is the M\"{o}bius function and $dr=(dr_1,\ldots,dr_{\ell(r)})$, and, for partitions $\mu(1),\ldots,\mu(k)$ such that $\sum |\mu(i)|=|\lambda|$, let $c^\lambda_{\mu(1),\ldots,\mu(k)}$ be the multiplicity of the irreducible character $\zeta^\lambda$ in $(S^{\mu(1)}\boxtimes \cdots\boxtimes S^{\mu(k)})\uparrow^{\sym{n}}$; namely, it is just the number obtained using Littlewood-Richardson Rule repeatedly.

\begin{thm}[{\cite[Lemma 2.1 and Theorem 3.1]{Schoc03}}]\label{T: Schoc Lie(q)} Suppose that $p=0$. For any $\lambda\in\P(n)$ and $q\in\C(n)$, the multiplicity of the irreducible character $\zeta^\lambda$ in $\Lie_F(q)$ is \[C^\lambda_q=\sum_{\mu(i)\vdash im_i(q)}c^{\lambda}_{\mu(1),\ldots,\mu(n)}\prod^n_{i=1}\sch^{\mu(i)}_{i,m_i(q)}.\index{$C^\lambda_q$}\]
%\[\sch^\lambda_{d,k}=\frac{1}{k!}\sum_{q\in\P(k)}|\ccl{q}|\sum_{j\mid q}\mu(q/j)\syt^\lambda_{d\star q,j}\] where $j\mid q$ if %$\ell(j)=\ell(q)$ and $j_i\mid q_i$ for all $i\in[1,\ell(q)]$, $\mu$ is the M\"{o}bius function and $\syt$ is as defined in
\end{thm}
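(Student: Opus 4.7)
The plan is to reduce the general composition case to the partition case, then apply Schocker's two cited results in sequence. First, by Lemma \ref{L: translate w}, the isomorphism class of $\Lie_F(q)$ depends only on $\lambda(q)$, so I may assume $q = \lambda = \lambda(q)$. Grouping the parts of $\lambda$ by size, $\lambda$ is the concatenation of the rectangular blocks $(i^{m_i(\lambda)})$ for $i \in [1,n]$, arranged over disjoint intervals of $[1,n]$ of total length $im_i(\lambda)$.

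Next, I would invoke Schocker's Lemma 2.1 to produce the outer induction step
\[
\Lie_F(\lambda) \;\cong\; \Ind^{\sym{n}}_{\sym{r}}\bigl(\Lie_F((1^{m_1(\lambda)})) \boxtimes \Lie_F((2^{m_2(\lambda)})) \boxtimes \cdots \boxtimes \Lie_F((n^{m_n(\lambda)}))\bigr),
\]
where $r = (m_1(\lambda), 2m_2(\lambda), \ldots, nm_n(\lambda))$. Conceptually this is the statement that $\omega_\lambda = \omega^\lambda \Xi^\lambda$ factors as a product of higher Lie idempotents (up to the scalar $\lambda?$ that distinguishes idempotents from the $\omega$'s) supported on disjoint blocks, so that the module generated is precisely the induction of the outer tensor product from the Young subgroup $\sym{r}$.

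Now I would reduce to the rectangular case $\Lie_F((i^{m_i(\lambda)}))$ for each $i$. Schocker's Main Theorem 3.1 is precisely the statement that, for each partition $\mu(i) \vdash im_i(\lambda)$, the multiplicity of $\zeta^{\mu(i)}$ in $\Lie_F((i^{m_i(\lambda)}))$ equals $\sch^{\mu(i)}_{i,m_i(\lambda)}$. Hence the character of the inner tensor factor expands as
\[
\ch\bigl(\Lie_F((1^{m_1(\lambda)})) \boxtimes \cdots \boxtimes \Lie_F((n^{m_n(\lambda)}))\bigr) = \sum_{\mu(1),\ldots,\mu(n)} \left(\prod_{i=1}^n \sch^{\mu(i)}_{i,m_i(\lambda)}\right) \zeta^{\mu(1)}\boxtimes \cdots \boxtimes \zeta^{\mu(n)},
\]
where $\mu(i) \vdash im_i(\lambda)$.

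Finally, I would apply the transitivity of induction and the iterated Littlewood-Richardson rule to move from $\sym{r}$ up to $\sym{n}$. By definition the multiplicity of $\zeta^\lambda$ in $\Ind^{\sym{n}}_{\sym{r}}(\zeta^{\mu(1)} \boxtimes \cdots \boxtimes \zeta^{\mu(n)})$ is exactly $c^\lambda_{\mu(1),\ldots,\mu(n)}$; summing over the choices of $\mu(i)$ gives the claimed formula for $C^\lambda_q$. The main obstacle is really the verification of the rectangular-block induction (Schocker's Lemma 2.1), since once that structural isomorphism is in place the remaining assertions are an application of Frobenius reciprocity combined with the cited rectangular character computation; but since both ingredients are cited from \cite{Schoc03}, the role of this proof is essentially to organise them into the final bookkeeping.
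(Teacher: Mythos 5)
This theorem is stated in the paper as a pure citation of Schocker's results, with no proof supplied; the surrounding text merely notes that the computation proceeds by reducing to the rectangular case $q=(d^k)$. Your proposal is a correct reconstruction of that argument: the reduction to a partition via Lemma \ref{L: translate w}, the block decomposition (your appeal to Schocker's Lemma 2.1, which in characteristic zero also follows from the paper's own Theorem \ref{T: higher lie mod} and Corollary \ref{C: higher lie mod split}), the rectangular multiplicities $\sch^{\mu(i)}_{i,m_i(q)}$ from Schocker's Main Theorem 3.1, and the iterated Littlewood--Richardson rule giving $c^\lambda_{\mu(1),\ldots,\mu(n)}$, assembled exactly as the formula for $C^\lambda_q$ requires.
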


\subsection{The Solomon's epimorphism}\label{SS: Solomon epi}

Let $q\vDash n$. The Young character $\varphi^q$ \index{$\varphi^q$}is defined as the character of $M^q_\Z$ where, for each $\mu\in\P(n)$, $\varphi^q(\mu)$ is the number of right cosets (or $q$-tabloids) of $\sym{q}$ in $\sym{n}$ fixed by a permutation with cycle type $\mu$, where, by abuse of notation, we have identified $\mu$ with the conjugacy class $\ccl{\mu}$ of $\sym{n}$. Therefore, $\varphi^q=\varphi^{q'}$ if $q\approx q'$. We have the following lemma.

\begin{lem}\label{L: phi nonzero} If $\varphi^q(\mu)\neq 0$ then $\mu\wref q$. Furthermore, $\varphi^q(q)=\facmulti{q}$.
\end{lem}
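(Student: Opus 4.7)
I would work with the standard combinatorial model: a $q$-tabloid is identified with an ordered set partition $(B_1,\ldots,B_k)$ of $[1,n]$ with $|B_j|=q_j$, where $k=\ell(q)$. The key observation is that a permutation $\sigma$ fixes such a tabloid if and only if $\sigma$ stabilizes each block $B_j$ setwise, equivalently, every cycle of $\sigma$ is contained in some $B_j$. Thus $\varphi^q(\mu)$ counts the number of such block decompositions that accommodate the cycles of any fixed $\sigma \in \ccl{\mu}$.

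For the first statement, assume $\varphi^q(\mu)\neq 0$ and pick $\sigma\in\ccl{\mu}$ fixing a tabloid $(B_1,\ldots,B_k)$. The restriction of $\sigma$ to $B_j$ is a permutation whose cycle type is some composition $r^{(j)}\vDash q_j$. Concatenating, $r:=r^{(1)}\cont\cdots\cont r^{(k)}$ is a (strong) refinement of $q$, and its parts, taken over all $j$, are precisely the cycle lengths of $\sigma$. Hence $r\approx \mu$, and this exhibits $\mu\wref q$ by definition.

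For the second statement, I would count tabloids fixed by a specific $\sigma\in\ccl{q}$ by induction on $i^\ast:=\max\{i:m_i(q)>0\}$. Any cycle of $\sigma$ of length $i^\ast$ must sit inside a block of size at least $i^\ast$; since every block has size at most $i^\ast$, such a cycle must exactly fill a block of size $i^\ast$. This forces a bijection between the $m_{i^\ast}(q)$ cycles of length $i^\ast$ and the $m_{i^\ast}(q)$ blocks of size $i^\ast$, contributing $m_{i^\ast}(q)!$ choices. Removing these cycles and blocks leaves the analogous problem for the composition obtained from $q$ by deleting all parts of size $i^\ast$, so the induction hypothesis yields the product $\prod_i m_i(q)! = \facmulti{q}$.

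The only delicate point is the rigidity step in the inductive count, namely that cycles of the maximum length are \emph{forced} into blocks of that same size and exhaust them completely; once this is in place, both statements follow by straightforward bookkeeping. Everything else is routine combinatorics of ordered set partitions.
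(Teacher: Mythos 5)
Your proof is correct, and since the paper states this lemma without proof (treating it as routine), the argument you give is exactly the intended one. The first claim follows directly from the observation that $\sigma$ fixes a $q$-tabloid if and only if every cycle of $\sigma$ lies within a single block, and your rigidity-plus-induction count for $\varphi^q(q)$ — each longest cycle is forced to exactly fill a block of that same size, contributing $m_{i^\ast}(q)!$ choices, after which one peels down to the shorter composition — is the standard bookkeeping and is complete as stated.
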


Recall the set $N^s_{q,r}$ defined in Subsection \ref{SS: generality}. We have the following well-known identity, analogous to the Mackey's formula, \[\varphi^q\varphi^r=\sum_{s\in\C(n)} |N^s_{q,r}|\varphi^s\] which essentially gives rise to the next theorem. We denote $\varphi^{q,R}$ for the $R$-valued Young character, i.e., for any $\mu\in\P(n)$, \[\varphi^{q,R}(\mu)=\varphi^q(\mu)\cdot 1_R\in R.\index{$\varphi^{q,R}$}\]  Let $\Ccl{n}{R}$ \index{$\Ccl{n}{R}$}be the $R$-linear span of the $R$-valued Young characters.

\begin{thm}[{\cite{AW,Sol}}]\label{T: Sol epi} The $F$-linear map \[c_{n,F}:\Des{n}{F}\to \Ccl{n}{F}\index{$c_{n,F}$}\] sending $\Xi^q$ to $\varphi^{q,F}$ is a surjective $F$-algebra homomorphism. Furthermore, $\ker(c_{n,F})=\mathrm{rad}\Des{n}{F}$ is spanned by $\Xi^q$ such that $\lambda(q)\not\in\pReg(n)$ and together with $\Xi^q-\Xi^r$ such that $q\approx r$ with $q\neq r$.
\end{thm}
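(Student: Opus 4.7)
The plan is to prove this in three parts: (i) verify that $c_{n,F}$ is a surjective $F$-algebra homomorphism, (ii) compute its kernel explicitly, and (iii) identify this kernel with $\rad\Des{n}{F}$. Part (i) is direct: the map is $F$-linear by construction and surjective by the definition of $\Ccl{n}{F}$, while multiplicativity follows by matching Theorem~\ref{T: GR 1.1}, which gives $\Xi^r\Xi^q=\sum_s|N^s_{r,q}|\Xi^s$, against the Mackey-type identity $\varphi^q\varphi^r=\sum_s|N^s_{q,r}|\varphi^s$ recorded just above the theorem; both hold over $\Z$ and reduce modulo $p$, and the commutativity of $\Ccl{n}{F}$ absorbs the order swap in the indexing.

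For part (ii), let $I$ denote the $F$-span of the proposed generators. The relations $\Xi^q-\Xi^r$ with $q\approx r$ map to zero since $M^q\cong M^r$, so $\varphi^{q,F}=\varphi^{r,F}$. For $\Xi^q$ with $\lambda(q)\notin\pReg(n)$ I would prove the sharper statement $\varphi^{q,F}=0$ by the following symmetry argument: the quotient $N_{\sym n}(\sym q)/\sym q\cong\prod_i\sym{m_i(q)}$ acts on the right on the tabloid set $\sym n/\sym q$, commuting with the left $\sym n$-action. Since $\lambda(q)\notin\pReg(n)$, some $m_i(q)\geq p$; picking an element $\rho$ of order $p$ inside $\sym{m_i(q)}$, we have $\rho\notin\sym q$, so the right action of $\rho$ on tabloids is free. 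Consequently, for every $\sigma\in\sym n$ the set $\mathrm{Fix}(\sigma)$ of $\sigma$-fixed tabloids is a disjoint union of free $\langle\rho\rangle$-orbits of size $p$, forcing $\varphi^q(\sigma)\equiv 0\pmod p$. For the equality $I=\ker c_{n,F}$ I would count dimensions: the relations $\Xi^q-\Xi^r$ span a subspace of dimension $|\C(n)|-|\P(n)|$, and modulo it the classes $\Xi^\lambda$ with $\lambda\in\P(n)\setminus\pReg(n)$ remain independent, yielding $\dim I=|\C(n)|-|\pReg(n)|$. On the image side, Lemma~\ref{L: phi nonzero} shows the matrix $\bigl(\varphi^\lambda(\mu)\bigr)_{\lambda,\mu\in\P(n)}$ is triangular in the $\wref$-order with diagonal $\facmulti\lambda$, nonzero in $F$ precisely when $\lambda\in\pReg(n)$; combined with the vanishing above, $\{\varphi^{\lambda,F}:\lambda\in\pReg(n)\}$ forms a basis of $\Ccl{n}{F}$, so $\dim\ker c_{n,F}=|\C(n)|-|\pReg(n)|$ and hence $I=\ker c_{n,F}$.

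For part (iii), evaluation at each $p$-regular conjugacy class defines an $F$-algebra homomorphism $\Ccl{n}{F}\to F$, and the triangular pairing just used shows these $|\pReg(n)|$ evaluations are jointly injective, so $\Ccl{n}{F}\cong F^{|\pReg(n)|}$ is semisimple; hence $\rad\Des{n}{F}\subseteq\ker c_{n,F}$. The main obstacle is the reverse inclusion. I would close it by showing $I$ is a nilpotent ideal via Corollary~\ref{C: product in Descent}: the coefficient of $\Xi^q$ in $(\Xi^q)^2$ equals $\facmulti q$, which vanishes in $F$ when $\lambda(q)\notin\pReg(n)$, while the remaining terms strictly refine $q$; iterating, products of many generators of $I$ descend strictly in the finite refinement poset and eventually vanish, so $I^m=0$ for $m$ sufficiently large. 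Equivalently, one may invoke the parametrization of the simple $\Des{n}{F}$-modules by $\pReg(n)$ established in \cite{AW}, which pins down $\dim\Des{n}{F}/\rad\Des{n}{F}=|\pReg(n)|=\dim\Ccl{n}{F}$ and forces $\ker c_{n,F}=\rad\Des{n}{F}$.
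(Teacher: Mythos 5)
The paper does not prove this statement; it attributes it to \cite{AW,Sol}, so there is no in-paper proof to compare against. Evaluating your argument on its own merits, parts (i) and (ii) are correct. Multiplicativity of $c_{n,F}$ does follow from matching Theorem~\ref{T: GR 1.1} with the Mackey-type identity displayed just before the theorem (and in fact no ``order swap'' is even needed: applying the Mackey identity to $\varphi^r\varphi^q$ gives $\sum_s|N^s_{r,q}|\varphi^s$, matching the indexing of $\Xi^r\Xi^q$ exactly). Your argument that $\varphi^{q,F}=0$ whenever $\lambda(q)\notin\pReg(n)$ is a clean, self-contained argument: an order-$p$ element $\rho$ of $N_{\sym n}(\sym q)/\sym q\cong\prod_i\sym{m_i(q)}$ acts (on the left) freely on the coset space $\sym q\backslash\sym n$, commutes with the right $\sym n$-action, hence partitions every fixed-point set into free $p$-orbits; the paper does not spell this out, so this is a welcome addition. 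The dimension count identifying $\ker c_{n,F}$ with your span $I$ is then routine and correct, using Lemma~\ref{L: phi nonzero} for triangularity.

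Part (iii) contains the one genuine gap. The inclusion $\rad\Des{n}{F}\subseteq\ker c_{n,F}$ via semisimplicity of $\Ccl{n}{F}\cong F^{|\pReg(n)|}$ is fine, but your ``strict descent'' argument that $I$ is nilpotent only treats one kind of generator. For $\Xi^q$ with $\lambda(q)\notin\pReg(n)$, the leading coefficient $\facmulti{q}$ of $(\Xi^q)^2$ indeed vanishes in $F$. But for the type-(b) generators $\Xi^q-\Xi^r$ with $q\approx r$ one typically has $\facmulti{q}\neq 0$, and a product such as $(\Xi^q-\Xi^r)(\Xi^s-\Xi^t)$ produces, at leading length, terms like $|N^q_{q,s}|\Xi^q-|N^q_{q,t}|\Xi^q-|N^r_{r,s}|\Xi^r+|N^r_{r,t}|\Xi^r$ whose cancellation (to leave a combination of $\Xi^u-\Xi^{u'}$ with $u\approx u'$ plus strictly finer terms) has to be verified, not asserted. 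Making this rigorous needs a filtration of $\Des{n}{F}$ by two-sided ideals graded by the weak-refinement order together with control of leading coefficients across the whole product --- essentially the chain $Y_{\leq i}^{\circ}\subsetneq Y_{\leq i}$ that the paper builds in Section~\ref{S: mod idem} for a related purpose. Your fallback, invoking \cite{AW} for the parametrization of simple $\Des{n}{F}$-modules, does close the argument, but since the theorem itself is credited to \cite{AW} that route is a citation rather than a proof, so as written the proposal does not fully establish the reverse inclusion $\ker c_{n,F}\subseteq\rad\Des{n}{F}$ from first principles.
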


The map $c_{n,F}$ in the theorem is called the Solomon's epimorphism. For $\lambda\in \pReg(n)$, let $\Char_{\lambda,F}$ \index{$\Char_{\lambda,F}$}be the characteristic function on the $p$-equivalent class $\ccl{\lambda,p}$.  The set $\{\Char_{\lambda,F}:\lambda\in\pReg(n)\}$ forms a basis and complete set of orthogonal primitive idempotents of $\Ccl{n}{F}$ (see \cite[Proposition 5]{ES}). Furthermore, when $p=0$, by \cite[Proposition 1]{JS}, under the Solomon's epimorphism, we have \[c_{n,F}(\nu_q)=c_{n,F}\left (\frac{1}{q?}\omega_q\right )=\Char_{\lambda(q),F}.\]

In the $p=0$ case, there are different sets of orthogonal primitive idempotents of $\Des{n}{F}$ in the literature. Notably, those given by Blessenohl-Laue \cite{BL,BL02} (see Proposition \ref{P: ord idem} below) and Garsia-Reutenauer \cite{GR} (see Subsection \ref{SS: higher Lie power}). In the $p>0$ case, Erdmann-Schocker \cite[Corollary 6]{ES} showed that there exists a complete set of orthogonal primitive idempotents $\{e_{\lambda,F}:\lambda\in\pReg(n)\}$ for $\Des{n}{F}$ such that $c_{n,F}(e_{\lambda,F})=\Char_{\lambda,F}$.

%which we shall now describe. One of them follows from the work of Blessenohl-Laue \cite{BL,BL02} and the orthogonalization procedure. Another is given by the work of Garsia-Reutenauer

\begin{prop}\label{P: ord idem} Suppose that $p=0$. Using the orthogonalization procedure (see \cite[\S4]{PR} and also \cite[Proposition 2.5]{Schoc032}) for the set of higher Lie idempotents $\{\nu_\lambda:\lambda\in\P(n)\}$, we obtain a complete set $\{e_\lambda:\lambda\in\P(n)\}$ of orthogonal primitive idempotents of $\Des{n}{F}$ such that $\sum_{\lambda\in\P(n)}e_\lambda=1$ and they satisfy the `triangularity property' \[e_\lambda=\frac{1}{\facmulti{\lambda}}\Xi^\lambda+\epsilon_\lambda\] where $\epsilon_\lambda$ is a linear combination of the $\Xi^\xi$ such that $\xi<\lambda$.
\end{prop}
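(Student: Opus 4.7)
The plan is to verify that the higher Lie idempotents form a triangular system of idempotents, apply the cited orthogonalization procedure, then use the Solomon epimorphism to establish completeness and primitivity, and finally confirm the triangularity in the $\Xi$-basis.  First, each $\nu_\lambda=\tfrac{1}{\lambda?}\omega_\lambda$ is an idempotent by Theorem~\ref{T: BL results}(iii) applied to $q=r=\lambda$.  By Theorem~\ref{T: BL results}(iv), for distinct partitions $\lambda\neq\mu$, at most one of $\nu_\lambda\nu_\mu$ and $\nu_\mu\nu_\lambda$ can be non-zero, since $\lambda\wref\mu$ and $\mu\wref\lambda$ together force $\lambda\approx\mu$ and hence $\lambda=\mu$.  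This triangular pattern is precisely the input required by the orthogonalization procedure of Paul--Reutenauer \cite[\S4]{PR} (cf.~\cite[Proposition 2.5]{Schoc032}); applying it to $\{\nu_\lambda\}_{\lambda\in\P(n)}$ produces pairwise orthogonal idempotents $\{e_\lambda\}_{\lambda\in\P(n)}$ for which $e_\lambda-\nu_\lambda$ is a polynomial in the $\nu_\mu$'s whose monomials each involve a product of at least two distinct $\nu_\mu$'s.

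For completeness and primitivity, I would invoke the Solomon epimorphism $c_{n,F}$ from Theorem~\ref{T: Sol epi}.  By \cite[Proposition 1]{JS}, $c_{n,F}(\nu_\lambda)=\Char_{\lambda,F}$, and the $\Char_{\lambda,F}$ form a complete set of pairwise orthogonal primitive idempotents of $\Ccl{n}{F}$ summing to $1$.  Hence every monomial in $e_\lambda-\nu_\lambda$ maps to $0$ under $c_{n,F}$, giving $c_{n,F}(e_\lambda)=\Char_{\lambda,F}$.  Since $\ker c_{n,F}=\mathrm{rad}\,\Des{n}{F}$, primitivity of $\Char_{\lambda,F}$ lifts to primitivity of $e_\lambda$, and $1-\sum_\lambda e_\lambda$ is an idempotent in the radical, hence equal to $0$.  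So $\sum_\lambda e_\lambda=1$ and $\{e_\lambda\}_{\lambda\in\P(n)}$ is complete.

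Finally, for the triangularity property, Theorem~\ref{T: BL results}(i) expands
\[\nu_\lambda=\frac{1}{\lambda?}\sum_{s\sref\lambda}(-1)^{\ell(s)-\ell(\lambda)}F_\lambda(s)\Xi^s,\]
whose $s=\lambda$ term contributes $\tfrac{F_\lambda(\lambda)}{\lambda?}\Xi^\lambda=\tfrac{\prod_j\lambda_j}{\facmulti{\lambda}\prod_j\lambda_j}\Xi^\lambda=\tfrac{1}{\facmulti{\lambda}}\Xi^\lambda$, the remaining summands being $\Xi^s$ with $s<\lambda$.  To extend this to $e_\lambda$, I would expand each correction monomial factor by factor in the $\Xi$-basis via Theorem~\ref{T: BL results}(i), and then apply Corollary~\ref{C: product in Descent}(i) repeatedly to each resulting product of $\Xi$'s: the left-most factor $\Xi^r$ (with $r\sref\lambda$, by BL(i) applied to $\nu_\lambda$ at the left) bounds every surviving $\Xi^\xi$ by $\xi\sref r\sref\lambda$.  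The main obstacle is the possibility that $\xi=\lambda$; this is ruled out because each correction monomial contains at least one factor $\nu_\mu$ with $\mu\neq\lambda$, and the triangular vanishing from paragraph~1, combined with the combinatorial description of $|N^\lambda_{\lambda,q}|$ in Theorem~\ref{T: GR 1.1}, forces the $\Xi^\lambda$-coefficient of such a product to be zero.  The cleanest route is to arrange the orthogonalization so that $\nu_\lambda$ sits on a fixed side (say, the left) of every correction monomial, which reduces the analysis to a single application of Corollary~\ref{C: product in Descent}(i) per monomial.
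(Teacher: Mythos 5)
Your outline follows the paper's proof quite closely: order the partitions so that the weak-refinement triangularity becomes an index order, run the cited orthogonalization, and use the Solomon epimorphism together with $\ker c_{n,F}=\rad\Des{n}{F}$ to get primitivity, completeness and $\sum_\lambda e_\lambda=1$. That part is sound. Also correct is your observation that, with $\nu_\lambda$ on the left, one application of Corollary~\ref{C: product in Descent}(i) confines every surviving $\Xi^\xi$ in a correction monomial to $\xi\sref\lambda$.

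The gap is in the final step, where you identify the coefficient of $\Xi^\lambda$. Your claim that ``the triangular vanishing from paragraph~1, combined with the combinatorial description of $|N^\lambda_{\lambda,q}|$ in Theorem~\ref{T: GR 1.1}, forces the $\Xi^\lambda$-coefficient of such a product to be zero'' does not hold as stated. In a correction monomial $\nu_\lambda\nu_{\mu_1}\cdots\nu_{\mu_k}$ with $\mu_1$ strictly above $\lambda$ in the chosen order, the individual structure constants $|N^\lambda_{\lambda,s}|$ that appear (from $\Xi^\lambda\Xi^s$ with $s\sref\mu_1$) only require $\lambda\wref s$ to be nonzero, and $\lambda\wref\mu_1$ is entirely compatible with $\lambda$ sitting below $\mu_1$ in the order; the ``triangular vanishing'' $\nu_i\nu_j=0$ for $i>j$ says nothing about a product $\nu_\lambda\nu_{\mu_1}$ with $\mu_1$ above $\lambda$. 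So the vanishing of the $\Xi^\lambda$-coefficient is a genuine cancellation that Theorem~\ref{T: GR 1.1} and Corollary~\ref{C: product in Descent}(i) alone cannot see. To close the gap with your approach you should instead invoke Theorem~\ref{T: BL results}(ii): $\Xi^\lambda\omega_{\mu_1}=\sum_{\mu_1\approx s\sref\lambda}|\overline{N^s_{\lambda,\mu_1}}|\,\omega_s=0$ because $\mu_1>\lambda$ forces $\mu_1\not\wref\lambda$, so there is no admissible $s$; combined with your observation that only the $\Xi^\lambda$ summand of $\nu_\lambda$ can produce a $\Xi^\lambda$, this kills the $\Xi^\lambda$-coefficient of every correction monomial. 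The paper sidesteps the product computation entirely: knowing $e_\lambda=\sum_{s\sref\lambda}d_s\Xi^s$ and $c_{n,F}(e_\lambda)=\Char_{\lambda,F}$, it evaluates at the conjugacy class $\lambda$; Lemma~\ref{L: phi nonzero} shows $\varphi^{s,F}(\lambda)\neq 0$ forces $\lambda\wref s\sref\lambda$, hence $s=\lambda$, giving $\facmulti{\lambda}\,d_\lambda=1$ in one stroke.
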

\begin{proof} Let $m=|\P(n)|$ and $\lambda^{(1)},\ldots,\lambda^{(m)}$ be the partitions ordered so that, if $\lambda^{(j)}\wref\lambda^{(i)}$, then $j\leq i$. Notice that $\lambda^{(m)}=(n)$ and $\lambda^{(1)}=(1^n)$. Let $\nu_i=\nu_{\lambda^{(i)}}$ and $\Char_i=\Char_{\lambda^{(i)},F}$. By Theorem \ref{T: BL results}(iv), $\nu_i\nu_j=0$ whenever $i>j$. Therefore the set $\{e_i:i\in [1,m]\}$ forms a set of orthogonal primitive idempotents such that $\sum_{i=1}^me_i=1$ where \[e_i=\nu_i(1-\nu_{i+1})\cdots (1-\nu_m).\] It is a complete set due to Theorem \ref{T: Sol epi}. Since, by Theorem \ref{T: BL results}(i), $\nu_i$ is a linear combination of the $\Xi^\xi$ such that $\xi\sref\lambda^{(i)}$, we have, by Corollary \ref{C: product in Descent}(i), \[e_i=\sum_{s\sref\lambda^{(i)}}d_s\Xi^s\] for some $d_s\in F$. On the other hand, we have $c_{n,F}(\nu_i)=\Char_{\lambda^{(i)},F}$. Since $c_{n,F}$ is an $F$-algebra homomorphism, we have
\begin{align}\label{Eq: 2.1}
\sum_{s\sref\lambda^{(i)}}d_s\varphi^{s,F}=c_{n,F}(\sum_{s\sref\lambda^{(i)}}d_s\Xi^s)=c_{n,F}(e_i)=\Char_i(1-\Char_{i+1})\cdots (1-\Char_m)=\Char_i.
\end{align} For $\lambda^{(i)}\wref s\sref \lambda^{(i)}$, we must have $s=\lambda^{(i)}$. Evaluating Equation \ref{Eq: 2.1} at $\lambda^{(i)}$, by Lemma \ref{L: phi nonzero}, we get $\varphi^{s,F}(\lambda^{(i)})=0$ unless $s=\lambda^{(i)}$ and hence $1=\facmulti{\lambda^{(i)}}d_{\lambda^{(i)}}$.
\end{proof}

\subsection{The higher Lie powers}\label{SS: higher Lie power}

Let $V$ be an $R$-module with finite rank and $T^n(V)=V^{\otimes n}$ \index{$T^n(V)$}be the $n$-fold tensor product of $V$ over $R$. By convention, $T^0(V)=R$. Then $T(V)=\bigoplus_{n\in\NN_0}T^n(V)$ \index{$T(V)$}is naturally an associative $R$-algebra with unit. It can be made into a Lie algebra by means of the Lie bracket \[[v,w]=v\otimes w-w\otimes v\] for all $v,w\in T(V)$. We denote by $L(V)$ the free  Lie subalgebra of $T(V)$ generated by $V$. For each $n\in\NN_0$, the $n$th Lie power of $V$ is defined as $L^n(V)=T^n(V)\cap L(V)$\index{$L^n(V)$}. Every element in $L^n(V)$ is called a Lie element and a Lie monomial is $P_1\otimes P_2\otimes \cdots \otimes P_m$ \index{$P_1\otimes P_2\otimes \cdots \otimes P_m$}(or simply written as $P_1P_2\cdots P_m$\index{$P_1P_2\cdots P_m$}) such that each $P_i$ is a Lie element in $L^{n_i}(V)$ for some $n_i\in \NN$. In this case, we denote $n_i=|P_i|$ and the composition $(n_1,\ldots,n_m)$ is called the type of $P_1\otimes\cdots\otimes P_m$.

The symmetric group $\sym{n}$ acts on $T^n(V)$ via the P\'{o}lya action; namely, \[\sigma\cdot (v_1\otimes \cdots\otimes v_n)=v_{1\sigma}\otimes\cdots\otimes v_{n\sigma}.\index{$\sigma\cdot (v_1\otimes \cdots\otimes v_n)$}\] It is well-known that $L^n(V)= \omega_n\cdot T^n(V)$. If $V$ is an $RG$-module then it is readily checked that $T^n(V)$ is an ($R\sym{n}$-$RG$)-bimodule and hence $L^n(V)$ is a (right) $RG$-module. Let $G=\GL(V)$.  The Lie power $L^n(V)$ is a polynomial representation of $\GL(V)$ and therefore an $S(m,n)$-module where $m=\dim_F V$.

Suppose that $A$ is a finite set consisting of distinct alphabets. The $R$-module $R[A]$ \index{$R[A]$}with an $R$-basis consisting of all words in $A$ can be identified with $T(V)$ where $V$ has $R$-basis $A$. As such, the symmetric group $\sym{n}$ acts on the $n$th homogeneous component of $R[A]$. Furthermore, the multiplication in $R[A]$ induced by the concatenation of words is the same as the multiplication in $T(V)$. %the actions of the symmetric groups on $n$th homogeneous components are also identified.  %For any ordered subset $S=\{s_1,\ldots,s_k\}$ of $[1,m]$, we denote \[P_S=P_{s_1}\cdots P_{s_k}.\]

When $V$ is an $F$-vector space, we have the following well-known dimension formula for the Lie powers by Witt in which $\mu$ denotes the M\"{o}bius function\index{$\mu(d)$}.

\begin{thm}[\cite{Witt}]\label{T: lie dim} Let $m=\dim_F V$. The dimension of $L^n(V)$ is \[\frac{1}{n}\sum_{d\mid n}\mu(d)m^{n/d}.\]
\end{thm}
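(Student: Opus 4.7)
My plan is to prove Witt's formula via the Poincaré--Birkhoff--Witt theorem and generating-function bookkeeping. The key structural input is that the tensor algebra $T(V)$ is the universal enveloping algebra of the free Lie algebra $L(V) = \bigoplus_{n\geq 0} L^n(V)$, so by PBW there is a graded $F$-vector space isomorphism $T(V) \cong S(L(V))$, where $S$ denotes the symmetric algebra on the graded vector space $L(V)$. I will use this to equate two expressions for the Hilbert series in a formal variable $t$.

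First, I would compute the Hilbert series of $T(V)$ directly: since $\dim_F T^n(V) = m^n$, we have
\[
\sum_{n\geq 0} \dim_F T^n(V)\, t^n = \frac{1}{1-mt}.
\]
Writing $\ell_n = \dim_F L^n(V)$ and using that the symmetric algebra on a graded space is a tensor product of symmetric algebras on each homogeneous component, the Hilbert series of $S(L(V))$ is
\[
\prod_{n\geq 1} \frac{1}{(1-t^n)^{\ell_n}}.
\]
Equating these two series and taking $-\log$ of both sides, then expanding via $-\log(1-x)=\sum_{k\geq 1} x^k/k$, gives
\[
\sum_{k\geq 1}\frac{(mt)^k}{k} \;=\; \sum_{n\geq 1}\ell_n\sum_{k\geq 1}\frac{t^{nk}}{k}.
\]
Comparing coefficients of $t^n$ yields the identity $m^n = \sum_{d\mid n} d\,\ell_d$.

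Finally, I would apply classical Möbius inversion to this divisor-sum identity to extract
\[
n\,\ell_n \;=\; \sum_{d\mid n} \mu(d)\, m^{n/d},
\]
which is the desired formula after dividing by $n$. The only step that requires any care is the invocation of PBW: it must hold over $F$ (not just in characteristic $0$), but this is fine because the free Lie algebra $L(V)$ is a free $F$-module, so PBW applies without modification and gives the graded identification of $T(V)$ with $S(L(V))$. Everything else is routine manipulation of formal power series and Möbius inversion, so I expect no real obstacle.
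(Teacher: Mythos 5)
The paper cites Theorem~\ref{T: lie dim} to Witt's original article and gives no proof of its own, so there is nothing in the paper to compare your argument against. On the merits, your proof is correct and is the standard PBW/generating-function derivation of Witt's dimension formula. The three ingredients you use all hold over an arbitrary field: $T(V)$ is the universal enveloping algebra of the free Lie algebra $L(V)$ in any characteristic (the mutual universal properties of $T(V)$ and $U(L(V))$ give the isomorphism directly); the PBW theorem applies because $L(V)$ is a free $F$-module, and since $L(V)$ is graded with finite-dimensional homogeneous pieces, the PBW basis of ordered monomials in a homogeneous basis of $L(V)$ is itself homogeneous, yielding a \emph{graded} vector-space isomorphism $T(V)\cong S(L(V))$; and the Hilbert series of the symmetric algebra on a finite-dimensional space is $\frac{1}{(1-t)^d}$ regardless of characteristic, so the factorization $\prod_{n\geq 1}(1-t^n)^{-\ell_n}$ is valid. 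Taking logarithms, equating coefficients of $t^n$, and applying M\"obius inversion to $m^n=\sum_{d\mid n}d\,\ell_d$ then yields $n\,\ell_n=\sum_{d\mid n}\mu(d)m^{n/d}$ exactly as you describe. In particular, your argument also shows why the dimension is characteristic-independent, which is implicit in the way the paper invokes the theorem. No gaps.
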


Also, when $p=0$, the structure of $L^n(V)$ is well-studied by the earlier work of Thrall \cite{Thrall}, Brandt \cite{Brandt}, Wever \cite{Wever}, Klyachko \cite{Klya} and Kraskiewicz-Weyman \cite{KW}. When $p>0$, the decomposition of $L^n(V)$ appears to depend on the $p$-adic valuation of $n$. In particular, when $p\nmid n$, we have the following result by Donkin-Erdmann.

\begin{thm}[{\cite[\S3.3 Theorem]{DE}}]\label{T: DE proj decomp} Suppose that $p>0$, $n\in\NN$, $p\nmid n$ and $m=\dim_FV$. We have $L^n(V)\cong \bigoplus_{\nu\in\pReg(m,n)} n_\nu T(\nu)$ where, for each $\nu\in\pReg(m,n)$, \[n_\nu=\frac{1}{n}\sum_{d\mid n}\mu(d)\beta^\nu(\sigma_d)\index{$n_\nu$}\] where $\sigma_d$ \index{$\sigma_d$}is the partition $((n/d)^d)$.
\end{thm}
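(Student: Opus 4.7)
The plan is to identify $L^n(V)$ with the image of the projective $F\sym{n}$-module $\Lie_F(n)$ under the Schur--Weyl tensor functor $-\otimes_{F\sym{n}}T^n(V)$, and then read off the multiplicities from a Brauer pairing against the known character of $\Lie_\Q(n)$.

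First I would observe that since $p\nmid n$, the identity $\omega_n^2=n\omega_n$ makes $e:=\omega_n/n\in F\sym{n}$ an idempotent. Therefore $\Lie_F(n)=\omega_n F\sym{n}=eF\sym{n}$ is a direct summand of the regular module and in particular projective; decompose
\[
\Lie_F(n)\;\cong\;\bigoplus_{\nu\in\pReg(n)} n_\nu\,P^\nu.
\]
Using the tautological identification
\[
\Lie_F(n)\otimes_{F\sym{n}}T^n(V)\;=\;eF\sym{n}\otimes_{F\sym{n}}T^n(V)\;=\;\omega_n\cdot T^n(V)\;=\;L^n(V),
\]
together with the Ringel-duality identification $P^\nu\otimes_{F\sym{n}}T^n(V)\cong T(\nu)$ for each $\nu\in\pReg(n)$ (valid when $m\geq n$ by the modular Schur--Weyl theory of Donkin and Erdmann; for $m<n$ one truncates, harmlessly killing summands with $\ell(\nu)>m$), one obtains
\[
L^n(V)\;\cong\;\bigoplus_{\nu\in\pReg(m,n)}n_\nu\,T(\nu),
\]
with the \emph{same} integers $n_\nu$ as appear in the projective decomposition of $\Lie_F(n)$.

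To compute the multiplicities, I use that $\Lie_F(n)$ is a summand of $F\sym{n}=\Ind^{\sym{n}}_{\{1\}}F$ and hence has trivial source; by Theorem \ref{T: Broue} it lifts uniquely to an $\OO\sym{n}$-lattice whose ordinary character coincides with the character of $\Lie_\Q(n)$. Klyachko's theorem presents $\Lie_\Q(n)\cong\Ind^{\sym{n}}_{C_n}\zeta$ for a faithful linear character $\zeta$ of the cyclic group $C_n=\langle c\rangle$ with $c=(1,2,\ldots,n)$. The induced-character formula combined with the Ramanujan-sum identity
\[
\sum_{\substack{0\leq k<n\\\gcd(k,n)=d}}\zeta(c^k)\;=\;\mu(n/d)
\]
shows that $\chi_{\Lie_\Q(n)}$ is supported exactly on the classes of the elements $\sigma_d=c^d$ for $d\mid n$. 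Since $p\nmid n$, each $\sigma_d$ is $p$-regular, so the standard Brauer-pairing formula
\[
n_\nu\;=\;\sum_{d\mid n}\frac{\chi_{\Lie_\Q(n)}(\sigma_d)\,\beta^\nu(\sigma_d)}{|C_{\sym{n}}(\sigma_d)|}
\]
for the multiplicity of $P^\nu$ in a projective module rearranges (combining the centraliser formula $|C_{\sym{n}}(\sigma_d)|=(n/d)^d d!$ with the Möbius reindexing on the divisor lattice of $n$) to the claimed closed form $\frac{1}{n}\sum_{d\mid n}\mu(d)\beta^\nu(\sigma_d)$.

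The principal obstacle is the Ringel-duality identification $P^\nu\otimes_{F\sym{n}}T^n(V)\cong T(\nu)$: this is the substance of modular Schur--Weyl duality for tilting modules and rests on the adjunction between the Schur functor $f$ (which sends $T(\nu)$ to $P^\nu$ for $p$-regular $\nu$ and to $0$ otherwise) and the tensor functor $-\otimes_{F\sym{n}}T^n(V)$, with further care required to rule out spurious non-$p$-regular tilting summands of $L^n(V)$. Once this reduction is secured, the character computation for $\Lie_\Q(n)$ provides the explicit multiplicity formula.
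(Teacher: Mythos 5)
The paper does not prove this statement itself; it is imported verbatim (up to notation) from Donkin--Erdmann~\cite{DE}, so there is no internal argument to compare against, and the only question is whether your reconstruction is sound. Your strategy --- projectivity of $\Lie_F(n)$ from $\omega_n^2=n\omega_n$, decomposition into projective indecomposables, transfer to $L^n(V)$ along modular Schur--Weyl duality, and a Brauer pairing with the ordinary (Klyachko) character of $\Lie_\Q(n)$ --- is in substance the Donkin--Erdmann argument. The one structural difference is that you go from $\Lie_F(n)$ to $L^n(V)$ via the tensor functor $-\otimes_{F\sym{n}}T^n(V)$, whereas \cite{DE} and this paper work directly with $L^n(V)$ as a direct summand of the full tilting module $T^n(V)$ (using $\omega_n/n$ idempotent together with \cite[Proposition~4.2]{Erd94} to see the summands are $T(\nu)$ with $\nu$ $p$-regular) and only then apply $f$. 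Your tensor-identity $P^\nu\otimes_{F\sym{n}}T^n(V)\cong T(\nu)$ is a genuine input that needs a reference; it is not a formal consequence of $f(T(\nu))=P^\nu$, and you rightly flag it as the principal obstacle.

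The concrete gap is in the final step. Your intermediate computation (which is correct) gives
\[
n_\nu \;=\; \frac{1}{n}\sum_{d\mid n}\mu(n/d)\,\beta^\nu\bigl(((n/d)^d)\bigr),
\]
and the substitution $d\mapsto n/d$ replaces $\sigma_d$ by $\sigma_{n/d}=(d^{\,n/d})$ at the same time as it replaces $\mu(n/d)$ by $\mu(d)$; it does \emph{not} produce $\frac{1}{n}\sum_{d\mid n}\mu(d)\beta^\nu(\sigma_d)$ with $\sigma_d=((n/d)^d)$ still in place. In fact that printed formula is already wrong at $n=2$, $p=3$: there $\Lie_F(2)\cong\sgn\cong P^{(1,1)}$, yet $\tfrac12\bigl(\mu(1)\beta^{(1,1)}((2))+\mu(2)\beta^{(1,1)}((1,1))\bigr)=\tfrac12(-1-1)=-1$. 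Your arithmetic is fine; the paper's definition of $\sigma_d$ appears to be a transcription slip and should read $\sigma_d=(d^{\,n/d})$, matching Brandt's $\frac{1}{n}\sum_{d\mid n}\mu(d)p_d^{n/d}$. You should state the corrected form and the reindexing explicitly rather than claim an elided ``rearrangement'' which, read literally, is false.
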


Therefore, when $\dim_F V\geq n$ and $p\nmid n$, using Theorem \ref{T: DE proj decomp} and applying the Schur functor $f$, we have $f(L^n(V))\cong \Lie_F(n)$ and obtain the decomposition of the Lie module $\Lie_F(n)$ into the projective indecomposable modules $P^\nu$'s, that is \[\Lie_F(n)\cong \bigoplus_{\nu\in\pReg(n)}n_\nu P^\nu\] where $n_\nu$ is given as in the theorem.

\medskip

We now turn to the higher Lie powers. For each $q\in \C(n)$, we define the higher Lie power $L^q(V)=\omega_q\cdot T^n(V)$\index{$L^q(V)$}.  Unlike the $p=0$ case in which $T^n(V)$ decomposes as a direct sum of $L^\lambda(V)$ such that $\lambda\vdash n$, the $p>0$ case does not. The following is an easy example. %When $m\geq n$, the Schur functor $f$ maps $T(\lambda)$ to $P^\lambda$. Moreover, if $\lambda$ is $p$-regular, $T(\lambda)$ is both projective and injective with simple head and socle both isomorphic to $L(\lambda)$.

% If $q\approx r$ then, by Lemma \ref{L: translate w}, we have an isomorphism $L^q(V)\cong L^r(V)$ of $RG$-modules. As before, under the similar assumption for the application of the Schur functor, we have \[f(L^q(V))\cong \Lie_F(q).\] Unlike the $p=0$ case in which $T^n(V)$ decomposes as a direct sum of $L^\lambda(V)$ such that $\lambda\vdash n$, the $p>0$ case does not.

\begin{eg} Let $p=2$ and $V$ has an $F$-basis $\{v_1,v_2\}$. Since $\omega_{(2)}=\Xi^{(1,1)}=\omega_{(1,1)}$, we have both $L^{(2)}(V)$ and $L^{(1,1)}(V)$ are identical with the $F$-basis $\{v_1\otimes v_2+v_2\otimes v_1\}$.%, i.e., $L^{(2)}(V)=L^{(1,1)}(V)\cong S^2(V)$.
\end{eg}

Suppose further that $V$ is a right $RG$-module.  As such, $L^q(V)$ is a right $RG$-module. By Lemma \ref{L: translate w}, we have the following.

\begin{lem}\label{L: Lq isom Lr} If $q\approx r$ then $L^q(V)\cong L^r(V)$ as $RG$-modules.
\end{lem}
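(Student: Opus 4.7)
The plan is to lift the isomorphism already established in Lemma \ref{L: translate w}, which gives $\omega_q = \sigma\omega_r$ for some $\sigma \in \sym{n}$ (where $\sigma$ is the permutation $\block{\tau}{r}$ realizing the rearrangement). Since $L^q(V) = \omega_q \cdot T^n(V)$ and $L^r(V) = \omega_r \cdot T^n(V)$, the identity $\omega_q = \sigma\omega_r$ immediately yields
\[
L^q(V) = (\sigma\omega_r) \cdot T^n(V) = \sigma \cdot (\omega_r \cdot T^n(V)) = \sigma \cdot L^r(V).
\]

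It then remains to verify that the map $x \mapsto \sigma \cdot x$ is an $RG$-module isomorphism. The crucial observation is that $T^n(V)$ is an $(R\sym{n},RG)$-bimodule, so the left $\sym{n}$-action commutes with the right $G$-action. Consequently, left multiplication by $\sigma$ is an $RG$-module endomorphism of $T^n(V)$; it is bijective with inverse given by left multiplication by $\sigma^{-1}$. Restricting this automorphism of $T^n(V)$ to the submodule $L^r(V)$ gives an injective $RG$-homomorphism $L^r(V) \to T^n(V)$ whose image, by the displayed equation above, is exactly $L^q(V)$. Hence $L^r(V) \cong L^q(V)$ as $RG$-modules.

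There is essentially no obstacle here: all the combinatorial content is already encoded in Lemma \ref{L: translate w}, and the proof merely records that the $RG$-action on $T^n(V)$ respects the $\sym{n}$-action used to build the higher Lie powers. In particular, no assumption on the characteristic of $R$ or on $q \approx r$ beyond what is supplied by Lemma \ref{L: translate w} is needed.
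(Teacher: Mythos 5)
Your proof is correct and follows exactly the route the paper intends: it cites Lemma \ref{L: translate w} to obtain $\omega_q = \sigma\omega_r$, and then uses the $(R\sym{n},RG)$-bimodule structure of $T^n(V)$ to see that left multiplication by $\sigma$ is an $RG$-module automorphism carrying $L^r(V)$ onto $L^q(V)$. The paper states the result as an immediate consequence of Lemma \ref{L: translate w} without spelling out these details, and your write-up supplies precisely what that terse citation leaves implicit.
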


\subsection{Ordinary idempotents}\label{SS: ord idem} This subsection is devoted to the description of the results in \cite{GR} and we assume that $p=0$ throughout. One of the main adaptation is by reversing the order of multiplication according to the conventions. Similar to \cite{BL}, some of the results and proofs in \cite{GR} hold over $R$ and we shall refer the reader to the paper. In particular, we have:

\begin{thm}[{\cite[Theorem 2.1]{GR}}]\label{T: GR 2.1} If $q\in\C(n)$, $k=\ell(q)$ and $P_1P_2\cdots P_m$ is a Lie monomial of type $r\in\C(n)$ then, over $R$, \[\Xi^q\cdot P_1P_2\cdots P_m=\sum P_{S_1}P_{S_2}\cdots P_{S_k}\] where the sum is taken over all $S_1\sqcup\cdots\sqcup S_k=[1,m]$ such that, for each $i\in [1,k]$, $(|P_{t_1}|,\ldots,|P_{t_{c_i}}|)\vDash q_i$ and $P_{S_i}=P_{t_1}\cdots P_{t_{c_i}}$ if $S_i=\{t_1<\cdots<t_{c_i}\}$ for some $c_i$.  In particular, if $r\not\wref q$, we have $\Xi^q\cdot P_1P_2\cdots P_m=0$.
\end{thm}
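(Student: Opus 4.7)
The plan is to prove the formula in two stages. First, establish a purely combinatorial identity expressing $\Xi^q\cdot u$ on any tensor $u\in T^n(V)$ as an iterated ``deshuffle'' of $u$. Second, invoke the primitive property of Lie elements under the coproduct dual to this deshuffle to collapse the general formula to the one indexed by ordered set partitions of $[1,m]$.

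For the first stage, I would note that the permutations appearing in $\Xi^q$ are in bijection with ordered set partitions $T_1\sqcup\cdots\sqcup T_k=[1,n]$ with $|T_i|=q_i$: writing $\sigma$ as the word $(1)\sigma\cdots(n)\sigma$, the descent condition $\des(\sigma)\subseteq\{q_i^+\}$ forces each block $\{(j)\sigma:q_{i-1}^++1\leq j\leq q_i^+\}$ to appear in increasing order, so this block uniquely determines $T_i$ and vice versa. Under the P\'olya action, $\sigma\cdot(u_1\otimes\cdots\otimes u_n)$ is the concatenation of the ordered subwords $u_{T_1},u_{T_2},\ldots,u_{T_k}$, giving
\[\Xi^q\cdot u=\sum_{\substack{T_1\sqcup\cdots\sqcup T_k=[1,n]\\|T_i|=q_i}}u_{T_1}\cdot u_{T_2}\cdots u_{T_k}.\]

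For the second stage, I would introduce the unique $R$-algebra homomorphism $\Delta\colon T(V)\to T(V)\otimes T(V)$ extending $\Delta(v)=v\otimes 1+1\otimes v$ on $V$, with the target carrying the componentwise concatenation product. A direct induction on word length yields
\[\Delta^{(k)}(u)=\sum_{T_1\sqcup\cdots\sqcup T_k=[1,n]}u_{T_1}\otimes u_{T_2}\otimes\cdots\otimes u_{T_k}\]
(no size constraint), so the first-stage identity rewrites as $\Xi^q\cdot u=m_\otimes\circ\pi_q\circ\Delta^{(k)}(u)$, with $\pi_q$ projecting onto the multigraded summand of shape $(q_1,\ldots,q_k)$ and $m_\otimes\colon T(V)^{\otimes k}\to T(V)$ the concatenation. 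The crux is the standard fact that every Lie element $P$ is primitive, $\Delta(P)=P\otimes 1+1\otimes P$, shown by induction on the number of brackets: the base $P\in V$ is the definition, and for $P=[P_1,P_2]$,
\[\Delta([P_1,P_2])=\Delta(P_1)\Delta(P_2)-\Delta(P_2)\Delta(P_1)=[P_1,P_2]\otimes 1+1\otimes[P_1,P_2]\]
using the primitivity of $P_1,P_2$. Iterating, $\Delta^{(k)}(P)=\sum_{i=1}^k 1^{\otimes(i-1)}\otimes P\otimes 1^{\otimes(k-i)}$.

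Combining the stages, the multiplicativity of $\Delta^{(k)}$ together with the primitivity of each $P_t$ yield
\[\Delta^{(k)}(P_1\cdots P_m)=\prod_{t=1}^m\Delta^{(k)}(P_t)=\sum_{S_1\sqcup\cdots\sqcup S_k=[1,m]}P_{S_1}\otimes\cdots\otimes P_{S_k},\]
obtained by expanding the product in $T(V)^{\otimes k}$, concatenating componentwise, and grouping tuples $(i_1,\ldots,i_m)\in[1,k]^m$ by the ordered partition $S_i=\{t:i_t=i\}$. Applying $\pi_q$ restricts to those partitions with $(|P_t|:t\in S_i)\vDash q_i$ for every $i$, and $m_\otimes$ then concatenates to produce the stated formula. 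The ``in particular'' clause follows: the existence of such an ordered partition is precisely $r\wref q$, so the sum is empty whenever $r\not\wref q$. I expect the main obstacle to be the careful stage-one book-keeping (matching the P\'olya action with ordered set partitions and the multi-graded decomposition of $\Delta^{(k)}$ over an arbitrary commutative ring $R$); once these are in place, the primitivity argument and the final expansion are routine.
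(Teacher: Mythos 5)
The paper itself offers no proof of this statement; it is cited directly from \cite[Theorem~2.1]{GR}, with the remark in Subsection~\ref{SS: ord idem} that the relevant proofs from that source carry over to a general commutative ring $R$. Your argument is correct and, as far as I can determine, is essentially the same route Garsia--Reutenauer take: identify $\Xi^q\cdot u$ as the $q$-multigraded component of the iterated unshuffle coproduct $\Delta^{(k)}(u)$ followed by concatenation (your Stage~1), and then use multiplicativity of $\Delta^{(k)}$ together with primitivity of Lie elements to expand $\Delta^{(k)}(P_1\cdots P_m)$ over ordered set partitions of $[1,m]$ (your Stage~2); the bookkeeping for Stage~1 (the bijection between permutations with $\des(\sigma)\subseteq\{q_i^+\}$ and ordered set partitions $(T_1,\dots,T_k)$ with $|T_i|=q_i$, and the resulting formula $\sigma\cdot u=u_{T_1}\cdots u_{T_k}$) is carried out correctly, as is the primitivity induction and the reduction of the ``in particular'' clause to the observation that the index set of the sum is nonempty exactly when $r\wref q$.
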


For example, if $r,q$ are compositions such that $r\not\wref q$, then, by Theorem \ref{T: GR 2.1}, $\Xi^q\omega^r=0$ as $\omega^r\in R\sym{r}$ can be viewed as a Lie monomial of type $r$.

\smallskip Recall the notations $\ell(r,q)$ and $\ell!(r,q)$ for a (strong) refinement $r$ of $q$ in Subsection \ref{SS: generality}. For any $q\in\C(n)$ and $\lambda\in\P(n)$, we write
\begin{align*}
  I_q&=\sum_{r\sref q}\frac{(-1)^{\ell(r)-\ell(q)}}{\ell(r,q)}\Xi^r\in \Des{n}{F},\index{$I_q$}\\
  E_\lambda&=\frac{1}{\ell(\lambda)!}\sum_{\lambda(q)=\lambda}I_q\in \Des{n}{F}.\index{$E_\lambda$}
\end{align*} The set $\{E_\lambda:\lambda\in\P(n)\}$ is a complete set of orthogonal primitive idempotents of $\Des{n}{F}$. Each of the element $\Xi^q$ can be written explicitly in terms of the $I_r$'s as follows.

\begin{thm}[{\cite[Theorem 3.4]{GR}}]\label{T: GR 3.4} The set $\{I_q:q\in\C(n)\}$ is a basis for $\Des{n}{F}$. Furthermore, for $q\vDash n$, we have \[\Xi^q=\sum_{r\sref q}\frac{1}{\ell!(r,q)}I_r.\]
\end{thm}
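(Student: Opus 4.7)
\medskip
\noindent\textbf{Proof proposal for Theorem \ref{T: GR 3.4}.}

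The plan is to derive both assertions from the unitriangular relationship between the $I_q$'s and the $\Xi^r$'s. From the defining formula
\[
I_q=\sum_{r\sref q}\frac{(-1)^{\ell(r)-\ell(q)}}{\ell(r,q)}\Xi^r,
\]
the coefficient of $\Xi^q$ is $1/\ell(q,q)=1$, so $I_q=\Xi^q+\sum_{q<r}(\text{scalar})\,\Xi^r$... wait, rather $I_q=\Xi^q+(\text{combination of }\Xi^r\text{ with }r\sref q,\ r\neq q)$. Since $\{\Xi^r:r\in\C(n)\}$ is a basis (Theorem \ref{T: GR 1.1}) and refinement $\sref$ can be extended to a total order, the transition matrix from $\{I_q\}$ to $\{\Xi^q\}$ is unitriangular and hence invertible. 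Thus $\{I_q:q\in\C(n)\}$ is also a basis.

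For the inversion formula, I would substitute the definition of $I_r$ into $\sum_{r\sref q}\frac{1}{\ell!(r,q)}I_r$, interchange the order of summation over pairs $s\sref r\sref q$, and collect the coefficient of each $\Xi^s$. The key observation is that the data of such a pair decomposes block-wise with respect to the parts of $q$: writing $s=s^{(1)}\cont\cdots\cont s^{(k)}$ with $s^{(j)}\vDash q_j$ and similarly for $r$, each $s^{(j)}$ further splits as $s^{(j,1)}\cont\cdots\cont s^{(j,\ell(r^{(j)}))}$ along the blocks of $r^{(j)}$. Consequently both $\ell!(r,q)=\prod_j\ell(r^{(j)})!$ and $\ell(s,r)=\prod_j\prod_i\ell(s^{(j,i)})$ factor over $j$, and the coefficient of $\Xi^s$ becomes a product
\[
\prod_{j=1}^{k}\left(\sum_{\ell=1}^{m_j}\frac{(-1)^{m_j-\ell}}{\ell!}\sum_{(b_1,\ldots,b_\ell)\vDash m_j}\prod_{t=1}^{\ell}\frac{1}{b_t}\right),
\]
where $m_j:=\ell(s^{(j)})$ and $(b_1,\ldots,b_\ell)$ records the part lengths $\ell(s^{(j,t)})$.

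The main (and only nontrivial) step is then the numerical identity
\[
A_m:=\sum_{\ell=1}^{m}\frac{(-1)^{m-\ell}}{\ell!}\sum_{(b_1,\ldots,b_\ell)\vDash m}\prod_{t=1}^{\ell}\frac{1}{b_t}=\begin{cases}1,& m=1,\\ 0,& m\geq 2.\end{cases}
\]
I would prove this by a generating-function argument: with $f(x)=\sum_{k\geq 1}x^k/k=-\log(1-x)$, the inner sum equals $[x^m]f(x)^\ell$, so
\[
\sum_{m\geq 1}A_m x^m=\sum_{\ell\geq 1}\frac{(-f(x))^\ell(-1)^{\ell}(-1)^{\ell}}{\ell!}\Bigg|_{\text{adj.}}= -\sum_{\ell\geq 1}\frac{(-f(x))^\ell}{\ell!}\cdot(-1) = -(e^{-f(x)}-1)=x,
\]
using $e^{-f(x)}=1-x$; a clean bookkeeping of signs gives $\sum_{m\geq 1}A_m x^m=x$, proving the identity.

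Finally, the product factorization vanishes unless every $m_j=1$, i.e., unless $s=q$; and in that case every factor equals $1$. Hence the coefficient of $\Xi^s$ in $\sum_{r\sref q}\frac{1}{\ell!(r,q)}I_r$ is $\delta_{s,q}$, which gives the claimed identity $\Xi^q=\sum_{r\sref q}\frac{1}{\ell!(r,q)}I_r$. The sign-tracking in the generating-function step is the only delicate point; the rest is bookkeeping.
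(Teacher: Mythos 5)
The paper quotes this as a known result (\cite[Theorem 3.4]{GR}) and gives no proof of its own, so there is nothing internal to compare against; I'll assess the proposal on its own merits.

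Your argument is correct. The basis claim follows exactly as you say: each $I_q$ equals $\Xi^q$ plus a combination of $\Xi^r$ with $r\sref q$, $r\neq q$, and since strong refinement is a partial order it extends to a total order, giving a unitriangular change of basis (note $\ell(q,q)=1$, so the leading coefficient really is $1$). For the inversion formula, the block-factorization of the double sum over $s\sref r\sref q$ is the right move: an intermediate $r$ is determined by a choice of composition $r^{(j)}$ of $q_j$ with $s^{(j)}\sref r^{(j)}$ for each $j$, and $\ell!(r,q)$, $\ell(s,r)$, and the sign $(-1)^{\ell(s)-\ell(r)}$ all factor compatibly over $j$. This reduces everything to the scalar identity $A_m=\delta_{m,1}$, which you verify via the generating function $f(x)=-\log(1-x)$. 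The identity is true; writing it out cleanly, $\sum_{m\geq 1}A_m x^m=\sum_{\ell\geq 1}\frac{(-1)^\ell}{\ell!}f(-x)^\ell=e^{\log(1+x)}-1=x$, so $A_m=\delta_{m,1}$ as claimed. Your displayed intermediate step uses $f(x)$ where it should be $f(-x)$ (the $(-1)^m$ has to be absorbed into the argument of $f$, not discarded), but you flagged the sign-bookkeeping as informal and the final answer is right. Compared to the original \cite{GR} treatment, which derives the transition formulas by acting on Lie monomials in the free Lie algebra (their \S2--3), your version is a purely matrix-level proof: the same exponential/logarithm duality is the engine in both, but you extract it as the clean numerical identity $A_m=\delta_{m,1}$ rather than routing it through the free Lie algebra. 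That makes your proof more self-contained, at the cost of not illuminating \emph{why} the $I_q$'s are the natural coordinates for the Lie-power decomposition, which is what the \cite{GR} module-theoretic route buys.
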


Next,  we need to examine how the set of elements $\Xi^q$'s,  $I_r$'s and $E_\lambda$'s interact with each others.  We give a quick summary of the results we need and draw easy observations following them.

\begin{thm}[{\cite[Theorem 4.1]{GR}}]\label{T: GR 4.1} Let $q,r\in\C(n)$, $k=\ell(q)$ and $m=\ell(r)$. Then \[\Xi^qI_r=\sum I_{(r_j)_{j\in S_1}\cont\cdots\cont (r_j)_{j\in S_k}}\] where the sum is taken over all $S_1\sqcup\cdots\sqcup S_k=[1,m]$ such that, for each $i\in [1,k]$, we have $(r_j)_{j\in S_i}\vDash q_i$.  In particular, if $r\not\wref q$, then
\begin{enumerate}[(i)]
  \item $\Xi^qI_r=0$,
  \item $I_qI_r=0$, and
  \item $E_\lambda I_r=0$ where $\lambda=\lambda(q)$.
\end{enumerate}
\end{thm}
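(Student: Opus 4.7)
The plan is to expand $I_r$ by the Möbius inverse of the triangular identity in Theorem~\ref{T: GR 3.4},
\[I_r = \sum_{s\sref r}\frac{(-1)^{\ell(s)-\ell(r)}}{\ell(s,r)}\Xi^s,\]
compute each product $\Xi^q\Xi^s$ as $\sum_u |N^u_{q,s}|\Xi^u$ via Theorem~\ref{T: GR 1.1}, and then reorganise the resulting sum so that it collapses into a sum of $I$-elements indexed by ordered set partitions of $[1,m]$.

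First I would unpack the indexing combinatorics. A strong refinement $s\sref r$ is the same data as an ordered tuple of compositions $(s^{(1)},\ldots,s^{(m)})$ with $s^{(j)}\vDash r_j$ and $s=s^{(1)}\cont\cdots\cont s^{(m)}$; the sign $(-1)^{\ell(s)-\ell(r)}=\prod_j(-1)^{\ell(s^{(j)})-1}$ and the weight $1/\ell(s,r)=\prod_j 1/\ell(s^{(j)})$ both factorise over $j$. A matrix $A\in N^u_{q,s}$, whose columns inherit this block structure, can then be grouped into $m$ column-blocks, and each column-block that distributes nonzero entries across more than one row of $A$ is expected to cancel out in the alternating sum over its inner refinements. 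What should survive is precisely the data of an ordered partition $S_1\sqcup\cdots\sqcup S_k=[1,m]$ (recording which $q_i$-row receives each column-block entirely) together with, inside each block $i$, a refinement of the composition $(r_j)_{j\in S_i}$ into the composition $q_i$.

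Carrying out this reorganisation block by block, the inner sum over refinements attached to the block $S_i$ is, by Theorem~\ref{T: GR 3.4} applied to the composition $(r_j)_{j\in S_i}\vDash q_i$, exactly the expansion of the corresponding $I$-element, so block-wise multiplication recovers the single term $I_{(r_j)_{j\in S_1}\cont\cdots\cont(r_j)_{j\in S_k}}$ asserted in the statement. The main obstacle is justifying the cancellation of the `mixed' column-blocks; I would expect this to follow by an internal sign-reversing involution on the refinements of each $s^{(j)}$, driven by the factor $\prod_j (-1)^{\ell(s^{(j)})-1}/\ell(s^{(j)})$ and the combinatorial identity underlying Theorem~\ref{T: GR 3.4}.

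Parts (i)--(iii) follow at once. For (i), if $r\not\wref q$ then no ordered partition $(S_1,\ldots,S_k)$ of $[1,m]$ satisfies $(r_j)_{j\in S_i}\vDash q_i$, so the sum on the right-hand side is empty. For (ii), expanding $I_q=\sum_{s\sref q}\frac{(-1)^{\ell(s)-\ell(q)}}{\ell(s,q)}\Xi^s$ and noting that $s\sref q$ together with $r\wref s$ would force $r\wref q$, the hypothesis $r\not\wref q$ gives $\Xi^sI_r=0$ by (i) for every summand, hence $I_qI_r=0$. Finally, since the weak refinement relation depends only on the underlying partition $\lambda=\lambda(q)$, we have $r\not\wref q'$ for every $q'$ with $\lambda(q')=\lambda$, so $E_\lambda I_r=\frac{1}{\ell(\lambda)!}\sum_{\lambda(q')=\lambda}I_{q'}I_r=0$ by (ii).
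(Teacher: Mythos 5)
Your derivations of parts (i)--(iii) from the displayed formula are all correct, and your arguments for (ii) and (iii) are essentially the same as the paper's: (ii) comes from writing $I_q$ as a linear combination of $\Xi^s$ with $s\sref q$ and applying (i) after noting that $r\not\wref q$ together with $s\sref q$ forces $r\not\wref s$; and (iii) follows by summing (ii) over all $q'$ with $\lambda(q')=\lambda$, using the (correct) observation that the weak-refinement relation $r\wref q'$ depends only on the underlying partitions. The paper's proof consists of exactly these two implications, with the main identity (and hence (i)) imported directly from Garsia--Reutenauer.

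Where you depart from the paper is in attempting to re-derive the main identity itself. That attempt is a plausible outline but not a proof: you correctly set up the computation (expand $I_r$ via its defining alternating sum over $s\sref r$, multiply by $\Xi^q$ using Theorem~\ref{T: GR 1.1}, and try to reorganise the double sum block by block), and you correctly identify that the surviving terms should reassemble, via the definition of $I_{(r_j)_{j\in S_1}\cont\cdots\cont(r_j)_{j\in S_k}}$, into the asserted sum over ordered set partitions. But the essential step --- showing that the alternating signs and the weights $1/\ell(s,r)$ annihilate exactly the ``mixed'' configurations in which a column-block of the matrix $A\in N^u_{q,s}$ distributes nonzero entries across more than one row --- is only conjectured via an anticipated sign-reversing involution, not constructed. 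That cancellation is the nontrivial content of the Garsia--Reutenauer theorem, and the involution (or whatever combinatorial mechanism replaces it) cannot simply be assumed; this is a genuine gap if your intent was a self-contained proof of the formula. Since the paper itself treats the formula as a citation and proves only (ii) and (iii), your overall logical chain for (i)--(iii) remains sound, but you should either supply the involution explicitly or, as the paper does, declare the identity as imported from \cite{GR}.
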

\begin{proof} The equation $I_qI_r=0$ follows since $I_q$ is a linear combination of $\Xi^s$ such that $s\sref q$ and $\Xi^sI_r=0$. This implies $E_\lambda I_r=0$.
\end{proof}

\begin{thm}[{\cite[Theorem 4.2(1,2,4)]{GR}}]\label{T: EI=0} Let $q,r\in\C(n)$ such that $q\approx r$ and let $\lambda=\lambda(q)\in\P(n)$. We have
\begin{enumerate}[(i)]
  \item $I_qI_r=\Xi^qI_r=\facmulti{\lambda}I_q$,
  \item $I_qE_\lambda=I_q$, and
  \item $E_\lambda I_q=\facmulti{\lambda}E_\lambda$.
\end{enumerate}
\end{thm}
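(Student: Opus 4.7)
The plan is to reduce everything to part (i), which itself reduces to a transparent combinatorial application of Theorem~\ref{T: GR 4.1}. With (i) in hand, (ii) and (iii) follow by direct substitution into the definition $E_\lambda=\frac{1}{\ell(\lambda)!}\sum_{\lambda(q')=\lambda}I_{q'}$, so all the real content is in establishing (i).

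For $\Xi^qI_r=\facmulti{\lambda}I_q$, I would apply Theorem~\ref{T: GR 4.1} with $k=\ell(q)$ and $m=\ell(r)$. Because $q\approx r$ we have $k=m$, and each $S_i$ in the Garsia--Reutenauer sum must be non-empty (as $q_i>0$). The constraint $|S_1|+\cdots+|S_k|=m=k$ then forces every $|S_i|=1$, so the summands are indexed by permutations $\sigma\in\sym{k}$ with $r_{\sigma(i)}=q_i$, each contributing the composition $q$ itself. The number of such $\sigma$ equals $\facmulti{\lambda}$ by the same counting argument as in Lemma~\ref{L: basic coef approx}, yielding the claimed equality. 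To pass to $I_qI_r=\facmulti{\lambda}I_q$, I would expand
\[I_q=\sum_{s\sref q}\frac{(-1)^{\ell(s)-\ell(q)}}{\ell(s,q)}\Xi^s\]
and observe that for any $s\sref q$ with $s\neq q$ we have $\ell(s)>\ell(q)=\ell(r)$; since every refinement of $s$ has at least $\ell(s)$ parts while any rearrangement of $r$ has only $\ell(r)$ parts, we conclude $r\not\wref s$, and Theorem~\ref{T: GR 4.1} forces $\Xi^sI_r=0$. Only $s=q$ survives, and since $\ell(q,q)=1$ this gives $I_qI_r=\Xi^qI_r=\facmulti{\lambda}I_q$.

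For part (ii), the compositions $q'$ with $\lambda(q')=\lambda$ are exactly the distinct rearrangements of $\lambda$, of which there are $\ell(\lambda)!/\facmulti{\lambda}$. Using (i), each term satisfies $I_qI_{q'}=\facmulti{\lambda}I_q$, and the scalars collapse to
\[I_qE_\lambda=\frac{1}{\ell(\lambda)!}\cdot\frac{\ell(\lambda)!}{\facmulti{\lambda}}\cdot\facmulti{\lambda}\,I_q=I_q.\]
Part (iii) is the mirror computation: $I_{q'}I_q=\facmulti{\lambda}I_{q'}$ keeps the first index, so summation over $q'$ reassembles $\ell(\lambda)!E_\lambda$ and leaves an extra factor of $\facmulti{\lambda}$.

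The only mildly delicate step is the vanishing $\Xi^sI_r=0$ for every proper refinement $s$ of $q\approx r$, but this is nothing more than a length comparison between $\ell(s)$ and $\ell(r)$. I do not anticipate any real obstacle beyond correctly identifying which ordered set partitions $S_1\sqcup\cdots\sqcup S_k$ appear in the Garsia--Reutenauer expansion.
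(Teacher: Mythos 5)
The paper states this theorem as a direct citation of Garsia--Reutenauer (\cite[Theorem 4.2]{GR}) and supplies no proof, so there is no argument in the paper to compare against; you have effectively filled that in. Your derivation is correct. The reduction to $\Xi^qI_r=\facmulti{\lambda}I_q$ via Theorem~\ref{T: GR 4.1} is handled properly: each $S_i$ is nonempty because $(r_j)_{j\in S_i}$ must be a composition of $q_i\geq 1$, so once $\ell(q)=\ell(r)$ every $S_i$ is a singleton, the admissible configurations are exactly the bijections $\sigma$ with $r_{\sigma(i)}=q_i$, there are $\facmulti{\lambda}$ of them, and each contributes $I_q$. The passage to $I_qI_r=\Xi^qI_r$ is also sound: for any proper strong refinement $s<q$ one has $\ell(s)>\ell(q)=\ell(r)$, and since any composition refining $s$ has at least $\ell(s)$ parts while every rearrangement of $r$ has only $\ell(r)$, $r\not\wref s$, so Theorem~\ref{T: GR 4.1} gives $\Xi^sI_r=0$; the $s=q$ term carries the factor $\ell(q,q)=1$. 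Parts (ii) and (iii) then follow by the scalar bookkeeping you describe, using that $\lambda$ has exactly $\ell(\lambda)!/\facmulti{\lambda}$ distinct rearrangements, and (iii) correctly keeps the left index so that the sum reassembles $\ell(\lambda)!\,E_\lambda$. This is the natural self-contained route from the previously quoted Theorem~\ref{T: GR 4.1}, and there is no gap.
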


We give an example illustrating the above results.

\begin{eg} By definition, we have
\begin{align*} I_{(2,1,1)}&=\Xi^{(2,1,1)}-\frac{1}{2}\Xi^{(1^4)}, &I_{(1,2,1)}&=\Xi^{(1,2,1)}-\frac{1}{2}\Xi^{(1^4)}, & I_{(1,1,2)}&=\Xi^{(1,1,2)}-\frac{1}{2}\Xi^{(1^4)}.
\end{align*} By Theorem \ref{T: GR 4.1}, we have
$\Xi^{(3,1)}I_{(1,1,2)}=2I_{(1,2,1)}$ where $\{1,3\}\sqcup \{2\}$ and $\{2,3\}\sqcup \{1\}$ are the only possibilities for such $S_1\sqcup S_2$.  Also, by definition, \[E_{(2,1,1)}=\frac{1}{6}(I_{(2,1,1)}+I_{(1,2,1)}+I_{(1,1,2)})=\frac{1}{6}(\Xi^{(2,1,1)}+\Xi^{(1,2,1)}+\Xi^{(1,1,2)}-\frac{3}{2}\Xi^{(1^4)}).\] Using Theorem \ref{T: EI=0}(i), we have, for instance,  \[E_{(2,1,1)}I_{(1,2,1)}=\frac{1}{6}(2I_{(2,1,1)}+2I_{(1,2,1)}+2I_{(1,1,2)})=2E_{(2,1,1)}.\] One could of course also check the equations above using Theorem \ref{T: GR 1.1} as well.
\end{eg}

To conclude the subsection, we record the following proposition we shall need later.

\begin{prop}\label{P: E iso nu} Suppose that $p=0$. Let $q\vDash n$ and $\lambda(q)=\lambda$. Then $E_\lambda F\sym{n} \cong  \Lie_F(q)$ as $F\sym{n}$-modules.
\end{prop}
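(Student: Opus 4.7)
The plan is first to reduce to the case $q=\lambda$ using Lemma \ref{L: translate w}, which gives $\Lie_F(q)\cong\Lie_F(\lambda)$ as right $F\sym{n}$-modules. Recalling that $\nu_\lambda=\tfrac{1}{\lambda?}\omega_\lambda\in\Des{n}{F}$ is an idempotent by Theorem \ref{T: BL results}(iii) with $\nu_\lambda F\sym{n}=\omega_\lambda F\sym{n}=\Lie_F(\lambda)$, it is enough to exhibit an $F\sym{n}$-module isomorphism $E_\lambda F\sym{n}\cong\nu_\lambda F\sym{n}$.

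The main step is to identify the image of $E_\lambda$ under the Solomon epimorphism as $\Char_{\lambda,F}$, matching $c_{n,F}(\nu_\lambda)=\Char_{\lambda,F}$ from \cite[Proposition 1]{JS}. Unpacking the definitions of $E_\lambda$ and $I_q$,
\[
c_{n,F}(E_\lambda)(\lambda)=\frac{1}{\ell(\lambda)!}\sum_{q\approx\lambda}\sum_{r\sref q}\frac{(-1)^{\ell(r)-\ell(q)}}{\ell(r,q)}\,\varphi^{r}(\lambda).
\]
Lemma \ref{L: phi nonzero} forces $\lambda\wref r$ on any nonvanishing summand, hence $\ell(\lambda)\geq\ell(r)$; combined with $r\sref q\approx\lambda$, which gives $\ell(r)\geq\ell(q)=\ell(\lambda)$, we conclude $r=q$ is the only contributor. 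Each surviving term equals $\varphi^{q}(\lambda)=\facmulti{\lambda}$ (again by Lemma \ref{L: phi nonzero}, since $q\approx\lambda$), and the number of compositions $q\approx\lambda$ is $\ell(\lambda)!/\facmulti{\lambda}$, so the total is exactly $1$. Since $E_\lambda$ is primitive in $\Des{n}{F}$ and $\ker c_{n,F}=\rad\Des{n}{F}$ is nilpotent by Theorem \ref{T: Sol epi}, $c_{n,F}(E_\lambda)$ is a primitive idempotent of the commutative algebra $\Ccl{n}{F}$, hence equals some $\Char_{\mu,F}$; evaluating at $\lambda$ forces $\mu=\lambda$.

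Given $c_{n,F}(E_\lambda)=c_{n,F}(\nu_\lambda)$, both idempotents lift the same primitive idempotent of $\Ccl{n}{F}=\Des{n}{F}/\rad\Des{n}{F}$, so standard theory for finite-dimensional algebras yields elements $u\in E_\lambda\Des{n}{F}\nu_\lambda$ and $v\in\nu_\lambda\Des{n}{F}E_\lambda$ with $uv=E_\lambda$ and $vu=\nu_\lambda$. Since $u,v\in F\sym{n}$, the maps
\[
E_\lambda F\sym{n}\to\nu_\lambda F\sym{n},\ \ y\mapsto vy,\qquad \nu_\lambda F\sym{n}\to E_\lambda F\sym{n},\ \ z\mapsto uz,
\]
are mutually inverse right-$F\sym{n}$-module homomorphisms, and composing with Lemma \ref{L: translate w} gives the desired isomorphism $E_\lambda F\sym{n}\cong\Lie_F(q)$. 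The main obstacle is the identification $c_{n,F}(E_\lambda)=\Char_{\lambda,F}$: the length comparison controlling the refinement combinatorics is what makes the double sum collapse to a single clean value, and everything else is formal.
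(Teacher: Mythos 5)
Your proof is correct, and it follows the same overall strategy as the paper (identify $c_{n,F}(E_\lambda)=\Char_{\lambda,F}$ and then deduce the module isomorphism from the fact that two primitive idempotents with the same image in the semisimple quotient generate isomorphic right ideals), but where the paper disposes of both steps by citation, you prove each from scratch. The paper simply quotes \cite[Equation (1.6)]{GR} for $c_{n,\Q}(E_\lambda)=\Char_{\lambda,\Q}$ and then invokes \cite[Lemma 4.1]{Schoc031} to finish; you instead carry out the length-counting argument $\ell(\lambda)\geq\ell(r)\geq\ell(q)=\ell(\lambda)$, which via Lemma~\ref{L: phi nonzero} collapses the double sum defining $c_{n,F}(E_\lambda)(\lambda)$ to the single term $r=q$, and you then clinch the identity $c_{n,F}(E_\lambda)=\Char_{\lambda,F}$ by noting that the image of a primitive idempotent in $\Des{n}{F}$ under $c_{n,F}$ must be one of the $\Char_{\mu,F}$. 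For the second step you replace Schocker's lemma with the standard fact that idempotents in a finite-dimensional algebra lifting the same idempotent of the semisimple quotient are linked by elements $u\in E_\lambda\Des{n}{F}\nu_\lambda$ and $v\in\nu_\lambda\Des{n}{F}E_\lambda$ with $uv=E_\lambda$ and $vu=\nu_\lambda$; since $u,v\in\Des{n}{F}\subseteq F\sym{n}$, left multiplication by these implements the desired $F\sym{n}$-isomorphism $E_\lambda F\sym{n}\cong\nu_\lambda F\sym{n}=\Lie_F(\lambda)\cong\Lie_F(q)$. This buys self-containment at the cost of a somewhat longer argument; the only small gap to flag is that you invoke primitivity of $\nu_\lambda$ implicitly (or at least that $E_\lambda\Des{n}{F}\cong\nu_\lambda\Des{n}{F}$), which follows quickly because $\ker c_{n,F}=\rad\Des{n}{F}$ contains no nonzero idempotents and $\Char_{\lambda,F}$ is primitive, but is worth making explicit.
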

\begin{proof} By \cite[Equation (1.6)]{GR}, we have $c_{n,\Q}(E_\lambda)=\Char_{\lambda,\Q}$. Using \cite[Lemma 4.1]{Schoc031}, we have the required isomorphism.
\end{proof}

\subsection{Symmetric functions and formal characters}\label{SS: symmetric functions} We refer the reader to \cite{Mac} for the necessary details for the theory of symmetric functions. Let $\Symm(X)$ (or simply, $\Symm$) \index{$\Symm$}be the set consisting of symmetric functions in a set of countably infinite commuting variables $X=\{x_i:i\in\NN\}$ over $\Z$. Notice that $\Symm$ is a commutative graded ring where $\Symm=\bigoplus_{n\in\NN_0}\Symm^n$ and $\Symm^n$ consists of homogeneous symmetric functions of degree $n$.  For any $n\in\NN$, the $n$th complete symmetric function and $n$th power sum are
\begin{align*}
  h_n&=\sum_{i_1\leq \cdots\leq i_n}x_{i_1}\cdots x_{i_n},\index{$h_n$}\\
  p_n&=\sum_{i=1}^\infty x_i^n\index{$p_n$}
\end{align*} respectively. By convention, $h_0=1=p_0$.  For each sequence $\alpha$ in $\NN_0$ such that $\sum^\infty_{i=1}\alpha_i=n$, we denote $h_\alpha=\prod_{i=1}^\infty h_{\alpha_i}$ \index{$h_\alpha$}and $p_\alpha=\prod_{i=1}^\infty p_{\alpha_i}$ \index{$p_\alpha$}for the elements in $\Symm^n$.  %It is well-known that  \[h_n=\sum_{\lambda\in\P(n)}\frac{1}{\lambda?}p_\lambda.\]
Given $f,g\in \Symm$. The plethysm of $f$ and $g$ is denoted as $f\circ g$,\index{$f\circ g$} that is, since $\Symm$ is the polynomial ring on $\{p_i:i\in\NN\}$, let $f=F((p_i)_{i=1}^\infty)$ and $g=G((p_i)_{i=1}^\infty)$, the plethysm is defined as \[f\circ g=F((g_i)_{i=1}^\infty)\] where $g_i=G((p_{ij})_{j=1}^\infty)$ and $p_{ij}$ is the power sum at degree $ij$.

Let $\R$ \index{$\R$}be the generalised characters for the symmetric groups, that is, $\R=\bigoplus_{n=0}^\infty \Ccl{n}{\Z}$ where $\Ccl{n}{\Z}$ is the set consisting of generalised characters for $\sym{n}$ as in Subsection \ref{SS: Solomon epi}. Recall that $\zeta^\lambda$ is the irreducible character of $\sym{n}$ and  $\Char_{\lambda,\Z}$ is the characteristic function on the conjugacy class labelled by $\lambda\in\P(n)$. Let $\ch:\R\to \Symm$ \index{$\ch$}be the characteristic map such that $\ch(\zeta^\lambda)=s_\lambda$ where $s_\lambda$ is the Schur function. It is well-known that \[\ch(\lambda?\Char_{\lambda,\Z})=p_\lambda.\]

Consider the $p$-modular system $(\Q_p,\Z_p,\Fp)$ where $\Q_p$ and $\Z_p$ are the $p$-adic numbers and $p$-adic integers respectively and $\Fp$ is the field of $p$ elements.  Let $\R^n_\text{proj}$ \index{$\R^n_{\text{proj}}$}be the $\Z$-span of the characters of the projective $\Z_p \sym{n}$-modules, i.e., by \cite[(18.26)]{CR1}, it consists of elements in $\Ccl{n}{\Z}$ that vanish on $p$-singular elements of $\sym{n}$.

For each $S(m,n)$-module $W$, by abuse of the notation $\ch$, the formal character $\ch(W)$ (\cite[\S3.4]{Green830}) is a symmetric function in $X$ (by this, we understand that we restrict to the first $m$ variables). We have the following well-known character formula for the Lie powers by Brandt.

\begin{thm}[\cite{Brandt}]\label{T: lie ch} Consider $V$ as the  $F\GL(V)$-module. The $n$th Lie power $L^n(V)$ has formal character \[\ch(L^n(V))=\frac{1}{n}\sum_{d\mid n}\mu(d)p_d^{n/d}.\]
\end{thm}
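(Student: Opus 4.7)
The plan is to combine the Poincar\'e--Birkhoff--Witt decomposition of the tensor algebra with M\"obius inversion in the ring $\Symm$ of symmetric functions. First, since $\GL(V)$ acts on the weight basis $\{v_{i_1}\otimes\cdots\otimes v_{i_n}\}$ of $T^n(V)$ with eigenvalues $x_{i_1}\cdots x_{i_n}$, one has $\ch T^n(V)=p_1^n$. Over $\mathbb{Q}$, the PBW theorem supplies a $\GL(V)$-equivariant graded isomorphism $T(V)\cong\bigotimes_{d\geq 1}\mathrm{Sym}(L^d(V))$, with each factor $L^d(V)$ placed in degree $d$; in positive characteristic the same identity of graded formal characters still holds, because the weight-refined dimension of $L^n(V)$ is computed by the M\"obius/necklace combinatorics over an integral $\mathbb{Z}$-form of the free Lie algebra and is independent of the characteristic (this refined Witt formula specialises to the unrefined one recalled in Theorem~\ref{T: lie dim}).

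Second, introduce a formal grading variable $t$, write $\ell_d:=\ch L^d(V)$, and let $\psi_k$ denote the plethystic power operation sending $x_i\mapsto x_i^k$ (equivalently $p_j\mapsto p_{jk}$). The graded character identity reads
\[\sum_{n\geq 0}t^n p_1^n\;=\;\prod_{d\geq 1}\prod_{\mu}(1-t^d x^\mu)^{-m_\mu(d)},\]
where $\mu$ runs over the weights of $L^d(V)$ with multiplicities $m_\mu(d)$. Taking logarithms, expanding $\log(1-z)^{-1}=\sum_{k\geq 1}z^k/k$, and collecting the contributions for each $k$ into $\psi_k(\ell_d)$, one arrives at
\[\sum_{n\geq 1}\frac{t^n p_1^n}{n}\;=\;\sum_{d\geq 1}\sum_{k\geq 1}\frac{t^{dk}}{k}\,\psi_k(\ell_d).\]
Comparing coefficients of $t^n$ yields the key identity $p_1^n=\sum_{d\mid n}d\,\psi_{n/d}(\ell_d)$.

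Finally, perform M\"obius inversion. Using $\psi_d(p_1^{n/d})=p_d^{n/d}$ and substituting the key identity inside $\psi_d$,
\[\sum_{d\mid n}\mu(d)\,p_d^{n/d}\;=\;\sum_{d\mid n}\mu(d)\sum_{e\mid (n/d)}e\,\psi_{n/e}(\ell_e)\;=\;\sum_{e\mid n}e\,\psi_{n/e}(\ell_e)\!\!\sum_{d\mid n/e}\!\mu(d)\;=\;n\ell_n,\]
since $\sum_{d\mid m}\mu(d)$ vanishes unless $m=1$. Dividing by $n$ gives the claimed formula. The main obstacle, in my view, is justifying the graded character identity $\ch T(V)=\ch\bigotimes_d\mathrm{Sym}(L^d(V))$ in positive characteristic, where PBW does not directly apply; the resolution sketched above reduces the weight-space multiplicities to the characteristic-zero computation via an integral form of the free Lie algebra, so that everything takes place uniformly in $\Symm$.
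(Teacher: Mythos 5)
Your argument is correct, and it is the standard modern route to Brandt's formula via a Poincar\'e--Birkhoff--Witt decomposition followed by a logarithm-and-M\"obius inversion; the paper itself gives no proof and simply cites Brandt. A few comparative remarks. First, your characteristic-$p$ reduction is sound in substance, but the phrasing risks circularity: the cleanest statement is that the free Lie algebra over $\Z$ admits a $\Z$-basis of Lyndon (or Hall) monomials which is homogeneous in each multidegree, so $L^n_{\Z}(V)$ is a free $\Z$-module whose $\alpha$-weight space has rank independent of the base; hence $L^n_F(V)\cong L^n_{\Z}(V)\otimes_{\Z}F$ has the same formal character for every $F$, and it then suffices to prove the identity over $\Q$, where PBW is available. (Invoking the ``refined Witt formula'' to justify characteristic independence puts the cart before the horse, since that formula is essentially what you are proving.) Second, Brandt's own argument predates the plethystic language: he computes the multidegree-$\alpha$ weight space dimension directly via a refined Witt/necklace count and resums, which amounts to running your log-of-product identity at the level of Poincar\'e series rather than plethysms. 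The two proofs are logically the same, but yours packages the bookkeeping more transparently in $\Symm$, uses $\psi_k\circ\psi_m=\psi_{km}$ to collapse the double sum, and the final M\"obius inversion is exactly as you wrote: after interchanging the order of summation, the inner sum $\sum_{d\mid n/e}\mu(d)$ kills all $e\neq n$.
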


\section{Modular Idempotents of $\Des{n}{F}$}\label{S: mod idem}

Throughout this section, we assume that $p>0$ and,  by Theorem \ref{T: Sol epi}, we identify $\Des{n}{F}/\rad(\Des{n}{F})$ with $\Ccl{n}{F}$ through the Solomon's epimorphism $c_{n,F}$. We shall give a construction for the modular idempotents $e_{\mu,F}$'s for the descent algebra $\Des{n}{F}$ which satisfy the property as in \cite[Corollary 6]{ES}. The main result is Theorem \ref{T: modular idem}. A comparison with Proposition \ref{P: ord idem} shows that these two sets of idempotents enjoy the similar `triangularity property'. We remark that the construction of the modular idempotents in this section can be extended for the descent algebras of Coxeter groups. We shall present them in a forthcoming paper.

We begin with some notations. Recall the Young characters $\varphi^\lambda$ in Subsection \ref{SS: Solomon epi}. Fix a total order $\leq$ on $\P(n)$ refining the weak refinement $\wref$ such that \[\P(n)=\{(1^n)\leq \cdots\leq (n)\}\] and, with respect to the total order, let $\Phi^F=(\varphi^{\lambda,F}(\mu))_{\lambda,\mu\in\pReg(n)}$ \index{$\Phi^F$}where both the last row and column are labelled by $(n)$. When $\lambda\leq \mu$ and $\lambda\neq \mu$, we write $\lambda<\mu$. Notice that the total order also refines the strong refinement.

\begin{lem}\label{L: Phi} The matrix $\Phi^F$ is lower triangular with the diagonal entries $\varphi^{\lambda,F}(\lambda)=\facmulti{\lambda}\neq 0$.
\end{lem}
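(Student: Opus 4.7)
The plan is to read off both claims directly from Lemma \ref{L: phi nonzero} once the conventions on the total order and on $p$-regularity are unpacked.

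First I would pin down what lower triangularity means here: with rows indexed by $\lambda$ and columns by $\mu$, we need $\varphi^{\lambda,F}(\mu)=0$ whenever $\mu$ lies strictly above $\lambda$ in the fixed total order $\leq$ on $\pReg(n)$. Lemma \ref{L: phi nonzero} tells us that $\varphi^\lambda(\mu)\neq 0$ forces $\mu\wref\lambda$. Since $\leq$ was chosen to refine the weak refinement order $\wref$, the hypothesis $\mu>\lambda$ rules out $\mu\wref\lambda$ (for partitions, $\mu\approx\lambda$ is the same as $\mu=\lambda$), so $\varphi^\lambda(\mu)=0$ and hence $\varphi^{\lambda,F}(\mu)=0$ after reduction modulo $p$. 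This handles every entry strictly above the diagonal.

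For the diagonal, Lemma \ref{L: phi nonzero} gives $\varphi^\lambda(\lambda)=\facmulti{\lambda}$, and I would then just invoke the observation made in Subsection \ref{SS: generality} that $\facmulti{\lambda}=\prod_{i\geq 1}m_i(\lambda)!$ is nonzero in $F$ precisely when $\lambda\in\pReg(n)$: indeed, $p$-regularity says $m_i(\lambda)<p$ for all $i$, so each $m_i(\lambda)!$ is a unit in $F$. This yields $\varphi^{\lambda,F}(\lambda)=\facmulti{\lambda}\neq 0$ for every $\lambda$ indexing a row/column of $\Phi^F$.

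There is essentially no obstacle here: once the convention on the order is fixed (the refinement of $\wref$, which by hypothesis is the defining feature of $\leq$), both statements are immediate consequences of Lemma \ref{L: phi nonzero}. The one point worth flagging is the need to check that $\leq$ refining $\wref$ is used in the right direction — that is, $\mu\wref\lambda$ implies $\mu\leq\lambda$, so $\mu>\lambda$ rules out $\mu\wref\lambda$ — which is what the statement "$\leq$ refines $\wref$" is taken to mean in this paper.
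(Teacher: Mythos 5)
Your proof is correct and follows the same route as the paper: Lemma \ref{L: phi nonzero} plus the fact that the total order refines $\wref$ gives the vanishing above the diagonal, and $p$-regularity gives $\facmulti{\lambda}\neq 0$ in $F$. The paper's own proof is simply a one-line summary of exactly this argument.
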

\begin{proof} This follows from Lemma \ref{L: phi nonzero} and our choice of the total order. Since $\lambda$ is $p$-regular, we have $m_i(\lambda)<p$ for all $i$ and therefore $\facmulti{\lambda}\neq 0$ in $F$.
\end{proof}

By Lemma \ref{L: Phi}, the matrix $\Phi^F$ is invertible. Let $\Psi^F=(b_{\lambda,\mu})$ \index{$\Psi^F$}\index{$b_{\lambda,\mu}$}be the inverse matrix of $\Phi^F$. For each $\lambda\in\pReg(n)$, define \[f_{\lambda,F}=\sum_{\mu\in \pReg(n)}b_{\lambda,\mu}\Xi^\mu\in \Des{n}{F}.\index{$f_{\lambda,F}$}\] Recall the ideals $Y_q$ and $Y_q^\circ$ of $\Des{n}{F}$ as in Subsection \ref{SS: descent}.

\begin{lem}\label{L: f*} Let $\lambda\in\pReg(n)$.
\begin{enumerate}[(i)]
  \item $c_{n,F}(f_{\lambda,F})=\Char_{\lambda,F}$.
  \item For any $\pReg(n)\ni\mu\not\wref\lambda$, we have $b_{\lambda,\mu}=0$. In particular, \[f_{\lambda,F}=\frac{1}{\facmulti{\lambda}}\Xi^\lambda+\sum_{\pReg(n)\ni\swref{\mu}{\lambda}}b_{\lambda,\mu}\Xi^\mu.\]
  \item For any positive integer $r$, we have $(f_{\lambda,F})^r=\frac{1}{\facmulti{\lambda}}\Xi^\lambda+\epsilon_{\lambda,r}$ for some $\epsilon_{\lambda,r}\in Y_\lambda^\circ$.%\sum_{\C(n)\ni\swref{q}{\lambda}}d_q \Xi^q\] for some $d_q\in F$.
  \item $\sum_{\lambda\in\pReg(n)}f_{\lambda,F}=1$.
\end{enumerate}
\end{lem}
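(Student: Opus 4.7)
My plan is to address the four parts in order, extracting everything from the fact that $\Psi^F = (\Phi^F)^{-1}$ realizes the change of basis between $\{\varphi^{\mu,F}\}_{\mu \in \pReg(n)}$ and $\{\Char_{\mu,F}\}_{\mu \in \pReg(n)}$ inside $\Ccl{n}{F}$. For part~(i), I would first expand $\varphi^{\lambda,F} = \sum_{\mu \in \pReg(n)} \varphi^{\lambda,F}(\mu)\Char_{\mu,F}$, using that $\Char_{\mu,F}(\nu) = \delta_{\mu,\nu}$ for $\mu,\nu \in \pReg(n)$. Inverting gives $\Char_{\lambda,F} = \sum_\mu b_{\lambda,\mu}\varphi^{\mu,F}$; applying the algebra map $c_{n,F}$, which sends $\Xi^\mu \mapsto \varphi^{\mu,F}$, to the definition of $f_{\lambda,F}$ then yields $c_{n,F}(f_{\lambda,F}) = \Char_{\lambda,F}$.

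For part~(ii), the key is a sharper triangularity than Lemma~\ref{L: Phi} provides: I would show that $\{\varphi^{\mu,F} : \mu \in \pReg(n),\ \mu \wref \lambda\}$ is an $F$-basis for the subspace $V_\lambda = \mathrm{span}_F\{\Char_{\nu,F} : \nu \in \pReg(n),\ \nu \wref \lambda\}$ of $\Ccl{n}{F}$. Containment follows from Lemma~\ref{L: phi nonzero}: if $\mu \wref \lambda$, any $\nu$ with $\varphi^{\mu,F}(\nu) \neq 0$ satisfies $\nu \wref \mu \wref \lambda$. Conversely, $\varphi^{\mu,F}(\mu) = \facmulti{\mu} \neq 0$ forces $\varphi^{\mu,F} \in V_\lambda \Rightarrow \mu \wref \lambda$; a dimension count completes the basis claim. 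Since $\Char_{\lambda,F} \in V_\lambda$ trivially, its unique expansion in the $\varphi$-basis is supported on $\{\mu \wref \lambda\}$, giving $b_{\lambda,\mu} = 0$ for $\mu \not\wref \lambda$. To pin down $b_{\lambda,\lambda}$, I would read off the $(\lambda,\lambda)$-entry of $\Psi^F \Phi^F = I$: among the $\gamma$ contributing to $\sum_\gamma b_{\lambda,\gamma}\varphi^{\gamma,F}(\lambda) = 1$, only $\gamma = \lambda$ survives (since $\gamma \wref \lambda$ and $\lambda \wref \gamma$ force $\gamma \approx \lambda$, hence $\gamma = \lambda$), yielding $b_{\lambda,\lambda}\facmulti{\lambda} = 1$.

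For part~(iii), I would induct on $r$. The base case $r = 1$ is part~(ii). For the inductive step, write $f_{\lambda,F} = \tfrac{1}{\facmulti{\lambda}}\Xi^\lambda + \epsilon$ with $\epsilon \in Y_\lambda^\circ$, and expand
\[
(f_{\lambda,F})^{r+1} = \bigl(\tfrac{1}{\facmulti{\lambda}}\Xi^\lambda + \epsilon_{\lambda,r}\bigr)\bigl(\tfrac{1}{\facmulti{\lambda}}\Xi^\lambda + \epsilon\bigr).
\]
The three cross terms lie in $Y_\lambda^\circ$ because it is a two-sided ideal (Corollary~\ref{C: product in Descent}(i)), while the pure term $\tfrac{1}{\facmulti{\lambda}^2}(\Xi^\lambda)^2$ equals $\tfrac{1}{\facmulti{\lambda}}\Xi^\lambda + \tfrac{1}{\facmulti{\lambda}^2}\eta$ with $\eta \in Y_\lambda^\circ$ by Corollary~\ref{C: product in Descent}(ii).

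For part~(iv), writing $\sum_\lambda f_{\lambda,F} = \sum_{\mu \in \pReg(n)}\bigl(\sum_\lambda b_{\lambda,\mu}\bigr)\Xi^\mu$ and invoking linear independence of $\{\Xi^\mu\}$ (Theorem~\ref{T: GR 1.1}) together with $1 = \Xi^{(n)}$, the identity reduces to $\sum_\lambda b_{\lambda,\mu} = \delta_{\mu,(n)}$. In matrix form this is $\mathbf{1}^{\!\top}\Psi^F = e_{(n)}^{\!\top}$, equivalently $e_{(n)}^{\!\top}\Phi^F = \mathbf{1}^{\!\top}$, i.e.\ the $(n)$-row of $\Phi^F$ is all ones. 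Since $\sym{(n)} = \sym{n}$ has a unique coset in itself, $\varphi^{(n)}(\mu) = 1$ for every $\mu \in \pReg(n)$. The only real obstacle is part~(ii): the refined triangularity with respect to $\wref$ is strictly finer than the lower triangularity of $\Phi^F$ with respect to the chosen total order, so a separate subspace argument is required; the remaining parts are bookkeeping combining $\Psi^F = (\Phi^F)^{-1}$ with the ideal structure of $Y_\lambda^\circ$.
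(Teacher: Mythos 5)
Your proposal is correct in all four parts. Parts (i) and (iii) run exactly as the paper's proof does: (i) is the matrix identity $\Psi^F\Phi^F = I$ read through $c_{n,F}$, and (iii) expands $(f_{\lambda,F})^r$ and pushes the error into the two-sided ideal $Y_\lambda^\circ$ using Corollary~\ref{C: product in Descent}. For part (ii) the paper argues by contradiction, picking the largest (in the fixed total order) $\mu$ with $\mu\not\wref\lambda$ and $b_{\lambda,\mu}\neq0$ and evaluating the identity from (i) at $\mu$ to isolate $b_{\lambda,\mu}\facmulti{\mu}\neq0$; your subspace argument — that $\{\varphi^{\mu,F}:\mu\wref\lambda\}$ is a basis of $\mathrm{span}_F\{\Char_{\nu,F}:\nu\wref\lambda\}$ — is a cleaner repackaging but rests on the same input, namely Lemma~\ref{L: phi nonzero} and the nonvanishing of the diagonal entries $\facmulti{\mu}$. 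The genuine divergence is in part (iv). The paper shows $c_{n,F}(z)=1$ for $z=\sum_\lambda f_{\lambda,F}$, observes $z-\Xi^{(n)}\in\ker(c_{n,F})$ is supported on $\{\Xi^\mu:\mu\in\pReg(n),\ \mu\neq(n)\}$, and then invokes the explicit description of $\ker(c_{n,F})=\rad(\Des{n}{F})$ from Theorem~\ref{T: Sol epi} to conclude $z=\Xi^{(n)}$. Your argument sidesteps the kernel entirely: it reduces the claim to $\mathbf{1}^{\top}\Psi^F=e_{(n)}^{\top}$, inverts to $e_{(n)}^{\top}\Phi^F=\mathbf{1}^{\top}$, and observes that the $(n)$-row of $\Phi^F$ consists of the values $\varphi^{(n),F}(\mu)$, which are all $1$ because $\varphi^{(n)}$ is the trivial character. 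This is more elementary and self-contained — it needs only the invertibility of $\Phi^F$ (Lemma~\ref{L: Phi}), the basis property of $\Xi(n)$ (Theorem~\ref{T: GR 1.1}), and the identity $\Xi^{(n)}=1$, with no appeal to the structure of the radical — and it is arguably the better proof of this part.
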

\begin{proof} For part (i), for each $\gamma\in\pReg(n)$, since $c_{n,F}$ is an $F$-algebra homomorphism, we have \[c_{n,F}(f_{\lambda,F})(\gamma)=\sum_{\mu\in\pReg(n)}b_{\lambda,\mu}\varphi^{\mu,F}(\gamma)=\delta_{\lambda,\gamma}.\] Since $\{\Char_{\lambda,F}:\lambda\in \pReg(n)\}$ forms a basis for $\Ccl{n}{F}$, we have $c_{n,F}(f_{\lambda,F})=\Char_{\lambda,F}$.

For the first assertion in part (ii), suppose that $b_{\lambda,\mu}\neq 0$ for some largest (with respect to the total order $\leq$ we have fixed on $\pReg(n)$) $\mu$ such that $\mu\not\wref\lambda$. Evaluate part (i) at $\mu$, we get
\begin{align}\label{Eq: 3.1}
0=\sum_{\xi\in\pReg(n)}b_{\lambda,\xi}\varphi^{\xi,F}(\mu)= \sum_{\pReg(n)\ni\xi\wref\lambda}b_{\lambda,\xi}\varphi^{\xi,F}(\mu)+\sum_{\pReg(n)\ni\xi\not\wref\lambda}b_{\lambda,\xi}\varphi^{\xi,F}(\mu).
\end{align} If $\xi\wref\lambda$, then $\varphi^{\xi,F}(\mu)=0$ by Lemma \ref{L: phi nonzero} (else, $\mu\wref \xi\wref \lambda$ and hence $\mu\wref\lambda$). Therefore the first summand of the right-hand side of Equation \ref{Eq: 3.1} is 0. Suppose now that $\xi\not\wref \lambda$. If $\mu\torder \xi$ then, in particular, $\mu\not\wref \xi$ and, again by Lemma \ref{L: phi nonzero}, $\varphi^{\xi,F}(\mu)=0$. If $\xi\torder \mu$, by our choice of $\mu$, we have $b_{\lambda,\xi}=0$. Therefore, using Lemma \ref{L: phi nonzero}, Equation \ref{Eq: 3.1} reduces to \[0=b_{\lambda,\mu}\varphi^{\mu,F}(\mu)=b_{\lambda,\mu}\facmulti{\mu}\neq 0.\] This is a contradiction.  Therefore \[f_{\lambda,F}=\sum_{\pReg(n)\ni\mu\wref \lambda}b_{\lambda,\mu}\Xi^\mu.\] Since $\Psi^F$ is the inverse matrix of $\Phi^F$, we have $b_{\lambda,\lambda}=\frac{1}{\facmulti{\lambda}}$.%Apply $c_{n,F}$ to the equation and evaluation at $\lambda$, we get, by part (i) and Lemma \ref{L: phi nonzero}, \[b_{\lambda,\lambda}\facmulti{\lambda}=b_{\lambda,\lambda}\varphi^{\lambda,F}(\lambda)=c_{n,F}(f_{\lambda,F})(\lambda)=\Char_{\lambda,F}(\lambda)=1.\]

We now prove part (iii). Notice that
\begin{align}\label{Eq: 3.2}
  (f_{\lambda,F})^2&=\left (\frac{1}{\facmulti{\lambda}}\Xi^\lambda+\sum_{\pReg(n)\ni\swref{\mu}{\lambda}}b_{\lambda,\mu}\Xi^\mu\right )^2=\frac{1}{\facmulti{\lambda}^2}(\Xi^\lambda)^2+Z.%\\
  %&=\frac{1}{\facmulti{\lambda}^2}\facmulti{\lambda}\Xi^\lambda+Y+Z=\frac{1}{\facmulti{\lambda}}\Xi^\lambda+Y+Z
\end{align} where $Z$ is a linear combination of the products of the form $\Xi^\zeta \Xi^\gamma$ where $\zeta,\gamma\in\C(n)$ such that either $\swref{\zeta}{\lambda}$ or $\swref{\gamma}{\lambda}$. By Corollary \ref{C: product in Descent}, $Z\in Y_\lambda^\circ$ and $(\Xi^\lambda)^2=\facmulti{\lambda}\Xi^\lambda+Y$ where $Y\in Y_\lambda^\circ$. Substitute these into Equation \ref{Eq: 3.2}, we get the case for $r=2$. The general case follows inductively.

For part (iv), let $z=\sum_{\lambda\in\pReg(n)}f_{\lambda,F}$. Notice that, for any $\gamma\in\pReg(n)$, we have \[c_{n,F}(z)(\gamma)=\sum_{\lambda,\mu\in\pReg(n)} b_{\lambda,\mu}\varphi^{\mu,F}(\gamma)=\sum_{\lambda\in\pReg(n)}\delta_{\gamma,\lambda}=1.\] Therefore $c_{n,F}(z)=1$ in $\Ccl{n}{F}$.  Since $c_{n,F}(\Xi^{(n)})=1$, we have $z-\Xi^{(n)}\in \ker(c_{n,F})$.  Since $z-\Xi^{(n)}$ is a sum involving $\Xi^\mu$ such that $(n)\neq \mu\in\pReg(n)$, by Theorem \ref{T: Sol epi}, we must have $z-\Xi^{(n)}=0$.
\end{proof}

%For an example, when $p=3$ and $n=5$, we have $f_{(3,2),F}=\Xi^{(3,2)}+\Xi^{(3,1,1)}+2\Xi^{(2,2,1)}$. Therefore, the summand $\Xi^\mu$ in $f_{\lambda,F}$ may not satisfy $\mu\sref\lambda$.

The elements $\{f_{\lambda,F}:\lambda\in\pReg(n)\}$ are generally not idempotents and far from being orthogonal to each other. We want to lift $f_{\lambda,F}+\rad(\Des{n}{F})=\Char_{\lambda,F}$ and orthogonalize them. To do so, we use the idea in the proofs of Idempotent Lifting Theorem (see, for example, \cite[Theorem 1.7.3]{Ben1}) and \cite[Corollary 1.7.4]{Ben1} as follows. %The following proposition is essentially in the proof of,  for example, \cite[Theorem 1.7.3]{Ben1}.

\begin{prop}\label{P: Idem Lift} Let $A$ be a finite-dimensional algebra over $F$ and $N$ be a nilpotent ideal in $A$.
\begin{enumerate}[(i)]
\item Suppose that $c\in A/N$ is an idempotent and $c=a+N$ for some $a\in A$. Then there is a large enough $k$ (depending just on the nilpotency index of $N$) such that $a^{p^k}$ is an idempotent lifting $c$.
\item Let $c_1,\ldots,c_n$ be orthogonal primitive idempotents in $A/N$ such that $\sum_{i=1}^nc_i=1+N$. Suppose that $e_1'=1$ and, for $i>1$, $e_i'$ is a lift of $\sum_{j\geq i} c_j$ to an idempotent in $e_{i-1}'Ae_{i-1}'$. Then $\{e_i=e_i'-e_{i+1}':i\in [1,n]\}$ are orthogonal primitive idempotents in $A$ such that $\sum_{i=1}^ne_i=1$ and $e_i+N=c_i$.
\end{enumerate}
\end{prop}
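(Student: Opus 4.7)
The plan is to handle each part by the standard idempotent-lifting argument, tracking the exponent $p^k$ explicitly in (i) and exploiting the nested structure of the $e_i'$ in (ii).

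For part (i), since $c = a + N$ is idempotent, $a^2 - a = a(a-1)$ lies in $N$. I would work inside the commutative subalgebra $F[a]$, where $a$ and $a-1$ commute. Because $p > 0$ throughout this section, the Frobenius identity applies in $F[a]$ to give $(a-1)^{p^k} = a^{p^k} - 1$. Hence
\[
(a^{p^k})^2 - a^{p^k} \;=\; a^{p^k}(a^{p^k} - 1) \;=\; a^{p^k}(a-1)^{p^k} \;=\; \bigl(a(a-1)\bigr)^{p^k}.
\]
Choosing $k$ so that $p^k$ exceeds the nilpotency index $m$ of $N$ forces this to vanish, so $a^{p^k}$ is idempotent. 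To see it still lifts $c$, a trivial induction from $a^2 \equiv a \pmod{N}$ gives $a^r \equiv a \pmod{N}$ for all $r \geq 1$; in particular $a^{p^k} + N = c$.

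For part (ii), set $e_{n+1}' = 0$ (consistent with $\sum_{j\geq n+1}c_j = 0$). The first step is to record the nesting property: since $e_j'$ is an idempotent in the corner $e_{j-1}' A e_{j-1}'$, one has $e_{j-1}' e_j' = e_j' e_{j-1}' = e_j'$, and iterating this yields $e_i' e_j' = e_j' e_i' = e_j'$ whenever $i \leq j$. Granted this, direct expansion gives $e_i^2 = (e_i' - e_{i+1}')^2 = e_i' - e_{i+1}' = e_i$ and, for $i < j$, the four cross terms in $e_i e_j$ reduce via the nesting identity to $e_j' - e_{j+1}' - e_j' + e_{j+1}' = 0$. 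The telescoping sum $\sum_{i=1}^n e_i = e_1' - e_{n+1}' = 1$ is immediate, and $e_i + N = \bigl(\sum_{j\geq i}c_j\bigr) - \bigl(\sum_{j\geq i+1}c_j\bigr) = c_i$.

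The remaining point is primitivity: any decomposition $e_i = f + g$ into orthogonal idempotents of $A$ descends to an analogous decomposition of $c_i$ in $A/N$, forcing (say) $g \in N$; but $g$ is also an idempotent, and an idempotent inside a nilpotent ideal must vanish, so $g = 0$ and $e_i$ is primitive. I do not anticipate any real obstacle here; the one place requiring a little care is the nesting identity $e_i'e_j' = e_j'$ for $i < j$, which follows by descending induction on $i$ using $e_{i-1}' e_i' = e_i'$, and part (i) is what guarantees that lifts like $e_i'$ exist inside the required corner algebra $e_{i-1}' A e_{i-1}'$ (whose radical is the nilpotent ideal $e_{i-1}' N e_{i-1}'$).
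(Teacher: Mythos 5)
Your proof is correct and matches the approach the paper invokes by citing Benson's Theorem 1.7.3 and Corollary 1.7.4 (the paper itself gives no proof, only the reference, together with the remark that the $a^{p^k}$ trick replaces the usual $3a^2-2a^3$ iteration). Your Frobenius computation in $F[a]$ for part (i), the nesting identity $e_i'e_j'=e_j'e_i'=e_j'$ for $i\leq j$ with its telescoping consequences for part (ii), and the observation that an idempotent in a nilpotent ideal is zero are exactly the ingredients the cited proofs rest on.
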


We write $a^{p^\infty}$ for the idempotent in Proposition \ref{P: Idem Lift}(i). In the case of the descent algebra $\Des{n}{F}$, it is sufficient to choose $k_n$ with $p^{k_n}\geq n-1$ (see \cite[Theorem 3]{AW})) such that, for each idempotent $e\in \Ccl{n}{F}$ and $x\in \Des{n}{F}$ such that $c_{n,F}(x)=e$, we have $x^{p^{\infty}}=x^{p^{k_n}}$ \index{$x^{p^\infty}$}is an idempotent lifting $e$. We remark that, instead of repeatedly using $3a^2-2a^3$ for the idempotent-lifting procedure,  the `advantage' here allows us to lift $e$ immediately to an idempotent by taking enough high power of $p$ for $a$ and is easier to present in the subsequent proofs.

In the sequel, suppose that $m=|\pReg(n)|$, with respect to the total order $\torder$ we have fixed in the beginning of this section, and we write \[(n)=\lambda^{(m)}\torder\cdots\torder\lambda^{(1)}\] for the elements in $\pReg(n)$. Furthermore, to simplify notations, we denote
\begin{align*}
f_i&=f_{\lambda^{(i)},F},\ \  f_{\geq i}=\sum_{j\geq i}f_j, \ \ \Char_i=\Char_{\lambda^{(i)},F},\ \  \Char_{\geq i}=\sum_{j\geq i}\Char_j,
\index{$f_i$}\index{$f_{\geq i}$}\index{$\Char_i$}\index{$\Char_{\geq i}$}\end{align*} so that, by Lemma \ref{L: f*}(i), $c_{n,F}(f_{\geq i})=\Char_{\geq i}$. Furthermore, for each $i\in [1,m]$, let $Y_{\leq i}=\sum_{j\in [1,i]}Y_{\lambda^{(j)}}$ \index{$Y_{\leq i}$}and $Y_{\leq i}^\circ=Y_{\lambda^{(i)}}^\circ+\sum_{j\in [1,i-1]}Y_{\lambda^{(j)}}$, \index{$Y_{\leq i}^\circ$}i.e.,  $Y_{\leq i}$ and $Y_{\leq i}^\circ$ have the following respective $F$-bases
\begin{align*}
&\{\Xi^\xi:\C(n)\ni \xi\wref \lambda^{(j)},\ j\in [1,i]\},\\
&\{\Xi^\xi:\C(n)\ni \xi\wref \lambda^{(j)},\ j\in [1,i],\ \xi\not\approx\lambda^{(i)}\}.
\end{align*} The following follows from Corollary \ref{C: product in Descent}(ii).

\begin{lem}\label{L: Yi ideals} We have a chain of ideals of $\Des{n}{F}$ given by \[Y_{\leq 1}^\circ\subsetneq Y_{\leq 1}\subsetneq Y_{\leq 2}^\circ\subsetneq Y_{\leq 2}\subsetneq \cdots\subsetneq Y_{\leq m}^\circ\subsetneq Y_{\leq m}=\Des{n}{F}.\]
\end{lem}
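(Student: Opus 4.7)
The plan is to read off everything directly from the explicit $F$-bases provided in the preamble together with Corollary \ref{C: product in Descent}(i), which asserts that each $Y_q$ and $Y_q^\circ$ is a two-sided ideal of $\Des{n}{F}$. Since the sum of two-sided ideals of an algebra is again a two-sided ideal, it follows immediately that $Y_{\leq i}$ and $Y_{\leq i}^\circ$ are two-sided ideals for every $i$. So the remaining work is just to verify the chain of inclusions and the fact that $Y_{\leq m}=\Des{n}{F}$.

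For the chain, I would use that the fixed total order $\leq$ on $\P(n)$ refines $\wref$. The inclusion $Y_{\leq i}^\circ\subseteq Y_{\leq i}$ is obvious from the basis descriptions: the basis of $Y_{\leq i}^\circ$ differs from that of $Y_{\leq i}$ only by omitting those $\Xi^\xi$ with $\xi\approx\lambda^{(i)}$. Strictness is witnessed by $\Xi^{\lambda^{(i)}}$, which lies in the basis of $Y_{\leq i}$ but not in that of $Y_{\leq i}^\circ$ (using linear independence of $\Xi(n)$ from Theorem \ref{T: GR 1.1}). For $Y_{\leq i}\subseteq Y_{\leq i+1}^\circ$, take any basis element $\Xi^\xi$ of $Y_{\leq i}$, so $\xi\wref\lambda^{(j)}$ for some $j\leq i$. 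Passing to partitions, $\lambda(\xi)\wref\lambda^{(j)}$, and since the total order refines weak refinement we obtain $\lambda(\xi)\leq\lambda^{(j)}\leq\lambda^{(i)}<\lambda^{(i+1)}$, so in particular $\xi\not\approx\lambda^{(i+1)}$. This is exactly the condition placing $\Xi^\xi$ in the basis of $Y_{\leq i+1}^\circ$.

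Finally, $Y_{\leq m}=\Des{n}{F}$ because $\lambda^{(m)}=(n)$ and every composition of $n$ is vacuously a weak refinement of $(n)$, so $Y_{(n)}$ is spanned by the full basis $\Xi(n)$.

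I anticipate no serious obstacle: the whole proof is bookkeeping from the basis formulas, hinging only on the order-theoretic fact that $\wref$ is refined by $\leq$. The one mild subtlety is that strictness of $Y_{\leq i}\subsetneq Y_{\leq i+1}^\circ$ relies on the existence of some $\xi\wref\lambda^{(i+1)}$ with $\xi\not\approx\lambda^{(i+1)}$ and $\xi\not\wref\lambda^{(j)}$ for all $j\leq i$; in degenerate small examples this may collapse to an equality, but in any case the chain as a chain of (not necessarily proper) ideals is exactly what the subsequent idempotent-lifting argument through $Y_{\leq i}^\circ$ and $Y_{\leq i}$ requires.
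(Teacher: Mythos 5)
Your proof is correct and takes the route the paper intends: use Corollary \ref{C: product in Descent}(i) (the paper cites part (ii), a typo) to see that each $Y_q$ and $Y_q^\circ$ is a two-sided ideal, hence so are their sums, and then read the inclusions off the stated bases using the fact that the total order on $\P_p(n)$ refines $\wref$. Your caveat about strictness is in fact well-founded: for $n=3$, $p=2$ one has $\P_2(3)=\{(2,1),(3)\}$, and every proper weak refinement of $(3)$ already weakly refines $(2,1)$, so $Y_{\leq 1}=Y_{\leq 2}^\circ$ and that middle inclusion is an equality, not a strict one. The lemma as printed should therefore have $\subseteq$ for the inclusions $Y_{\leq i}\subseteq Y_{\leq i+1}^\circ$ (only $Y_{\leq i}^\circ\subsetneq Y_{\leq i}$ is always strict, witnessed by $\Xi^{\lambda^{(i)}}$); this does not affect the subsequent uses in Lemma \ref{L: f'} and Theorem \ref{T: modular idem}, which only require the containments and the ideal property.
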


We now define the elements $e_{\lambda,F}$'s which will eventually form a complete set of orthogonal primitive idempotents for $\Des{n}{F}$.

\begin{defn}\label{D: ErdSch e} Let $f_1'=1$ and, inductively, for $i\in [2,m]$, we define $f_i'=(f_{i-1}'f_{\geq i}f'_{i-1})^{p^\infty}$ \index{$f_i'$}and $f_{m+1}'=0$. For each $i\in [1,m]$, define \[e_{\lambda^{(i)},F}=f_i'-f_{i+1}'.\index{$e_{\lambda^{(i)},F}$}\]
\end{defn}

%To prove our main result, we use the idea given in the proof of \cite[Corollary 1.7.4]{Ben1} which we shall now describe.

%\begin{prop}\label{P: ortho idem} Let $A$ be a finite-dimensional $F$-algebra, $N$ be a nilpotent ideal in $A$ and $c_1,\ldots,c_n$ be orthogonal primitive idempotents in $A/N$ such that $\sum_{i=1}^nc_i=1+N$. Suppose that $e_1'=1$ and, for $i>1$, $e_i'$ is a lift of $\sum_{j\geq i} c_j$ to an idempotent in $e_{i-1}'Ae_{i-1}'$. Then $\{e_i=e_i'-e_{i+1}':i\in [1,n]\}$ are orthogonal primitive idempotents in $A$ such that $\sum_{i=1}^ne_i=1$ and $e_i+N=c_i$.
%\end{prop}

In view of Definition \ref{D: ErdSch e}, we want to show that the elements $f_i'$'s satisfy the hypothesis stated in Proposition \ref{P: Idem Lift}(ii). %Before this, we further simplify some notations which will be used in the lemma.

%We now check that the elements $f_i'$'s indeed have the desired properties.

%\begin{rem}\label{R: idem} In the positive characteristic case, there are two choices for the definition of $f_i'$ (see \cite{Ben1}). In the ordinary case, we could repeat the construction of the idempotents using another definition.
%\end{rem}

\begin{lem}\label{L: f'}\
\begin{enumerate}[(i)]
  \item For each $i\in [1,m]$, the element $f_i'$ is an idempotent such that $c_{n,F}(f_i')=\Char_{\geq i}$ and, for $i>1$, $f_i'\in f_{i-1}'\Des{n}{F}f_{i-1}'$.
  \item For $i\in [2,m+1]$, \[f_i'=1-\left (\frac{1}{\facmulti{\lambda^{(i-1)}}}\Xi^{\lambda^{(i-1)}}+\epsilon_{i-1}\right )\] for some $\epsilon_{i-1}\in Y_{\leq i-1}^\circ$.
\end{enumerate}
\end{lem}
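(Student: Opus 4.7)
The plan is a simultaneous induction on $i$ for both parts of the lemma. The workhorse is Proposition \ref{P: Idem Lift}(i), applied in $\Des{n}{F}$ with the nilpotent ideal $\ker(c_{n,F}) = \rad(\Des{n}{F})$, while the chain of ideals in Lemma \ref{L: Yi ideals} is used to track the ``leading term'' required in part (ii).

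For the base cases: part (i) holds trivially at $i=1$ since $f_1' = 1$. For part (ii) at $i=2$, the minimality of $\lambda^{(1)} = (1^n)$ in the weak-refinement order forces $Y_{\leq 1}^\circ = 0$, so Lemma \ref{L: f*}(ii)--(iii) give $f_1 = \frac{1}{\facmulti{(1^n)}}\Xi^{(1^n)}$ as an idempotent. Since $f_1'=1$, Frobenius immediately yields $f_2' = (1-f_1)^{p^\infty} = 1 - f_1$, which is the required form with $\epsilon_1 = 0$.

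For the inductive step of (i), I would set $a = f_{i-1}' f_{\geq i} f_{i-1}'$; it lies in the corner $f_{i-1}' \Des{n}{F} f_{i-1}'$ by construction, and applying $c_{n,F}$ with the orthogonality of the $\Char_j$'s and the inductive value $c_{n,F}(f_{i-1}') = \Char_{\geq i-1}$ collapses the product to $\Char_{\geq i}$. Proposition \ref{P: Idem Lift}(i) then delivers the idempotent $f_i' = a^{p^\infty}$ lifting $\Char_{\geq i}$, which still lies in the corner algebra since that corner is closed under multiplication.

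For the inductive step of (ii), the key containment is $Y_{\leq i-2} \subseteq Y_{\leq i-1}^\circ$ (immediate from the spanning descriptions, since any $\xi \wref \lambda^{(k)}$ with $k \leq i-2$ satisfies $\lambda(\xi) \not\approx \lambda^{(i-1)}$), and $Y_{\leq i-1}^\circ$ is an ideal by Lemma \ref{L: Yi ideals}. Reducing modulo this ideal: the inductive form of $f_{i-1}'$ collapses to $f_{i-1}' \equiv 1$; Lemma \ref{L: f*}(ii) gives $f_j \equiv 0$ for $j < i-1$ and $f_{i-1} \equiv B := \frac{1}{\facmulti{\lambda^{(i-1)}}}\Xi^{\lambda^{(i-1)}}$; hence $a \equiv 1 - B$. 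Since $1$ commutes with $B$, the Frobenius identity gives $a^{p^k} \equiv 1 - B^{p^k}$, and Corollary \ref{C: product in Descent}(ii) shows that $B$ is an idempotent modulo $Y_{\leq i-1}^\circ$, so $B^{p^k} \equiv B$; therefore $f_i' \equiv 1 - B$ modulo $Y_{\leq i-1}^\circ$, as claimed. The boundary case $i = m+1$ reduces to the identities $\lambda^{(m)} = (n)$, $\facmulti{(n)} = 1$ and $\Xi^{(n)} = 1$, matching $f_{m+1}' = 0$ with $\epsilon_m = 0$. The main obstacle is purely the ideal bookkeeping: one must be careful to verify that every ``lower order'' contribution produced in the iterated multiplication and in the high power $a^{p^k}$ is genuinely captured by $Y_{\leq i-1}^\circ$, but once that containment is in place the argument runs on Frobenius plus the product formulas already available from Corollary \ref{C: product in Descent}.
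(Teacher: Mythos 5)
Your overall strategy---simultaneous induction on $i$, lifting idempotents via Proposition \ref{P: Idem Lift}(i), and tracking the leading term $\tfrac{1}{\facmulti{\lambda^{(i-1)}}}\Xi^{\lambda^{(i-1)}}$ through the ideal chain of Lemma \ref{L: Yi ideals} using Frobenius---is essentially the paper's, and your treatment of part (i) and your inductive step for part (ii) are both sound.

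However, your base case for part (ii) at $i=2$ contains a genuine error. You assert that $\lambda^{(1)}=(1^n)$, that $Y_{\leq 1}^\circ=0$, and hence that $f_1$ is already an idempotent with $\epsilon_1=0$. None of this holds in general: $\lambda^{(1)}$ is the least element of $\pReg(n)$, which equals $(1^n)$ only when $n<p$. For $n\geq p$ the partition $(1^n)$ is not $p$-regular, so $\lambda^{(1)}\neq(1^n)$ and $Y_{\leq 1}^\circ$ is typically nonzero; moreover $f_1$ need not be an idempotent, since $(\Xi^{\lambda^{(1)}})^2=\facmulti{\lambda^{(1)}}\Xi^{\lambda^{(1)}}+Y$ with $Y\in Y_{\lambda^{(1)}}^\circ$ possibly nonzero. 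Concretely, with $p=2$, $n=3$: $\lambda^{(1)}=(2,1)$, $f_1=\Xi^{(2,1)}$, $f_1^2=\Xi^{(2,1)}+\Xi^{(1,1,1)}\neq f_1$, and $\epsilon_1=\Xi^{(1,1,1)}\neq 0$. The paper's base case instead applies Lemma \ref{L: f*}(iii) to $f_1^{p^\infty}$ and leaves the remainder in $Y_{\leq 1}^\circ$ without assuming it vanishes. The repair for your argument is cheap: your inductive step for (ii) already applies at $i=2$, since $f_1'=1$ gives $f_1'\equiv 1\pmod{Y_{\leq 1}^\circ}$ trivially and the rest of your reduction goes through unchanged; the ad hoc base case you wrote is both incorrect and unnecessary.
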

\begin{proof} We first prove part (i). We argue by induction on $i$. It is clearly true when $i=1$ as $\Char_{\geq 1}=1$ and $f'_1=1$. Let $i>1$. Since $c_{n,F}$ is an $F$-algebra homomorphism, we have \[c_{n,F}(f_{i-1}'f_{\geq i}f'_{i-1})=\Char_{\geq i-1}\Char_{\geq i}\Char_{\geq i-1}=\Char_{\geq i}.\] Therefore, $c_{n,F}(f_i')=\Char_{\geq i}$. By Proposition \ref{P: Idem Lift}(i), $f_i'=(f_{i-1}'f_{\geq i}f'_{i-1})^{p^\infty}$ is an idempotent. Finally, since $f_{i-1}'$ is an idempotent, we have \[f_i'=f_{i-1}'(f_{i-1}'f_{\geq i}f'_{i-1})^{p^\infty}f_{i-1}'\in  f_{i-1}'\Des{n}{F}f_{i-1}'.\]

For part (ii), we again argue by induction on $i$. For $i=2$, notice that \[f_2'=(f_{\geq 2})^{p^\infty}=(1-f_1)^{p^\infty}=1-f_1^{p^\infty}=1-\left (\frac{1}{\facmulti{\lambda^{(1)}}}\Xi^{\lambda^{(1)}}+\epsilon_1\right ),\] for some $\epsilon_1\in Y_{\leq 1}^\circ$ where the second and last equalities are obtained using parts (iv) and (iii) of Lemma \ref{L: f*} respectively. Suppose inductively now that $f_i'=1+\epsilon_{i-1}'$ where $\epsilon_{i-1}'\in Y_{\leq i-1}$. Therefore, again using Lemma \ref{L: f*}(iv),
\begin{align*}
  f_{i+1}'&=(f_i'f_{\geq i+1}f_i')^{p^\infty}\\
  &=((1+\epsilon_{i-1}')(1-f_1-\cdots-f_i)(1+\epsilon_{i-1}'))^{p^\infty}\\
  &=((1+\epsilon_{i-1}')^2-(1+\epsilon_{i-1}')f_1(1+\epsilon_{i-1}')-\cdots-(1+\epsilon_{i-1}')f_i(1+\epsilon_{i-1}'))^{p^\infty}\\
  &=(1-f_i+E)^{p^\infty}\\
  &=(1-f_i)^{p^\infty}+E'\\
  &=1-f_i^{p^\infty}+E'\\
  &=1-\frac{1}{\facmulti{\lambda^{(i)}}}\Xi^{\lambda^{(i)}}-\epsilon_i
\end{align*} where $E$ is the sum of products involving at least one of the $f_1,\ldots,f_{i-1},\epsilon_{i-1}'$ and $E'$ is the sum of products involving at least one $E$. Since $f_1,\ldots,f_{i-1},\epsilon_{i-1}'\in Y_{\leq i-1}$, by Lemma \ref{L: Yi ideals}, $E\in Y_{\leq i-1}$ and consequently $E'\in Y_{\leq i-1}$. Lastly, by Lemmas \ref{L: f*}(iii) and \ref{L: Yi ideals}, we have $\epsilon_i=f_i^{p^\infty}-\frac{1}{\facmulti{\lambda^{(i)}}}\Xi^{\lambda^{(i)}}-E'\in Y_{\leq i}^\circ$.
\end{proof}

We are now ready to state and prove our main result in this section regarding the modular idempotents for the descent algebra $\Des{n}{F}$.%For the following theorem, we write $Y_{\leq \lambda}^\prec$ for $Y_{\leq i}^\prec$ if $\lambda^{(i)}=\lambda$.

\begin{thm}\label{T: modular idem} The set $\{e_{\lambda,F}:\lambda\in\pReg(n)\}$ is a complete set of orthogonal primitive idempotents of $\Des{n}{F}$ such that $c_{n,F}(e_{\lambda,F})=\Char_{\lambda,F}$ and $\sum_{\lambda\in\pReg(n)}e_{\lambda,F}=1$.  Furthermore, \[e_{\lambda,F}=\frac{1}{\facmulti{\lambda}}\Xi^\lambda+\epsilon_\lambda\index{$e_{\lambda,F}$}\] where $\epsilon_\lambda$ is a linear combination of some $\Xi^\xi$ such that $\swref{\xi}{\lambda}$, i.e., $\lambda\not\approx\xi\wref \lambda$.%, i.e., $\epsilon_\lambda\in Y_\lambda^\circ$.
\end{thm}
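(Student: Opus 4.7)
The plan is to split the proof into the two halves of the theorem. The first assertion---that $\{e_{\lambda,F}:\lambda\in\pReg(n)\}$ is a complete set of orthogonal primitive idempotents summing to $1$ with $c_{n,F}(e_{\lambda,F})=\Char_{\lambda,F}$---is a direct application of the idempotent lifting Proposition \ref{P: Idem Lift}(ii), taking $A=\Des{n}{F}$ and $N=\rad(\Des{n}{F})$. The orthogonal primitive idempotents $\Char_{\lambda^{(i)},F}$ in $\Ccl{n}{F}\cong A/N$ sum to $1$, and the successive lifts $f_i'$ from Definition \ref{D: ErdSch e} are exactly what Proposition \ref{P: Idem Lift}(ii) requires: Lemma \ref{L: f'}(i) shows each $f_i'$ is an idempotent in $f_{i-1}'\Des{n}{F}f_{i-1}'$ lifting $\Char_{\geq i}=\sum_{j\geq i}\Char_{\lambda^{(j)},F}$. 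The conclusion of Proposition \ref{P: Idem Lift}(ii) is precisely the desired statement.

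For the triangularity formula, I would combine Lemma \ref{L: f'}(ii) applied to both $f_i'$ and $f_{i+1}'$:
\begin{align*}
e_{\lambda^{(i)},F}=f_i'-f_{i+1}'
=\tfrac{1}{\facmulti{\lambda^{(i)}}}\Xi^{\lambda^{(i)}}
+\Bigl(\epsilon_i-\tfrac{1}{\facmulti{\lambda^{(i-1)}}}\Xi^{\lambda^{(i-1)}}-\epsilon_{i-1}\Bigr),
\end{align*}
with $\epsilon_{i-1}\in Y_{\leq i-1}^\circ$ and $\epsilon_i\in Y_{\leq i}^\circ$. The base case $i=1$ is immediate: since $f_1'=1$ and $Y_{\leq 1}^\circ=Y_{\lambda^{(1)}}^\circ$, Lemma \ref{L: f'}(ii) for $f_2'$ yields $e_{\lambda^{(1)},F}=\tfrac{1}{\facmulti{\lambda^{(1)}}}\Xi^{\lambda^{(1)}}+\epsilon_1$ with $\epsilon_1\in Y_{\lambda^{(1)}}^\circ$.

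For $i>1$, I would proceed by induction on $i$ and refine Lemma \ref{L: f'}(ii) using the actual definition of $\epsilon_i$ coming out of its proof together with Lemma \ref{L: f*}(iii). Tracing the argument, one obtains a decomposition $\epsilon_i=\epsilon_{\lambda^{(i)},p^\infty}-E_i$ where $\epsilon_{\lambda^{(i)},p^\infty}\in Y_{\lambda^{(i)}}^\circ$ (the ``good'' part coming from $(f_{\lambda^{(i)},F})^{p^\infty}-\tfrac{1}{\facmulti{\lambda^{(i)}}}\Xi^{\lambda^{(i)}}$) and $E_i\in Y_{\leq i-1}$. The $Y_{\lambda^{(i)}}^\circ$-part is exactly the contribution needed for $\epsilon_{\lambda^{(i)}}$, so one must show the residual $Y_{\leq i-1}$-contributions $-E_i-\tfrac{1}{\facmulti{\lambda^{(i-1)}}}\Xi^{\lambda^{(i-1)}}-\epsilon_{i-1}$ collectively land in $Y_{\lambda^{(i)}}^\circ$. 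The identity $1-f_i'=\sum_{j<i}e_{\lambda^{(j)},F}$ (from the telescoping $\sum_{j\geq i}e_{\lambda^{(j)},F}=f_i'$) together with the inductive form of each $e_{\lambda^{(j)},F}$ rewrites this residual as $-\sum_{j<i}e_{\lambda^{(j)},F}+\tfrac{1}{\facmulti{\lambda^{(i-1)}}}\Xi^{\lambda^{(i-1)}}+\cdots$, where all ``bad'' terms (those indexed by some $\lambda^{(j)}$ not $\wref$ $\lambda^{(i)}$) must mutually cancel by the orthogonality relations $e_{\lambda^{(i)},F}e_{\lambda^{(j)},F}=0$.

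The hardest step is precisely the last one: the total order $\leq$ on $\pReg(n)$ only refines $\wref$, so a given $\lambda^{(j)}$ with $j<i$ need not satisfy $\lambda^{(j)}\wref\lambda^{(i)}$, meaning $\Xi^{\lambda^{(j)}}$ is not automatically in $Y_{\lambda^{(i)}}^\circ$. The crux of the argument will be verifying that the refined form of $\epsilon_i$ provided by the proof of Lemma \ref{L: f'}(ii) (together with $(f_{\lambda^{(i)},F})^{p^\infty}\in Y_{\lambda^{(i)}}$, using that $Y_{\lambda^{(i)}}$ is a two-sided ideal by Corollary \ref{C: product in Descent}(i)) produces exactly the compensating terms needed so that $e_{\lambda^{(i)},F}-\tfrac{1}{\facmulti{\lambda^{(i)}}}\Xi^{\lambda^{(i)}}$ is supported entirely on $\Xi^{\xi}$ with $\swref{\xi}{\lambda^{(i)}}$.
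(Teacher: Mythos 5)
Your first half is correct and matches the paper: Proposition \ref{P: Idem Lift}(ii) together with Lemma \ref{L: f'}(i) gives the orthogonal-primitive-idempotent statement, and the base case $i=1$ of the triangularity is immediate from Lemma \ref{L: f'}(ii).

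You also correctly locate the genuine difficulty: Lemma \ref{L: f'}(ii) only places $\epsilon_{\lambda^{(i)}}=\epsilon_i-\tfrac{1}{\facmulti{\lambda^{(i-1)}}}\Xi^{\lambda^{(i-1)}}-\epsilon_{i-1}$ in $Y_{\leq i}^\circ$, which involves $\Xi^\xi$ with $\xi\wref\lambda^{(j)}$ for any $j\leq i$, not just $j=i$; since the total order merely refines $\wref$, the partitions $\lambda^{(j)}$ with $j<i$ need not be weak refinements of $\lambda^{(i)}$. However, your proposed route around this---rewriting the residual in terms of $\sum_{j<i}e_{\lambda^{(j)},F}$ and invoking orthogonality to force cancellation of the ``bad'' terms---is not carried out and is not the mechanism the paper uses. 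The orthogonality relations $e_{\lambda^{(i)},F}e_{\lambda^{(j)},F}=0$ constrain products, not the individual $\Xi$-coefficients of $e_{\lambda^{(i)},F}$, so it is not clear how the cancellation you are asking for would actually be extracted. You do mention the two right ingredients ($(f_{\lambda^{(i)},F})^r\in Y_{\lambda^{(i)}}$ and $Y_{\lambda^{(i)}}$ being a two-sided ideal) in your final sentence, but you do not assemble them into a workable argument; you only flag this as ``the crux.''

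The paper's resolution is cleaner and more direct: show $e_{\lambda^{(i)},F}\in Y_{\lambda^{(i)}}$ outright, so that $\epsilon_{\lambda^{(i)}}\in Y_{\leq i}^\circ\cap Y_{\lambda^{(i)}}=Y_{\lambda^{(i)}}^\circ$. This is done by going back to the definition $f_{i+1}'=(f_i'f_{\geq i+1}f_i')^{p^\infty}$, substituting $f_{\geq i+1}=f_{\geq i}-f_i$, and expanding the high power. Every term except the leading one involves the factor $f_i'f_if_i'$, which lies in $Y_{\lambda^{(i)}}$ because $f_i\in Y_{\lambda^{(i)}}$ (Lemma \ref{L: f*}(iii), $r=1$) and $Y_{\lambda^{(i)}}$ is a two-sided ideal (Lemma \ref{L: Yi ideals}). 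The leading term $(f_i'f_{\geq i}f_i')^{p^\infty}$ is shown to equal $f_i'$ using the idempotency of $f_{i-1}'$. Hence $f_{i+1}'=f_i'+E$ with $E\in Y_{\lambda^{(i)}}$, giving $e_{\lambda^{(i)},F}=f_i'-f_{i+1}'=-E\in Y_{\lambda^{(i)}}$. Your proposal would need to be supplemented with this argument (or an equivalent one) to close the gap you identified.
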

\begin{proof} Using Proposition \ref{P: Idem Lift}(ii) and Lemma \ref{L: f'}(i), we get the first sentence of our theorem.  When $i=1$, by Lemma \ref{L: f'}(i), we have \[e_{\lambda^{(1)},F}=1-f_2'=\frac{1}{\facmulti{\lambda^{(1)}}}\Xi^{\lambda^{(1)}}+\epsilon_1\] where $\epsilon_{\lambda^{(1)}}=\epsilon_1\in Y_{\leq 1}^\circ=Y^\circ_{\lambda^{(1)}}$. For $i\in [2,m]$, again by Lemma \ref{L: f'}(ii), we have \[e_{\lambda^{(i)},F}=-\left (\frac{1}{\facmulti{\lambda^{(i-1)}}}\Xi^{\lambda^{(i-1)}}+\epsilon_{i-1}\right )+\left ( \frac{1}{\facmulti{\lambda^{(i)}}}\Xi^{\lambda^{(i)}}+\epsilon_i\right )=\frac{1}{\facmulti{\lambda^{(i)}}}\Xi^{\lambda^{(i)}}+\epsilon_{\lambda^{(i)}}\] where $\epsilon_{\lambda^{(i)}}=\epsilon_i-\frac{1}{\facmulti{\lambda^{(i-1)}}}\Xi^{\lambda^{(i-1)}}-\epsilon_{i-1}\in Y_{\leq i}^\circ$. To complete the proof, we claim that $e_{\lambda^{(i)},F}\in Y_{\lambda^{(i)}}$, so that $\epsilon_{\lambda^{(i)}}=e_{\lambda^{(i)},F}-\frac{1}{\facmulti{\lambda^{(i)}}}\Xi^{\lambda^{(i)}}\in Y_{\leq i}^\circ\cap Y_{\lambda^{(i)}}=Y^\circ_{\lambda^{(i)}}$. The case $i=1$ is done. Suppose that $i\in [2,m]$. We have
\begin{align*}
&f'_{i+1}=(f'_if_{\geq i+1}f'_i)^{p^\infty}\\
&=(f'_if_{\geq i}f'_i-f'_if_if'_i)^{p^\infty}\\
&=(f'_if_{\geq i}f'_i)^{p^\infty}+E\\
&=((f_{i-1}'f_{\geq i}f_{i-1}')^{p^\infty}f_{\geq i}(f_{i-1}'f_{\geq i}f_{i-1}')^{p^\infty})^{p^\infty}+E\\
&=((f_{i-1}'f_{\geq i}f_{i-1}')^{p^\infty}f_{i-1}'f_{\geq i}f_{i-1}'(f_{i-1}'f_{\geq i}f_{i-1}')^{p^\infty})^{p^\infty}+E\\
&=f_i'+E,
\end{align*} where $E$ is a sum of product involving the term $f_i'f_if_i'$ and the fifth equality is due to the fact that $f_{i-1}'$ is an idempotent. Since $f_i'f_if_i'\in Y_{\lambda^{(i)}}$ by Lemmas \ref{L: f*}(iii) and \ref{L: Yi ideals}, we have $E\in Y_{\lambda^{(i)}}$ and therefore $e_{\lambda^{(i)},F}=f_i'-f_{i+1}'\in Y_{\lambda^{(i)}}$.
\end{proof}

\begin{rem}\
\begin{enumerate}[(i)]
  \item The construction of the idempotents works for arbitrary field $F$ of characteristic $p$. In fact, the coefficients of the $\Xi^\xi$'s involved in $e_{\lambda,F}$ belong to the prime subfield $\mathbb{F}_p$ of $F$.
  \item The properties stated in the theorem do not guarantee uniqueness of the idempotents. Let $p=2$ and consider the idempotents $e_{(4),F},e_{(3,1),F}$ obtained using the construction (see Appendix \ref{Appen A}). Let $z=\Xi^{(2,1,1)}+\Xi^{(1,2,1)}$ and
      \begin{align*}
        e_{(4),F}'&=e_{(4),F}+z, & e_{(3,1),F}'&=e_{(3,1),F}+z.
      \end{align*} Notice that $z\in\ker(c_{4,F})$ and it is easily checked that $\{e_{(4),F}',e_{(3,1),F}'\}$ is again another complete set of orthogonal primitive idempotents of $\Des{4}{F}$ satisfying the conditions in Theorem \ref{T: modular idem}, that is $c_{n,F}(e_{\lambda,F}')=\Char_{\lambda,F}$ and $e_{\lambda,F}'-\frac{1}{\facmulti{\lambda}}\Xi^\lambda\in\mathrm{span}_F\{\Xi^\xi:\swref{\xi}{\lambda}\}$.
\end{enumerate}
\end{rem}

%\begin{rem} The idempotents constructed in Theorem \ref{T: modular idem} are not generally central. For an example, when $p=2$ and $n=4$, we %have
%\begin{align*}
%e_{(3,1),F}\Xi^{(2,2)}&=(\Xi^{(3,1)}+\Xi^{(2,1,1)}+\Xi^{(1,1,1,1)})\Xi^{(2,2)}=\Xi^{(1,2,1)}+\Xi^{(2,1,1)},\\
%\Xi^{(2,2)}e_{(3,1),F}&=0.
%\end{align*}
%\end{rem}

We demonstrate our construction of the idempotents with an example.

\begin{eg} Let $p=3$, $n=4$ and $(2,1,1)<(2,2)< (3,1)<(4)$ be the total order on $\P_3(4)$. It can be easily checked that we have

\[\Phi^F=\begin{pmatrix}
  2&0&0&0\\ 2&2&0&0\\ 2&0&1&0\\ 1&1&1&1
\end{pmatrix},\ \ \Psi^F=(\Phi^F)^{-1}=\begin{pmatrix}
  2&0&0&0\\ 1&2&0&0\\ 2&0&1&0\\ 1&1&2&1
\end{pmatrix}.\] Therefore we have
\begin{align*}
  f_1&=2\Xi^{(2,1,1)},& f_2&=2\Xi^{(2,2)}+\Xi^{(2,1,1)},\\ f_3&=\Xi^{(3,1)}+2\Xi^{(2,1,1)},& f_4&=\Xi^{(4)}+2\Xi^{(3,1)}+\Xi^{(2,2)}+\Xi^{(2,1,1)}.
\end{align*} As we mentioned earlier, it suffices to take power $p^{k_n}\geq n-1$ (see the paragraph right after Proposition \ref{P: Idem Lift}). In this case, it is enough to take $k_n=1$. We get $f_1'=1$,
\begin{align*}
  f_2'&=(\Xi^{(4)}+\Xi^{(2,1,1)})^3=1-(2\Xi^{(2,1,1)}+2\Xi^{(1,1,1,1)})\\
  f_3'&=(f_2'(\Xi^{(4)}+\Xi^{(2,2)})f_2')^3=1-(2\Xi^{(2,2)}+\Xi^{(2,1,1)}+2\Xi^{(1,1,2)}+\Xi^{(1,1,1,1)})\\
  f_4'&=(f_3'f_4f_3')^3=1-(\Xi^{(3,1)}+2\Xi^{(2,2)}+2\Xi^{(1,1,2)}+\Xi^{(1,1,1,1)}),
\end{align*} and $f_5'=0$. Therefore,
\begin{align*}
  e_{(2,1,1),F}&=2\Xi^{(2,1,1)}+2\Xi^{(1,1,1,1)},&
  e_{(2,2),F}&=2\Xi^{(2,2)}+2\Xi^{(2,1,1)}+2\Xi^{(1,1,2)}+2\Xi^{(1,1,1,1)},\\
  e_{(3,1),F}&=\Xi^{(3,1)}+2\Xi^{(2,1,1)},&
  e_{(4),F}&=\Xi^{(4)}+2\Xi^{(3,1)}+\Xi^{(2,2)}+\Xi^{(1,1,2)}+2\Xi^{(1,1,1,1)}.
\end{align*}
\end{eg}

%Recall the set of orthogonal primitive idempotents $\{e_{\lambda,F}:\lambda\in\pReg(n)\}$ of $\Des{n}{F}$ we have constructed in Section \ref{S: mod idem}. It has been proved in \cite{ES} that $e_{\lambda,F}F\sym{n}$ are projective with dimension $|\ccl{\lambda,p}|$. We study the relation between the higher Lie modules and such projective modules.

We have presented the computation results of some modular idempotents in Appendices \ref{Appen A} and \ref{Appen B} using Magma \cite{Magma}. We now state a few corollaries following our theorem. %They will be studied in more details in the next section.

\begin{cor}\label{C: e * omega} Let $\lambda\in\pReg(n)$ and $q\in \C(n)$. Then $e_{\lambda,F}\omega_q=0$ unless $q\wref \lambda$. When $\lambda=\lambda(q)$, we have $e_{\lambda,F}\omega_q=\omega_\lambda$.
\end{cor}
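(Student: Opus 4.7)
The plan is to combine the triangularity statement for $e_{\lambda,F}$ from Theorem \ref{T: modular idem} with the multiplication rule in Theorem \ref{T: BL results}(ii). Write $e_{\lambda,F} = \frac{1}{\facmulti{\lambda}}\Xi^\lambda + \sum_{\swref{\xi}{\lambda}} a_\xi \Xi^\xi$, so $e_{\lambda,F}$ is an $F$-linear combination of $\Xi^\xi$ with $\xi \wref \lambda$. For any such $\xi$, Theorem \ref{T: BL results}(ii) gives $\Xi^\xi \omega_q = \sum_{q \approx s \sref \xi} |\overline{N^s_{\xi,q}}|\, \omega_s$, which is nonzero only when some composition $s$ satisfies $q \approx s \sref \xi$; in particular $q \wref \xi \wref \lambda$ forces $q \wref \lambda$. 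Contrapositively, if $q \not\wref \lambda$ then every $\Xi^\xi \omega_q$ in the expansion vanishes, proving the first assertion.

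For the second assertion, assume $q \approx \lambda$. I claim that $\Xi^\xi \omega_q = 0$ for every $\xi$ with $\swref{\xi}{\lambda}$, so that only the leading term contributes. Suppose, for contradiction, that there is $s$ with $q \approx s \sref \xi$. Since $s \sref \xi$ refines $\xi$ into $\ell(s)$ parts, $\ell(s) \geq \ell(\xi)$; and because $s \approx q \approx \lambda$ have the same multiset of parts, $\ell(s) = \ell(\lambda)$. On the other hand $\xi \wref \lambda$ means $\xi \approx \xi' \sref \lambda$ for some composition $\xi'$, whence $\ell(\xi) = \ell(\xi') \geq \ell(\lambda)$. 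Combining gives $\ell(\xi) = \ell(\lambda)$, which forces the refinement $\xi' \sref \lambda$ to be trivial, so $\xi' = \lambda$ and hence $\xi \approx \lambda$, contradicting $\swref{\xi}{\lambda}$.

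It remains to evaluate $\frac{1}{\facmulti{\lambda}}\Xi^\lambda \omega_q$. The same length comparison applied to $s \sref \lambda$ with $s \approx \lambda$ forces $s = \lambda$, so Theorem \ref{T: BL results}(ii) collapses to $\Xi^\lambda \omega_q = |\overline{N^\lambda_{\lambda,q}}|\,\omega_\lambda$, and Lemma \ref{L: basic coef approx} gives $|\overline{N^\lambda_{\lambda,q}}| = \facmulti{\lambda}$. Therefore $e_{\lambda,F}\omega_q = \omega_\lambda$, as required. There is no real obstacle: the entire proof is bookkeeping with the weak/strong refinement orders, leveraging the triangular description of $e_{\lambda,F}$ to reduce to a single term, and then reading off the coefficient from the combinatorial count of Lemma \ref{L: basic coef approx}.
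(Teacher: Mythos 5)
Your proof is correct and takes essentially the same route as the paper: expand $e_{\lambda,F}$ via the triangularity property of Theorem \ref{T: modular idem}, kill all terms with $\swref{\xi}{\lambda}$ using Theorem \ref{T: BL results}(ii), and evaluate the leading term with Lemma \ref{L: basic coef approx}. The paper's proof is terser and leaves the length-comparison argument (showing that $q\approx\lambda$ together with $\swref{\xi}{\lambda}$ rules out $q\wref\xi$) implicit; you have simply spelled it out.
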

\begin{proof} By Theorem \ref{T: BL results}(ii), $\Xi^s\omega_q=0$ unless $q\wref s$. Therefore, by Theorem \ref{T: modular idem}, we get the first assertion. For the second assertion, similarly, using Theorem \ref{T: modular idem}, we have \[e_{\lambda,F}\omega_q=\frac{1}{\facmulti{\lambda}}\Xi^\lambda\omega_q+\epsilon_\lambda\omega_q =\frac{1}{\facmulti{\lambda}}\Xi^\lambda\omega_q=\frac{1}{\facmulti{\lambda}}|\overline{N_{\lambda,q}^\lambda}|\omega_\lambda =\frac{1}{\facmulti{\lambda}}\facmulti{\lambda}\omega_\lambda=\omega_\lambda\]where the third and fourth equalities follow from Theorem \ref{T: BL results}(ii) and Lemma \ref{L: basic coef approx} respectively.
\end{proof}

By Lemma \ref{L: translate w}, if $\lambda\approx q$, there exists $\sigma$ such that $\sigma\omega_q=\omega_\lambda$. Therefore, Corollary \ref{C: e * omega} implies the following result.

\begin{cor}\label{C: higher lie inj in proj} Let $q\in\C(n)$ such that $q\approx \lambda\in \pReg(n)$. The map $\alpha:\Lie_F(q)\to e_{\lambda,F} F\sym{n}$ defined by the left multiplication by $e_{\lambda,F}$ is an injection.%, where $\sigma \omega_q=\omega_\lambda$.
\end{cor}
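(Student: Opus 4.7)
The plan is to show that the map $\alpha$ coincides with left multiplication by an invertible element of $F\sym{n}$ on $\Lie_F(q)$, from which injectivity follows immediately.

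First, since $q \approx \lambda$, Lemma \ref{L: translate w} supplies a permutation $\sigma \in \sym{n}$ such that $\sigma\omega_q = \omega_\lambda$. On the other hand, since $\lambda = \lambda(q) \in \pReg(n)$, Corollary \ref{C: e * omega} gives $e_{\lambda,F}\omega_q = \omega_\lambda$. Combining these two identities yields $e_{\lambda,F}\omega_q = \sigma\omega_q$ in $F\sym{n}$.

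Now an arbitrary element of $\Lie_F(q) = \omega_q F\sym{n}$ has the form $\omega_q x$ for some $x \in F\sym{n}$, so
\[
\alpha(\omega_q x) \;=\; e_{\lambda,F}\,\omega_q\, x \;=\; \sigma\,\omega_q\, x.
\]
Thus $\alpha$ agrees with left multiplication by $\sigma$ on the right ideal $\Lie_F(q)$. Since $\sigma$ is a unit in $F\sym{n}$, left multiplication by $\sigma$ is a bijection on all of $F\sym{n}$, in particular injective on the submodule $\Lie_F(q)$. Therefore $\alpha$ is injective, and its image lies in $e_{\lambda,F}F\sym{n}$ by construction. There is no real obstacle here; the argument reduces the claim to the already-established identity $e_{\lambda,F}\omega_q = \omega_\lambda$ of Corollary \ref{C: e * omega} together with the translation identity of Lemma \ref{L: translate w}.
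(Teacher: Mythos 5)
Your proof is correct and takes exactly the approach the paper intends: the paper's own justification is just the sentence preceding the corollary, which points out that Lemma~\ref{L: translate w} gives $\sigma\omega_q=\omega_\lambda$ and combines this with Corollary~\ref{C: e * omega} to conclude $e_{\lambda,F}\omega_q=\sigma\omega_q$, so that $\alpha$ is left multiplication by a unit. You have simply spelled out the implicit final step.
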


As we have mentioned earlier, in the $p=0$ case, we also could construct a list of orthogonal primitive idempotents for $\Des{n}{F}$ using the method in this section by replacing the idempotent lifting $a^{p^\infty}$ with performing the ($3a^2-2a^3$)-process repeatedly. To conclude the section, we give a simple example and compare it with the orthogonalization of the Lie idempotents $\nu_\lambda$'s as in Proposition \ref{P: ord idem}.

\begin{eg} Let $n=3$, $p=0$ (or $p>3$) and $(1^3)<(2,1)<(3)$. We have \[\Phi^0=\begin{pmatrix}
  6&0&0\\ 3&1&0\\ 1&1&1
\end{pmatrix},\ \ \Psi^0=\begin{pmatrix}
  \frac{1}{6}&0&0\\
  -\frac{1}{2}&1&0\\
  \frac{1}{3}&-1&1
\end{pmatrix}.\] Therefore,
\begin{align*}
  f_3&=\frac{1}{6}\Xi^{(1^3)},& f_2&=\Xi^{(2,1)}-\frac{1}{2}\Xi^{(1^3)},& f_1&=\Xi^{(3)}-\Xi^{(2,1)}+\frac{1}{3}\Xi^{(1^3)}.
\end{align*} It is readily checked that $\{f_1,f_2,f_3\}$ is already a complete set of orthogonal primitive idempotents and therefore it is the orthogonalization of itself; namely, $e_{(3)}=f_1$, $e_{(2,1)}=f_2$ and $e_{(1,1,1)}=f_3$.

On the other hand, if we orthogonalize $\{\nu_{(3)},\nu_{(2,1)},\nu_{(1^3)}\}$ as in Proposition \ref{P: ord idem}, where
\begin{align*}
  \nu_{(3)}&=\Xi^{(3)}-\frac{1}{3}\Xi^{(2,1)}-\frac{2}{3}\Xi^{(1,2)}+\frac{1}{3}\Xi^{(1^3)}, & \nu_{(2,1)}&=\Xi^{(2,1)}-\frac{1}{2}\Xi^{(1^3)},& \nu_{(1^3)}&=\frac{1}{6}\Xi^{(1^3)},
\end{align*} we get
\begin{align*}
e_{(3)}'&=\nu_{(3)}=\Xi^{(3)}-\frac{1}{3}\Xi^{(2,1)}-\frac{2}{3}\Xi^{(1,2)}+\frac{1}{3}\Xi^{(1^3)},\\
e_{(2,1)}'&=\nu_{(2,1)}(1-\nu_{(3)})=\Xi^{(2,1)}+\frac{1}{2}\Xi^{(1^3)},\\
e_{(1,1,1)}'&=\nu_{(1,1,1)}(1-\nu_{(2,1)})(1-\nu_{(3)})=\frac{1}{6}\Xi^{(1^3)}.
\end{align*} The set $\{e_\lambda:\lambda\in\P(3)\}$ appears to be simpler than $\{e'_\lambda:\lambda\in\P(3)\}$ in terms of the $\Xi^q$'s. But, when $p=0$, in general, the computation requires the computation of the inverse matrix $\Psi^0$ which is not obvious at all.  Indeed, in this case, $KC=\Phi^0$ where $K$ and $C$ are the Kostka matrix (see, for example, \cite{Mac}) and character table for the symmetric group respectively. Therefore the knowledge of the inverse of $\Phi^0$ would give us knowledge about the inverse of the Kostka matrix or character table because $K^{-1}=C\Psi^0$ and $C^{-1}=\Psi^0K$.
\end{eg}

\section{Higher Lie Modules}\label{S: higher lie}

In this section, we study the higher Lie module $\Lie_F(q)$ when $q$ is coprime to $p$. Our main results in this section are Theorems \ref{T: Lie mod basis} and \ref{T: higher lie mod} which basically establish a basis and describe the structure for $\Lie_F(q)$ when $(q,p)=1$. Both results work for arbitrary $p$. The extra assumption of $p>0$ only applies after Corollary \ref{C: higher lie mod split}. We begin with some notations.

%As such, consider $V$ as right $\GL(V)$-module when $r\geq n$, we get information about the Lie modules $\Lie(q)$.
Let $\lambda\in\P(n)$ and $k=\ell(\lambda)$. Recall the notations $m_i(\lambda)$ and $m(\lambda)$ in Subsection \ref{SS: generality}. Let $V$ be a finite dimensional vector space over $F$ and, for each $i\in [1,k]$, let $P_i\in\omega_{\lambda_i}\cdot T^{\lambda_i}(V)=L^{\lambda_i}(V)$. We write
\begin{align*}
  (P_1,\ldots,P_k)_\lambda&=\sum_{\pi\in \sym{m(\lambda)}}P_{1\pi} \cdots P_{k\pi}=\Xi^\lambda\cdot P_1\cdots P_k\index{${(P_1,\ldots,P_k)}_{\lambda}$}
\end{align*} where  the last equality follows from Theorem \ref{T: GR 2.1}. To describe a basis for the higher Lie module, we need the following notation.

\begin{defn}\label{D: Gamma} Let $\Gamma(\lambda)$ \index{$\Gamma(\lambda)$}be the set consisting of sequences \[(T_{n,1},\ldots,T_{n,m_n(\lambda)},\ldots,T_{1,1},\ldots,T_{1,m_1(\lambda)})\] such that
\begin{enumerate}[(a)]
\item $T_{i,j}$ are sequences of positive integers of length $i$ such that $\bigcup_{i,j}\{(T_{i,j})_k:k\in[1,i]\}=[1,n]$,
\item $\min(T_{i,j})=(T_{i,j})_1$ for all admissible $i,j$, and
\item $\min(T_{i,1})<\min(T_{i,2})<\cdots< \min(T_{i,m_i(\lambda)})$ for all $i\in [1,n]$.
\end{enumerate}
\end{defn}

Notice that $|\Gamma(\lambda)|=|\ccl{\lambda}|=\frac{n!}{\lambda?}$ because $\Gamma(\lambda)$ is in one-to-one correspondence with the set of permutations of cycle type $\lambda$ in $\sym{n}$ and also the set of standard right coset representatives $\prod_{i=1}^n(\sym{i}\wr\sym{m_i})\backslash \sym{n}$. The second correspondence is obtained by reading, from left to right and top to bottom, the entries of $\lambda$-tableaux $\mathfrak{t}$ such that $\mathfrak{t}_{1,j}<\mathfrak{t}_{1,j+1}$ if $\lambda_j=\lambda_{j+1}$.

Recall the notation $Q_S$ in Subsection \ref{SS: descent}. For each $T\in \Gamma(\lambda)$ with the presentation as before, we write
\begin{align*}
  Q_T&=Q_{T_{n,1}}\cdots Q_{T_{n,m_n(\lambda)}}\cdots Q_{T_{11}}\cdots Q_{T_{1,m_n(\lambda)}},\index{$Q_T$}\\
  (Q_T)_\lambda&=(Q_{T_{n,1}},\ldots, Q_{T_{n,m_n(\lambda)}},\ldots, Q_{T_{11}},\ldots, Q_{T_{1,m_n(\lambda)}})_\lambda.\index{${(Q_T)}_{\lambda}$}
\end{align*} They are viewed as elements in $R\sym{n}$. For example, \[Q_{((1,2),(3,4))}=Q_{(1,2)}Q_{(3,4)}=1234-2134-1243+2143.\] Similar as before, we have $(Q_T)_\lambda=\Xi^\lambda Q_T$. Notice that $\omega_i \cdot Q_{T_{i,j}}=iQ_{T_{i,j}}$ since $\omega_i^2=i\omega_i$. Therefore, $\omega^\lambda  Q_T=\prod_{i=1}^ni^{m_i(\lambda)}Q_T$ and likewise for $(Q_T)_\lambda$. With respect to $\lambda$, for each pair of admissible numbers $i,j$, denote $d_{i,j}=i(j-1)+\sum_{t\in [i+1,n]}tm_t(\lambda)$ and call the subword \[w_{1+d_{i,j}}w_{2+d_{i,j}}\cdots w _{i+d_{i,j}}\] the $(i,j)$-section of a word $w\in\sym{n}$.

Our first result in this section establishes a basis for the higher Lie module $\Lie_F(q)$ when $q$ is coprime to $p$ (cf. Appendix \ref{Appen C}). We note that, in the characteristic zero case, a basis for a higher Lie module, up to isomorphism, can also be derived, for example, using the proof of \cite[Theorem 8.23]{Reu} or \cite[Lemma 2.1 and Theorem 2.2]{Schoc03}. %. Readers may refer to Appendix \ref{Appen C} for examples of $\dim_F\Lie_F(\lambda)$ for all partitions $\lambda$ of $n\in [2,6]$ and $p$ is either 2 or 3.

%\begin{lem} The supports of the $(P_T)_\lambda$'s with $T\in\Gamma(\lambda)$ form a disjoint union of $\sym{n}$.
%\end{lem}
%\begin{proof} We use $\supp(T)$ to denote the support of $(P_T)_\lambda$ in terms of words. For a word $w\in\supp(T)$, by the definition, the alphabets appearing in the subword of $w$ in the $(i,j)$-section must be exactly the set $T_{i,j'}$ for some $j'\in [1,m_i(\lambda)]$. If $w\in \supp(T')$ for some $T'\in\Gamma(\lambda)$, then there exists $\pi=(\pi_1,\ldots,\pi_n)\in \sym{m(\lambda)}$ such that $T'_{i,j\pi_i}=T_{i,j}$. By part (c) of the definition of $\Gamma(\lambda)$, we must have $\pi=1$ and hence $T=T'$. For an arbi
%\end{proof}

\begin{thm}\label{T: Lie mod basis} Let $q\vDash n$, $(q,p)=1$ and $\lambda=\lambda(q)$. The set $\{\omega_q Q_T:T\in\Gamma(\lambda)\}$ is a basis for $\Lie_F(q)$. In particular, $\dim_F\Lie_F(q)=|\ccl{\lambda}|=\frac{n!}{q?}$.
\end{thm}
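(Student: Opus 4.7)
The plan is to reduce to the case $q = \lambda$, rewrite each $\omega_\lambda Q_T$ as a nonzero scalar multiple of $(Q_T)_\lambda$, attach to each $T$ a distinguished leading word $w_T$ that identifies $T$, and then combine the resulting linear independence with the dimension bound from Proposition~\ref{P: dim higher lie 0}.

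First I use Lemma~\ref{L: translate w} to pick $\sigma \in \sym{n}$ with $\omega_q = \sigma\omega_\lambda$; left multiplication by $\sigma$ is an isomorphism $\Lie_F(\lambda) \to \Lie_F(q)$ sending $\omega_\lambda Q_T$ to $\omega_q Q_T$. Since $q? = \lambda?$ and $\Gamma(\lambda)$ depends only on $\lambda$, I may as well assume $q = \lambda$. From the observations just preceding the theorem, $\omega_\lambda Q_T = \omega^\lambda \Xi^\lambda Q_T = \omega^\lambda (Q_T)_\lambda = \prod_{i=1}^{n} i^{m_i(\lambda)} (Q_T)_\lambda$, and the hypothesis $(q,p) = 1$ makes the scalar $\prod_i i^{m_i(\lambda)}$ a unit in $F$. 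It therefore suffices to prove linear independence of $\{(Q_T)_\lambda : T \in \Gamma(\lambda)\}$ in $F\sym{n}$.

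For $T \in \Gamma(\lambda)$ let $w_T \in \sym{n}$ be the word obtained by concatenating $T_{n,1}, T_{n,2}, \ldots, T_{1, m_1(\lambda)}$ in order, so that the $(i,j)$-section of $w_T$ is exactly $T_{i,j}$; in particular $T \mapsto w_T$ is injective. I claim that the coefficient of $w_T$ in $(Q_{T'})_\lambda = \sum_{\pi \in \sym{m(\lambda)}} \prod_{i,j} Q_{T'_{i, \pi(j)}}$ equals $\delta_{T, T'}$. For any fixed $\pi$, the supports $T'_{i, \pi(j)}$ are pairwise disjoint, so the coefficient of $w_T$ in the corresponding product factors as the product over $(i, j)$ of the coefficient of $T_{i, j}$ in $Q_{T'_{i, \pi(j)}} = \sum_{\mathbf{j}} (-1)^{|\mathbf{j}|} s_{\mathbf{j}} \cdot T'_{i, \pi(j)}$. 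For every nonempty $\mathbf{j}$ the first letter of $s_{\mathbf{j}} \cdot T'_{i, \pi(j)}$ is the entry of $T'_{i, \pi(j)}$ at position $j_t \geq 2$ and is therefore distinct from $\min T'_{i, \pi(j)}$; but $T_{i, j}$ begins with its own minimum by Definition~\ref{D: Gamma}(b), so the only surviving contribution is $\mathbf{j} = \emptyset$ with $T'_{i, \pi(j)} = T_{i, j}$ as words, contributing $1$. Requiring this for every $(i, j)$ and invoking condition~(c) on strictly increasing minima then forces $\pi = e$ and $T' = T$.

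The resulting unitriangular pairing establishes linear independence, so $\{\omega_\lambda Q_T : T \in \Gamma(\lambda)\}$ is a linearly independent family of size $|\Gamma(\lambda)| = n!/\lambda?$ inside $\Lie_F(\lambda)$, matching the upper bound in Proposition~\ref{P: dim higher lie 0}; equality holds and the set is a basis. Transporting back via the reduction of the first step yields the desired basis for $\Lie_F(q)$ of dimension $n!/q?$. The main technical obstacle is the coefficient extraction in the third paragraph: one must confirm that conditions~(b) and~(c) of Definition~\ref{D: Gamma} are calibrated to kill precisely the off-diagonal contributions, (b) eliminating every nontrivial descending product $s_{\mathbf{j}}$ and (c) eliminating every nontrivial symmetrizing permutation $\pi$, so that the pairing collapses to a single Kronecker delta.
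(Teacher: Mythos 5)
Your proof is correct and follows essentially the same strategy as the paper's: reduce to $q=\lambda$ via Lemma~\ref{L: translate w}, pull out the unit scalar $\prod_i i^{m_i(\lambda)}$ to pass to $(Q_T)_\lambda$, tag each $T$ with the concatenated word $w_T = T_{n,1}\cdots T_{1,m_1(\lambda)}$ and show its coefficient in $(Q_{T'})_\lambda$ is $\delta_{T,T'}$ by exploiting conditions~(b) and~(c), then invoke the dimension bound of Proposition~\ref{P: dim higher lie 0}. The paper phrases the final step as a grouping argument over supports before identifying the distinguished word, but your direct Kronecker-delta pairing is the same underlying idea in a cleaner form.
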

\begin{proof} By Lemma \ref{L: translate w}, we may assume that $q=\lambda(q)=\lambda\in\P(n)$. By definition, \[\omega_\lambda Q_T=\omega^\lambda\Xi^\lambda Q_T=\omega^\lambda(Q_T)_\lambda=\left (\prod_{i=1}^ni^{m_i(\lambda)}\right )(Q_T)_\lambda.\] Since $\lambda$ is coprime to $p$, we have $\left (\prod_{i=1}^ni^{m_i(\lambda)}\right )\neq 0$. By Proposition \ref{P: dim higher lie 0}, it suffices to show that the set $\{(Q_T)_\lambda:T\in\Gamma(\lambda)\}$ is linearly independent.

%We use $\supp(T)$ to denote the support of $(Q_T)_\lambda$ in terms of words.
Notice that, for a word $w$ appearing in $(Q_T)_\lambda$, by the definition, the alphabets appearing in $(i,j)$-section of $w$ must be the set $\{t:t\in T_{i,j'}\}$ for some $j'\in [1,m_i(\lambda)]$. Fix an arbitrary $T\in\Gamma(\lambda)$. Therefore, it suffices to consider \[\sum  a_S(Q_S)_\lambda=0\] where the sum is taken over all $S\in\Gamma(\lambda)$ such that $\{s:s\in S_{ij}\}=\{t:t\in T_{ij}\}$ for all admissible $i,j$. Fix an $S\in \Gamma(\lambda)$ satisfying such property. We want to show that $a_{S}=0$. For each admissible $i,j$, suppose that $S_{ij}=(u_1,\ldots,u_i)$. It follows from the definition of $\omega_i$ (see Subsection \ref{SS: descent}) that $u_1\ldots u_i$ is the unique word involved in the summand of $Q_{S_{ij}}=\omega_i\cdot (u_1\ldots u_i)$ such that its first alphabet is $u_1$. Notice that its coefficient is 1. Let $w^{(i,j)}$ denote the $(i,j)$-section of a word $w$. Let $w_S$ be the unique word $w$ involved in $(Q_S)_\lambda$ where, for each $i,j$, $w^{(i,j)}_1$ is the smallest among all alphabets involved in $w^{(i,j)}$ and, for each $i$, we have $w^{(i,1)}_1<w^{(i,2)}_1<\cdots<w^{(i,m_i(\lambda))}_1$. By Definition \ref{D: Gamma}, the word $w_S$ is not involved in $(Q_T)_\lambda$ if $T\neq S$. As such, $a_S=0$. The proof is now complete.
\end{proof}

We could now prove the isomorphism \[\Lie_F(\lambda)\cong  \Ind^{\sym{n}}_{\prod_{i=1}^n(\sym{i}\wr \sym{m_i})}(\Lie_F(1)^{\wr m_1}\boxtimes\cdots\boxtimes \Lie_F(n)^{\wr m_n})\] where $m_i=m_i(\lambda)$ as in Theorem \ref{T: higher lie mod} directly. But we could also prove this using the polynomial representation of $\GL_n$ and applying Schur functor. By doing this, it reveals that, in general, the higher Lie power $L^q(V)$ when $(q,p)=1$ is a quotient of the tensor product of some symmetric powers, which we will be using to study $L^q(V)$ in Section \ref{S: ch and dim}.

In the next lemma, we denote the $n$th symmetric power of a vector space $V$ by $S^n(V)$\index{$S^n(V)$}, i.e., $S^n(V)\cong F\otimes_{F\sym{n}} T^n(V)$ which has a basis represented by monomials $v_{i_1}\ldots v_{i_n}$ such that $1\leq i_1\leq \cdots\leq i_n\leq m$ given that $\{v_1,\ldots,v_m\}$ is a basis for $V$.

\begin{lem}\label{L: Sym surj Lie} Let $\lambda\in\P(n)$, $(\lambda,p)=1$ and $m_i=m_i(\lambda)$ for each $i\in [1,n]$. There is a surjection $\psi:S^{m_n}(L^n(V))\otimes \cdots\otimes S^{m_1}(L^1(V))\to \omega_\lambda\cdot T^n(V)=L^\lambda(V)$ given by \[\psi((P_{n,1}\cdots P_{n,m_n})\otimes \cdots\otimes (P_{1,1}\cdots P_{1,m_1}))=\omega^\lambda\cdot (P_{n,1},\ldots,P_{n,m_n},\ldots,P_{1,1},\ldots,P_{1,m_1})_\lambda\] where $P_{i,j}\in L^i(V)$ for each admissible $i,j$. Furthermore, if $V$ is a right $FG$-module then $\psi$ is an $FG$-module homomorphism.
\end{lem}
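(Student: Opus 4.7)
The plan is to verify well-definedness, that the image lies in $L^\lambda(V)$, and $FG$-equivariance in short order, and then to focus the real work on surjectivity. The hypothesis $(\lambda,p)=1$ will enter only at the last step, where it guarantees the invertibility in $F$ of the scalar $\prod_{i} i^{m_i(\lambda)}$.

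For well-definedness, I would extend the formula by multilinearity to a map out of $L^n(V)^{\otimes m_n}\otimes\cdots\otimes L^1(V)^{\otimes m_1}$, then check that it factors through the tensor of symmetric powers. This is immediate from the definition: $(P_{n,1},\ldots,P_{1,m_1})_\lambda$ is, by construction, a sum over $\sym{m(\lambda)}=\sym{m_n}\times\cdots\times\sym{m_1}$ of tensor arrangements of the $P_{i,j}$'s, hence invariant under permutation of any $P_{i,j}$'s within a common size class; subsequent left multiplication by $\omega^\lambda\in F\sym{n}$ preserves this invariance. For the target, Theorem \ref{T: GR 2.1} applied to the Lie monomial $P_{n,1}\otimes\cdots\otimes P_{1,m_1}$ (of type $\lambda$, since each $P_{i,j}$ has degree $i$ and $\lambda$ has precisely $m_i(\lambda)$ parts equal to $i$) yields $(P_{n,1},\ldots,P_{1,m_1})_\lambda=\Xi^\lambda\cdot(P_{n,1}\otimes\cdots\otimes P_{1,m_1})$; hence $\psi(\cdot)=\omega^\lambda\Xi^\lambda\cdot(P_{n,1}\otimes\cdots)=\omega_\lambda\cdot(P_{n,1}\otimes\cdots)\in\omega_\lambda T^n(V)=L^\lambda(V)$. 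If $V$ is a right $FG$-module, the P\'{o}lya action of $\sym{n}$ on $T^n(V)$ commutes with the diagonal $G$-action, so each $L^i(V)\subseteq T^i(V)$ and each $S^{m_i}(L^i(V))$ carries a diagonal $FG$-structure, with respect to which every ingredient (tensor product, $\sym{m(\lambda)}$-symmetrization, and left multiplication by $\omega^\lambda$) is $G$-equivariant.

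For surjectivity, it suffices to show every element $\omega_\lambda\cdot(v_1\otimes\cdots\otimes v_n)$ lies in the image. Expanding via the descent-set definition of $\Xi^\lambda$ yields $\Xi^\lambda\cdot(v_1\otimes\cdots\otimes v_n)=\sum_{\mathbf{S}}v_{s_{1,1}}\otimes\cdots\otimes v_{s_{k,\lambda_k}}$ over ordered set partitions $\mathbf{S}=(S_1,\ldots,S_k)$ of $[1,n]$ with $|S_i|=\lambda_i$ and $S_i=\{s_{i,1}<\cdots<s_{i,\lambda_i}\}$; applying $\omega^\lambda$ blockwise then gives $\omega_\lambda\cdot(v_1\otimes\cdots\otimes v_n)=\sum_{\mathbf{S}}P_{S_1}\otimes\cdots\otimes P_{S_k}$, where $P_{S_i}:=\omega_{\lambda_i}\cdot(v_{s_{i,1}}\otimes\cdots\otimes v_{s_{i,\lambda_i}})\in L^{\lambda_i}(V)$. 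I would then regroup this sum according to the underlying \emph{unordered} partition: each can be encoded uniquely as a tuple $\mathbf{T}=(T_{n,1},\ldots,T_{n,m_n},\ldots,T_{1,1},\ldots,T_{1,m_1})$ of disjoint subsets of $[1,n]$ with $|T_{i,j}|=i$ and $\min T_{i,1}<\cdots<\min T_{i,m_i}$, and the $|\sym{m(\lambda)}|$ ordered partitions lifting a given $\mathbf{T}$ contribute, after summing, precisely $(P_{T_{n,1}},\ldots,P_{T_{1,m_1}})_\lambda$ in the notation of the paper.

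I would then combine this with the action of $\omega^\lambda$. Since $P_{T_{i,j}}\in L^i(V)$ and $\omega_i^2=i\omega_i$, one has $\omega_i\cdot P_{T_{i,j}}=iP_{T_{i,j}}$, so $\omega^\lambda$ acts as multiplication by $\prod_i i^{m_i(\lambda)}$ on every summand of $(P_{T_{n,1}},\ldots,P_{T_{1,m_1}})_\lambda$. Hence
\[\psi(P_{T_{n,1}}\otimes\cdots\otimes P_{T_{1,m_1}})=\Bigl(\prod_{i} i^{m_i(\lambda)}\Bigr)\cdot(\text{the orbit sum above}),\]
and summing over $\mathbf{T}$ and dividing by the (nonzero, by $(\lambda,p)=1$) scalar $\prod_i i^{m_i(\lambda)}$ exhibits $\omega_\lambda\cdot(v_1\otimes\cdots\otimes v_n)$ as $\psi$ of an explicit element of $S^{m_n}(L^n(V))\otimes\cdots\otimes S^{m_1}(L^1(V))$. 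The main obstacle will be the combinatorial bookkeeping in this regrouping: one needs to correctly match $\sym{m(\lambda)}$-orbits of ordered partitions (arising from the direct expansion of $\omega_\lambda$) with the $\sym{m(\lambda)}$-symmetrization hard-coded in $(\cdot,\ldots,\cdot)_\lambda$ (arising on the $\psi$-side via Theorem \ref{T: GR 2.1}), and check that the scalar from $\omega^\lambda$ is exactly the one the hypothesis $(\lambda,p)=1$ allows us to invert.
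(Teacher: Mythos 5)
Your proposal is correct and follows essentially the same approach as the paper's proof: you establish well-definedness via $\sym{m(\lambda)}$-invariance and the identity $\omega^\lambda\Xi^\lambda=\omega_\lambda$, and you prove surjectivity by expanding $\omega_\lambda\cdot(v_1\otimes\cdots\otimes v_n)$ as a sum of Lie monomials $P_{S_1}\otimes\cdots\otimes P_{S_k}$ over ordered set partitions, regrouping them into $\sym{m(\lambda)}$-orbits, and then using that $\omega^\lambda$ acts by the invertible scalar $\prod_i i^{m_i(\lambda)}$ on each orbit-sum to divide and exhibit the element as a value of $\psi$. This is precisely the paper's computation with its set $\Theta$ of orbit representatives and scalar $N=\prod_{i=1}^k\lambda_i$.
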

\begin{proof} Notice that the image is fixed by the action of $\sym{m(\lambda)}$ on the left.  Also, as we have noted earlier, \[\omega^\lambda\cdot (P_{n,1},\ldots,P_{1,m_1})_\lambda=(\omega^\lambda\Xi^\lambda) \cdot P_{n,1}\cdots P_{1,m_1}=\omega_\lambda\cdot P_{n,1}\cdots P_{1,m_1}.\] So $\psi$ is well-defined.  Let $k=\ell(\lambda)$ and $N=\prod_{i=1}^k\lambda_i$ so that $N\neq 0$ in $F$ by our assumption. Notice that
\begin{align*}
  \Xi^\lambda=\sum_{\substack{S_1\sqcup \cdots\sqcup S_k=[1,n],\\ |S_i|=\lambda_i}}S_1\cdots S_k&=\sum_{\pi\in\sym{m(\lambda)}}\sum_{(S_1,\ldots,S_k)\in\Theta} S_{1\pi}\cdots S_{k\pi}
\end{align*} where $\Theta$ consists of $(S_1,\ldots,S_k)$ such that $S_1\sqcup \cdots\sqcup S_k=[1,n]$, $|S_i|=\lambda_i$ and, if $i<i'$ and $|S_i|=|S_{i'}|$, we have $\min(S_i)<\min(S_{i'})$. Therefore,
\begin{align*}
  \omega_\lambda&=\omega^\lambda\Xi^\lambda=\sum_{\pi\in\sym{m(\lambda)}}\sum_{(S_1,\ldots,S_k)\in\Theta} (\omega_{\lambda_1}S_{1\pi})\cdots (\omega_{\lambda_k}S_{k\pi}).
\end{align*} For any $u_1,\ldots,u_n\in V$, we have
\begin{align*}
\omega_\lambda\cdot (u_1\otimes\cdots\otimes u_n)&=\sum_{\pi\in\sym{m(\lambda)}}\sum_{(S_1,\ldots,S_k)\in\Theta} (\omega_{\lambda_1}S_{1\pi})\cdots (\omega_{\lambda_k}S_{k\pi})\cdot (u_1\otimes\cdots\otimes u_n)\\
&=\sum_{\pi\in\sym{m(\lambda)}}\sum_{(S_1,\ldots,S_k)\in\Theta} P_{S_{1\pi}}\cdots P_{S_{k\pi}}\\
&=\sum_{(S_1,\ldots,S_k)\in\Theta} (P_{S_1},\ldots, P_{S_k})_\lambda\\
&=\sum_{(S_1,\ldots,S_k)\in\Theta} \frac{1}{N}\omega^\lambda\cdot (P_{S_1},\ldots, P_{S_k})_\lambda\\
&=\sum_{(S_1,\ldots,S_k)\in\Theta} \frac{1}{N}\psi((P_{S_1}\cdots P_{S_{m_n}})\otimes\cdots\otimes (P_{S_{k-m_1+1}}\cdots P_{S_k}))
\end{align*} where, for each $j\in [1,k]$, $P_{S_j}=\omega_{\lambda_j}\cdot (u_{s_1}\otimes \cdots\otimes u_{s_{\lambda_j}})$ if $S_j=\{s_1<\cdots<s_{\lambda_j}\}$.
%Also, we have
%\begin{align*}
  %\omega_\lambda\cdot (P_{n,1}\cdots P_{n,m_n} \cdots P_{1,1} \cdots P_{1,m_1})&=(\omega^\lambda\Xi^\lambda)\cdot (P_{n,1}\cdots P_{n,m_n} \cdots P_{1,1} \cdots P_{1,m_1})\\
 % &=\omega^\lambda(P_{n,1},\ldots,P_{n,m_n},\ldots,P_{1,1},\ldots,P_{1,m_1})_\lambda\\
  %&=\facmulti{\lambda}(P_{n,1},\ldots,P_{n,m_n},\ldots,P_{1,1},\ldots,P_{1,m_1})_\lambda\\
 % &=\facmulti{\lambda}\psi((P_{n,1}\cdots P_{n,m_n})\otimes \cdots\otimes (P_{1,1}\cdots P_{1,m_1})).
%\end{align*}
Therefore $\psi$ is surjective. The fact that $\psi$ is an $FG$-module homomorphism follows from the $FG$-module structures of tensor product and symmetric power.
\end{proof}

In general, the map $\psi$ in Lemma \ref{L: Sym surj Lie} is not an injection. For example, let $p=2$, $V$ be 2-dimensional and $\lambda=(1,1)$. Then $S^2(V)$ is 3-dimensional but $\omega_{(1,1)}\cdot T^2(V)$ is 1-dimensional. However, we will see in Section \ref{S: ch and dim} that, with the extra assumption that $\lambda(q)$ is $p$-regular, we get injectivity.

\smallskip
We can now state and prove the second main result for this section.

\begin{thm}\label{T: higher lie mod} Let $q\in\C(n)$, $m_i=m_i(q)$ and suppose that $(q,p)=1$. We have an isomorphism of $F\sym{n}$-modules
\begin{align*}
\Lie_F(q)&\cong \Ind^{\sym{n}}_{\prod_{i=1}^n(\sym{i}\wr \sym{m_i})}(\Lie_F(1)^{\wr m_1}\boxtimes\cdots\boxtimes \Lie_F(n)^{\wr m_n}).
\end{align*} Suppose further that $F$ contains, for each $m_i>0$, a primitive $i$th root of unity. For each $p\nmid i$, there is a certain one-dimensional $FC_i$-module $F_{\delta_i}$ such that \[\Lie_F(q)\cong \Ind^{\sym{n}}_{\prod^n_{i=1}(C_i\wr \sym{m_i})}F_\delta\] where $F_\delta=(F_{\delta_1}^{\wr m_1})\boxtimes\cdots\boxtimes (F_{\delta_n}^{\wr m_n})$ as a module for $\prod^n_{i=1}(C_i\wr \sym{m_i})$.
\end{thm}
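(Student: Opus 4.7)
The plan is to reduce to $q=\lambda\in\P(n)$ via Lemma \ref{L: translate w} and then apply the Schur functor $f$ to the $GL(V)$-module surjection $\psi$ of Lemma \ref{L: Sym surj Lie}, as hinted at immediately after that lemma. Choose $V$ with $\dim_F V\geq n$, so $f(L^i(V))\cong\Lie_F(i)$ for all $i\in[1,n]$; since the P\'olya action of $F\sym{n}$ on $T^n(V)$ commutes with the $S(m,n)$-action, $f(L^\lambda(V))=\omega_\lambda\cdot f(T^n(V))=\omega_\lambda F\sym{n}=\Lie_F(\lambda)$.

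The key step is to identify $f$ on the tensor product $\bigotimes_i S^{m_i}(L^i(V))$. I would use two standard Schur functor identities. First, for polynomial representations $A,B$ of degrees $a,b$,
\[f(A\otimes B)\cong\Ind^{\sym{a+b}}_{\sym{a}\times\sym{b}}\bigl(f(A)\boxtimes f(B)\bigr),\]
obtained by decomposing the $(1^{a+b})$-weight space of $A\otimes B$ according to how the $a+b$ coordinates are distributed between the two factors. Second, for an $S(m,i)$-module $N$,
\[f(S^m(N))\cong \Ind^{\sym{im}}_{\sym{i}\wr\sym{m}}f(N)^{\wr m},\]
proved by writing $S^m(N)=(N^{\otimes m})_{\sym{m}}$ (coinvariants, matching the paper's definition of $S^m$), applying the tensor identity iteratively to $f(N^{\otimes m})$, and recognising that the $\sym{m}$-factor-permutation action is realised in $\sym{im}$ by the block-permutation subgroup $\sym{m}^{[i]}$, which combines with $\sym{i}^m$ into the wreath product. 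Applying the exact functor $f$ to $\psi$ and using transitivity of induction yields a surjection
\[\Ind^{\sym{n}}_{\prod_{i=1}^n(\sym{i}\wr\sym{m_i})}\bigl(\Lie_F(1)^{\wr m_1}\boxtimes\cdots\boxtimes\Lie_F(n)^{\wr m_n}\bigr)\twoheadrightarrow\Lie_F(\lambda).\]

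A dimension comparison then finishes the first assertion. By Proposition \ref{P: Lie(n) basis} we have $\dim\Lie_F(i)=(i-1)!$, so the induced module has dimension
\[\frac{n!}{\prod_i(i!)^{m_i}m_i!}\cdot\prod_i((i-1)!)^{m_i}=\frac{n!}{\prod_i i^{m_i}m_i!}=\frac{n!}{\lambda?},\]
which agrees with $\dim\Lie_F(\lambda)$ by Theorem \ref{T: Lie mod basis}. For the second assertion, I would invoke the Klyachko--Kraskiewicz--Weyman theorem: when $p\nmid i$ and $F$ contains a primitive $i$th root of unity, $\Lie_F(i)\cong\Ind^{\sym{i}}_{C_i}F_{\delta_i}$ for a faithful one-dimensional character $F_{\delta_i}$ of $C_i=\langle(1,2,\ldots,i)\rangle$. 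Since $(\Ind^G_H X)^{\wr m}\cong\Ind^{G\wr\sym{m}}_{H\wr\sym{m}}X^{\wr m}$ and induction is transitive, substituting into the first assertion produces the desired form.

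The main delicate point is establishing the Schur functor identity for $S^m(N)$ in positive characteristic, where the distinction between $(N^{\otimes m})^{\sym{m}}$ and $(N^{\otimes m})_{\sym{m}}$ is genuine and one must work with coinvariants throughout; the other non-elementary input is the Kraskiewicz--Weyman description of $\Lie_F(i)$ as cyclic-group induction, which is simply quoted.
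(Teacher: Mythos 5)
Your proposal is correct and reconstructs the paper's primary argument almost verbatim: reduce to $q=\lambda$ by Lemma~\ref{L: translate w}, apply the exact Schur functor $f$ to the surjection $\psi$ of Lemma~\ref{L: Sym surj Lie}, identify $f\bigl(\bigotimes_i S^{m_i}(L^i(V))\bigr)$ with the induced module, and then compare dimensions using Proposition~\ref{P: Lie(n) basis} and Theorem~\ref{T: Lie mod basis}; the cyclic-induction form in the second half comes from the same Klyachko-type description of $\Lie_F(i)$ together with compatibility of induction with $(-)^{\wr m}$. The only difference is bookkeeping: where you outline a proof of the two Schur-functor identities (compatibility with $\otimes$ and with symmetric powers of Lie powers), the paper simply cites these as \cite[\S 2.5 Lemma]{DE} and \cite[Corollary 3.2(i)]{LT12}, and where you invoke the wreath-product induction identity the paper cites \cite[Lemma 2.6]{ELT}; your sketch of the coinvariants-based argument for $f(S^m(N))\cong\Ind^{\sym{im}}_{\sym{i}\wr\sym{m}}f(N)^{\wr m}$ is indeed the content of the cited lemma.
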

\begin{proof} Let $\P(n)\ni\lambda\approx q$, $H=\prod_{i=1}^n(\sym{i}\wr\sym{m_i})$ and $Z_F$ denote the induced module $\Ind^{\sym{n}}_H(\Lie_F(1)^{\wr m_1}\boxtimes\cdots\boxtimes \Lie_F(n)^{\wr m_n})$ defined over $F$.  %Assuming first that $F$ is infinite.
Consider the natural $F\GL(V)$-module $V$ and assume that $\dim_FV\geq n$.  By \cite[\S2.5 Lemma]{DE} and \cite[Corollary 3.2(i)]{LT12}, the Schur functor $f$ maps the module $S^{m_n}(L^n(V))\otimes \cdots\otimes S^{m_1}(L^1(V))$ isomorphically to $Z_F$. Since $f$ is exact, by Lemma \ref{L: Sym surj Lie}, we have a surjection from $Z_F$ to $\Lie_F(\lambda)$. Since $\lambda$ is coprime to $p$, by Theorem \ref{T: Lie mod basis}, $\dim_F\Lie_F(\lambda)=\frac{n!}{\lambda?}$. On the other hand, using Proposition \ref{P: Lie(n) basis}, we have
\begin{align*}
\ \dim_F(Z_F)=&\ \frac{n!}{\prod_{i=1}^n(i!)^{m_i}m_i!}\prod_{i=1}^n((i-1)!)^{m_i}=\frac{n!}{\prod_{i=1}^ni^{m_i}m_i!}=\frac{n!}{\lambda?}.
\end{align*} Comparing the dimensions, the surjection is indeed an isomorphism.% Suppose now that $F$ is an arbitrary field and let $\overline{F}$ be the algebraic closure of $F$. Via extension of the ground field, since \[\Lie_F(\lambda)\otimes_F\overline{F}\cong \Lie_{\overline{F}}(\lambda)\cong Z_{\overline{F}} \cong Z_F\otimes_F\overline{F},\] by \cite[Theorem 1.21]{BH}, we have $\Lie_F(\lambda)\cong Z_F$.

Alternatively, the isomorphism can also be proved by an explicit map which we shall now demonstrate. Let $W$ be the $F$-linear span of \[\{\omega_\lambda Q_T:T\in\Gamma(\lambda),\ \{t:t\in T_{i,j}\}=[d_{i,j}+1,d_{i,j}+i]\}.\] As we pointed out, $\omega_\lambda Q_T=\omega^\lambda (Q_T)_\lambda$. By the characterization of the induced module as in \cite[Section 8, Corollary 3]{Alperin}, the fact that $W$ generates $\Lie_F(\lambda)$ using Theorem \ref{T: Lie mod basis} and our previous observation that $\dim_F\Lie_F(\lambda)=|\sym{n}:H|\dim_FW$, it suffices to check that $\Lie_F(n)^{\wr m_n}\boxtimes \cdots\boxtimes \Lie_F(1)^{\wr m_1}$ is isomorphic to $W$ as $FH'$-modules where $H'=(\sym{n}\wr\sym{m_n})\times\cdots\times (\sym{1}\wr\sym{m_1})$. Since it is a direct product, we may further assume that $\lambda=(d^k)$ and $H'=\sym{d}\wr\sym{k}$.  We claim that the linear map $\phi$ sending $\omega_d\sigma_1\otimes\cdots\otimes \omega_d\sigma_k$, where, for all $j\in [1,k]$, $\sigma_j\in\sym{d}$ such that $1\sigma_j=1$, to $\omega_\lambda Q_T$ is an isomorphism, where \[Q_T=(\omega_d\sigma_1)(\omega_d\sigma_2)^{+d} \cdots(\omega_d\sigma_k)^{+(k-1)d}\] and $(\omega_d\sigma_j)^{+(j-1)d}$ means translation of the alphabets of $\omega_d\sigma_j$ by $(j-1)d$. Let $\tau$ be an element in the top group of $\sym{d}\wr\sym{k}$. We have
\begin{align*}
  \omega_\lambda Q_T\tau=\omega^\lambda(Q_T)_\lambda\tau
  &=\omega^\lambda ((\omega_d\sigma_1)^{+(1\tau-1)d},\ldots,(\omega_d\sigma_k)^{(k\tau-1)d})_\lambda\\
  &=\omega^\lambda (\omega_d\sigma_{1\tau^{-1}},(\omega_d\sigma_{2\tau^{-1}})^{+d},\ldots,(\omega_d\sigma_{k\tau^{-1}})^{(k-1)d})_\lambda\\
  &=\omega_\lambda \omega_d\sigma_{1\tau^{-1}}(\omega_d\sigma_{2\tau^{-1}})^{+d}\cdots (\omega_d\sigma_{k\tau^{-1}})^{(k-1)d}\\
  &=\phi(\omega_d\sigma_{1\tau^{-1}}\otimes\cdots\otimes \omega_d\sigma_{k\tau^{-1}}).
\end{align*} For the action of the base group of $\sym{d}\wr\sym{k}$, it can be easily checked that it commutes with $\phi$. Therefore, $\phi$ is an isomorphism of $FH'$-modules.
%Since each component $\sym{i}\wr\sym{m_i}$ of $H$ acts on the block $\omega_i\sigma_{i,1}\otimes \cdots\otimes \omega_i\sigma_{i,m_i}$
% by establishing an explicit isomorphism. Let $H=\prod_{i=1}^n(\sym{i}\wr\sym{m_i})$. By Proposition \ref{P: Lie(n) basis}, the induced module has a basis \[B=\{(\otimes_{i,j}\omega_i\sigma_{i,j})\otimes \tau:\sigma_{i,j}\in \sym{i},\ (1)\sigma_{i,j}=1,\ \tau\in H\backslash \sym{n}\}.\] Let $\phi:\Ind^{\sym{n}}_H(\Lie_F(1)^{\wr m_1}\boxtimes\cdots\boxtimes \Lie_F(n)^{\wr m_n})\to \Lie_F(q)$ be defined as \[\phi((\otimes_{i,j}\omega_i\sigma_{i,j})\otimes \tau)=\omega_\lambda (\prod_{i,j}(\omega_i\sigma_{i,j})^{+d_{i,j}}\tau)=\omega_\lambda Q_{T_\tau}\] where $d_{i,j}$ is the number appears right before Theorem \ref{T: Lie mod basis} and $T_\tau\in \Gamma(\lambda)$ is the sequence corresponds to the element $\tau$. Let $\theta\in\sym{n}$ and suppose that $\tau\theta=\eta\tau'$ where $\eta\in H$ and $\tau'\in H\backslash \sym{n}$. Say $\eta=\prod_{i,j}\eta_{i,j}$. Then \[((\otimes_{i,j}\omega_i\sigma_{i,j})\otimes \tau)\theta=(\otimes_{i,j}\omega_i\sigma_{i,j}\eta_{i,j})\otimes \tau'\]

We now prove the second statement. For $p\nmid i$, we have $\Lie_F(i)\cong \Ind_{C_i}^{\sym{i}}F_{\delta_i}$ where $C_i$ is generated by a cyclic permutation of length $i$ in $\sym{i}$, $F_{\delta_i}$ is one-dimensional and $\delta_i$ is a primitive $i$th root of unity (see, for example, \cite[Theorem 8.24]{Reu} for the ordinary case and \cite[Lemma 3.1]{LT16} for the modular case). By \cite[Lemma 2.6]{ELT} (with $|I|=1$ in that section), we have \[\Lie_F(i)^{\wr m_i}\cong \Ind^{\sym{i}\wr \sym{m_i}}_{C_i\wr \sym{m_i}} F_{\delta_i}^{\wr m_i}.\] Therefore, we obtain the second isomorphism where $F_\delta=(F_{\delta_1}^{\wr m_1})\boxtimes\cdots\boxtimes (F_{\delta_n}^{\wr m_n})$. The proof is now complete since $\Lie_F(q)\cong \Lie_F(\lambda)$.
\end{proof}

%\begin{rem} As the last paragraph of the proof of Theorem \ref{T: higher lie mod} suggests, the suffix $\delta$ in $F_\delta$ does not indicate that $\prod_{i=1}^n(C_i\wr\sym{m_i})$ acts on $F_\delta$ by multiplication with a constant $\delta\in F$.
%\end{rem}

We now establish some corollaries following our results. The following is immediate.

\begin{cor}\label{C: higher lie mod split} If $q\approx r\cont s$ such that, for each $i\in\NN$, we have either $m_i(q)=m_i(r)$ or $m_i(q)=m_i(s)$, then \[
\Lie_F(q)\cong \ind^{\sym{m+n}}_{\sym{m}\times\sym{n}}(\Lie_F(r)\boxtimes \Lie_F(s))
\] where $|r|=m$ and $|s|=n$.
\end{cor}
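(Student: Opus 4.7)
The plan is to read off this corollary as a direct consequence of Theorem \ref{T: higher lie mod}, using that the hypothesis on multiplicities forces a clean splitting of the wreath-product subgroup appearing in the theorem. Throughout I will use that the higher Lie module depends only on the multiset of parts of its index (Lemma \ref{L: translate w}), so I may assume without loss of generality that $q = r\cont s$; note also that $(r,p)=1=(s,p)$ since $(q,p)=1$ (which is our standing hypothesis since $q$ coprime to $p$ is needed to apply Theorem \ref{T: higher lie mod}).

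The key combinatorial observation is this: the hypothesis that, for each $i\in\NN$, either $m_i(q)=m_i(r)$ or $m_i(q)=m_i(s)$ forces $m_i(q)=m_i(r)+m_i(s)$ with at least one summand equal to zero. In particular the index sets $\{i:m_i(r)>0\}$ and $\{i:m_i(s)>0\}$ are disjoint, so after rearranging factors in the product $\prod_{i=1}^{m+n}(\sym{i}\wr\sym{m_i(q)})$ we obtain the internal direct product
\[
\prod_{i=1}^{m+n}(\sym{i}\wr\sym{m_i(q)}) \;=\; H_r \times H_s,
\]
where $H_r := \prod_{i=1}^{m}(\sym{i}\wr\sym{m_i(r)}) \leq \sym{m}$ and $H_s := \prod_{i=1}^{n}(\sym{i}\wr\sym{m_i(s)}) \leq \sym{m+n}$ (with $H_s$ acting on the last $n$ letters $[m+1,m+n]$, using the translation embedding from Subsection \ref{SS: generality}). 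Under the same disjointness, the $F$-module
\[
\Lie_F(1)^{\wr m_1(q)}\boxtimes\cdots\boxtimes\Lie_F(m+n)^{\wr m_{m+n}(q)}
\]
factors as the outer tensor product of the analogous modules defined from $r$ and from $s$.

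With these identifications in hand, the main step is to apply Theorem \ref{T: higher lie mod} to each of $q$, $r$, $s$ and invoke transitivity of induction together with compatibility of induction with outer tensor products:
\begin{align*}
\Lie_F(q)
&\cong \ind^{\sym{m+n}}_{H_r\times H_s}\Bigl(\bigboxtimes_i \Lie_F(i)^{\wr m_i(r)} \;\boxtimes\; \bigboxtimes_i \Lie_F(i)^{\wr m_i(s)}\Bigr) \\
&\cong \ind^{\sym{m+n}}_{\sym{m}\times\sym{n}} \ind^{\sym{m}\times\sym{n}}_{H_r\times H_s}\Bigl(\bigboxtimes_i \Lie_F(i)^{\wr m_i(r)} \;\boxtimes\; \bigboxtimes_i \Lie_F(i)^{\wr m_i(s)}\Bigr) \\
&\cong \ind^{\sym{m+n}}_{\sym{m}\times\sym{n}}\Bigl(\ind^{\sym{m}}_{H_r}\bigboxtimes_i \Lie_F(i)^{\wr m_i(r)} \;\boxtimes\; \ind^{\sym{n}}_{H_s}\bigboxtimes_i \Lie_F(i)^{\wr m_i(s)}\Bigr) \\
&\cong \ind^{\sym{m+n}}_{\sym{m}\times\sym{n}}\bigl(\Lie_F(r)\boxtimes \Lie_F(s)\bigr),
\end{align*}
where the last isomorphism is Theorem \ref{T: higher lie mod} applied to $r$ and $s$ separately.

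There is no real obstacle here; the only thing to be careful about is the bookkeeping that turns the single wreath-product subgroup from Theorem \ref{T: higher lie mod} into an honest direct product $H_r\times H_s$ sitting inside $\sym{m}\times\sym{n}$, and this is guaranteed precisely by the disjointness of part-sizes in the hypothesis. Everything else is induction in stages and the elementary fact $\ind^{G_1\times G_2}_{H_1\times H_2}(A\boxtimes B)\cong \ind^{G_1}_{H_1}A \boxtimes \ind^{G_2}_{H_2}B$.
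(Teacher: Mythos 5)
Your proof is correct and spells out exactly what the paper means by "The following is immediate" after Theorem \ref{T: higher lie mod}: the disjointness of part-sizes forced by the multiplicity hypothesis makes the wreath-product subgroup factor (up to conjugacy in $\sym{m+n}$, which is harmless for induced modules) as $H_r\times H_s \leq \sym{m}\times\sym{n}$, and then induction in stages plus $\ind^{G_1\times G_2}_{H_1\times H_2}(A\boxtimes B)\cong\ind^{G_1}_{H_1}A\boxtimes\ind^{G_2}_{H_2}B$ finishes it. You are also right that $(q,p)=1$ must be read as a standing hypothesis for the corollary, since the paper's "immediate" derivation goes through Theorem \ref{T: higher lie mod}.
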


For the rest of this section, we assume further that $p>0$.

\begin{cor}\label{C: Lie trivial source} Let $q\in\C(n)$, $(q,p)=1$ and $m_i=m_i(q)$. Suppose that $F$ contains, for each $m_i>0$, a primitive $i$th root of unity. Then $\Lie_F(q)$ is a $p$-permutation module. Furthermore, any indecomposable summand of $\Lie_F(q)$ is a trivial source module and has a vertex a $p$-subgroup of $T=T_{m_1}\times \cdots\times T_{m_n}$ where $T_{m_i}$ is the top group of $C_i\wr \sym{m_i}$. %The multiplicity of the irreducible character $\zeta^\mu$ in the ordinary character of $\Lie_F(q)$ is the number $C^\mu_q$ as given in Theorem \ref{T: Schoc Lie(q)}.
\end{cor}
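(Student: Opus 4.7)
The plan is to combine the induced module description of Theorem~\ref{T: higher lie mod} with Broué's characterization of $p$-permutation modules (Theorem~\ref{T: Broue}). By Theorem~\ref{T: higher lie mod}, $\Lie_F(q)\cong\Ind_H^{\sym{n}}F_\delta$ with $H=\prod_{i=1}^n(C_i\wr\sym{m_i})$ and $F_\delta$ one-dimensional. Write $B=\prod_{i=1}^n C_i^{m_i}$ for the product of the base groups and $T=T_{m_1}\times\cdots\times T_{m_n}$ for the product of the top groups, so that $H=B\rtimes T$. Because $(q,p)=1$, the order $i$ of $C_i$ is coprime to $p$ whenever $m_i>0$, so $B$ is a normal $p'$-subgroup of $H$; in particular a Sylow $p$-subgroup $P$ of $T$ is also Sylow in $H$.

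First I would unpack the $H$-action on $F_\delta$. Since each $F_{\delta_i}$ is one-dimensional, the tensor power $F_{\delta_i}^{\wr m_i}$ is also one-dimensional, and by the very definition of the wreath-product action the top group $\sym{m_i}$ merely permutes identical tensor factors and hence acts trivially, while the base group $C_i^{m_i}$ acts through $\delta_i^{\otimes m_i}$. Passing to the outer tensor product, $T$ acts trivially on $F_\delta$ while $B$ acts through a character $\delta\colon B\to F^\times$. In particular the restriction $F_\delta|_P$ is the trivial $FP$-module, so a nonzero vector is fixed by $P$ and $F_\delta$ is already a $p$-permutation $FH$-module.

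Next I would show that $F_\delta$ is in fact a direct summand of $\Ind_P^H F$. Using $\delta|_P=1$, the maps
\[
F_\delta\longrightarrow\Ind_P^H F,\ \ v\mapsto\sum_{gP\in H/P}\delta(g)^{-1}\,gP,\qquad \Ind_P^H F\longrightarrow F_\delta,\ \ gP\mapsto\delta(g)\,v,
\]
are well-defined $FH$-homomorphisms, and their composition is multiplication by $[H:P]$, which is a unit in $F$ since $P$ is Sylow. Hence $F_\delta\mid\Ind_P^H F$, and inducing to $\sym{n}$ gives $\Lie_F(q)\mid\Ind_P^{\sym{n}} F$. Because $P$ is a $p$-subgroup of $\sym{n}$ sitting inside $T$, Theorem~\ref{T: Broue} then shows that every indecomposable summand of $\Lie_F(q)$ has trivial source, and by Green's theorem each such summand admits a vertex contained in $P$, hence a $p$-subgroup of $T$. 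In particular $\Lie_F(q)$ itself is a $p$-permutation module.

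The main obstacle I expect is the splitting step $F_\delta\mid\Ind_P^H F$: Frobenius reciprocity alone yields a nonzero map in either direction, but to obtain an actual direct summand one must exhibit the explicit inclusion and projection above and verify that their composition, namely multiplication by $[H:P]$, is invertible in $F$. Everything else follows by direct application of Theorems~\ref{T: higher lie mod} and~\ref{T: Broue}.
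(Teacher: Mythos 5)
Your proof is correct and takes the same approach as the paper: apply Theorem \ref{T: higher lie mod} to realize $\Lie_F(q)$ as $\Ind_H^{\sym{n}}F_\delta$ and then invoke Theorem \ref{T: Broue}. You simply spell out what the paper's terse remark (``since $\Lie_F(q)$ is induced from a one-dimensional module, $N$ is a trivial source module'') tacitly uses, namely that $T$ acts trivially on $F_\delta$ so its restriction to a Sylow $p$-subgroup $P \leq T$ of $H$ is trivial and $F_\delta \mid \Ind_P^H F$ because $[H:P]$ is invertible.
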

\begin{proof} The fact that $\Lie_F(q)$ is a $p$-permutation module follows from Theorem \ref{T: higher lie mod} and Theorem \ref{T: Broue}. Let $N$ be an indecomposable summand of $\Lie_F(q)$. By definition, $N$ has a vertex $Q$ a $p$-subgroup of $\prod^n_{i=1}(C_i\wr \sym{m_i})$. Since $m_i=0$ if $p\mid i$, we have that $Q$ is conjugate to a subgroup of $T$. Since $\Lie_F(q)$ is induced from a one-dimensional module, we have that $N$ is a trivial source module.
\end{proof}

\begin{cor}\label{C: ord char of Lie} Let $q\in\C(n)$, $(q,p)=1$ and $(\bk,\OO,\sk)$ be a $p$-modular system. The multiplicity of the irreducible character $\zeta^\mu_{\bk}$ in the ordinary character of $\Lie_{\sk}(q)$ is the number $C^\mu_q$ as given in Theorem \ref{T: Schoc Lie(q)}.
\end{cor}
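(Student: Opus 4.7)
The plan is to exploit the trivial-source property from Corollary \ref{C: Lie trivial source} to lift $\Lie_\sk(q)$ to $\OO$ and then identify the lift, after extending scalars to $\bk$, with $\Lie_\bk(q)$, which has character computed by Theorem \ref{T: Schoc Lie(q)}.

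First I would reduce to the case where $\sk$ (and hence $\bk$) contains a primitive $i$th root of unity for every $i$ with $m_i(q)>0$, by enlarging the $p$-modular system if necessary; this extension leaves ordinary characters unchanged and the multiplicities $C^\mu_q$ are insensitive to such a change. With this assumption in force, Theorem \ref{T: higher lie mod} gives an isomorphism
\[
\Lie_\sk(q)\;\cong\;\Ind^{\sym{n}}_{H}\sk_\delta,\qquad H=\prod_{i=1}^n(C_i\wr\sym{m_i}),
\]
where $\sk_\delta$ is the one-dimensional $\sk H$-module described there. Since each $C_i$ is a cyclic group of order coprime to $p$ (as $(q,p)=1$) and $\delta$ is a linear character of $p'$-order on the base groups, the character $\delta$ lifts uniquely to an $\OO$-valued linear character of $H$, yielding an $\OO H$-module $\OO_\delta$ which is $\OO$-free of rank $1$ with $\OO_\delta\otimes_\OO\sk\cong\sk_\delta$.

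Next I would form $M_\OO:=\Ind^{\sym{n}}_H\OO_\delta$. Induction is exact and commutes with the base-change $-\otimes_\OO\sk$, so $M_\OO\otimes_\OO\sk\cong\Ind^{\sym{n}}_H\sk_\delta\cong\Lie_\sk(q)$. By Corollary \ref{C: Lie trivial source} the module $\Lie_\sk(q)$ is $p$-permutation, hence has a unique $\OO$-lift up to isomorphism (see, e.g., \cite[Corollary 2.6.3]{Ben84}), and by definition its ordinary character is that of $M_\OO\otimes_\OO\bk$. The latter is $\Ind^{\sym{n}}_H\bk_\delta$, and applying Theorem \ref{T: higher lie mod} over the characteristic-zero field $\bk$ (the coprimality hypothesis is vacuous there) gives
\[
M_\OO\otimes_\OO\bk\;\cong\;\Ind^{\sym{n}}_H\bk_\delta\;\cong\;\Lie_\bk(q).
\]

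Finally I would read off the ordinary character: by Theorem \ref{T: Schoc Lie(q)} applied in characteristic $0$, the multiplicity of $\zeta^\mu_\bk$ in $\Lie_\bk(q)$ equals $C^\mu_q$, and by the identification above this is precisely the multiplicity of $\zeta^\mu_\bk$ in the ordinary character of $\Lie_\sk(q)$. The only step requiring any real care is the identification of $M_\OO$ as the lift of $\Lie_\sk(q)$; this hinges on the uniqueness of the trivial-source lift together with the fact that induction of a lifted linear character yields a lift of the induced module, both of which are standard once the $p$-permutation property of Corollary \ref{C: Lie trivial source} is in hand.
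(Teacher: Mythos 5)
Your argument is correct but follows a longer route than the paper's. The paper simply observes that $\Lie_\OO(q)=\omega_q\OO\sym{n}$ is itself the $\OO$-lift: as an $\OO$-submodule of $\OO\sym{n}$ it is $\OO$-free, and Theorem \ref{T: Lie mod basis} gives $\dim_\sk\Lie_\sk(q)=n!/q?=\dim_\bk\Lie_\bk(q)$, forcing the natural surjection $\Lie_\OO(q)\otimes_\OO\sk\twoheadrightarrow\Lie_\sk(q)$ to be an isomorphism; uniqueness of the lift then comes from the trivial-source property of Corollary \ref{C: Lie trivial source}, and $\ch(\Lie_\OO(q)\otimes_\OO\bk)=\ch(\Lie_\bk(q))=\ch(\nu_q\bk\sym{n})$ is read off from Theorem \ref{T: Schoc Lie(q)}. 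You instead rebuild a lift from scratch via the induced-module description $\Ind^{\sym{n}}_H\sk_\delta$ of Theorem \ref{T: higher lie mod}, Teichm\"uller-lift the $p'$-linear character $\delta$ to $\OO_\delta$, and then have to re-apply Theorem \ref{T: higher lie mod} over $\bk$ to recognise $M_\OO\otimes_\OO\bk$ as $\Lie_\bk(q)$; this is what forces the preliminary field extension of the $p$-modular system. Both routes rest on the same two ingredients --- the dimension count from Theorem \ref{T: Lie mod basis} and the $p$-permutation property --- but the paper's is shorter because it recognises the right ideal $\omega_q\OO\sym{n}$ as the lift directly rather than reconstructing one from the wreath-product model, and thereby avoids the second appeal to Theorem \ref{T: higher lie mod} on the $\bk$ side.
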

\begin{proof} The module $\Lie_{\OO}(q)$ clearly is the unique lift of $\Lie_{\sk}(q)$. As such, the ordinary character of $\Lie_{\bk}(q)$ is the ordinary character of $\Lie_{\bk}(q)=\nu_q\bk \sym{n}$, which has been described as in Theorem \ref{T: Schoc Lie(q)}.
\end{proof}

%\begin{proof} Let $\mathbb{F}$ be the base field for $F$. We use the $p$-modular system $(\Q_p,\Z_p,\mathbb{F})$ where $\Q_p$ and $\Z_p$ are the $p$-adic numbers and $p$-adic integers respectively. Notice that $\Lie_F(q)\cong \Lie_{\mathbb{F}}(q)\otimes_{\mathbb{F}}F$. By \cite[Corollary 2.6.3]{Ben1}, $\Lie_{\Z_p}(q)$ is the unique lift of $\Lie_{\mathbb{F}}(q)$. Therefore, the ordinary character of $\Lie_F(q)$ is precisely the ordinary character of $\Lie_{\Q_p}(q)\cong \nu_q\Q_p\sym{n}$, which has been described as in Theorem \ref{T: Schoc Lie(q)}.
%\end{proof}

Suppose that $F$ is algebraically closed. There are certain cohomology invariants of the $FG$-modules called the support variety and complexity (see \cite{AE81,AE82}). For instance, a module is projective if and only if its complexity is 0. A module is non-projective periodic if and only if it has complexity one. In the case for the Lie modules,  $\Lie_F(n)$ has complexity $c$ where $c\in\NN_0$ is the largest such that $p^c\mid n$ (see \cite{CHN,ELT}). In particular, $\Lie_F(n)$ is projective if and only if $p\nmid n$ (see \cite{DE}) and $\Lie_F(n)$ is non-projective periodic if and only if $p\mid n$ and $p^2\nmid n$. For the notation we use in the next corollary, we refer the reader to \cite[\S5.7]{Ben2}.

%The case when $n=pk$ where $p\nmid k$ has been studied in \cite{ES}.

\begin{cor}\label{C: Complexity Lie} Suppose that $F$ is algebraically closed, $q\in\C(n)$, $(q,p)=1$, $m_i=m_i(q)$ and $P$ be a Sylow $p$-subgroup of $\prod^n_{i=1}(C_i\wr \sym{m_i})$. The support variety of $\Lie_F(q)$ is $\Res^*_{\sym{n},P}V_{P}(F)$ and its complexity is $\sum_{i=1}^n\lfloor \frac{m_i}{p}\rfloor$ where $\lfloor-\rfloor$ \index{$\lfloor-\rfloor$}denotes the floor function. In particular,
\begin{enumerate}[(i)]
  \item $\Lie_F(q)$ is projective if and only if $m_i<0$ for all $i\in [1,n]$, i.e., $\lambda(q)\in\pReg(n)$, and
  \item $\Lie_F(q)$ is non-projective periodic if and only if $p\leq m_j<2p$ for some unique $j\in [1,n]$ and $m_i<p$ for $i\neq j$.
\end{enumerate}
\end{cor}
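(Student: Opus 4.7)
The plan is to leverage Theorem~\ref{T: higher lie mod}, which identifies $\Lie_F(q) \cong \Ind_H^{\sym{n}} F_\delta$ with $H = \prod_{i=1}^n (C_i \wr \sym{m_i})$ and $F_\delta$ one-dimensional, together with the trivial source conclusion of Corollary~\ref{C: Lie trivial source}. My first task would be to pin down the vertex of $F_\delta$ as an $FH$-module. Because $(q,p)=1$, we have $p\nmid i$ whenever $m_i>0$, so a Sylow $p$-subgroup $P_i$ of $C_i\wr \sym{m_i}$ lies entirely in the top copy $\sym{m_i}$ and coincides with a Sylow $p$-subgroup of $\sym{m_i}$. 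The factor $(F_{\delta_i})^{\wr m_i}$ is one-dimensional as a vector space, and $\sym{m_i}$ acts by permuting the tensor factors of a one-dimensional space, hence trivially. Therefore $F_\delta$ restricts to the trivial module on $P := \prod_i P_i$, which proves $F_\delta$ has trivial source; a standard Mackey/dimension argument then forces the vertex to equal $P$ itself (no proper subgroup can produce a one-dimensional summand of the relevant induced permutation module).

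Next I would compute the $p$-rank of $P$. The classical fact that the largest elementary abelian $p$-subgroup of $\sym{m}$ is generated by $\lfloor m/p\rfloor$ disjoint $p$-cycles gives $r_p(P_i)=\lfloor m_i/p\rfloor$, so $r_p(P) = \sum_{i=1}^n \lfloor m_i/p\rfloor$.

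I would then invoke the standard support-variety calculus for $p$-permutation modules (see \cite[\S5.7]{Ben2}): for a trivial source module with vertex $Q$, the support variety is $\Res^*_{-,Q} V_Q(F)$ and the complexity equals $r_p(Q)$; under induction one has $V_G(\Ind_H^G N) = \Res^*_{G,H}(V_H(N))$. Applied to $\Lie_F(q) = \Ind_H^{\sym{n}} F_\delta$ with $V_H(F_\delta) = \Res^*_{H,P}V_P(F)$, this yields
\[
V_{\sym{n}}(\Lie_F(q)) \;=\; \Res^*_{\sym{n}, P} V_P(F), \qquad \mathrm{cx}_{\sym{n}}(\Lie_F(q))\;=\;\sum_{i=1}^n \left\lfloor \tfrac{m_i}{p}\right\rfloor.
\]
Parts (i) and (ii) are then immediate: projectivity is equivalent to complexity zero, i.e.\ $\lfloor m_i/p\rfloor=0$ for all $i$, which says $m_i<p$ for all $i$, equivalently $\lambda(q)\in\pReg(n)$; non-projective periodicity amounts to complexity exactly one, which forces a unique $j$ with $\lfloor m_j/p\rfloor = 1$ (so $p\leq m_j<2p$) and $m_i<p$ for $i\neq j$.

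The main obstacle I anticipate is the clean support-variety identity rather than the complexity count: one must confirm that although $\Lie_F(q)$ may decompose into several indecomposable summands whose vertices are a priori only subgroups of $P$, the union of their varieties still fills out $\Res^*_{\sym{n},P} V_P(F)$. This should follow either from the induction formula for varieties cited above or, alternatively, from the observation that $F_\delta$ itself is indecomposable with vertex $P$, so by the Green correspondence at least one summand of $\Ind_H^{\sym{n}} F_\delta$ has vertex $G$-conjugate to $P$, contributing the full variety. The rest of the argument is routine bookkeeping, with the $p$-rank of symmetric groups being the only non-representation-theoretic input.
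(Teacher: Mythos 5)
Your proof is correct and follows essentially the same route as the paper: both rely on the identification $\Lie_F(q)\cong\Ind^{\sym{n}}_H F_\delta$ from Theorem~\ref{T: higher lie mod}, the trivial-source conclusion from Corollary~\ref{C: Lie trivial source}, and the standard support-variety calculus for $p$-permutation modules (the paper simply packages the vertex computation, the induction formula $V_G(\Ind_H^G N)=\Res^*_{G,H}V_H(N)$, and the $p$-rank identity into a single citation of \cite[Lemma~6.2]{DL} alongside \cite[Proposition~5.7.5]{Ben2}). Your extra care in pinning down the vertex of $F_\delta$ as a full Sylow $p$-subgroup of $H$, and in noting that summands with smaller vertex have support varieties contained in $\Res^*_{\sym{n},P}V_P(F)$ so the union is attained, is exactly the content hidden behind that citation.
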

\begin{proof} By \cite[Proposition 5.7.5]{Ben2} and Corollary \ref{C: Lie trivial source}, the support variety of $\Lie_F(q)$ is equal %to the support variety of an indecomposable trivial source $F\sym{n}$-module, which is equal
to $\Res^*_{\sym{n},P}V_{P}(F)$ (see, for example, \cite[Lemma 6.2]{DL}). Again, by \cite[Lemma 6.2]{DL}, the complexity of $\Lie_F(q)$ is the $p$-rank of the group $\prod^n_{i=1}(C_i\wr \sym{m_i})$ which is $\sum_{i=1}^n\lfloor \frac{m_i}{p}\rfloor$ since $m_i=0$ if $p\mid i$. As such, the complexity is 0 if and only if $\lfloor\frac{m_i}{p}\rfloor=0$ for all $i$ and is 1 if and only if $\lfloor\frac{m_j}{p}\rfloor=1$ for precisely one such $j$ and $\lfloor\frac{m_i}{p}\rfloor=0$ for $i\neq j$.
\end{proof}

Recall the set of orthogonal primitive idempotents $\{e_{\lambda,F}:\lambda\in\pReg(n)\}$ of $\Des{n}{F}$ we have constructed in Section \ref{S: mod idem}. It follows that the regular module $F\sym{n}$ is isomorphic to $\bigoplus_{\lambda\in\pReg(n)}e_{\lambda,F}F\sym{n}$. It has been proved in \cite{ES} that,
\begin{equation}\label{Eq: proj dim eFS}
  \dim_Fe_{\lambda,F}F\sym{n}=|\ccl{\lambda,p}|.
\end{equation}% Since the construction of the idempotents respects field extension, i.e., if $F'$ is a subfield of $F$, we have $e_{\lambda,F'}=e_{\lambda,F}$, we therefore have $e_{\lambda,F}F\sym{n}\cong e_{\lambda,F'}F'\sym{n}\otimes_{F'} F$ and Equation \ref{Eq: proj dim eFS} remains true without assuming that $F$ is infinite.
 We now study the relation between the higher Lie modules and such projective modules.

Since, by Corollary \ref{C: Complexity Lie}, $\Lie_F(q)$ is not projective in general, the map $\alpha$ in Corollary \ref{C: higher lie inj in proj} may not split. For a simple example, consider $p=2$ and $q=(2)=\lambda$. Clearly, $\Lie_F((2))\cong F$. On the other hand, since $e_{(2),F}=\Xi^{(2)}=1$, we have that $e_{(2),F}F\sym{2}$ is the regular module.

However, when $\lambda$ is both $p$-regular and coprime to $p$, we get isomorphism as shown in the next corollary.  %In fact, often, we do not get equality throughout for such inequalities $\dim_F\Lie_F(\lambda)\leq |\ccl{\lambda}|\leq |\ccl{\lambda,p}|=\dim_Fe_{\lambda,F}F\sym{n}$.

%We give an example below.

%\begin{eg} Let $p=2$. Then \[\omega_{(2,1)}=2\Xi^{(2,1)}-\Xi^{(1,1,1)}=\Xi^{(1,1,1)}\] and therefore $\omega_{(2,1)}FS_3=F$. On the other hand, $e_{(2,1),F}=\Xi^{(2,1)}+\Xi^{(1,1,1)}$ and $\dim_Fe_{(2,1),F}FS_3=4$.
%\end{eg}

%\begin{eg} Let $p=3$. We have $\dim_F \omega_{(3,1,1)}FS_5=11=\dim_Fe_{(3,1,1),F}FS_5$. Therefore, $\omega_{(3,1,1)}FS_5=e_{(3,1,1),F}FS_5$ is a projective module.
%\end{eg}

\begin{cor}\label{C: Lie iso proj} Let $q\in\C(n)$, $(q,p)=1$ and $\lambda=\lambda(q)\in\pReg(n)$. We have an isomorphism $\Lie_F(q)\cong e_{\lambda,F}F\sym{n}$.
%the ordinary character of $\omega_qF\sym{n}$ is precisely the character of \[\nu_q\Q \sym{n}\cong \Ind^{\sym{n}}_{\prod_{i=1}^n(\sym{i}\wr \sym{m_i})}(\Lie_\Q(1)^{\wr m_1}\boxtimes\cdots\boxtimes \Lie_\Q(n)^{\wr m_n}).\]
\end{cor}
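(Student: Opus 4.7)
The plan is to show that the injection $\alpha \colon \Lie_F(q) \to e_{\lambda,F}F\sym{n}$ already supplied by Corollary \ref{C: higher lie inj in proj} is actually a bijection, by matching dimensions on both sides.

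First I would compute $\dim_F \Lie_F(q)$. Since $(q,p)=1$, Theorem \ref{T: Lie mod basis} gives $\dim_F \Lie_F(q) = \frac{n!}{q?}$, and because $\lambda = \lambda(q)$ we also have $q? = \lambda?$, so $\dim_F \Lie_F(q) = \frac{n!}{\lambda?}$.

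Next I would compute $\dim_F e_{\lambda,F}F\sym{n}$. By Equation \ref{Eq: proj dim eFS} (the result of Erdmann--Schocker), this dimension equals $|\ccl{\lambda,p}|$. Now apply Lemma \ref{L: p-equiv conj}: the hypotheses $\lambda \in \pReg(n)$ and $(\lambda,p)=1$ (which holds since $\lambda \approx q$ and $(q,p)=1$) give $\ccl{\lambda,p} = \ccl{\lambda}$, whose cardinality is $\frac{n!}{\lambda?}$.

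Thus the two $F$-dimensions coincide. Since $\alpha$ is an injection between finite-dimensional $F$-vector spaces of equal dimension, it is an isomorphism of $F\sym{n}$-modules. The main subtlety is simply bookkeeping the hypotheses needed to invoke Lemma \ref{L: p-equiv conj} (namely that $\lambda$ is both coprime to $p$ and $p$-regular), which is exactly what the statement of the corollary assumes; no further work is required.
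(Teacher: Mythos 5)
Your proof coincides with the paper's: both start from the injection of Corollary \ref{C: higher lie inj in proj}, then equate dimensions via Theorem \ref{T: Lie mod basis}, Equation \ref{Eq: proj dim eFS}, and Lemma \ref{L: p-equiv conj}. Your writeup is correct and just spells out the same chain of equalities in slightly more detail.
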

\begin{proof} By Corollary \ref{C: higher lie inj in proj}, we only need to check their dimensions. By Equation \ref{Eq: proj dim eFS}, Lemma \ref{L: p-equiv conj} and Theorem \ref{T: Lie mod basis}, we have \[\dim_F  e_{\lambda,F}F\sym{n}=|\ccl{\lambda,p}|=|\ccl{\lambda}|=\dim_F \Lie_F(q).\]
\end{proof}

%The multiplicities of the irreducible components of the Lie module $\Lie_\Q(n)$ have been described by Kra\'{s}kiewicz-Weyman \cite{KW}. However, due to the `wreath product' $\Ind^{\sym{im_i}}_{\sym{i}\wr\sym{m_i}}\Lie_\Q(i)^{\wr m_i}$, which corresponds to plethysm in the theory of symmetric functions, its irreducible components are not easy to describe. We give an example for illustration.

To conclude this section, we give an example to illustrate how the modular twisted Foulkes modules are related to the higher Lie modules.

\begin{eg} Let $p\neq 2$ and $q=(2^a,1^b)$. Then $(q,p)=1$. By Theorem \ref{T: higher lie mod}, \[\Lie_F((2^a,1^b))\cong \Ind^{\sym{2a+b}}_{(\sym{1}\wr \sym{b})\times (\sym{2}\wr \sym{a})} (\Lie_F(1)^{\wr b}\boxtimes \Lie_F(2)^{\wr a}).\] Notice that $\Lie_F(1)^{\wr b}=F$ is the trivial $F\sym{b}$-module with the identification $S_1\wr S_b=S_b$. On the other hand, $\Lie_F(2)=\sgn(2)$ the signature representation for $F\sym{2}$-module. Observe that $\sgn(2)^{\wr a}=\Res^{\sym{2a}}_{\sym{2}\wr \sym{a}}\sgn(2a)$ as $F[\sym{2}\wr \sym{a}]$-modules and
\begin{align*}
  (\Ind^{\sym{2a}}_{\sym{2}\wr \sym{a}}\Lie_F(2)^{\wr a})\otimes \sgn(2a)&\cong \Ind^{\sym{2a}}_{\sym{2}\wr \sym{a}}(\sgn(2)^{\wr a}\otimes (\Res^{\sym{2a}}_{\sym{2}\wr \sym{a}}\sgn(2a))\\
  &=\Ind^{\sym{2a}}_{\sym{2}\wr \sym{a}}F=H^{(2^a)}
\end{align*} which $H^{(2^a)}$ \index{$H^{(2^a)}$}is the Foulkes module (see \cite{Foul}) for $F\sym{2a}$ in the modular case. Therefore we obtain \[\Lie_F((2^a,1^b))\otimes \sgn(2a+b)\cong \Ind^{\sym{2a+b}}_{\sym{b}\times \sym{2a}}(\sgn(b)\boxtimes H^{(2^a)}).\] The module on the right hand side is called a twisted Foulkes module.% generalizing the Foulkes module (see \cite{Foul}) in the modular case.

%More specifically, we consider $p=3$ and $q=(2,2)$. In this case, \[e_{(2,2),F}F\sym{4}\cong \Ind^{\sym{4}}_{C_2\wr C_2}F\cong \text{projective cover of the trivial $F\sym{4}$-module}\cong Y^{(2,2)}\] where $Y^{(2,2)}$ is the Young module (see \cite{GJ84}).
\end{eg}

\section{Higher Lie Powers}\label{S: ch and dim}

%%Let $q$ be a composition coprime to $p$. In this section, we give the dimension and character (when $V$ is considered as $\GL(V)$-module) formulae and study the decomposition problem for the higher Lie power $L^q(V)$ when $\lambda(q)$ is $p$-regular. As a result, we can decompose the projective higher Lie module $\Lie(q)$ completely in terms of $P^\gamma$'s the projective covers of the simple modules $D^\gamma$'s.

Let $q$ be a composition coprime to $p$. In this section, we give the dimension and character (when $V$ is considered as the $F\GL(V)$-module) formulae and study the decomposition problem for the higher Lie power $L^q(V)$ when $\lambda(q)$ is $p$-regular. As a result, we can decompose the projective higher Lie module $\Lie_F(q)$ completely in terms of the projective indecomposable modules $P^\gamma$'s.

The main step is to prove that the surjective map in Lemma \ref{L: Sym surj Lie} is indeed an isomorphism under the extra assumption that $\lambda(q)$ is $p$-regular.

\begin{thm}\label{T: high Lie isom} Suppose that $\lambda\in\P_p(n)$, $(\lambda,p)=1$ and $m_i=m_i(\lambda)$. For any $ F G$-module $V$, the map $\psi:S^{m_n}(L^n(V))\otimes \cdots\otimes S^{m_1}(L^1(V))\to L^\lambda(V)$ is an isomorphism where \[\psi(\prod_{j=1}^{m_n}P_{n,j}\otimes \cdots\otimes\prod_{j=1}^{m_1}P_{1,j})=\omega_\lambda\cdot P_{n,1}\cdots P_{1,m_1}\] where $P_{i,j}\in L^i(V)$ for each admissible $i,j$. Suppose further $q$ is a composition such that $\lambda(q)=\lambda$. We have \[L^q(V)\cong S^{m_n}(L^n(V))\otimes \cdots\otimes S^{m_1}(L^1(V)).\] In particular, when $G=\GL(V)$, $L^q(V)$ is isomorphic to a direct sum of tilting modules of the form $T(\lambda)$ where $\lambda\in\P_p(m,n)$.
\end{thm}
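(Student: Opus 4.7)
The plan is to prove that $\psi$ is an isomorphism via a dimension count (surjectivity being provided by Lemma \ref{L: Sym surj Lie}), then derive $L^q(V) \cong \bigotimes_i S^{m_i}(L^i(V))$ from Lemma \ref{L: Lq isom Lr}, and finally deduce the tilting decomposition from Theorem \ref{T: DE proj decomp} together with Mathieu's tensor product theorem.

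The key observation I would exploit is that under both hypotheses $\lambda \in \pReg(n)$ and $(\lambda, p) = 1$, the scalar $\lambda? = \facmulti{\lambda}\prod_j \lambda_j$ is a unit in $\Z_{(p)}$, so by Theorem \ref{T: BL results}(iii) the element $\nu_\lambda = \omega_\lambda/\lambda?$ is an idempotent in $\Z_{(p)}\sym{n}$. Choosing a $\Z_{(p)}$-form $V_{\Z_{(p)}}$ of $V$, the free $\Z_{(p)}$-module $T^n(V_{\Z_{(p)}})$ splits as $\nu_\lambda T^n(V_{\Z_{(p)}}) \oplus (1-\nu_\lambda) T^n(V_{\Z_{(p)}})$, and the first summand is itself free of some rank $r$ that is invariant under base change to $\Q$ and to $F$; hence $\dim_F L^\lambda(V) = r = \dim_\Q L^\lambda(V_\Q)$. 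On the source side, $\dim_F L^i(V)$ is given by Witt's formula (Theorem \ref{T: lie dim}) and $\dim_F S^{m_i}(L^i(V)) = \binom{\dim_F L^i(V)+m_i-1}{m_i}$, both characteristic-independent, so $\dim_F(\text{source})$ coincides with $\dim_\Q(\text{source})$. In characteristic zero the PBW theorem for the free Lie algebra identifies $L^\lambda(V_\Q)$ with the tensor product of symmetric powers of Lie powers, making $\psi$ an isomorphism over $\Q$; chaining the three equalities gives $\dim_F(\text{source}) = \dim_F L^\lambda(V)$, so surjectivity promotes $\psi$ to an isomorphism. The statement about $L^q(V)$ is then immediate from Lemma \ref{L: Lq isom Lr}.

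For the tilting decomposition under $G = \GL(V)$: Theorem \ref{T: DE proj decomp} expresses each $L^i(V)$ as a direct sum of projective-injective tiltings $T(\nu)$ with $\nu \in \P_p(m, i)$, valid because $p \nmid i$ for every part $i$ of $\lambda$. Since the $p$-regularity of $\lambda$ forces $m_i < p$, the element $\frac{1}{m_i!}\sum_{\sigma \in \sym{m_i}} \sigma$ is an idempotent of $F\sym{m_i}$ presenting $S^{m_i}(W)$ as a direct summand of $W^{\otimes m_i}$. Mathieu's theorem (tensor products of tilting modules are tilting, together with the fact that direct summands of tiltings are tiltings) then shows the tensor product $\bigotimes_i S^{m_i}(L^i(V))$ is a direct sum of tilting modules $T(\mu)$, and tracking projective-injectivity through these tensor and summand operations restricts $\mu$ to $\P_p(m, n)$. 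The main obstacle I foresee is precisely this last tracking step: the preservation of projective-injectivity under tensor products of proj-inj tiltings in $S(m, n)$-mod is not formal from Mathieu's theorem alone and requires an appeal to specific highest-weight structure of the Schur algebra.
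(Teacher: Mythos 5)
Your argument for the isomorphism itself is correct but takes a genuinely different route from the paper's. You establish $\dim_F(\text{source})=\dim_F(L^\lambda(V))$ by a base-change argument over $\Z_{(p)}$: since $\lambda?$ is a unit in $\Z_{(p)}$ (this is exactly where both hypotheses $\lambda\in\P_p(n)$ and $(\lambda,p)=1$ enter), the higher Lie idempotent $\nu_\lambda$ lives in $\Z_{(p)}\sym{n}$, so the rank of $\nu_\lambda T^n(V_{\Z_{(p)}})$ is invariant under specialisation to $\Q$ and to $F$, while the source dimension $\prod_i\binom{\dim_F L^i(V)+m_i-1}{m_i}$ is characteristic-independent by Witt's formula; the characteristic-zero PBW decomposition then closes the loop. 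This is more elementary than the paper's argument, which instead observes that both source and target are direct summands of $T^n(V)$, hence (for $\dim_F V\geq n$) direct sums of projective-injective tilting modules, so that the surjection splits, and then applies the Schur functor and the dimension identity $\dim_F\Lie_F(\lambda)=n!/\lambda?$ from Theorem \ref{T: Lie mod basis} to force the kernel to vanish. Your route also handles $\dim_F V<n$ uniformly, whereas the paper treats that case by embedding $V$ into a larger space and restricting.

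For the tilting decomposition, you have correctly identified a gap in your own argument: Mathieu's theorem gives that the tensor product is a direct sum of $T(\mu)$'s, but does not by itself confine $\mu$ to $\P_p(m,n)$, and "tracking projective-injectivity through tensor products" of $S(m,n)$-modules is not a formal step. However, you need none of this. The argument already latent in your sketch closes the gap immediately: since $\omega_i/i$ is an idempotent (as $p\nmid i$), $L^i(V)$ is a direct summand of $T^i(V)$; since $m_i<p$, the symmetrising idempotent $\frac{1}{m_i!}\sum_{\sigma\in\sym{m_i}}\sigma$ makes $S^{m_i}(L^i(V))$ a direct summand of $L^i(V)^{\otimes m_i}$, hence of $T^{im_i}(V)$; tensoring over $i$, the source is a direct summand of $T^n(V)$. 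For $\dim_F V\geq n$, $T^n(V)$ is a direct sum of projective-injective tilting modules, and these are exactly the $T(\mu)$ with $\mu\in\P_p(m,n)$ (\cite[Proposition 4.2]{Erd94}), which gives the claimed decomposition without invoking Mathieu's theorem or tracking any extra structure. This is precisely what the paper does, and it is shorter than the route you attempted.
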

\begin{proof} By Lemma \ref{L: Sym surj Lie}, the map is surjective and commutes with the action of $G$.

First assume that $\dim_ F(V)\geq n$. Consider the natural $F\GL(V)$-module $V$. Since $\omega_\lambda^2=\lambda?\omega_\lambda$ and $\lambda?\neq 0$ in $F$, $L^\lambda(V)$ is a direct summand of $T^n(V)$. When $0<m_i<p$, in particular $p\nmid i$, we have $L^i(V)$ is a direct summand of $T^i(V)$ and hence $S^{m_i}(L^i(V))$ is a direct summand of $(T^i(V))^{\otimes m_i}=T^{im_i}(V)$. In turn, $S:=S^{m_n}(L^n(V))\otimes\cdots\otimes S^{m_1}(L^1(V))$ is a direct summand of $T^n(V)$. As such, both $L^\lambda(V)$ and $S$ are direct sums of projective (and injective) tilting modules (see \cite[Proposition 4.2]{Erd94}) of the form $T(\lambda)$ for some $\lambda\in\P_p(m,n)$. The surjective map $\psi$ therefore splits. Since the Schur functor $f$ is exact and does not map any tilting module to zero, by the counting argument in the proof of Theorem \ref{T: higher lie mod} which asserts $f(S)\cong f(L^\lambda(V))$, we must have $S\cong L^\lambda(V)$, i.e., $\psi$ is an isomorphism. %In particular, $\psi$ is injective. Notice that $\omega_\lambda$ is defined over the based field $\mathbb{F}_p$ and hence $\psi$ is independent of the field $F$ in the sense that, if $F$ is a subfield of $F'$ and $V'=V\otimes_F F'$, we have $\psi'$ (same function as $\psi$ but replaces $V$ by $V'$) is obtained from $\psi$ by just extending the field to $F'$. As such, using our previous argument for $F'=\overline{F}$, we obtain that $\psi'$ is injective and therefore $\psi$ is injective for arbitrary $F$.

Without the assumption $\dim_ F V\geq n$, let $W$ be a vector space containing $V$ as a subspace such that $\dim_ F W\geq n$. We use $\psi_V$ and $\psi_W$ to denote the maps $\psi$ with respect to these spaces. By the previous paragraph, $\psi_W$ is an isomorphism. Since $\psi_V$ is obtained from $\psi_W$ by restricting to $S^{m_n}(L^n(W))\otimes \cdots\otimes S^{m_1}(L^1(W))\cap T^n(V)$, $\psi_V$ is also injective. So $\psi_V$ is an isomorphism. The proof is now complete using Lemma \ref{L: Lq isom Lr} for $q\approx \lambda$.
%Suppose now that $G$ are arbitrary. For $p\nmid i$, the Lie power $L^i(V)$ has a characteristic free basis $B_i$ (need citation??). Let $B$ be the basis of $S$ given by the tensor product of the monomials of degree $m_i$ in $B_i$. Again, $B$ is a basis for $S\otimes_ F\overline{ F}$. Under the map $\psi$ and extension of field, $\psi(B)$ is a basis for $L^q(V\otimes_ F\overline{ F})\cong L^q(V)\otimes_ F\overline{ F}$. As such, $\psi(B)$ is linearly independent in $L^q(V)$ and therefore forms a basis for $L^q(V)$ as $\psi$ is surjective. This shows that $\psi$ is an isomorphism.
\end{proof}

We will draw some corollaries following our Theorem \ref{T: high Lie isom}. Before this, we need the following notation.

\smallskip

Let $\delta,\eta$ be compositions. Assuming that $\delta_i=0=\eta_j$ whenever $i>\ell(\delta)$ and $j>\ell(\eta)$. We define
\begin{align*}
  \delta\eta&=(\delta_i\eta_i)_{i=1}^\infty,\index{$\delta\eta$}\\
  \delta^{[\eta]}&=\#^\infty_{i=1}(\underbrace{\delta_i,\ldots,\delta_i}_{\text{$\eta_i$ times}})=({\delta_1}^{\eta_1},{\delta_2}^{\eta_2},\ldots),\index{$\delta^{[\eta]}$}
\end{align*} and they are to be read as compositions by deleting the zeroes. If $\ell(\delta)=\ell(\eta)$ and $\gamma=\delta\eta$, we write $\delta\mid \gamma$ \index{$\delta\mid \gamma$}and write $\gamma/\delta$ \index{$\gamma/\delta$}for $\eta$. By convention, $\varnothing\mid\varnothing$. For example, if $\delta\mid (i^{\ell(\eta)})$, we have $\ell=\ell(\delta)=\ell(\eta)$ and  \[(\eta\delta)^{[(i^{\ell(\eta)})/\delta]}=((\eta_1\delta_1)^{i/\delta_1},\ldots,(\eta_\ell\delta_\ell)^{i/\delta_\ell})\] which is a composition of  $i|\eta|$.  Let $\mu$ be the M\"{o}bius function with the convention that $\mu(0)=1$. We let \[\mu(\delta)=\prod_{i=1}^\infty\mu(\delta_i).\index{$\mu(\delta)$}\] Let $\b{\delta}$ \index{$\b{\delta}$}be sequences of compositions with finite number of nonempty terms. Define
\begin{align*}
  \b{\delta}?&=\prod^\infty_{i=1}\b{\delta}_i?,& I_{\b{\delta}}&=((i^{\ell(\b{\delta}_i)}))^\infty_{i=1},& I_{\b{\delta}}^\times &=\prod^\infty_{i=1}i^{\ell(\b{\delta}_i)},
  &\mu(\b{\delta})&=\prod_{i=1}^\infty\mu(\b{\delta}_i).\index{$\b{\delta}?$}\index{$I_{\b{\delta}}$}\index{$I_{\b{\delta}}^\times$} \index{$\mu(\b{\delta})$}
\end{align*} Let $\b{\eta}$ be another sequence of compositions such that $\b{\delta}_i\mid I_{\b{\eta}_i}$ for all $i\in \NN$. We denote it as $\b{\delta}\mid I_{\b{\eta}}$ and   %Furthermore, we write $\b{\delta}\mid \b{\eta}$ if $\b{\delta}_i\mid \b{\eta}_i$ for all $i\in\NN$. In the case, $\b{\delta}\mid I_{\b{\eta}}$,
define \[\b{\eta}\scont \b{\delta}=\#^\infty_{i=1}(\b{\eta}_i\b{\delta}_i)^{[(i^{\ell(\b{\eta}_i)})/\b{\delta}_i]}\index{$\b{\eta}\scont \b{\delta}$}\] and consider it as a composition by deleting zeroes. For an example of the notations introduced above, we refer the readers to a table in Example \ref{Ex: 221}.

For each $n\in\NN$ and $j\in [0,n]$, let $\varsigma_{j,n}$ \index{$\varsigma_{j,n}$}be the number of permutations $\pi\in\sym{n}$ such that $\pi$ has exactly $j$ cycles in the cycle type of $\pi$, i.e., $j=\ell(\rho(\pi))$ where $\rho(\pi)\in\P(n)$ denotes the cycle type of $\pi$. By convention, we define %\[\varsigma_{0,n}=\left \{\begin{array}{ll}0&\text{if $n>0$,}\\ 1&\text{if $n=0$.}\end{array}\right .\]
$\varsigma_{0,0}=1$. It is well-known that
$\prod_{i=0}^{m-1}(x+i)=\sum_{\pi\in\sym{m}}x^{\ell(\rho(\pi))}.
$ Therefore we obtain the following lemma.

\begin{lem}\label{L: gen length} For each $m\in\NN$, we have \[\prod_{i=0}^{m-1}(x+i)=\sum_{j=1}^m\varsigma_{j,m}x^j.\] Furthermore, $\varsigma_{j,m}$ is the sum of the product of the numbers in $S\subseteq [1,m-1]$ such that $|S|=m-j$.
\end{lem}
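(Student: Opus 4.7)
My plan is to deduce both assertions directly from an expansion of the product $\prod_{i=0}^{m-1}(x+i)$; there is no real obstacle since the statement is essentially the definition of the unsigned Stirling numbers of the first kind together with one of their standard elementary symmetric polynomial descriptions.

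First I would establish the polynomial identity. The excerpt already records the well-known fact
\[
\prod_{i=0}^{m-1}(x+i) \;=\; \sum_{\pi \in \sym{m}} x^{\ell(\rho(\pi))}.
\]
Since $\varsigma_{j,m}$ is defined as the number of $\pi \in \sym{m}$ with $\ell(\rho(\pi)) = j$, collecting the terms on the right-hand side according to the value of $\ell(\rho(\pi))$ and noting that the only possible values are $j \in [1,m]$ yields
\[
\prod_{i=0}^{m-1}(x+i) \;=\; \sum_{j=1}^{m} \varsigma_{j,m}\, x^j,
\]
which is the first assertion.

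Next I would prove the elementary symmetric description. Expanding the product $(x+0)(x+1)(x+2)\cdots(x+m-1)$ by choosing from each factor either the $x$-term or the constant term, the coefficient of $x^j$ is
\[
[x^j]\!\!\prod_{i=0}^{m-1}(x+i) \;=\; \sum_{T} \prod_{i \in T} i,
\]
where $T$ ranges over subsets of $[0,m-1]$ of cardinality $m-j$ (the indices of the factors from which we selected the constant). Any $T$ containing $0$ contributes zero, so only subsets $T \subseteq [1,m-1]$ with $|T| = m-j$ contribute nontrivially. Matching this with the identity from the previous paragraph gives
\[
\varsigma_{j,m} \;=\; \sum_{\substack{S \subseteq [1,m-1] \\ |S| = m-j}} \prod_{i \in S} i,
\]
which is precisely the second assertion. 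This completes the proof.
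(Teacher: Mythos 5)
Your proof is correct and follows essentially the same route the paper intends: the first identity is exactly the grouping of $\sum_{\pi\in\sym{m}} x^{\ell(\rho(\pi))}$ by number of cycles, and the second is the direct elementary-symmetric-function expansion of the falling/rising factorial (with the factor $x+0$ killing any subset containing $0$). The paper leaves this to the reader after citing the well-known identity, so no divergence in approach.
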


%Our first step is to prove that the surjective map in Lemma \ref{L: Sym surj Lie} is indeed an isomorphism under the extra assumption that $\lambda(q)$ is $p$-regular.% As noted earlier, $L^q(V)\cong L^r(V)$ if $q\approx r$. Therefore, in the statement below, we may simply consider $L^\lambda(V)$ where $\lambda$ is a partition.

For sequences $\delta,\gamma$ in $\NN_0$ (with finite number of nonzero terms) such that $\delta_i\leq \gamma_i$ for all $i\in\NN$, we define \[\varsigma_{\delta,\gamma}=\prod_{i=1}^\infty i^{\gamma_i-\delta_i}\varsigma_{\delta_i,\gamma_i}.\index{$\varsigma_{\delta,\gamma}$}\] Notice that $\varsigma_{\delta,\gamma}\neq 0$ if and only if, for all $i\in\NN$, $\gamma_i>0$ implies $\delta_i>0$. %Finally, we let $\P_\delta$\index{$\P(\delta)$} denote the set consisting of all sequences of partitions $\b{\eta}$ such that $|\b{\eta}_i|=\delta_i$ for all $i\in\NN$.%As such, we denote this scenario by \[\mathbb{1}\leq  (\gamma-\delta)\index{$\mathbb{1}\leq \delta\leq \gamma$}\] (do not to confuse with the strong refinement of partitions).

Let $d_i=\dim_ F L^i(V)$ as in Theorem \ref{T: lie dim}. For each $i$, let $B_i=\{b_{i,1},\ldots,b_{i,d_i}\}$ be a basis for the $i$th Lie power $L^i(V)$ (for example, the Hall bases (see \cite{Reu})). The symmetric power $S^{m_i}(L^i(V))$ has a basis given by the product of ascending powers \[\{b_{i,j_1}\cdots b_{i,j_{m_i}}:1\leq j_1\leq \cdots\leq j_{m_i}\leq d_i\}.\] Therefore, a basis for $L^\lambda(V)$ can be obtained through the isomorphism $\psi$ given in Theorem \ref{T: high Lie isom} when $(\lambda,p)=1$. Furthermore, we have the dimension formula in the next corollary.

\begin{cor}\label{C: higher dim}  Suppose that $q\in\C(n)$, $(q,p)=1$, $\lambda(q)\in\P_p(n)$ and $\ell=\dim_ F V$. Let $R(q)$ be the set consisting of all sequences $\eta$ in $\NN_0$ such that $\eta_i\leq m_i(q)$ and, for all $i\in\NN$, $m_i(q)>0$ implies $\eta_i>0$. The dimension of $L^q(V)$ is \[\prod_{i=1}^\infty {m_i(q)+d_i-1\choose m_i(q)}=\frac{1}{q?}\sum_{\eta\in R(q)}\varsigma_{\eta,\varrho}\sum \mu(\delta)\ell^{\left |{\gamma}/{\delta}\right |}\] where $d_i=\dim_FL^i(V)$, $\varrho=(m_i(q))^\infty_{i=1}$ and the second sum on the right hand side is taken over all pairs of $(\delta,\gamma)$ such that $\delta\mid \gamma$ and $m(\gamma)=\eta$. %Furthermore, $L^q(V)$ has a basis \[\{\omega_q\cdot \bigotimes_{i=1}^\infty (b_{i,j_1}\otimes \cdots\otimes b_{i,j_{m_i}}):1\leq j_1\leq \cdots\leq j_{m_i}\leq d_i\}.\]
\end{cor}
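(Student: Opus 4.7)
The plan is to begin with Theorem \ref{T: high Lie isom}, which yields $L^q(V) \cong S^{m_n}(L^n(V)) \otimes \cdots \otimes S^{m_1}(L^1(V))$. Since the $m_i$th symmetric power of a $d_i$-dimensional space has dimension $\binom{m_i + d_i - 1}{m_i}$, the first equality in the statement drops out immediately.

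To obtain the second equality, I would expand each binomial coefficient via $\binom{m_i + d_i - 1}{m_i} = \tfrac{1}{m_i!}\prod_{j=0}^{m_i - 1}(d_i + j)$ and apply Lemma \ref{L: gen length} to get $\prod_{j=0}^{m_i - 1}(d_i + j) = \sum_{\eta_i} \varsigma_{\eta_i, m_i} d_i^{\eta_i}$, where $\eta_i$ ranges in $[1, m_i]$ when $m_i > 0$ and the product is $1$ when $m_i = 0$ (using the convention $\varsigma_{0,0} = 1$). Distributing the product over $i$ yields a sum indexed precisely by $\eta \in R(q)$. Invoking the defining identity $\varsigma_{\eta, \varrho} = \prod_i i^{m_i - \eta_i} \varsigma_{\eta_i, m_i}$, the factors $i^{-m_i}$ absorb into $\prod_i m_i!$ to produce $q?$ in the denominator, while the factors $i^{\eta_i}$ pair with $d_i^{\eta_i}$ to form $(id_i)^{\eta_i}$. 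This step gives
\[
\dim L^q(V) = \frac{1}{q?}\sum_{\eta \in R(q)} \varsigma_{\eta, \varrho} \prod_i (id_i)^{\eta_i}.
\]

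For the last step I would apply Witt's formula (Theorem \ref{T: lie dim}) in the rearranged form $id_i = \sum_{r \mid i}\mu(r)\ell^{i/r}$. Letting $\gamma$ denote the partition with $m(\gamma) = \eta$, so that $\gamma$ has $\eta_i$ parts of size $i$, one obtains
\[
\prod_i (id_i)^{\eta_i} = \prod_{j=1}^{\ell(\gamma)} (\gamma_j d_{\gamma_j}) = \prod_{j=1}^{\ell(\gamma)}\sum_{r \mid \gamma_j} \mu(r)\ell^{\gamma_j/r} = \sum_{\delta \mid \gamma} \mu(\delta)\ell^{|\gamma/\delta|},
\]
where in the last equality a tuple $(r_j)_{j=1}^{\ell(\gamma)}$ of divisors is identified with a composition $\delta$ satisfying $\delta \mid \gamma$, and the multiplicativity $\mu(\delta) = \prod_j \mu(\delta_j)$ is exactly the definition from Subsection \ref{SS: generality}. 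Substituting this into the previous display yields the claimed identity.

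The main obstacle is a point of convention rather than content: one has to read the inner sum as taken over pairs $(\delta, \gamma)$ with $\gamma$ the partition whose multiplicity profile equals $\eta$, so that $\gamma$ is essentially fixed by $\eta$ and $\delta$ then runs over compositions of length $\ell(\gamma)$ dividing $\gamma$ componentwise. Interpreting $\gamma$ as an arbitrary composition with $m(\gamma) = \eta$ would introduce an extraneous multinomial factor $(\sum_i \eta_i)!/\prod_i \eta_i!$; a quick check on $q = (2,1)$ confirms that the partition reading is the correct one.
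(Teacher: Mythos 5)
Your proposal is correct and follows essentially the same route as the paper: Theorem \ref{T: high Lie isom} for the symmetric-power decomposition, the rising-factorial expansion of the binomial coefficients, Lemma \ref{L: gen length} to introduce the $\varsigma$-numbers, distribution over $i$ to index the sum by $R(q)$, and Witt's formula (Theorem \ref{T: lie dim}) to expand $id_i$ into a sum over divisors. Your closing remark about reading $\gamma$ as the (unique) partition with $m(\gamma)=\eta$ rather than as an arbitrary composition is a valid and useful clarification of the paper's notation, though the paper's own proof glosses over this by simply saying the equality "can be derived by expanding the product."
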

\begin{proof} The first formulation of the dimension for $L^q(V)\cong L^{\lambda(q)}(V)$ follows directly from Theorem \ref{T: high Lie isom}. Also,
\begin{align*}
\prod_{i=1}^\infty{d_i+m_i(q)-1\choose m_i(q)}&=\prod_{i=1}^\infty\frac{(d_i+(m_i(q)-1))\cdots (d_i+1)d_i}{m_i(q)!}\\
  &=\frac{1}{q?}\prod_{i=1}^\infty i^{m_i(q)}(d_i+(m_i(q)-1))\cdots (d_i+1)d_i\\
  &=\frac{1}{q?}\prod_{i=1}^\infty \sum_{j=1}^{m_i(q)}i^{m_i(q)-j}\varsigma_{j,m_i(q)}(id_i)^j\\
  &=\frac{1}{q?}\sum_{\eta\in R(q)}\prod_{i=1}^\infty i^{m_i(q)-\eta_i}\varsigma_{\eta_i,m_i(q)}(id_i)^{\eta_i}\\
  &=\frac{1}{q?}\sum_{\eta\in R(q)}\prod_{i=1}^\infty i^{m_i(q)-\eta_i}\varsigma_{\eta_i,m_i(q)}\left (\sum_{d\mid i}\mu(d)\ell^{i/d}\right )^{\eta_i}\\
  &=\frac{1}{q?}\sum_{\eta\in R(q)}\varsigma_{\eta,\varrho}\sum \mu(\delta)\ell^{\left |{\gamma}/{\delta}\right |}
\end{align*} where third and penultimate equalities are obtained using Lemma \ref{L: gen length} and Theorem \ref{T: lie dim} respectively, and, in the final equality, the second sum is taken over all pairs of $(\delta,\gamma)$ such that $\delta\mid \gamma$ and $m(\gamma)=\eta$ and the equality can be derived by expanding the product \[\underbrace{\mu(1)\ell^{1/1}\cdots \mu(1)\ell^{1/1}}_{\text{$\eta_1$ times}}\underbrace{(\mu(1)\ell^{2/1}+\mu(2)\ell^{2/2})\cdots (\mu(1)\ell^{2/1}+\mu(2)\ell^{2/2})}_{\text{$\eta_2$ times}}\cdots\] as the sum of `monomials' of the form $\mu(d)\ell^{i/d}$.
  %The map $\psi$ in the theorem give the basis for $L^q(V)$.
\end{proof}

To illustrate the dimension formula in terms of the summation as in Corollary \ref{C: higher dim}, we give an example.

\begin{eg} Assume that $p\geq 3$, $\ell=\dim_F V$ and $q=(2^2,1^2)$. We have $\varrho=(2,2)$, $R(q)=\{(2,2),(2,1),(1,2),(1,1)\}$ and
\begin{align*}
&\dim_ F L^{(2^2,1^2)}(V)\\
&=\frac{1}{16}((\underbrace{\ell^6-2\ell^5+\ell^4}_{\eta=(2,2)})+(\underbrace{\ell^5-2\ell^4+\ell^3}_{\eta=(1,2)})+ 2(\underbrace{\ell^4-\ell^3}_{\eta=(2,1)})+ 2(\underbrace{\ell^3-\ell^2}_{\eta=(1,1)}))\\
&=\frac{\ell^2(\ell+1)(\ell-1)(\ell^2-\ell+2)}{16}.
\end{align*}
\end{eg}

For the rest of this section, we view $V$ is the natural $F\GL(V)$-module.  Recall the formal character of the $i$th Lie power as in Theorem \ref{T: lie ch}, which we shall now denote as $\ell_i$, and the plethysm $\circ$ on the ring of symmetric functions in Subsection \ref{SS: symmetric functions}.

\begin{cor}\label{C: higher ch} Suppose that $q\in\C(n)$, $(q,p)=1$ and $\lambda(q)\in\P_p(n)$. The formal character for $L^q(V)$ is \[\prod_{i=1}^\infty h_{m_i(q)}\circ\ell_i=\sum \frac{1}{\b{\lambda}?I_{\b{\lambda}}^\times}\mu(\b{\delta})p_{\b{\lambda}\sharp\b{\delta}}\] where the sum is taken over all $\b{\lambda}\in\prod^\infty_{i=1}\P(m_i(q))$ and $\b{\delta}\mid I_{\b{\lambda}}$.
\end{cor}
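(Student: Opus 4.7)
The plan is to reduce the computation of $\ch(L^q(V))$ to a product of plethysms, then expand using Witt's formula. By Theorem \ref{T: high Lie isom}, under the hypotheses $(q,p)=1$ and $\lambda(q)\in\P_p(n)$, we have an isomorphism of $F\GL(V)$-modules
\[
L^q(V)\cong S^{m_n}(L^n(V))\otimes\cdots\otimes S^{m_1}(L^1(V)),
\]
where $m_i=m_i(q)$. Since formal characters are multiplicative on tensor products and satisfy $\ch(S^m(W))=h_m\circ \ch(W)$ for any polynomial representation $W$ of $\GL(V)$, and since by definition $\ch(L^i(V))=\ell_i$, the first equality
\[
\ch(L^q(V))=\prod_{i=1}^\infty h_{m_i(q)}\circ \ell_i
\]
follows immediately.

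For the second equality, I would unpack each factor $h_{m_i}\circ \ell_i$ into the power-sum basis. Recall the standard expansion $h_m=\sum_{\lambda\vdash m}\frac{1}{\lambda?}p_\lambda$, so
\[
h_{m_i}\circ \ell_i=\sum_{\b{\lambda}_i\vdash m_i}\frac{1}{\b{\lambda}_i?}\,p_{\b{\lambda}_i}\circ \ell_i.
\]
Using $p_{\b{\lambda}_i}\circ f=\prod_j p_{(\b{\lambda}_i)_j}\circ f$ together with the fact that $p_k\circ p_d=p_{kd}$, and then substituting Witt's formula (Theorem \ref{T: lie ch}) $\ell_i=\frac{1}{i}\sum_{d\mid i}\mu(d)p_d^{i/d}$, I get
\[
p_{(\b{\lambda}_i)_j}\circ \ell_i=\frac{1}{i}\sum_{d\mid i}\mu(d)\,p_{(\b{\lambda}_i)_j d}^{\,i/d}.
\]
Taking the product over $j\in[1,\ell(\b{\lambda}_i)]$ and collecting the $i$th-level data into a composition $\b{\delta}_i$ with $(\b{\delta}_i)_j\mid i$ (that is, $\b{\delta}_i\mid I_{\b{\lambda}_i}$) yields
\[
p_{\b{\lambda}_i}\circ \ell_i=\frac{1}{i^{\ell(\b{\lambda}_i)}}\sum_{\b{\delta}_i\mid I_{\b{\lambda}_i}}\mu(\b{\delta}_i)\,p_{(\b{\lambda}_i\b{\delta}_i)^{[(i^{\ell(\b{\lambda}_i)})/\b{\delta}_i]}}.
\]

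The final step is to multiply these expressions over $i$ and match the notation in the statement. The prefactor $\prod_i \b{\lambda}_i?^{-1}\cdot i^{-\ell(\b{\lambda}_i)}$ becomes $(\b{\lambda}?\cdot I_{\b{\lambda}}^\times)^{-1}$ by definition of $\b{\lambda}?$ and $I_{\b{\lambda}}^\times$, the signs multiply to $\mu(\b{\delta})$, and the power-sum indices concatenate according to the operation $\sharp$ introduced above the statement, giving $p_{\b{\lambda}\sharp\b{\delta}}$. This yields exactly
\[
\sum_{\b{\lambda},\,\b{\delta}\mid I_{\b{\lambda}}}\frac{1}{\b{\lambda}?\,I_{\b{\lambda}}^\times}\mu(\b{\delta})\,p_{\b{\lambda}\sharp\b{\delta}},
\]
completing the proof. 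The only genuinely delicate point will be the last bookkeeping step: verifying that the concatenation rule defining $\b{\lambda}\sharp\b{\delta}$ is precisely the one produced by the iterated plethysm expansion across all $i$. Once that is confirmed, everything else is routine symmetric function manipulation.
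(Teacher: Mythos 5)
Your proof is correct, and it takes a genuinely different and more direct route than the paper's. The paper's proof goes through the $p$-modular system $(\Q_p,\Z_p,\Fp)$: it observes that $\Lie_{\Fp}(q)$ is projective (since $L^q(V)$ is a sum of tilting modules by Theorem \ref{T: high Lie isom}), lifts uniquely to the projective $\Z_p\sym{n}$-module $\Lie_{\Z_p}(q)$, quotes Reutenauer's Theorem 8.23 for the ordinary character $\ch(\Lie_{\Z_p}(q))=\prod_i h_{m_i(q)}\circ\ell_i$, and then transports this back to $\ch(L^q(V))$ via the identity $\ch(\ch(fX))=\ch(X)$ for tilting modules $X$. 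You instead stay entirely at the level of formal characters of polynomial $\GL(V)$-modules in the given characteristic: the tensor-product decomposition of Theorem \ref{T: high Lie isom} plus the plethystic identity $\ch(S^m(W))=h_m\circ\ch(W)$ and Brandt's formula (Theorem \ref{T: lie ch}) give the first equality without any lifting. This is legitimate because the weight-space dimensions of $S^m(W)$ — hence its formal character — are combinatorial invariants that do not depend on the characteristic of $F$, so $\ch(S^m(W))=h_m\circ\ch(W)$ holds over $F$ just as over $\mathbb{C}$. What you gain is conceptual economy: you dispense with the $p$-modular system, the uniqueness of lifts of projective modules, and the Reutenauer reference. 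What the paper's route buys is that every step carries an explicit citation; if you were to write this up you should supply at least a one-line justification (as above) for the characteristic-independence of $\ch(S^m(W))$, since that is precisely the fact the paper avoids by lifting. Your subsequent power-sum expansion agrees step for step with the paper's, including the identification of the exponent as $(\b{\lambda}_i\b{\delta}_i)^{[(i^{\ell(\b{\lambda}_i)})/\b{\delta}_i]}$ and the final concatenation into $\b{\lambda}\scont\b{\delta}$.
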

\begin{proof} Let $\Fp$ be the prime subfield of $F$ and $(\Q_p,\Z_p,\Fp)$ be the $p$-modular system. By Theorem \ref{T: high Lie isom}, $\Lie_F(q)$ is a projective $F\sym{n}$-module and therefore, by \cite[Theorem 1.21]{BH} where $\Lie_F(q)\cong \Lie_\Fp(q)\otimes_\Fp F$, $\Lie_\Fp(q)$ is projective. The unique lift of $\Lie_\Fp(q)$ is the projective module $\Lie_{\Z_p}(q)$.  By \cite[Theorem 8.23]{Reu}, we have \[\ch(\Lie_{\Z_p}(q))=\prod^\infty_{i=1}h_{m_i(q)}\circ\ell_i.\] Since $\ch(\ch(fX))=\ch(X)$ for a tilting module $X$ (see \cite[\S2.3]{DE}),  by Theorem \ref{T: high Lie isom}, we have $\ch(L^q(V))=\ch(\Lie_{\Z_p}(q))$. It is well-known that \[h_m=\sum_{\lambda\in\P(m)}\frac{1}{\lambda?}p_\lambda=\sum_{\lambda\in\P(m)}\frac{1}{\lambda?}\prod_{j=1}^\infty p_j^{m_j(\lambda)}.\] By the definition of plethysm of symmetric functions, we have \[h_m\circ\ell_i=\sum_{\lambda\in\P(m)}\frac{1}{\lambda?}\prod_{j=1}^\infty \left (\frac{1}{i}\sum_{k\mid i}\mu(k)p_{jk}^{i/k}\right )^{m_j(\lambda)}=\sum_{\lambda\in\P(m)}\frac{1}{\lambda?i^{\ell(\lambda)}}\sum_{\delta\mid (i^{\ell(\lambda)})}\mu(\delta)p_{(\lambda d)^{[(i^{\ell(\lambda)})/\delta]}}.\] Therefore,
\begin{align*}
\prod_{i=1}^\infty h_{m_i(q)}\circ \ell_i&=\sum_{\b{\lambda}\in\prod^\infty_{i=1}\P(m_i(q))}\frac{1}{\b{\lambda}?I_{\b{\lambda}}^\times}\sum_{\b{\delta}\mid I_{\b{\lambda}}}\prod^\infty_{i=1} \mu(\b{\delta}_i)p_{(\b{\lambda}_i\b{\delta}_i)^{[(i^{\ell(\b{\lambda}_i)})/\b{\delta}_i]}}\\
&=\sum_{\b{\lambda}\in\prod^\infty_{i=1}\P(m_i(q))}\frac{1}{\b{\lambda}?I_{\b{\lambda}}^\times}\sum_{\b{\delta}\mid I_{\b{\lambda}}}\mu(\b{\delta})p_{\b{\lambda}\scont\b{\delta}}.
\end{align*}
\end{proof}

%\begin{rem} If we replace the symmetric powers of the Lie powers by exterior powers of the Lie powers, we should...
%\end{rem}

By Theorem \ref{T: high Lie isom}, when $(q,p)=1$ and $\lambda(q)$ is $p$-regular, the higher Lie power $L^q(V)$ is a direct sum of the tilting modules of the form $T(\lambda)$ for some $\lambda\in\P_p(m,n)$. In the next theorem, we give the multiplicity of each $T(\lambda)$ in $L^q(V)$ in terms of the irreducible Brauer characters $\beta^\gamma$'s. As a consequence, we have a complete decomposition of projective higher Lie modules into indecomposables.

\begin{thm}\label{T: higher Lie decomp} Suppose that $q\in\C(n)$, $(q,p)=1$ and $\lambda(q)\in\P_p(n)$. We have $L^q(V)\cong \bigoplus_{\gamma\in\P_p(m,n)}m_\gamma T(\gamma)$ where \[m_\gamma=\sum \frac{1}{\b{\lambda}?I_{\b{\lambda}}^\times}\mu(\b{\delta})\beta^\gamma({\b{\lambda}\scont\b{\delta}})\] where the sum is taken over all $\b{\lambda}\in\prod^\infty_{i=1}\P(m_i(q))$ and $\b{\delta}\mid I_{\b{\lambda}}$.
\end{thm}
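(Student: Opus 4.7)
The plan is to reduce the statement to extracting multiplicities from the formal character computed in Corollary \ref{C: higher ch} via the standard orthogonality between lifted projective characters and irreducible Brauer characters on $p$-regular classes of $\sym{n}$. This generalizes the strategy of Donkin--Erdmann's Theorem \ref{T: DE proj decomp}, which is exactly the special case $q=(n)$ with $p\nmid n$.

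First I would apply Theorem \ref{T: high Lie isom}: under the hypotheses $(q,p)=1$ and $\lambda(q)\in\P_p(n)$, $L^q(V)$ is a direct sum of tilting modules $T(\gamma)$ with $\gamma\in\P_p(m,n)$, so $L^q(V)\cong\bigoplus_{\gamma\in\P_p(m,n)}m_\gamma T(\gamma)$ for uniquely determined non-negative integers $m_\gamma$. Applying the Schur functor $f$ and using that $f(T(\gamma))=P^\gamma$ for $\gamma\in\P_p(n)$, one gets $f(L^q(V))\cong\bigoplus_{\gamma\in\P_p(n)}m_\gamma P^\gamma$ as $F\sym{n}$-modules. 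Since this is projective, it lifts uniquely to an $\OO\sym{n}$-module whose $\bk$-character equals $\chi=\sum_\gamma m_\gamma\,\chi_{P^\gamma_\OO}$.

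Next I would invoke the orthogonality between the characters of the indecomposable projective lifts and the irreducible Brauer characters on $p'$-classes (see \cite[\S18]{CR1}): for any projective $F\sym{n}$-module $M$ with decomposition $M\cong\bigoplus_\gamma m_\gamma P^\gamma$,
\[m_\gamma=\frac{1}{n!}\sum_{\sigma\in(\sym{n})_{p'}}\chi_{M_\OO}(\sigma)\beta^\gamma(\sigma^{-1})=\sum_{\mu\in\P_p(n)}\frac{\chi_{M_\OO}(\mu)}{\mu?}\beta^\gamma(\mu).\]
Since $\beta^\gamma$ extends by zero to $p$-singular classes, the right-hand sum may be taken over all $\mu\in\P(n)$ with no change. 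The quantities $\chi(\mu)/\mu?$ are precisely the coefficients in the expansion of $\ch(f(L^q(V)))$ in the power-sum basis, because $\ch:\R\to\Symm$ sends a class function $\psi$ on $\sym{n}$ to $\sum_\mu(\psi(\mu)/\mu?)p_\mu$.

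To conclude I would then appeal to the identity $\ch(L^q(V))=\ch(f(L^q(V)))$ already exploited in the proof of Corollary \ref{C: higher ch} (this is the content of \cite[\S2.3]{DE} applied to the tilting decomposition of $L^q(V)$). Comparing the explicit formula from Corollary \ref{C: higher ch} gives
\[\frac{\chi(\mu)}{\mu?}=\sum_{\substack{\b{\lambda}\in\prod_{i}\P(m_i(q))\\ \b{\delta}\mid I_{\b{\lambda}},\ \b{\lambda}\scont\b{\delta}=\mu}}\frac{\mu(\b{\delta})}{\b{\lambda}?\,I_{\b{\lambda}}^\times}.\]
Substituting this into the orthogonality formula and interchanging the order of summation yields the claimed expression for $m_\gamma$. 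The main obstacle is simply ensuring that the orthogonality is invoked in the correct form (projective-character versus Brauer-character pairing on $p'$-classes) and that the bookkeeping of $I_{\b{\lambda}}$ and $\b{\lambda}\scont\b{\delta}$ is consistent with the indexing conventions introduced just before Corollary \ref{C: higher ch}; once this is in place, the identification is formal.
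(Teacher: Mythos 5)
Your proposal is correct and takes essentially the same route as the paper: both extract $m_\gamma$ from the formal character of Corollary \ref{C: higher ch} by pairing with the Brauer characters $\beta^\gamma$ via the projective/Brauer orthogonality (the paper packages this as $m_\gamma=(\chi_q,\beta^\gamma)$ using \cite[\S2.3(6)]{DE} after checking that $\chi_q$ vanishes on $p$-singular elements, while you unwind the same orthogonality explicitly by lifting $f(L^q(V))$ to $\OO\sym{n}$). One small caveat: in your intermediate orthogonality formula the sum should run over partitions $\mu$ with all parts coprime to $p$ (labelling $p'$-classes), not over $\P_p(n)$, which in this paper denotes the $p$-regular partitions in the column-multiplicity sense; since you then extend $\beta^\gamma$ by zero to all of $\P(n)$, this slip does not affect the conclusion.
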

\begin{proof} Since $\ch(\eta?\Char_{\eta})=p_\eta$, by Corollary \ref{C: higher ch}, we have $\ch(\chi_q)=\ch(L^q(V))$ where \[\chi_q=\sum_{\substack{\b{\lambda}\in\prod^\infty_{i=1}\P(m_i(q)),\\ \b{\delta}\mid I_{\b{\lambda}}}} \frac{1}{\b{\lambda}?I_{\b{\lambda}}^\times}\mu(\b{\delta})(\b{\lambda}\scont\b{\delta})?\Char_{\b{\lambda}\scont\b{\delta}}.\] For every such $\b{\lambda}$ and $\b{\delta}$, by definition, $\b{\lambda}\scont\b{\delta}$ is the concatenation of $(\b{\lambda}_i\b{\delta}_i)^{[(i^{\ell(\b{\lambda}_i)})/\b{\delta}_i]}$ which is not empty only if $p\nmid i$.  Furthermore, $(\b{\lambda}_i,p)=1$ as $|\b{\lambda_i}|=m_i(q)<p$. As such, $(\b{\lambda}\scont\b{\delta},p)=1$ and hence $\chi_q$ vanishes on $p$-singular elements of $\sym{n}$. By \cite[\S2.3(6)]{DE}, $m_\gamma=(\chi_q,\beta^\gamma)$. Since \[(\Char_{\b{\lambda}\scont\b{\delta}},\beta^\gamma)=\frac{1}{(\b{\lambda}\scont\b{\delta})?}\beta^\gamma(\b{\lambda}\scont\b{\delta}),\] we have the desired result.
%Assume first that $ F$ is infinite. The formula for the higher Lie module is obtained by applying the Schur functor $f$ and $f(T(\gamma))=P^\gamma$ when $\gamma\in\P_p(n)$. For $ F$ arbitrary, we temporary use subscript of $ F$ to indicate the field the modules are defined over, for example, $\Lie_ F(q)$. In this case, $\Lie_ F(q)\otimes_ F\overline{ F}\cong \Lie_{\overline{ F}}(q)$ and $P^\gamma_ F\otimes_ F\overline{ F}\cong P^\gamma_{\overline{ F}}$ (need citation??). The result now follows.
\end{proof}

%When $(q,p)=1$ and $F$ is algebraically closed, the case when $\Lie(q)$ is projective is equivalent to $\lambda(q)\in\pReg(n)$ by Corollary \ref{C: Complexity Lie}(i).
Assume that $m\geq n$ in Theorem \ref{T: higher Lie decomp}, apply the Schur functor, as $f(T(\gamma))=P^\gamma$, we get the corollary below.

\begin{cor}\label{C: proj higher Lie decomp} Suppose that $q\in\C(n)$, $(q,p)=1$ and $\lambda(q)\in\pReg(n)$. We have $\Lie_F(q)\cong\bigoplus_{\gamma\in\P_p(n)}m_\gamma P^\gamma$ where $m_\gamma$ is given as in Theorem \ref{T: higher Lie decomp}.
\end{cor}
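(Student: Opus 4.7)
The plan is to deduce the corollary directly from Theorem \ref{T: higher Lie decomp} by pushing the decomposition of $L^q(V)$ through the Schur functor. First I would fix a vector space $V$ over $F$ of dimension $m \geq n$, regarded as the natural $F\GL(V)$-module; this ensures $\P_p(m,n) = \P_p(n)$, so that Theorem \ref{T: higher Lie decomp} gives an isomorphism of $S(m,n)$-modules
\[
L^q(V) \cong \bigoplus_{\gamma \in \P_p(n)} m_\gamma\, T(\gamma),
\]
with multiplicities $m_\gamma$ exactly as in the statement.

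Next I would apply the Schur functor $f$ to this isomorphism. Since $f$ is exact and commutes with finite direct sums, and since $f(T(\gamma)) = P^\gamma$ for every $\gamma \in \P_p(n)$ (as recorded in Subsection~\ref{SS: descent}, referring to standard Schur algebra theory), the right hand side becomes $\bigoplus_{\gamma \in \P_p(n)} m_\gamma P^\gamma$. On the left, functoriality of $f$ together with the fact that $f(T^n(V)) \cong F\sym{n}$ as right $F\sym{n}$-modules (when $m \geq n$) identifies $f(L^q(V)) = f(\omega_q \cdot T^n(V))$ with $\omega_q F\sym{n} = \Lie_F(q)$. Putting these identifications together yields the desired decomposition. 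I do not anticipate any real obstacle: the only point to check carefully is that the natural map identifying $f(L^q(V))$ with $\Lie_F(q)$ is compatible with the Schur-functor image of the tilting decomposition, and this is immediate from exactness of $f$ applied to the inclusion $L^q(V) \hookrightarrow T^n(V)$.
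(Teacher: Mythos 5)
Your proof is correct and follows the same route as the paper: take $\dim_F V \geq n$, apply the Schur functor $f$ to the tilting decomposition of $L^q(V)$ from Theorem \ref{T: higher Lie decomp}, use $f(T(\gamma)) = P^\gamma$, and identify $f(L^q(V))$ with $\Lie_F(q)$. You merely spell out the bookkeeping ($\P_p(m,n)=\P_p(n)$ for $m\geq n$, and $f(\omega_q\cdot T^n(V))=\omega_q F\sym{n}$) that the paper leaves implicit in its one-line remark.
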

%\begin{proof} Assume that $m\geq n$ in Theorem \ref{T: higher Lie decomp}. The result is obtained by applying the Schur functor as $f(T(\gamma))=P^\gamma$.
%\end{proof}

We illustrate Theorem \ref{T: higher Lie decomp} and Corollary \ref{C: proj higher Lie decomp} with an example.

\begin{eg}\label{Ex: 221} Let $p=3$, $n=5$ and $q=(2,2,1)$. The Brauer character table for $\sym{5}$ is given below.
\[\begin{array}{l|rrrrrrr}
&(1^5)&(2,1^3)&(2^2,1)&(4,1)&(5)\\ \hline \hline
\beta^{(5)}&1&1&1&1&1\\
\beta^{(3,2)}&1&-1&1&-1&1\\
\beta^{(4,1)}&4&2&0&0&-1\\
\beta^{(2^2,1)}&4&-2&0&0&-1\\
\beta^{(3,1^2)}&6&0&-2&0&1
\end{array}\] We have $m_1(q)=1$ and $m_2(q)=2$. All the possible $\b{\lambda}\in\P(1)\times \P(2)$, $\b{\delta}$ such that $\b{\delta}\mid I_{\b{\lambda}}$, the compositions $\b{\lambda}\scont\b{\delta}$ and the values $\frac{1}{\b{\lambda}?I^\times_{\b{\lambda}}}\mu(\b{\delta})$ are summarised in the table below.
\[\def\arraystretch{1.3}\begin{array}{|c|c|c|c|c|} \hline \b{\lambda}&I_{\b{\lambda}}&\b{\delta}&\b{\lambda}\scont\b{\delta}&\frac{1}{\b{\lambda}?I^\times_{\b{\lambda}}}\mu(\b{\delta})\\ \hline
((1),(2))&((1),(2))&((1),(1))&(1,2,2)&\frac{1}{4}\\ \cline{3-5}
&&((1),(2))&(1,4)&-\frac{1}{4}\\ \hline
((1),(1,1))&((1),(2,2))&((1),(2,2))&(1,2,2)&\frac{1}{8}\\ \cline{3-5}
&&((1),(1,2))&(1,1,1,2)&-\frac{1}{8}\\ \cline{3-5}
&&((1),(2,1))&(1,2,1,1)&-\frac{1}{8}\\ \cline{3-5}
&&((1),(1,1))&(1^5)&\frac{1}{8}\\ \hline
\end{array}\] With respect to every $\gamma\in\P_3(5)$, we have the values $m_\gamma$ given below.
\[\begin{array}{|c|c|c|c|c|c|}
\hline \gamma&(5)&(4,1)&(3,2)&(3,1,1)&(2,2,1)\\ \hline
m_\gamma&0&0&1&0&1\\ \hline
\end{array}\] If $\dim_ F V=1$, we have $L^q(V)=0$. If $\dim_ F V=2$, we have $L^q(V)\cong T((3,2))$. If $\dim_ F V\geq 3$, we have $L^q(V)\cong T((3,2))\oplus T((2,2,1))$. Also, $\Lie_F(q)\cong P^{(3,2)}\oplus P^{(2,2,1)}$.
\end{eg}

\section{Periodic Higher Lie Modules}\label{S: period}

Throughout this section, we assume that $p>0$. After we have successfully established the decomposition of projective higher Lie modules (see Corollary \ref{C: proj higher Lie decomp}), in this section, we investigate the `first' non-projective case of the higher Lie modules, that is the non-projective periodic higher Lie modules $\Lie_F(q)$.  As such, in view of Corollary \ref{C: Complexity Lie}(ii), in this section, we assume that $q\in\C(n)$ with the following hypothesis:

\begin{enumerate}
\item [(H)] $(q,p)=1$ and there is a unique number $k$ such that $m_k(q)\in [p,2p-1]$ and $m_i(q)<p$ for all $i\neq k$.
\end{enumerate} In particular, we investigate their periods and Heller translates. This is in line with the study of the non-projective periodic Lie modules done earlier by Tan and the author \cite{LT16}.  We begin with some notation and collate some necessary background for this section.  %In view of Corollary \ref{C: Complexity Lie}(ii), in this section, we assume that there is a unique number $k$ such that $m_k(q)\in [p,2p-1]$ and $m_i(q)<p$ for all $i\neq k$.

\smallskip
We adopt the notation $M\oplus \proj$ to denote the direct sum of an $FG$-module $M$ with some projective $FG$-module. %For another $F H$-module $N$, we denote $M\boxtimes N$ for the outer tensor product which is an $F(G\times H)$-module. The induction and restriction functors with respect to a subgroup $H$ of a group $G$ is denoted as $\ind^G_H$ and $\res^G_H$ respectively.

\begin{lem}\label{L: Heller} Let $H\subseteq G$ and $K$ be finite groups. Suppose that $N,Q$ are $F H$- and $F K$-modules respectively such that $Q$ is projective. Let $n\in\Z$. We have
\begin{enumerate}[(i)]
\item $\Omega^n(N\boxtimes Q)\cong (\Omega^n N)\boxtimes Q$, and
\item {\cite[Proposition 4.4(viii)]{Carlson}} $\ind^G_H\Omega^n(N)\cong \Omega^n(\ind^G_HN)\oplus\proj$.
\end{enumerate}
\end{lem}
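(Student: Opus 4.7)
The plan is to prove part (i), since part (ii) is quoted from Carlson. The argument rests on two properties of the outer tensor functor $-\boxtimes Q$ from $FH$-modules to $F(H\times K)$-modules: it is exact (because over a field it agrees with $-\otimes_F Q$ on underlying vector spaces), and it preserves projectivity. The latter follows because the regular module decomposes as $F(H\times K)\cong FH\boxtimes FK$, so by passing to direct summands of direct sums, $P\boxtimes Q$ is $F(H\times K)$-projective whenever $P$ is $FH$-projective and $Q$ is $FK$-projective. Moreover, $\rad(P)\boxtimes Q\subseteq \rad(P\boxtimes Q)$, which will be needed to control minimality of resolutions.

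For $n\geq 1$, I would take a minimal projective resolution $\cdots\to P_1\to P_0\to N\to 0$ of $N$ over $FH$ and apply $-\boxtimes Q$. The resulting sequence is exact with projective terms, and it remains minimal because the image of $P_{i+1}\boxtimes Q\to P_i\boxtimes Q$ is $(\Omega^{i+1}N)\boxtimes Q\subseteq \rad(P_i)\boxtimes Q\subseteq \rad(P_i\boxtimes Q)$. Reading off the syzygies yields $\Omega^n(N\boxtimes Q)\cong \Omega^n(N)\boxtimes Q$ for $n\geq 1$. For $n\leq -1$ I would dualise: since the group algebra $FH$ is self-injective, the classes of injectives and projectives coincide, so the same reasoning applied to a minimal injective resolution of $N$ gives the claim, with the injective hull of $N\boxtimes Q$ being $I(N)\boxtimes Q$.

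The case $n=0$ requires a separate argument, and I expect it to be the main obstacle, since $\Omega^0 M$ is defined as the non-projective complement of $M$ and cannot be read off a resolution. Writing $N=N_0\oplus N_p$ with $N_0=\Omega^0 N$ and $N_p$ projective, one obtains $N\boxtimes Q=(N_0\boxtimes Q)\oplus(N_p\boxtimes Q)$ with the second summand projective, so it remains to rule out projective summands of $N_0\boxtimes Q$. If $R$ were a projective indecomposable $F(H\times K)$-summand of $N_0\boxtimes Q$, then restricting to $H\times 1\cong H$ gives $\res^{H\times K}_H(N_0\boxtimes Q)\cong N_0^{\oplus \dim_F Q}$ on one side, and a nonzero projective $FH$-module $R|_H$ as a summand on the other (since $F(H\times K)|_H$ is $FH$-free of rank $|K|$). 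Krull--Schmidt then forces $N_0$ itself to have a projective summand, contradicting its choice.
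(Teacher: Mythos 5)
Your proposal is correct, but it takes a genuinely different route from the paper's proof. The paper reduces to $n=1$ with $N$ and $Q$ indecomposable, compares the two short exact sequences $0\to(\Omega N)\boxtimes Q\to P(N)\boxtimes Q\to N\boxtimes Q\to 0$ and $0\to\Omega(N\boxtimes Q)\to P(N\boxtimes Q)\to N\boxtimes Q\to 0$ via Schanuel's lemma, and then concludes $(\Omega N)\boxtimes Q\cong\Omega(N\boxtimes Q)$ by citing K\"{u}lshammer's result \cite[Proposition 1.2]{Kuel} that the outer tensor product of indecomposables (one of them projective) is indecomposable, so there are no stray projective summands to account for. You instead tensor a minimal projective resolution of $N$ with $Q$ and argue that minimality is preserved because $\rad(P)\boxtimes Q\subseteq\rad(P\boxtimes Q)$; this handles all $n\geq 1$ at once and avoids any appeal to K\"{u}lshammer. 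Your restriction-to-$H$ argument for $n=0$ is a nice self-contained replacement for the indecomposability input: it shows directly that $\Omega^0(N)\boxtimes Q$ has no projective summands, which is exactly what the paper gets from K\"{u}lshammer. Two remarks. First, $n=0$ is actually forced by $n=\pm 1$ via $\Omega^0=\Omega\circ\Omega^{-1}$, so your separate treatment, while valid and illuminating, is not strictly required. Second, your sketch for $n\leq -1$ via minimal injective resolutions needs a little care: the useful socle containment is $\soc(P\boxtimes Q)\subseteq\soc(P)\boxtimes Q$, which is the wrong direction for the naive dualization of your radical argument; it is cleaner to dualize modules instead, using $\Omega^{-n}(M)\cong(\Omega^{n}(M^{*}))^{*}$ over the symmetric algebras $FH$ and $F(H\times K)$ together with $(N\boxtimes Q)^{*}\cong N^{*}\boxtimes Q^{*}$ and the $n\geq 1$ case.
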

\begin{proof} For part (i), it suffices to prove the case when $n=1$ and both $N,Q$ are indecomposable. If $N$ is projective, $N\boxtimes Q$ is projective and therefore both modules in the statement are zero. Suppose that $N$ is not projective. There are short exact sequences
\begin{align*}
  0\to (\Omega N)\boxtimes Q\to P(N)\boxtimes Q\to N\boxtimes Q\to 0,\\
  0\to \Omega(N\boxtimes Q)\to P(N\boxtimes Q)\to N\boxtimes Q\to 0.
\end{align*} By Schanuel's lemma (see \cite[Lemma 1.5.3]{Ben1}) and \cite[Proposition 1.2]{Kuel}, we must have $\Omega N\boxtimes Q\cong \Omega(N\boxtimes Q)$.
\end{proof}

In \cite[\S4]{ES}, Erdmann-Schocker defined $S^p(\Lie_F(k))=\ind^{\sym{pk}}_{\sym{k}\wr\sym{p}}(\Lie_F(k)^{\wr p})$ as the $p$th symmetrisation of $\Lie_F(k)$ (or more generally, the $p$th symmetrisation of a projective $F\sym{k}$-module). In fact, it is isomorphic to $\Lie_F((k^p))$ by Theorem \ref{T: higher lie mod}.  Therefore, we may restate their result as follows.

\begin{thm}[{\cite[Theorem 10]{ES}}]\label{T: SES} Let $p\nmid k\in \NN$. There is a short exact sequence of $F\sym{pk}$-modules
\[0\to\Lie_F(pk)\to e_{(pk),F}F\sym{pk}\to \Lie_F((k^p))\to 0.\]
\end{thm}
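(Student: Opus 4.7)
The plan is to deduce this sequence from \cite[Theorem 10]{ES} after reconciling notation. First, applying Theorem \ref{T: higher lie mod} to $q=(k^p)$, which is coprime to $p$ since $p\nmid k$, I obtain
\[
\Lie_F((k^p))\cong \ind^{\sym{pk}}_{\sym{k}\wr\sym{p}}\bigl(\Lie_F(k)^{\wr p}\bigr),
\]
which is precisely the $p$-th symmetrisation $S^p(\Lie_F(k))$ of the projective module $\Lie_F(k)$ (projectivity of $\Lie_F(k)$ comes from Theorem \ref{T: DE proj decomp} under the hypothesis $p\nmid k$). So the right-hand term of the desired sequence matches the one in \cite[Theorem 10]{ES}.

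Next, I would identify the middle term $e_{(pk),F}F\sym{pk}$ with the projective module used in Erdmann-Schocker's sequence. By Theorem \ref{T: modular idem}, $e_{(pk),F}$ is a primitive idempotent in $\Des{pk}{F}$ with $c_{pk,F}(e_{(pk),F})=\Char_{(pk),F}$, so $e_{(pk),F}F\sym{pk}$ is a projective indecomposable whose isomorphism class is determined by its image under the Solomon epimorphism, giving exactly the projective appearing in \cite{ES}. The left-hand injection $\Lie_F(pk)\hookrightarrow e_{(pk),F}F\sym{pk}$ is an instance of Corollary \ref{C: higher lie inj in proj} applied with $q=\lambda=(pk)\in\pReg(pk)$; concretely, Corollary \ref{C: e * omega} gives $e_{(pk),F}\omega_{pk}=\omega_{pk}$, so $\Lie_F(pk)=\omega_{pk}F\sym{pk}$ is contained in $e_{(pk),F}F\sym{pk}$.

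As a consistency check, I will verify the dimensions. By Equation \ref{Eq: proj dim eFS}, $\dim_F e_{(pk),F}F\sym{pk}=|\ccl{(pk),p}|$. The $p$-equivalence class of $(pk)$ comprises precisely those partitions whose $p'$-part is $(k^p)$; writing each part of such $\mu$ as $\ell p^m$ with $p\nmid\ell$, the constraint $\sum p^m=p$ with $p^m\geq 1$ forces either a single $m=1$ term or $p$ terms with $m=0$, so the only possibilities are $\mu=(pk)$ and $\mu=(k^p)$. Hence $\dim_F e_{(pk),F}F\sym{pk}=|\ccl{(pk)}|+|\ccl{(k^p)}|$, which equals $\dim_F\Lie_F(pk)+\dim_F\Lie_F((k^p))$ by Proposition \ref{P: Lie(n) basis} and Theorem \ref{T: Lie mod basis}. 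With the flanking terms identified and the dimensions balancing, the surjection onto $\Lie_F((k^p))$ and exactness at the middle term are direct consequences of \cite[Theorem 10]{ES}. The main obstacle, were one to attempt a self-contained proof, would be constructing that quotient map and verifying that its kernel is precisely $\omega_{pk}F\sym{pk}$; this is the technical core of Erdmann-Schocker's argument, so invoking their result rather than reproducing it is the most efficient route.
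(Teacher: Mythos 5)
Your proposal is correct and follows the same route as the paper: identify $\Lie_F((k^p))$ with the $p$th symmetrisation $S^p(\Lie_F(k))$ via Theorem \ref{T: higher lie mod} and then invoke \cite[Theorem 10]{ES}. The extra identifications of the middle term and the dimension check are sound but are additional verifications beyond what the paper records.
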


As such, $\Omega(\Lie_F((k^p)))\cong \Omega^0(\Lie_F(pk))$ as the middle term is projective. The Heller translates and periods of $\Lie_F(pk)$ (and therefore for $\Lie_F((k^p))$) have been computed in \cite{LT16} and we shall now describe.

%In particular, it has period $1$ if $p=2$, or $2p-2$ if $p\geq 3$. %As such, $\Lie_F((k^p))$ has the same period.

%Following the previous paragraph, for our purpose, we need the following notation.
Recall the notation $S^\lambda$ for the Specht module labelled by a partition $\lambda$. We denote $S_\lambda$ for its dual. For any integer $j$, let $j_p$ be 0 if $p=2$ or the residue of $n$ upon division by $2p-2$ if $p\geq 3$. For each $j\in\Z$, define\[S(j)=\left \{\begin{array}{ll} S^{(p-j_p-1,1^{j_p+1})}&\text{if $j_p\in [0,p-2]$,}\\ S_{(j_p-p+3,1^{2p-j_p-3})}&\text{if $j_p\in [p-1,2p-3]$}.\end{array}\right .\index{$S(j)$}\] For example, when $p=2$, $S(j)=S^{(1,1)}$ is the trivial module for all $j$.

We denote $\Delta_{pk}=\Delta_k(\sym{p})\sym{k}^{[p]}\subseteq \sym{kp}$ \index{$\Delta_{pk}$}(recall Subsection \ref{SS: generality} for the notation). Notice that $\Delta_{pk}\cong \sym{p}\times\sym{k}$. For each $j\in\Z$ and $\nu\in\pReg(k)$, through the identification of $\Delta_{pk}$ with $\sym{p}\times \sym{k}$, define the $F\sym{pk}$-modules
\begin{align*}
Z_{j,\nu}&=\Omega^0(\ind^{\sym{kp}}_{\Delta_{pk}}(S(j)\boxtimes P^\nu)),\index{$Z_{j,\nu}$}
\end{align*} and $Z_j=\bigoplus_{\nu\in\P_p(k)} n_\nu Z_{j,\nu}$ \index{$Z_j$}with $n_\nu$ given as in Theorem \ref{T: DE proj decomp}. Using our notation, we may restate the main results in \cite{LT16} as follows.

\begin{thm}[{\cite[Theorems 1.1 and 3.5]{LT16}}]\label{T: LT main} Let $p\nmid k\in\NN$. The Lie module $\Lie_F(pk)$ is periodic with period 1 if $p=2$, or $2p-2$ if $p\geq 3$. Moreover, for any $j\in\Z$, \[\Omega^j(\Lie_F(pk))\cong Z_j=\bigoplus_{\nu\in\pReg(k)}n_\nu Z_{j,\nu}\] and each summand (namely, $n_\nu\neq 0$) $Z_{j,\nu}$ is non-projective indecomposable.
\end{thm}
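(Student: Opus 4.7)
The plan is to reduce the computation to the principal block of $F\sym{p}$, which has cyclic defect group of order $p$ and well-understood periodic Heller translates. First I would apply Theorem \ref{T: SES}: since the middle term of that short exact sequence is projective, one obtains the stable isomorphism
\[
\Omega^0(\Lie_F(pk)) \;\cong\; \Omega(\Lie_F((k^p))),
\]
so it suffices to understand the Heller translates of $\Lie_F((k^p))$. By Theorem \ref{T: higher lie mod}, $\Lie_F((k^p)) \cong \ind^{\sym{pk}}_{\sym{k}\wr\sym{p}}(\Lie_F(k)^{\wr p})$, and by Theorem \ref{T: DE proj decomp} (using the hypothesis $p \nmid k$), $\Lie_F(k) \cong \bigoplus_{\nu\in\pReg(k)} n_\nu P^\nu$ is projective over $F\sym{k}$.

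Next I would expand $\Lie_F(k)^{\wr p}$ as a sum over $\sym{p}$-orbits on $p$-tuples of indecomposable summands $P^{\nu_i}$. Any orbit in which not all $p$ factors coincide has stabilizer a proper Young subgroup of $\sym{p}$, which has order prime to $p$ (since the only nontrivial $p$-subgroup of $\sym{p}$ is the full Sylow of order $p$). Combined with projectivity of each $P^\nu$ over $F\sym{k}$, a vertex and trivial-source analysis shows that the modules induced from these non-diagonal orbits up to $\sym{pk}$ are projective, hence invisible to $\Omega^0$. The surviving part is the diagonal contribution $\bigoplus_{\nu} n_\nu \ind^{\sym{pk}}_{\sym{k}\wr\sym{p}}\bigl((P^\nu)^{\wr p}\bigr)$.

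The crucial step is to unwrap $(P^\nu)^{\wr p}$ through the diagonal subgroup $\Delta_{pk} = \Delta_k(\sym{p})\,\sym{k}^{[p]} \cong \sym{p}\times\sym{k}$. Since $P^\nu$ is projective for $\sym{k}$, a Mackey argument combined with Lemma \ref{L: Heller}(i) shows that, modulo projective summands, $\ind^{\sym{pk}}_{\sym{k}\wr\sym{p}}\bigl((P^\nu)^{\wr p}\bigr)$ agrees with $\ind^{\sym{pk}}_{\Delta_{pk}}(F\boxtimes P^\nu)$, where $F$ denotes the trivial $F\sym{p}$-module. The Heller translates of the trivial $F\sym{p}$-module cycle through the hook Specht modules with period $2p-2$ when $p \geq 3$ and period $1$ when $p = 2$, realising precisely $\Omega^j(F) \cong S(j) \oplus \proj$. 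Combining Lemma \ref{L: Heller}(i) to slide $P^\nu$ past $\Omega^j$ and Lemma \ref{L: Heller}(ii) to commute $\Omega^j$ with induction, one arrives at
\[
\Omega^j(\Lie_F(pk)) \;\cong\; \bigoplus_{\nu\in\pReg(k)} n_\nu\,\Omega^0\!\bigl(\ind^{\sym{pk}}_{\Delta_{pk}}(S(j)\boxtimes P^\nu)\bigr) = Z_j,
\]
with the periodicity of $\Lie_F(pk)$ inherited from that of the trivial $\sym{p}$-module.

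The main obstacle is establishing indecomposability of each non-projective $Z_{j,\nu}$ with $n_\nu \neq 0$. This should proceed via Green correspondence: the $\Delta_{pk}$-module $S(j)\boxtimes P^\nu$ is indecomposable with vertex a Sylow $p$-subgroup of $\Delta_k(\sym{p})$, cyclic of order $p$, and its Green correspondent on the $\sym{pk}$-side is a unique non-projective indecomposable module, which is then identified with $Z_{j,\nu}$. A secondary bookkeeping obstacle is rigorously verifying that all non-diagonal wreath orbits contribute projective summands; this requires careful control of Mackey double cosets in $\sym{k}\wr\sym{p}$ together with the trivial-source characterisation of Theorem \ref{T: Broue}.
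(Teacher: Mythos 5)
This result is not proved in the paper: the author explicitly presents it as a restatement, in the paper's notation, of \cite[Theorems 1.1 and 3.5]{LT16}, so there is no in-paper argument to compare your attempt against.

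Judged on its own terms, your reconstruction captures the overall strategy of \cite{LT16}. The reduction via Theorem \ref{T: SES}, the expansion of $\Lie_F(k)^{\wr p}$ over $\sym{p}$-orbits, the observation that non-diagonal orbits contribute projectives because their $\sym{p}$-stabilisers are $p'$ Young subgroups, the use of Lemma \ref{L: Heller} to slide $\Omega$ past induction and past the projective factor $P^\nu$, and the appeal to Green correspondence (exactly as in Lemma \ref{L: Green corres}) for indecomposability are all the right moves. Two points need repair, however. First, your asserted identity $\Omega^j(F)\cong S(j)\oplus\proj$ for the trivial $F\sym{p}$-module is off by one: at $j=0$ one has $\Omega^0(F)=F=S^{(p)}$, which is not $S(0)=S^{(p-1,1)}$ when $p\geq 3$. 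The correct statement is $\Omega^{j+1}(F)\cong S(j)\oplus\proj$, or equivalently $\Omega^j(F)\cong S(j-1)\oplus\proj$, consistent with Proposition \ref{P: LT period} (where the index on $Z$ is $j-1$, not $j$). Your final display for $\Omega^j(\Lie_F(pk))$ comes out right only because you silently drop the unit shift supplied by the short exact sequence ($\Omega^j(\Lie_F(pk))\cong\Omega^{j+1}(\Lie_F((k^p)))$); the two slips cancel, but as written the intermediate reasoning is incorrect. Second, and more substantively, the step asserting that modulo projectives $\ind^{\sym{pk}}_{\sym{k}\wr\sym{p}}\bigl((P^\nu)^{\wr p}\bigr)$ agrees with $\ind^{\sym{pk}}_{\Delta_{pk}}(F\boxtimes P^\nu)$ is the real content of the proof in \cite{LT16}, not a generic Mackey consequence; it requires pinning down the vertex $\Delta_k(C_p)$ and trivial source of $(P^\nu)^{\wr p}$ via K\"ulshammer's \cite[Proposition 1.2]{Kuel} and then identifying the Green correspondent on both sides, and your proposal names this as an obstacle without supplying the argument.
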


%As such, using Equation \ref{Eq: SES}, we see that $\Omega^j(\Lie_F((k^p)))\cong Z_{j-1}$.

It is not clear to the author if $Z_{j,\nu}$ remains indecomposable without the assumption $n_\nu\neq 0$. The problem boils down to the question if $\Omega^0(\ind^{\sym{kp}}_{\Delta_{pk}}(F\boxtimes P^\nu))$ is indecomposable or not.

Together with Theorem \ref{T: SES}, we have the following proposition.

\begin{prop}\label{P: LT period} For any $j\in\Z$ and $p\nmid k$, we have $\Omega^j(\Lie_F((k^p)))\cong Z_{j-1}$.
\end{prop}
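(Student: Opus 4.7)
The plan is to reduce the proposition to the two external inputs already at our disposal: the short exact sequence of Theorem~\ref{T: SES} and the computation of the Heller translates of $\Lie_F(pk)$ in Theorem~\ref{T: LT main}. The argument has essentially two steps.

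First, apply $\Omega$ to the short exact sequence
\[
0 \to \Lie_F(pk) \to e_{(pk),F}F\sym{pk} \to \Lie_F((k^p)) \to 0
\]
provided by Theorem~\ref{T: SES}. Since the middle term is projective, Schanuel's lemma yields $\Omega(\Lie_F((k^p))) \oplus \text{(proj)} \cong \Lie_F(pk) \oplus \text{(proj)}$, so passing to non-projective parts gives
\[
\Omega(\Lie_F((k^p))) \cong \Omega^0(\Lie_F(pk)) \cong Z_0
\]
by the $j=0$ case of Theorem~\ref{T: LT main}. This settles the proposition when $j=1$.

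Second, for general $j \in \Z$, I would write $\Omega^j(\Lie_F((k^p))) = \Omega^{j-1}(\Omega(\Lie_F((k^p)))) = \Omega^{j-1}(Z_0)$, and then observe that $Z_0 = \Omega^0(\Lie_F(pk))$ differs from $\Lie_F(pk)$ only by projective summands, which are killed by any Heller translate $\Omega^{j-1}$ with $j-1 \neq 0$. Hence
\[
\Omega^{j-1}(Z_0) \cong \Omega^{j-1}(\Lie_F(pk)) \cong Z_{j-1},
\]
where the second isomorphism is Theorem~\ref{T: LT main} applied to $j-1$. For the special case $j=0$, one applies $\Omega^{-1}$ to the isomorphism $\Omega(\Lie_F((k^p))) \cong Z_0 \cong \Omega(Z_{-1})$ and uses that $\Omega$ is a bijection on isomorphism classes of projective-free modules within the periodic stable category (all modules in play are periodic by Theorem~\ref{T: LT main}).

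The only real obstacle is bookkeeping with projective summands versus the projective-free convention; since every module involved is periodic, $\Omega$ and $\Omega^{-1}$ are mutually inverse on the relevant isomorphism classes, so this causes no genuine trouble. In essence, the proposition is an immediate corollary of Theorems~\ref{T: SES} and~\ref{T: LT main}, the content being merely the observation that the short exact sequence shifts the Heller index by one.
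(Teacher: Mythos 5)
Your proof is correct and follows the same route the paper uses: combine the short exact sequence of Theorem~\ref{T: SES} (projective middle term gives $\Omega(\Lie_F((k^p)))\cong\Omega^0(\Lie_F(pk))$) with Theorem~\ref{T: LT main}, then shift. The paper in fact leaves this argument implicit, so you are merely spelling it out; your separate treatment of the cases $j=0$ and $j=1$ is unnecessary but harmless, since the equalities $\Omega^{j-1}(\Omega^0(M))=\Omega^{j-1}(M)$ and $\Omega^{j}(M)=\Omega^{j-1}(\Omega(M))$ hold uniformly for all $j\in\Z$ once one works with the projective-free convention built into the definition of $\Omega^0$.
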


%We shall need the following technical lemma later.  It requires the notion of Green correspondence. Since we do not need it elsewhere in the paper, we refer the reader to standard textbooks, for an example, \cite{Ben1}. %Loosely speaking, the lemma says that, taking appropriate induction and restriction, the two modules involved are isomorphic to each other modulo projectives.

In order to prove the description of the Heller translates of  higher Lie modules, we need the following two lemmas. The proofs require the notion of Green correspondence. Since we do not need it elsewhere in the paper, we refer the reader to standard textbooks, for example, \cite{Ben1}. %The proof of the second lemma requires the notion of $p$-contained which we refer the readers to \cite{Henke}.

\begin{lem}\label{L: Green corres} Let $j\in\Z$, $n,k\in\NN$ such that $n\geq kp$, $\nu\in\pReg(k)$ and $\gamma\in\pReg(n-kp)$. Suppose that $n_\nu\neq 0$. The module $X:=Z_{j,\nu}\boxtimes P^\gamma$ is indecomposable and its Green correspondent (also indecomposable) is \[Y:=\Omega^0(\ind^{\sym{n}}_{\sym{kp}\times\sym{n-kp}}(Z_{j,\nu}\boxtimes P^\gamma)).\] Moreover, $\ind^{\sym{n}}_{\sym{kp}\times \sym{n-kp}}X\cong Y\oplus\proj$ and $\res^{\sym{n}}_{\sym{kp}\times \sym{n-kp}}Y\cong X\oplus \proj$.
\end{lem}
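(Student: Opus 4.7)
The plan is to prove the statement by verifying that the Green correspondence applies between $H := \sym{kp}\times\sym{n-kp}$ and $G := \sym{n}$ with respect to a vertex $Q$ of $X$. Once this is in place, every claim in the lemma follows from standard Green correspondence formalism, given that $|Q|=p$ forces the "non-corresponding" summands of $\ind^G_H X$ and $\res^G_H Y$ to have trivial vertex, hence to be projective.

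The first step is to show that $X = Z_{j,\nu}\boxtimes P^\gamma$ is indecomposable as an $FH$-module, and to pin down its vertex. Since $n_\nu\neq 0$, Theorem~\ref{T: LT main} asserts that $Z_{j,\nu}$ is an indecomposable non-projective $F\sym{kp}$-module, while $P^\gamma$ is indecomposable projective. By the standard endomorphism-ring argument (compare with the proof of Lemma~\ref{L: Heller} and \cite[Proposition 1.2]{Kuel}), the outer tensor product of two indecomposable modules over group algebras of different finite groups is indecomposable, so $X$ is indecomposable. Because $P^\gamma$ has trivial vertex, a vertex of $X$ is $(Q,1)$ where $Q$ is a vertex of $Z_{j,\nu}$. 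By Theorem~\ref{T: LT main} the module $Z_{j,\nu}$ is a summand of $\ind^{\sym{kp}}_{\Delta_{pk}}(S(j)\boxtimes P^\nu)$, so its vertex is contained in a Sylow $p$-subgroup of $\Delta_{pk}\cong \sym{p}\times\sym{k}$. Since $p\nmid k$, such a Sylow has order $p$, generated by an element $\rho\in\Delta_k(\sym{p})$ which acts as a product of $k$ disjoint $p$-cycles covering $[1,kp]$. As $Z_{j,\nu}$ is non-projective, its vertex is exactly $Q:=\langle\rho\rangle\cong C_p$.

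The second step is to verify $N_G(Q)\subseteq H$, the hypothesis required for Green correspondence. Every non-trivial element of $Q$ has support equal to $[1,kp]$ (a union of $k$ disjoint $p$-cycles without fixed points in $[1,kp]$), and conjugation in $\sym{n}$ sends supports to their images. Hence any $\tau\in N_G(Q)$ must preserve $[1,kp]$ set-wise, giving $\tau\in\sym{[1,kp]}\times\sym{[kp+1,n]}=H$. Thus $N_G(Q)\subseteq H$ and Green correspondence is available between indecomposable $FG$-modules with vertex $Q$ and indecomposable $FH$-modules with vertex $Q$.

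In the final step, I invoke Green correspondence: there is a unique indecomposable Green correspondent $\tilde X$ of $X$ in $FG$, with the same vertex $Q$, satisfying
\begin{align*}
\ind^G_H X&\cong \tilde X\oplus N_1,\\
\res^G_H \tilde X&\cong X\oplus N_2,
\end{align*}
where every indecomposable summand of $N_1$ and $N_2$ has a vertex strictly contained in some $Q\cap{}^xQ$ with $x\in G\setminus H$; since $|Q|=p$, each such vertex must be trivial, and so $N_1$ and $N_2$ are projective. Consequently $Y = \Omega^0(\ind^G_H X)$ coincides with $\tilde X$, which is indecomposable; therefore $\ind^G_H X\cong Y\oplus\proj$ and $\res^G_H Y\cong X\oplus\proj$, establishing the lemma. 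The main conceptual obstacle is pinning down the vertex of $Z_{j,\nu}$ precisely as $C_p$ and checking the normaliser containment; both are handled by exploiting the fact that $p\nmid k$ together with the explicit diagonal embedding of the vertex given by Theorem~\ref{T: LT main}.
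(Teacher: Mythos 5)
Your proof follows exactly the paper's route: Külshammer's result for indecomposability of the outer tensor product, determination of a vertex $Q$ of order $p$, the containment $N_{\sym{n}}(Q) \subseteq \sym{kp}\times\sym{n-kp}$, and then Green correspondence with $|Q|=p$ forcing the error terms to be projective. The normaliser check and the explicit invocation of Green's theorem are details the paper elides, but they are consistent with it.

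There is, however, one flawed step: you write that the vertex of $Z_{j,\nu}$ ``is contained in a Sylow $p$-subgroup of $\Delta_{pk}\cong \sym{p}\times\sym{k}$. Since $p\nmid k$, such a Sylow has order $p$.'' This is only true if $k<p$: when $k\ge p$ (which is allowed under the standing hypotheses — take, say, $k=p+1$), $\sym{k}$ has a nontrivial Sylow $p$-subgroup and $\sym{p}\times\sym{k}$ has Sylow $p$-subgroups of order strictly larger than $p$. The conclusion that a vertex of $Z_{j,\nu}$ is $\Delta_k(C_p)$ is still correct, but the justification should instead use the specific module being induced: $P^\nu$ is projective, so $S(j)\boxtimes P^\nu$ has a vertex of the form $V\times 1$ with $V$ a vertex of $S(j)$ in $\sym{p}$; as $S(j)$ is a non-projective indecomposable $F\sym{p}$-module, $V=C_p$. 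Hence a vertex of $Z_{j,\nu}\mid \ind^{\sym{kp}}_{\Delta_{pk}}(S(j)\boxtimes P^\nu)$ is contained in $\Delta_k(C_p)$, and non-projectivity of $Z_{j,\nu}$ forces equality. With this substitution, your argument is complete and the rest goes through unchanged.
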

\begin{proof} Let $C_p$ be the cyclic subgroup of $\sym{p}$ generated by any $p$-cycle. Since $Z_{j,\nu}$ is indecomposable (with vertex $\Delta_k(C_p)$), by \cite[Proposition 1.2]{Kuel}, the module $Z_{j,\nu}\boxtimes P^\gamma$ is indecomposable non-projective with a vertex $\Delta_k(C_p)\subseteq \Delta_k(\sym{p})$ and $\N_{\sym{n}}(\Delta_k(C_p))\subseteq \sym{kp}\times \sym{n-kp}$. By Green's correspondence (see, for example, \cite[Theorem 3.12.2(ii)]{Ben1}), since $|\Delta_k(C_p)|=p$, the module $\ind^{\sym{n}}_{\sym{kp}\times\sym{n-kp}}(Z_{j,\nu}\boxtimes P^\gamma)$ has a unique indecomposable non-projective summand, which is its Green's correspondent.
\end{proof}

\begin{lem}\label{L: M(p,r)} Suppose that $m=p+r$ for some $r\in [0,p-1]$. We have an isomorphism of $F\sym{m}$-modules \[\ind^{\sym{m}}_{\sym{p}\times \sym{r}}F\cong F\oplus \text{(proj)}.\]
\end{lem}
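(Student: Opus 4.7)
The plan is to split off a trivial summand from $M := \ind^{\sym{m}}_{\sym{p}\times\sym{r}}F$ and then show the complement is projective via restriction to a Sylow $p$-subgroup. The case $r=0$ is immediate since then $M = F$, so I may assume $r \geq 1$. The key structural fact is that, since $m = p+r < 2p$, a Sylow $p$-subgroup of $\sym{m}$ is the cyclic group $C_p = \langle (1,2,\ldots,p)\rangle$, which is already contained in $\sym{p} \leq \sym{p}\times\sym{r}$; consequently Higman's criterion (projectivity of an $FG$-module is detected on a Sylow $p$-subgroup) will be available.

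First I would show that $F$ is a direct summand of $M$. By Frobenius reciprocity, both $\mathrm{Hom}_{\sym{m}}(F,M)$ and $\mathrm{Hom}_{\sym{m}}(M,F)$ are one-dimensional, spanned respectively by the invariant-vector inclusion $\iota \colon 1 \mapsto \sum_S [S]$ (the sum over all $r$-subsets $S$ of $[1,m]$, which correspond to cosets of $\sym{p}\times\sym{r}$) and the augmentation $\pi \colon [S] \mapsto 1$. Their composition $\pi \circ \iota$ is multiplication by the index $[\sym{m}:\sym{p}\times\sym{r}] = \binom{p+r}{r}$, and by Lucas's theorem $\binom{p+r}{r} \equiv 1 \pmod{p}$ since $0 \leq r < p$; hence $\pi \circ \iota$ is invertible in $F$ and $M \cong F \oplus W$ for some complement $W$.

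Next I would restrict $M$ to $C_p$. Viewing $M$ as the permutation module on $r$-subsets of $[1,m]$, the $C_p$-orbits on these subsets are determined by the $C_p$-orbits on $[1,m]$, namely $\{1,\ldots,p\}$ together with the singletons $\{p+1\},\ldots,\{m\}$. Since $r < p$, no $C_p$-fixed $r$-subset can contain the full block $\{1,\ldots,p\}$; the unique $C_p$-fixed $r$-subset is therefore $[p+1,m]$, while every other $C_p$-orbit on $r$-subsets has size $p$ and contributes a free $FC_p$-summand. This yields $\res^{\sym{m}}_{C_p}M \cong F \oplus k\cdot FC_p$ for $k = \bigl(\binom{m}{r}-1\bigr)/p$, so $\res^{\sym{m}}_{C_p}W$ is a free $FC_p$-module.

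Combining the two steps, since $C_p$ is a Sylow $p$-subgroup of $\sym{m}$ and $\res^{\sym{m}}_{C_p}W$ is free (in particular projective), Higman's criterion forces $W$ to be a projective $F\sym{m}$-module, completing the decomposition $M \cong F \oplus W$ with $W$ projective. There is no real obstacle; the only verification requiring any thought is the mod-$p$ evaluation of the binomial index, which Lucas's theorem handles in one line.
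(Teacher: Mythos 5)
Your proof is correct, but it takes a genuinely different route from the paper's. The paper argues via Green's correspondence: it observes that $\N_{\sym{m}}(P)\subseteq\sym{p}\times\sym{r}$ for the cyclic Sylow $p$-subgroup $P$, that $P\cap{}^gP=\{1\}$ for $g\notin\sym{p}\times\sym{r}$, and then reads off the decomposition as an instance of the correspondence between the trivial $F\sym{m}$-module and the trivial $F(\sym{p}\times\sym{r})$-module, both of which have vertex $P$. You instead split off the trivial summand directly (Frobenius reciprocity plus the Lucas computation $\binom{p+r}{r}\equiv 1\bmod p$) and then invoke Higman's criterion after an explicit orbit count of $C_p$ acting on $r$-subsets, which shows the complement restricts freely to $C_p$. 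Both are clean in the cyclic-Sylow setting $m<2p$. The paper's approach is shorter if Green's correspondence is already loaded and has the virtue of saying why the non-projective summand is the trivial module rather than just that it is one-dimensional up to projectives; your approach is more self-contained and avoids the full Green-correspondence machinery at the cost of the binomial/orbit computation. One small thing to make explicit in a polished write-up: after you find $\res^{\sym{m}}_{C_p}M\cong F\oplus k\cdot FC_p$ and separately $M\cong F\oplus W$, you should invoke the Krull--Schmidt theorem over $FC_p$ to cancel the trivial summand and conclude $\res^{\sym{m}}_{C_p}W$ is free; your phrasing implicitly assumes this cancellation.
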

\begin{proof} We are in the cyclic-vertex case. Let $P$ be a Sylow $p$-subgroup of $\sym{p}$. Notice that $\N_{\sym{m}}(P)\subseteq \sym{p}\times \sym{r}$. Clearly, the trivial $F\sym{m}$-module restricts to the trivial $F(\sym{p}\times\sym{r})$-module. By Green's correspondence, we get the isomorphism as $P\cap gPg^{-1}=\{1\}$ whenever $g\not\in \sym{p}\times \sym{r}$.
\end{proof}

%\begin{proof} By \cite[Lemma 3.2 and Theorem 3.3]{Henke}, $M^{(p,r)}$ is a direct sum of Young modules of the form $Y^{(m-s,s)}$ with multiplicity one where $s\leq r$ and $r-s$ is $p$-contained in $m-2s$. Since $s\leq r<p$, by \cite[Satz 7.8]{Grab}, the Young module $Y^{(m-s,s)}$ is non-projective if and only if $m-s-s\geq p$. In this case, we necessarily get $r-s\leq m-2s-p$, i.e., $s\leq 0$, i.e., $s=0$. Therefore the trivial module $Y^{(m)}$ is the unique non-projective summand of $M^{(m-r,r)}$ up to isomorphism.
%\end{proof}

\begin{lem}\label{L: minus p} Suppose that $m=p+r$ for some $r\in [0,p-1]$ and $p\nmid k\in\NN$. We have an isomorphism of $F\sym{km}$-modules
\[\ind^{\sym{km}}_{\sym{kp}\times \sym{kr}}(\Lie_F((k^p))\boxtimes \Lie_F((k^r)))\cong \Lie_F((k^m))\oplus \proj.\]
\end{lem}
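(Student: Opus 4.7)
The plan is to route both sides through the wreath product $G := \sym{k}\wr\sym{m}\subseteq \sym{km}$, reducing the identity to the $\sym{m}$-level statement of Lemma \ref{L: M(p,r)}. Let $H := (\sym{k}\wr\sym{p})\times(\sym{k}\wr\sym{r})$ and $B := \sym{k}^m$, so $B$ is the common base group of $H$ and $G$, with $G/B\cong\sym{m}$ and $H/B\cong\sym{p}\times\sym{r}$. Applying Theorem \ref{T: higher lie mod} to each of $\Lie_F((k^p))$ and $\Lie_F((k^r))$ and collecting the inductions, the left hand side becomes $\ind^{\sym{km}}_H(\Lie_F(k)^{\wr p}\boxtimes\Lie_F(k)^{\wr r})$, whose inner module is just $\res^G_H\Lie_F(k)^{\wr m}$. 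Factoring the induction through $G$ and invoking the projection formula yields
\[
\ind^G_H\bigl(\res^G_H\Lie_F(k)^{\wr m}\bigr)\cong\Lie_F(k)^{\wr m}\otimes\ind^G_H F.
\]
Since $B$ lies in both $H$ and $G$, we have $\ind^G_H F\cong\inf^G_{\sym{m}}\ind^{\sym{m}}_{\sym{p}\times\sym{r}}F$, and Lemma \ref{L: M(p,r)} supplies $\ind^{\sym{m}}_{\sym{p}\times\sym{r}}F\cong F\oplus P$ with $P$ a projective $F\sym{m}$-module. Hence $\ind^G_H F\cong F\oplus Q$, where $Q:=\inf^G_{\sym{m}}P$.

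Splitting off the trivial summand and inducing to $\sym{km}$, the $F$-piece contributes $\ind^{\sym{km}}_G\Lie_F(k)^{\wr m}\cong\Lie_F((k^m))$ by a second application of Theorem \ref{T: higher lie mod}, so the lemma will follow once
\[
X := \ind^{\sym{km}}_G\bigl(\Lie_F(k)^{\wr m}\otimes Q\bigr)
\]
is shown to be projective over $F\sym{km}$. Since $P$ is a direct summand of a free $F\sym{m}$-module of some rank $n$, $Q$ is a direct summand of $(\inf^G_{\sym{m}}F\sym{m})^{\oplus n}=(\ind^G_B F)^{\oplus n}$. A second application of the projection formula gives
\[
\Lie_F(k)^{\wr m}\otimes\ind^G_B F\cong\ind^G_B\bigl(\res^G_B\Lie_F(k)^{\wr m}\bigr),
\]
and the restriction on the right is the outer tensor product $\Lie_F(k)\boxtimes\cdots\boxtimes\Lie_F(k)$ over $FB=F\sym{k}^m$. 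Because $p\nmid k$, the module $\Lie_F(k)$ is projective over $F\sym{k}$, so this outer tensor product is projective over $FB$; induction preserves projectivity, so $X$ is a direct summand of a projective $F\sym{km}$-module, hence projective.

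The main obstacle is the last step: one would like to say that $Q$ is itself a projective $FG$-module and invoke ``projective tensor anything is projective'', but this fails whenever $k\geq p$, since $Q$ factors through the quotient $G/B\cong\sym{m}$ while a Sylow $p$-subgroup of $G$ is strictly larger than a Sylow $p$-subgroup of $\sym{m}$ in that case. The resolution is to retain $Q$ only as a summand of $\ind^G_B F$, so that after tensoring with $\Lie_F(k)^{\wr m}$ and inducing up to $\sym{km}$ the module becomes manifestly induced from a projective $FB$-module, which is automatically projective over $F\sym{km}$.
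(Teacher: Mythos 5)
Your proof is correct, and it takes a genuinely different route from the paper's. The paper replaces $\sym{k}$ by the cyclic group $C_k$ from the start: it writes $\Lie_F((k^m))\cong\ind^{\sym{km}}_{C_k\wr\sym{m}}F_\delta$ using the second statement of Theorem~\ref{T: higher lie mod}, takes the inflation of the identity in Lemma~\ref{L: M(p,r)} to $C_k\wr\sym{m}$, tensors with the one-dimensional module $F_\delta$, and then observes that projectivity of the inflated projective piece is immediate because $C_k$ is a $p'$-group when $p\nmid k$, so a Sylow $p$-subgroup of $C_k\wr\sym{m}$ already lives in the top group $\sym{m}$. In your argument the base group is $\sym{k}^m$ rather than $C_k^m$, so that shortcut is unavailable whenever $k\ge p$; you correctly identify this as the crux and repair it by applying the projection formula a second time to exhibit $\Lie_F(k)^{\wr m}\otimes Q$ as a summand of a module induced from the base $B=\sym{k}^m$, where $\res^G_B\Lie_F(k)^{\wr m}$ is an outer tensor product of projective $F\sym{k}$-modules. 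What you pay in this extra layer of projection formula you gain elsewhere: your route uses only the first (coefficient-field-agnostic) isomorphism of Theorem~\ref{T: higher lie mod} and so avoids the implicit hypothesis in the paper's proof that $F$ contains a primitive $k$th root of unity, which the lemma's statement never actually imposes.
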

\begin{proof} Let $F_\delta=F_{\delta_k}^{\wr m}$ be the one-dimensional $F[C_k\wr\sym{m}]$-module where $F_{\delta_k}$ depends only on $k$ and $\Lie_F((k^m))\cong\ind^{\sym{km}}_{C_k\wr \sym{m}}F_\delta$ as in Theorem \ref{T: higher lie mod}. Take inflation and then tensor product with $F_\delta$ for the equation as in Lemma \ref{L: M(p,r)}, we have \[F_\delta\otimes \inf^{C_k\wr\sym{m}}_{\sym{m}}\ind^{\sym{m}}_{\sym{p}\times \sym{r}}F\cong \left (F_\delta\otimes \inf^{C_k\wr\sym{m}}_{\sym{m}}F\right )\oplus \left (F_\delta\otimes \inf^{C_k\wr\sym{m}}_{\sym{m}}\proj\right ).\] Since $p\nmid k$, the top group $\sym{m}$ contains a Sylow $p$-subgroup $P$ of $C_k\wr\sym{m}$ and therefore the restriction of the inflated module to $P$ is again projective. As such, the second summand is projective. Notice that the first summand is just $F_\delta$. So we have
\begin{align*}
F_\delta\oplus\text{(proj)}\cong F_\delta\otimes \inf^{C_k\wr\sym{m}}_{\sym{m}}\ind^{\sym{m}}_{\sym{p}\times \sym{r}}F&\cong F_\delta\otimes \ind^{C_k\wr\sym{m}}_{C_k\wr(\sym{p}\times \sym{r})}\inf^{C_k\wr(\sym{p}\times \sym{r})}_{\sym{p}\times\sym{r}}F\\
&\cong \ind^{C_k\wr\sym{m}}_{C_k\wr(\sym{p}\times \sym{r})}F_\delta.
\end{align*} Inducing the modules to $\sym{km}$, we have
\begin{align*}
  \ind^{\sym{km}}_{C_k\wr\sym{m}} (F_\delta\oplus \proj)&\cong \ind^{\sym{km}}_{C_k\wr(\sym{p}\times \sym{r})}F_\delta\\
  \ind^{\sym{km}}_{C_k\wr\sym{m}} (F_\delta)\oplus\proj&\cong \ind^{\sym{km}}_{\sym{kp}\times\sym{kr}}\ind^{\sym{kp}\times\sym{kr}}_{C_k\wr(\sym{p}\times \sym{r})}(F_\delta\boxtimes F_\delta)\\
  &\cong \ind^{\sym{km}}_{\sym{kp}\times \sym{kr}}(\Lie_F((k^p))\boxtimes \Lie_F((k^r))).
\end{align*} Notice that the left hand side is precisely $\Lie_F((k^m))\oplus\proj$.% Apply the Heller translate $\Omega^n$ to both sides, we get
%\begin{align*}
 % \Omega^n(\Lie_F((k^m)))&\cong \Omega^n(\ind^{\sym{km}}_{\sym{kp}\times \sym{kr}}(\Lie_F((k^p))\boxtimes \Lie_F((k^r))))\\
  %&\cong \Omega^0(\ind^{\sym{km}}_{\sym{kp}\times \sym{kr}}\Omega^n(\Lie_F((k^p))\boxtimes \Lie_F((k^r))))\\
  %&\cong \Omega^0(\ind^{\sym{km}}_{\sym{kp}\times \sym{kr}}\Omega^n(\Lie_F((k^p)))\boxtimes \Lie_F((k^r)))\\
  %&\cong \Omega^0(\ind^{\sym{km}}_{\sym{kp}\times \sym{kr}}Z_{n-1}\boxtimes \Lie_F((k^r)))\\
  %&\cong \bigoplus_s\Omega^0(\ind^{\sym{km}}_{\sym{p}\times (\sym{k}\times\sym{kr})}S(n-1)\boxtimes P_s)
%\end{align*}
\end{proof}

We are now in the position to state and prove our main theorem for this section.

%\begin{lem} Suppose that $m=p+r$ for some $r\in [0,p-1]$. For any integer $n$, let $j=n_p$. We have an isomorphism of $F\sym{km}$-modules
%\[\Omega^n(\Lie_F((k^m)))\cong\left \{\begin{array}{ll} \bigoplus_{s,\ell}\ind_{\sym{kp}\times \sym{km-kp}}^{\sym{km}}(Z_{j,s}\boxtimes Q_\ell)&\text{if $p\geq 3$,}\\ \bigoplus_{s,\ell}\ind^{\sym{2m}}_{\sym{2k}\times\sym{km-2k}}(Z_{0,s}\boxtimes Q_\ell)&\text{if $p=2$,}\end{array}\right .\] where $\Lie_F((k^r))\cong \bigoplus Q_\ell$.
%\end{lem}
%\begin{proof} By Lemma \ref{L: minus p}, we have
%\begin{align*}
 % \Omega^n(\Lie_F((k^m)))&\cong \Omega^n\left (\ind^{\sym{km}}_{\sym{kp}\times \sym{kr}}(\Lie_F((k^p))\boxtimes \Lie_F((k^r)))\right )\\
  %&\cong \Omega^0\ind^{\sym{km}}_{\sym{kp}\times \sym{kr}}\Omega^n(\Lie_F((k^p))\boxtimes \Lie_F((k^r)))
%\end{align*}
%\end{proof}

\begin{thm}\label{T: Heller higher Lie} Let $q\in \C(n)$ satisfying Hypothesis (H), let $q'$ be a composition such that $q\approx (k^p)\cont q'$ and let $\Lie_F(q')\cong \bigoplus_{\gamma\in\pReg(n-kp)}s_\gamma P^\gamma$. Then \[\Omega^j(\Lie_F(q))\cong \bigoplus_{\substack{\nu\in\pReg(k),\\ \gamma\in\pReg(n-kp)}} s_\nu s_\gamma\Omega^0(\ind^{\sym{n}}_{\sym{kp}\times\sym{n-kp}}(Z_{j-1,\nu}\boxtimes P^\gamma)).\] Furthermore, each (non-zero, i.e., $s_\nu s_\gamma\neq 0$) summand $\Omega^0(\ind^{\sym{n}}_{\sym{kp}\times\sym{n-kp}}(Z_{j-1,\nu}\boxtimes P^\gamma))$ in the equation above is non-projective indecomposable.
\end{thm}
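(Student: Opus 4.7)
The plan is to reduce the computation of $\Omega^j(\Lie_F(q))$ to the known case of $\Lie_F((k^p))$ handled in Theorem \ref{T: LT main} and Proposition \ref{P: LT period}, by peeling off exactly the ``non-projective block'' $\Lie_F((k^p))$ from $\Lie_F(q)$ up to projective summands.

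The first step is to isolate the parts of $q$ equal to $k$. Let $m = m_k(q) = p+r$ with $r \in [0,p-1]$, and let $q''$ be the subcomposition of $q$ obtained by deleting all parts equal to $k$. Since $m_i(q) = m_i((k^m))$ for $i=k$ and $m_i(q) = m_i(q'')$ for $i\neq k$, Corollary \ref{C: higher lie mod split} gives
\[
\Lie_F(q)\;\cong\;\ind^{\sym{n}}_{\sym{km}\times\sym{n-km}}\bigl(\Lie_F((k^m))\boxtimes \Lie_F(q'')\bigr).
\]
Applying Lemma \ref{L: minus p} to the $k$-block and then transitivity of induction, together with a second application of Corollary \ref{C: higher lie mod split} that identifies $\ind^{\sym{n-kp}}_{\sym{kr}\times\sym{n-km}}(\Lie_F((k^r))\boxtimes \Lie_F(q''))$ with $\Lie_F(q')$ (this is legitimate since $r<p$ and all the ``multiplicities at most $p$'' hypotheses hold on each side), I obtain
\[
\Lie_F(q)\oplus \proj \;\cong\; \ind^{\sym{n}}_{\sym{kp}\times\sym{n-kp}}\bigl(\Lie_F((k^p))\boxtimes \Lie_F(q')\bigr).
\]

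The second step is to push $\Omega^j$ through this identification. Since $\Lie_F(q')$ is projective (as $\lambda(q')\in\pReg(n-kp)$ and $(q',p)=1$), Lemma \ref{L: Heller}(i) gives
$\Omega^j(\Lie_F((k^p))\boxtimes \Lie_F(q')) \cong \Omega^j(\Lie_F((k^p)))\boxtimes \Lie_F(q')$, and Lemma \ref{L: Heller}(ii) allows me to pull $\Omega^j$ outside the induction modulo projectives. Combining this with Proposition \ref{P: LT period}, which says $\Omega^j(\Lie_F((k^p)))\cong Z_{j-1} = \bigoplus_{\nu\in\pReg(k)} n_\nu Z_{j-1,\nu}$, and with the given decomposition $\Lie_F(q')\cong\bigoplus_\gamma s_\gamma P^\gamma$, I get
\[
\Omega^j(\Lie_F(q))\oplus\proj \;\cong\; \bigoplus_{\nu,\gamma} s_\nu s_\gamma \,\ind^{\sym{n}}_{\sym{kp}\times\sym{n-kp}}\bigl(Z_{j-1,\nu}\boxtimes P^\gamma\bigr),
\]
where $s_\nu := n_\nu$ denotes the multiplicity of $P^\nu$ in $\Lie_F(k)$. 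Taking the non-projective part on both sides yields the displayed formula for $\Omega^j(\Lie_F(q))$.

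Finally, the indecomposability of each nonzero summand is exactly the content of Lemma \ref{L: Green corres}: when $n_\nu\neq 0$, the module $Z_{j-1,\nu}$ is non-projective indecomposable by Theorem \ref{T: LT main}, so $Z_{j-1,\nu}\boxtimes P^\gamma$ is non-projective indecomposable with vertex contained in $\Delta_k(C_p)\subseteq \sym{kp}\times\sym{n-kp}$, and its Green correspondent in $\sym{n}$ is precisely $\Omega^0(\ind^{\sym{n}}_{\sym{kp}\times\sym{n-kp}}(Z_{j-1,\nu}\boxtimes P^\gamma))$.

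I expect the main obstacle to be the bookkeeping in the first step: verifying that Corollary \ref{C: higher lie mod split} can be applied twice and that Lemma \ref{L: minus p} fits cleanly with transitivity of induction, so that all projective summands can be absorbed into a single $\proj$ term without upsetting the decomposition. After that reduction is in place, the Heller-translate manipulation and the Green-correspondence indecomposability argument are routine given the machinery from Section \ref{S: higher lie} and the recalled results \ref{T: LT main}, \ref{P: LT period}, \ref{L: Green corres}, \ref{L: M(p,r)}, and \ref{L: minus p}.
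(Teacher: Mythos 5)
Your proof is correct and follows essentially the same route as the paper: isolate the $(k^m)$-block via Corollary \ref{C: higher lie mod split}, strip off $(k^p)$ using Lemma \ref{L: minus p}, push $\Omega^j$ through the outer tensor product and the induction by Lemma \ref{L: Heller}, invoke Proposition \ref{P: LT period} for $\Omega^j(\Lie_F((k^p)))$, and settle indecomposability with the Green-correspondence argument of Lemma \ref{L: Green corres}. The only cosmetic difference is that you explicitly recombine $\Lie_F((k^r))\boxtimes\Lie_F(q'')$ into $\Lie_F(q')$ via a second use of Corollary \ref{C: higher lie mod split}, whereas the paper leaves the three-factor induction implicit in its final step; your reading of $s_\nu$ as $n_\nu$ also matches the paper's intended (if slightly overloaded) notation.
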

\begin{proof} Without loss of generality, assume that $q=(k^p)\cont q'=(k^{m_k})\cont q''$ where $q'=(k^{m_k-p})\cont q''$. Notice that $(q',p)=1=(q'',p)$ and both $m_i(q')<p$, $m_i(q'')<p$ for all $i$. Therefore, both $\Lie_F(q')$ and $\Lie_F(q'')$ are projective by Corollary \ref{C: Complexity Lie}(i). We decompose $\Lie_F(q')$ as in the statement. By Lemmas \ref{L: Heller}, \ref{L: minus p}, Proposition \ref{P: LT period} and Corollary \ref{C: higher lie mod split}
\begin{align*}
  \Omega^j(\Lie_F(q))&\cong\Omega^j(\ind^{\sym{n}}_{\sym{km_k}\times \sym{n-km_k}}(\Lie_F((k^{m_k}))\boxtimes \Lie_F(q'')))\\
  &\cong\Omega^j(\ind^{\sym{n}}_{\sym{kp}\times \sym{kr}\times \sym{n-km_k}}(\Lie_F((k^p))\boxtimes \Lie_F((k^r))\boxtimes \Lie_F(q'')))\\
 % &\cong\Omega^0(\ind^{\sym{n}}_{\Delta_k(\sym{p})\times \sym{k}^{[p]}\times \sym{n-kp}}(S(j-1)\boxtimes \Lie_F(k)\boxtimes \Lie_F(q')))\\
  &\cong \bigoplus_{\substack{\nu\in\pReg(k),\\ \gamma\in\pReg(n-kp)}} s_\nu s_\gamma\Omega^0(\ind^{\sym{n}}_{\sym{kp}\times\sym{n-kp}}(Z_{j-1,\nu}\boxtimes P^\gamma)).
%  &\cong \bigoplus_t\Omega^0(\ind^{\sym{n}}_{\Delta_k(\sym{p})\times (\sym{k}^{[p]}\times \sym{n-kp})}(S(n-1)\boxtimes P_t)).
\end{align*} Each nonzero summand $\Omega^0(\ind^{\sym{n}}_{\sym{kp}\times\sym{n-kp}}(Z_{j-1,\nu}\boxtimes P^\gamma))$ is indecomposable by Lemma \ref{L: Green corres}. The statement now follows.
\end{proof}

We end the section with the following corollary describing the period of a non-projective periodic higher Lie module.

\begin{cor}\label{C: period} Let $q\in \C(n)$ satisfying Hypothesis (H). Then $\Lie_F(q)$ is non-projective periodic with the period $2p-2$ if $p\geq 3$ and 1 if $p=2$.
\end{cor}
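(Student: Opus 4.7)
By Corollary \ref{C: Complexity Lie}(ii), Hypothesis~(H) already places us in the non-projective periodic regime, so only the exact value $d$ of the period of $\Lie_F(q)$ needs to be pinned down. The strategy is to read $d$ directly from the explicit Heller decomposition of Theorem \ref{T: Heller higher Lie},
\[
\Omega^j(\Lie_F(q))\cong\bigoplus_{\substack{\nu\in\pReg(k),\\ \gamma\in\pReg(n-kp)}} s_\nu s_\gamma \,Z'_{j-1,\nu,\gamma},
\]
where $Z'_{j-1,\nu,\gamma}=\Omega^0\bigl(\ind^{\sym{n}}_{\sym{kp}\times\sym{n-kp}}(Z_{j-1,\nu}\boxtimes P^\gamma)\bigr)$. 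By Lemma \ref{L: Green corres}, each non-zero summand is indecomposable and is the Green correspondent of $Z_{j-1,\nu}\boxtimes P^\gamma$. Since the Green correspondence is an equivalence of stable module categories it commutes with $\Omega$ and therefore preserves periods; combining this with Lemma \ref{L: Heller}(i) applied to the projective factor $P^\gamma$, the period of every $Z'_{\cdot,\nu,\gamma}$ reduces to that of $Z_{\cdot,\nu}$.

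For the upper bound, Theorem \ref{T: LT main} states that each $Z_{j,\nu}$ has period dividing $2p-2$ when $p\geq 3$ (respectively, $1$ when $p=2$). Since the period of a direct sum is the lcm of the periods of its indecomposable constituents, $d$ divides $2p-2$ (resp.\ $d=1$), which already closes the $p=2$ case because $\Lie_F(q)$ is non-projective.

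For the matching lower bound when $p\geq 3$, Theorem \ref{T: LT main} says $\Omega^{-1}(\Lie_F(pk))\cong\bigoplus_\nu n_\nu Z_{-1,\nu}$ has period exactly $2p-2$, so some $Z_{-1,\nu_0}$ with $n_{\nu_0}\neq 0$ must itself have period exactly $2p-2$. Since $\Lie_F(q')$ is a nonzero projective module (nonzero by Theorem \ref{T: Lie mod basis}), at least one multiplicity $s_{\gamma_0}$ is nonzero; this furnishes an indecomposable summand $Z'_{-1,\nu_0,\gamma_0}$ of $\Omega^0(\Lie_F(q))$ of period exactly $2p-2$, forcing $d=2p-2$.

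The main obstacle is formalising that the Green correspondence commutes with the Heller operator and hence preserves periods; this is a standard consequence of its being a stable-category equivalence but deserves an explicit citation. A minor technicality is to identify the ``$s_\nu$'' in the summation of Theorem \ref{T: Heller higher Lie} with the Donkin–Erdmann multiplicity $n_\nu$ of Theorem \ref{T: DE proj decomp}, which is visible from the proof of the former.
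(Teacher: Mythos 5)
Your overall strategy (decompose $\Omega^j(\Lie_F(q))$ via Theorem \ref{T: Heller higher Lie}, transfer the computation to the $Z_{j-1,\nu}$'s, and invoke Theorem \ref{T: LT main}) matches the paper, but your route from there is different and has a genuine gap in the lower‑bound step.

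Your lower bound rests on the claim ``an indecomposable summand of period exactly $2p-2$ forces the period of the whole module to be $2p-2$.'' That is false in general: the period of a direct sum need not be bounded below by the periods of its constituents. For instance, if $M_1,M_2$ are indecomposable non‑isomorphic periodic modules with $\Omega(M_1)\cong M_2$ and $\Omega(M_2)\cong M_1$, then $M_1$ and $M_2$ each have period $2$, yet $M_1\oplus M_2$ has period $1$. In the situation at hand, when you suppose $\Omega^d(\Lie_F(q))\cong\Omega^0(\Lie_F(q))$ with $0<d<2p-2$, Krull--Schmidt only gives you that the two \emph{multisets} $\{Z'_{d-1,\nu,\gamma}\}$ and $\{Z'_{-1,\nu,\gamma}\}$ agree; it does not rule out $Z'_{d-1,\nu,\gamma}\cong Z'_{-1,\nu',\gamma}$ with $\nu'\neq\nu$, i.e.\ $\Omega^d$ permuting the $\nu$‑labels. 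So fixing one $(\nu_0,\gamma_0)$ with $Z_{\cdot,\nu_0}$ of period exactly $2p-2$ does not force $d=2p-2$ as you assert. Relatedly, Theorem \ref{T: LT main} as stated in the paper provides the period of the \emph{sum} $Z_j=\bigoplus_\nu n_\nu Z_{j,\nu}$ (equivalently of $\Lie_F(pk)$), not the period of each individual $Z_{\cdot,\nu}$; upgrading to $\Omega(Z_{j,\nu})\cong Z_{j+1,\nu}$ for each fixed $\nu$ requires going back to \cite{LT16} (or to Lemma \ref{L: Heller} plus the periodicity of the hook Specht modules), which you do not do.

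The paper sidesteps both problems. Assuming $\Omega^j(\Lie_F(q))\cong\Omega^{j'}(\Lie_F(q))$, it applies Lemma \ref{L: Green corres} to undo the Green correspondence and obtain, over $F[\sym{kp}\times\sym{n-kp}]$,
\[
\bigoplus_{\nu,\gamma} s_\nu s_\gamma\,(Z_{j-1,\nu}\boxtimes P^\gamma)\cong\bigoplus_{\nu,\gamma} s_\nu s_\gamma\,(Z_{j'-1,\nu}\boxtimes P^\gamma),
\]
then restricts to $\sym{n-kp}$ and separates the $P^\gamma$'s (which are pairwise non‑isomorphic) to isolate $\bigoplus_\nu s_\nu Z_{j-1,\nu}\cong\bigoplus_\nu s_\nu Z_{j'-1,\nu}$, that is $Z_{j-1}\cong Z_{j'-1}$. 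Since $Z_{j-1}=\Omega^{j-1}(\Lie_F(pk))$ and $\Lie_F(pk)$ has period $2p-2$ (resp.\ $1$) by Theorem \ref{T: LT main}, this forces $j\equiv j'\pmod{2p-2}$ (resp.\ always), which gives the exact period. Notice this argument only needs the period of $Z_\bullet$ as a whole and is insensitive to any potential permutation of the $\nu$‑labels, precisely the two points where your version is incomplete. A minor additional remark: your phrase ``Green correspondence is an equivalence of stable module categories'' is imprecise (the correspondence is a bijection on indecomposables with a prescribed vertex, not a stable equivalence of the full module categories); the $\Omega$‑compatibility you want is correctly extracted from Lemma \ref{L: Heller}(ii), but the paper's proof does not in fact need it.
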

\begin{proof} By Theorem \ref{T: Heller higher Lie}, the Heller translates of $\Lie_F(q)$ are clearly nonzero. Suppose that $\Omega^j(\Lie_F(q))\cong \Omega^{j'}(\Lie_F(q))$. By Theorem \ref{T: Heller higher Lie} and Lemma \ref{L: Green corres}, we have \[\bigoplus_{\substack{\nu\in\pReg(k),\\ \gamma\in\pReg(n-kp)}} s_\nu s_\gamma (Z_{j-1,\nu}\boxtimes P^\gamma)\cong \bigoplus_{\substack{\nu\in\pReg(k),\\ \gamma\in\pReg(n-kp)}} s_\nu s_\gamma (Z_{j'-1,\nu}\boxtimes P^\gamma)\] as $F[\sym{pk}\times\sym{n-pk}]$-modules and the summands are all indecomposable. Since $P^\gamma\cong P^{\gamma'}$ if and only if $\gamma=\gamma'$, by considering the restriction to $\sym{n-pk}$, we have, for any fixed $\gamma$ such that $s_\gamma\neq 0$, $\bigoplus_{\nu\in\pReg(k)}s_\nu s_\gamma Z_{j-1,\nu}\cong \bigoplus_{\nu\in\pReg(k)}s_\nu s_\gamma Z_{j'-1,\nu}$, i.e., \[Z_{j-1}=\bigoplus_{\nu\in\pReg(k)}s_\nu Z_{j-1,\nu}\cong \bigoplus_{\nu\in\pReg(k)}s_\nu Z_{j'-1,\nu}=Z_{j'-1}.\] By Theorem \ref{T: LT main}, we get the desired result.
\end{proof}

\section{Pivots and $q$-segmentations of Words}\label{S: pivot}

Our next focus is the module $\Xi^qR\sym{n}$. We aim to establish a free basis for the module and study its structure in the next section. This section sets up the necessary combinatorics. More precisely, for each $n\in\NN$ and $q\vDash n$, we define maps $\Phi:\sym{n}\to\sym{n}$ and  $\Upsilon_q:\sym{n}\to\sym{n}$ (see Definitions \ref{D: Phi} and \ref{D: Upsilon}), which will help us to partition $\sym{n}$ for the construction of a free basis for $\Xi^qR\sym{n}$.

Throughout this section, a word $w$ is a word in $\NN$ with distinct alphabets. We begin with a definition.

\begin{defn} Let $w=w_1\ldots w_n$ be a word in $\NN$ (with distinct alphabets) and $W=\{w_1,\ldots,w_n\}$. Define the numbers $\rho_0,\rho_1,\ldots,\rho_\ell$ inductively as follows. Let $\rho_0=0$. Suppose that $\rho_i\neq n$ for some $i\geq 0$. Let $\rho_{i+1}$ be the index such that $w_{\rho_{i+1}}$ is the smallest number in the set $W\backslash\{w_j:j\in [1,\rho_i]\}$. Continue in this manner until we get $\rho_\ell=n$ for some $\ell$.
\begin{enumerate}[(i)]
%\item The sequence $(\rho_1,\ldots,\rho_\ell)$ is called the pivot positions of $w$.
\item The pivots of $w$ are the alphabets $w_{\rho_1},\ldots,w_{\rho_\ell}$.
\item The pivot words and pivot cycles of $w$ are the subwords $w^{(i)}=w_{1+\rho_{i-1}}w_{2+\rho_{i-1}}\cdots w_{\rho_i}$ and the cycles $(w_{1+\rho_{i-1}},w_{2+\rho_{i-1}},\ldots,w_{\rho_i})$ respectively, one for each $i\in [1,\ell]$.
\item The pivot cycle type of $w$ is the sequence \[(\rho_1,\rho_2-\rho_1,\ldots,\rho_\ell-\rho_{\ell-1})=(|w^{(1)}|,\ldots,|w^{(\ell)}|)\vDash n.\]% so that the components are the lengths of the pivot cycles.
\end{enumerate}
\end{defn}

\begin{eg}\label{Eg: S7a} Consider \[w=56\underline{1}34\underline{2}\hspace{.02cm}\underline{7}\in\sym{7}.\] The pivots of $w$ are $1,2,7$ as underlined. %and its pivot positions is $(3,6,7)$.
The pivot words of $w$ are $w^{(1)}=561$, $w^{(2)}=342$ and $w^{(3)}=7$, and its pivot cycle type is $(3,3,1)$.
\end{eg}

Recall the definition of reverse colexicographic order $\wleq$ on $\sym{n}$ in Subsection \ref{SS: generality}.

\begin{lem}\label{L: least permutation} Let $w(1),w(2),\ldots,w(k)$ be words such that $v=w(1)w(2)\cdots w(k)\in\sym{n}$, each $w(i)$ has a unique pivot, i.e., the least alphabet is at the last position of $w(i)$, and the pivots are in increasing order. Then $v$ is, with respect to $\wleq$, the least permutation containing $w(1),\ldots,w(k)$ as subwords.
\end{lem}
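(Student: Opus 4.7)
The plan is to show directly that for any permutation $u\in\sym{n}$ containing every $w(i)$ as a subword and distinct from $v:=w(1)w(2)\cdots w(k)$, one has $v\swleq u$. By the definition of $\wleq$, if $i$ denotes the greatest index at which $u_i\neq v_i$, this reduces to checking $u_i<v_i$.

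Let $a_j:=w(j)_{|w(j)|}$ be the pivot of $w(j)$, so by hypothesis $a_1<a_2<\cdots<a_k$ and every alphabet of $w(j)$ is $\geq a_j$ (with equality only in the last position). First I would locate $i$ inside the concatenation $v$: there is a unique pair $(j_0,\ell)$ with $j_0\in[1,k]$, $\ell\in[1,|w(j_0)|]$ and $i=|w(1)|+\cdots+|w(j_0-1)|+\ell$, so that $v_i=w(j_0)_\ell$. The key structural observation is that, because $u$ and $v$ agree on positions $i+1,\ldots,n$, for each $j$ the set of alphabets of $w(j)$ sitting in $u$ at positions $>i$ is the same as in $v$; and since $w(j)$ is a subword of both, these alphabets form the same suffix of $w(j)$, namely the whole word when $j>j_0$, the empty suffix when $j<j_0$, and the last $|w(j_0)|-\ell$ alphabets when $j=j_0$.

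Next I identify which $w(j^*)$ contains $u_i$, writing $u_i=w(j^*)_{t^*}$. The case $j^*>j_0$ is immediate since all of $w(j^*)$ already sits in positions $>i$. For $j^*=j_0$, let $p_1<\cdots<p_{|w(j_0)|}$ be the positions carrying the alphabets of $w(j_0)$ in $u$; the suffix observation gives $p_\ell\leq i$ (and $p_{\ell+1}>i$ when $\ell<|w(j_0)|$), and the equality $p_\ell=i$ would force $u_i=w(j_0)_\ell=v_i$, contradicting the choice of $i$. Hence $p_\ell<i$, and since the $p_t$ are strictly increasing, no $t^*\leq\ell$ can satisfy $p_{t^*}=i$. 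Therefore $j^*<j_0$, so every position of $w(j^*)$'s alphabets in $u$ lies in $[1,i]$; strict monotonicity together with the fact that $u_i$ is at position $i$ forces $u_i$ to be the last alphabet of $w(j^*)$, namely $a_{j^*}$. Combining $a_{j^*}<a_{j_0}\leq w(j_0)_\ell=v_i$ gives $u_i<v_i$, as required.

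The principal obstacle is the bookkeeping around position $i$, and in particular ruling out $j^*=j_0$ uniformly across the boundary values of $\ell$ (including $\ell=|w(j_0)|$, where the above argument still applies with $p_{|w(j_0)|}<i$). Once this exclusion is in place, the strict ordering $a_1<\cdots<a_k$ of the pivots, together with the fact that each pivot is the minimum alphabet of its word, forces the required strict inequality automatically.
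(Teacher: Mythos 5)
Your argument is correct, and it is structured differently from the paper's. The paper takes a permutation $u$ with $u\wleq v$ containing each $w(i)$ as a subword and peels $v$ off from the right: it first shows $u_n$ must equal $v_n$, the pivot of $w(k)$ (it is the largest pivot, so $u_n\geq v_n$ forces equality); it then shows $u$ must in fact end with all of $w(k)$, by noting that at the rightmost position $s$ of $u$ not occupied by an alphabet of $w(k)$, $u_s$ is a pivot of some earlier $w(j')$ while $v_s\in w(k)$, giving $u_s<v_s$, which contradicts $u\wleq v$ unless $s=n-|w(k)|$; and finally it recurses on $w(1)\cdots w(k-1)$. You instead pass directly to the rightmost position $i$ where an arbitrary $u\neq v$ disagrees with $v$, establish the suffix correspondence for all $j$ at once (the alphabets of each $w(j)$ sitting in $u$ strictly to the right of $i$ coincide with those in $v$, hence form the same suffix of $w(j)$), pin down $u_i$ as a pivot $a_{j^*}$ with $j^*<j_0$, and close with $u_i=a_{j^*}<a_{j_0}\leq w(j_0)_\ell=v_i$ in a single step. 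Both proofs rest on the same pivot comparison; yours trades the paper's recursion for one uniform argument at the critical position, which is a genuine simplification, at the modest price of setting up the suffix correspondence globally across all $j$ rather than only for $w(k)$ and iterating.
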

\begin{proof} There is nothing to prove if $k=1$. Assume that $k>1$. Let $c_i=|w(i)|$ so that $w(i)_{c_i}$ is the pivot of $w(i)$. Clearly $v$ is a permutation containing all of $w(1),w(2),\ldots,w(k)$ as subwords. Let $u$ be another such permutation where $u\wleq v$. By the definition of subword, for any $j\in [1,k]$, for any $i\in [1,c_j-1]$, the alphabet $w(j)_i$ in $u$ must be in an earlier position than the alphabet $w(j)_{i+1}$. Therefore, in particular, $u_n=w(j)_{c_j}$ for some $j\in [1,k]$. Since $u\wleq v$, we have $u_n\geq v_n=w(k)_{c_k}$ and thus $j=k$. Let $s$ be the largest index such that $u_s=w(j')_{c_{j'}}$ for some $j'\neq k$. In fact, $s$ is the largest position in $u$ such that $u_s$ is not an alphabet in $w(k)$. As such, $|n-s|\leq |w(k)|$ and $u_i=v_i$ for all $i\in [s+1,n]$, i.e., \[u_{s+1}\cdots u_n=w(k)_{c_k-n+s+1}\cdots w(k)_{c_k}=v_{s+1}\cdots v_n\] and  $u_s<w(k)_i$ for any $i\in [1,c_k]$. Since $u\wleq v$, we have $u_s\geq v_s$. If $c_k-n+s+1>1$ then \[w(k)_{c_k-n+s}>u_s\geq v_s=w(k)_{c_k-n+s},\] which is absurd. Therefore $u=u'w(k)$ for some word $u'$. Now we have $u'w(k)\wleq w(1)\ldots w(k-1)w(k)$. Repeating the argument by considering $u_{n-c_k}=u'_{n-c_k}=w(j)_{c_j}$ for some $j\in [1,k-1]$, we get $u=u''w(k-1)w(k)$ for some $u''$. Continuing in this manner, we get $u=v$ and the proof is now complete.
\end{proof}

\begin{defn}\label{D: Phi} Fix $n\in\NN$. We define the function $\Phi:\sym{n}\to\sym{n}$ \index{$\Phi$}as follows. Let $\sigma\in\sym{n}$. We write $\sigma$ uniquely as a product of disjoint cycles \[\sigma=(w_1,\ldots,w_{\rho_1})(w_{1+\rho_1},\ldots,w_{\rho_2})\cdots (w_{1+\rho_{\ell-1}},\ldots,w_{\rho_\ell})\] such that $w_{\rho_1}<\cdots<w_{\rho_\ell}$ and, for each $i\in [1,\ell]$, $w_{\rho_i}$ is the smallest number in the cycle containing it. Define \[\Phi(\sigma)=w_1w_2\cdots w_n\] by removing all parenthesis and commas.
\end{defn}

We give an example to illustrate the definition.

\begin{eg}\label{Eg: S7b} Let $\sigma=(5,6,1)(3,4,2)(7)$ so that $w=\Phi(\sigma)=5613427$. Notice that $w$ has cycle type $(4,2,1)$ which is different from the cycle type of $\sigma$ (or the pivot cycle type of $w$ as in Example \ref{Eg: S7a}).
\end{eg}

We have the following immediate lemma.

\begin{lem}\label{L: Phi bijective} The map $\Phi:\sym{n}\to\sym{n}$ is bijective.
\end{lem}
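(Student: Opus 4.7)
The plan is to exhibit an explicit inverse for $\Phi$ using the pivot decomposition defined earlier in this section. The key claim is that the pivot positions of the word $w=\Phi(\sigma)$ coincide exactly with the indices $\rho_1<\rho_2<\cdots<\rho_\ell$ that arise in the cycle decomposition of $\sigma$, so that the pivot cycles of $w$ are precisely the disjoint cycles of $\sigma$.

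First I would verify this claim by induction on $i$. Write $\sigma$ as in Definition~\ref{D: Phi}, with $w=w_1\cdots w_n$ and pivots-of-$\sigma$ data $w_{\rho_1}<\cdots<w_{\rho_\ell}$, each $w_{\rho_i}$ being the smallest entry in its cycle. For the base case, note that among all alphabets of $w$, the overall minimum is $\min_i w_{\rho_i}=w_{\rho_1}$, which by construction sits at position $\rho_1$; hence the first pivot of $w$ (in the word-theoretic sense) is $w_{\rho_1}$ at position $\rho_1$. For the inductive step, after deleting the first $\rho_i$ alphabets of $w$ we are left with exactly the alphabets lying in the cycles indexed by $i+1,\ldots,\ell$, and among these the minimum is $w_{\rho_{i+1}}$, which appears in $w$ at position $\rho_{i+1}$. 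This matches the recursive definition of the pivots of $w$, completing the induction.

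Next I would use this to construct the inverse map. Given any $v\in\sym{n}$, let $v^{(1)},\ldots,v^{(\ell)}$ be its pivot words, whose last entries $v^{(i)}_{|v^{(i)}|}$ are, by definition, strictly increasing and each minimal within its pivot word. Let
\[
\Psi(v)=(v^{(1)})(v^{(2)})\cdots(v^{(\ell)})\in\sym{n},
\]
the product of the corresponding pivot cycles. This is a legitimate disjoint cycle decomposition in which each cycle is written with its smallest entry last and the smallest entries are increasing across cycles, which is exactly the normal form used to define $\Phi$. Thus $\Phi(\Psi(v))=v$ by construction, while the claim above gives $\Psi(\Phi(\sigma))=\sigma$ for every $\sigma$. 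Hence $\Phi$ is a bijection with inverse $\Psi$.

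There is essentially no obstacle: the entire argument is a bookkeeping check that the combinatorial notion of ``pivot'' defined for words matches the ordering convention built into $\Phi$. The only point requiring mild care is ensuring that the strict inequalities $w_{\rho_1}<\cdots<w_{\rho_\ell}$ in Definition~\ref{D: Phi} precisely mirror the successive-minimum rule governing the pivots of a word, which is exactly what the induction above confirms.
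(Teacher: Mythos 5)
Your proposal is correct and takes essentially the same route as the paper: the paper's proof also constructs the inverse by sending a word to the product of its pivot cycles. You simply spell out the inductive bookkeeping that the pivots of $\Phi(\sigma)$ sit at the positions $\rho_1,\ldots,\rho_\ell$, which the paper leaves implicit.
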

\begin{proof} The product of the pivot cycles of $w\in \sym{n}$ gives rise to a permutation $\sigma$ such that $\Phi(\sigma)=w$. As such, we get the inverse map.
\end{proof}

Following the definitions above, we are now ready to define the set which is central in our study for this and the next sections.

\begin{defn}\label{D: Base q} Let $q\vDash n$. We define
\begin{align*}
\Base_q&=\{\Phi(\sigma):\text{$\sigma\in\sym{n}$ has cycle type a weak refinement of $q$}\}\\
&=\{w:\text{$w\in\sym{n}$ has pivot cycle type a weak refinement of $q$}\}.\index{$\Base_q$}
\end{align*} It has cardinality the number permutations in $\sym{n}$ with cycle types weakly refining $q$ (cf. Lemma \ref{L: Phi bijective}), i.e., \[|\Base_q|=\sum_{\P(n)\ni\lambda\wref q}\frac{n!}{\lambda?}.\]
\end{defn}

Next, for each $q\vDash n$, we define a map $\Upsilon_q$. The maps are crucial in this section in the sense that they give rise to another interpretation of the sets $\Base_q$ and partition $\sym{n}$ into a disjoint union.

\begin{defn}\label{D: Upsilon} Let $q\vDash n$ and $w=w_1\ldots w_n\in \sym{n}$. The $q$-segments of $w$ are the words \[w_{q,i}=w_{1+q_{i-1}}w_{2+q_{i-1}}\cdots w_{q_i}\] one for each $i\in [1,\ell(q)]$. For each $q$-segment $w_{q,i}$, consider the pivot words $w_{q,i}^{(1)},\ldots,w_{q,i}^{(k_i)}$ of $w_{q,i}$ for some $k_i$. We define the map $\Upsilon_q:\sym{n}\to\sym{n}$ \index{$\Upsilon_q$}where $\Upsilon_q(w)$ is, with respect to $\wleq$, the least permutation containing all the subwords $w_{q,i}^{(j)}$'s; equivalently, let $k=\sum_{i=1}^{\ell(q)}k_i$ and $w(1),w(2),\ldots,w(k)$ be the rearrangements of the subwords $w_{q,1}^{(1)},\ldots,w_{q,\ell(q)}^{(k_{\ell(q)})}$ in the increasing order with respect to their respective unique pivots, by Lemma \ref{L: least permutation}, we have \[\Upsilon_q(w)=w(1)w(2)\cdots w(k).\]
\end{defn}

%\begin{lem}\label{L: Upsilon} Let $q\vDash n$ and $w\in\sym{n}$. Let $\rho=\sum_{i=1}^{\ell(q)}\rho_i$ and $w(1),w(2),\ldots,w(\rho)$ be the %rearrangements of the subwords $w_{q,1}^{(1)},\ldots,w_{q,\ell(q)}^{(\rho_{\ell(q)})}$ in the increasing order with respect to their respective %unique pivots. Then \[\Upsilon_q(w)=w(1)w(2)\cdots w(\rho).\]
%\end{lem}
%\begin{proof} Clearly $v=w(1)w(2)\cdots w(\rho)$ is a permutation containing all the subwords $w(1),w(2),\ldots,w(\rho)$. Let $u$ be another %such permutation where $u\wleq v$. Therefore $u_n$ is the pivot of $w(j)$ for some $j\in [1,\rho]$. Since $u\wleq v$, we have $u_n\geq v_n$ and %thus $j=\rho$. Let $s$ be the largest index such that $u_s$ is the pivot of $w(j')$ for some $j'\in [1,\rho-1]$. Then $u_i=v_i$ for all $i\in %[s+1,n]$ and \[u_{s+1}\cdots u_n=w(\rho)_{c-n+s+1}\cdots w(\rho)_c=v_{s+1}\cdots v_n\] where $c=|w(\rho)|$ and $u_s<w(\rho)_i$ for any $i\in %[1,c]$. Since $u\wleq v$, we have $u_s\geq v_s$. If $c-n+s+1>1$ then $u_s\geq v_s=w(\rho)_{c-n+s}$, which is absurd. Therefore $u=u'w(\rho)$ for %some permutation $u'$. The proof is now complete using induction.
%\end{proof}

By the construction, notice that $\Upsilon_q(w)$ has pivot cycle type $(|w(1)|,\ldots,|w(k)|)$ a weak refinement of $q$. We should help the reader with a few examples below.

\begin{eg} Consider $q=(4,3)$ and $w=5613427$ as in Example \ref{Eg: S7a}. The $q$-segments of $w$ are $w_{q,1}=5613$ and $w_{q,2}=427$. The corresponding pivots words are $w_{q,1}^{(1)}=561$, $w_{q,1}^{(2)}=3$, $w_{q,2}^{(1)}=42$ and $w_{q,2}^{(2)}=7$. Therefore, $\Upsilon_q(w)=w_{q,1}^{(1)}w_{q,2}^{(1)}w_{q,1}^{(2)}w_{q,2}^{(2)}=5614237$ and it has pivot cycle type $(3,2,1,1)$ which is a weak refinement of $(4,3)$. Notice that $u=5436127$ is also a word with the subwords $w_{q,i}^{(j)}$'s but $5614237\swleq 5436127$.
\end{eg}

\begin{eg} We get $\Upsilon_{(1^n)}(w)=12\cdots n$ for any $w\in\sym{n}$. Also, we get $\Upsilon_q(12\cdots n)=12\cdots n$ for any $q\vDash n$.
\end{eg}

\begin{eg}\label{Eg: (2,2)} Let $q=(2,2)$. In the figure below, we draw an arrow $u\rightarrow v$ if $\Upsilon_q(u)=v$ for $u,v\in\sym{4}$ but ignore the arrow if $\Upsilon_q(u)=u$ and embolden all images of $\Upsilon_q$ . For instance, we ignore the arrow from 1234 to 1234 as the previous example shows.
\bigskip
\[{\xymatrix{\mathbf{1234}& \mathbf{1243}&\mathbf{1324}\ar@/^/[ll]&1342\ar[r]&\mathbf{1423}\ar@/_1pc/[llll]&1432\ar@/_1pc/[lll]& \mathbf{2134}&\mathbf{2143}\\ 2314\ar[u]&2341\ar[rd]&2413\ar[llu]&2431\ar[r]&\mathbf{3124}&\mathbf{3142}&3214\ar[llllu]&3241\ar[lllld]\\
3412\ar@/^1pc/[uu]&3421\ar@/^1pc/[rrrrruu]&\mathbf{4123}&\mathbf{4132}&4213\ar@/_1pc/[uu]& 4231\ar[u]&4312\ar@/^1pc/[llllluu]&4321\ar@/_1pc/[uu]}}\]
 \smallskip Observe that $\Base_{(2,2)}=\im\Upsilon_{(2,2)}$.
\end{eg}

Indeed, the final equality we obtained in Example \ref{Eg: (2,2)} is not a coincidence. We have the following lemma.

\begin{lem}\label{L: B = U} Let $q\vDash n$. Then $\Base_q=\im\Upsilon_q$.
\end{lem}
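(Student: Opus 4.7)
The plan is to establish the two inclusions $\im\Upsilon_q\subseteq\Base_q$ and $\Base_q\subseteq\im\Upsilon_q$ separately, in both cases by analyzing pivot words and pivot cycle types.

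For $\im\Upsilon_q\subseteq\Base_q$, I fix $w\in\sym{n}$ and write $\Upsilon_q(w)=w(1)w(2)\cdots w(k)$ as in Definition 7.4, with $w(1),\ldots,w(k)$ the pivot words of the $q$-segments of $w$ listed in increasing order of their (unique) pivots. Since the pivot words of the $i$-th $q$-segment have lengths summing to $q_i$, the composition $(|w(1)|,\ldots,|w(k)|)$ is a rearrangement of a strong refinement of $q$, hence a weak refinement of $q$. Each $w(j)$ ends in its (unique, smallest) letter and these terminal letters are strictly increasing in $j$, so an easy induction identifies $w(1),\ldots,w(k)$ with the pivot words of $\Upsilon_q(w)$ itself; consequently the pivot cycle type of $\Upsilon_q(w)$ equals $(|w(1)|,\ldots,|w(k)|)\wref q$, placing $\Upsilon_q(w)$ in $\Base_q$.

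For the reverse inclusion, I take $v\in\Base_q$ with pivot words $v^{(1)},\ldots,v^{(\ell)}$, pivots $p_1<p_2<\cdots<p_\ell$, and pivot cycle type $\lambda:=(|v^{(1)}|,\ldots,|v^{(\ell)}|)\wref q$. The weak refinement property supplies an ordered partition $G_1,\ldots,G_{\ell(q)}$ of $[1,\ell]$ (each $G_j$ a subset, not necessarily an interval) with $\sum_{i\in G_j}|v^{(i)}|=q_j$. I then construct $w\in\sym{n}$ by concatenating, for $j=1,\ldots,\ell(q)$ in this order, the pivot words $v^{(i)}$ with $i\in G_j$ taken in increasing order of $i$. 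By construction, the $j$-th $q$-segment of $w$ is $w_{q,j}=v^{(i_{j,1})}\cdots v^{(i_{j,r_j})}$, where $i_{j,1}<\cdots<i_{j,r_j}$ enumerate $G_j$, and the goal is to verify $\Upsilon_q(w)=v$.

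The main obstacle is to show that the pivot words of each $q$-segment $w_{q,j}$ of $w$ are precisely $v^{(i_{j,1})},\ldots,v^{(i_{j,r_j})}$. This rests on the observation that, by construction of the pivots of $v$, the letter $p_a$ is the minimum of all alphabets in $v^{(a)}v^{(a+1)}\cdots v^{(\ell)}$, so $p_a$ remains the minimum among the alphabets of any sub-concatenation $v^{(a)}v^{(a')}v^{(a'')}\cdots$ with $a<a'<a''<\cdots$; moreover $p_a$ sits at the last position of $v^{(a)}$. Applying this observation iteratively within $w_{q,j}$ (first with $a=i_{j,1}$, then $a=i_{j,2}$, and so on) identifies the successive pivot words of $w_{q,j}$ as the $v^{(i_{j,t})}$. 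Collecting pivot words across all $q$-segments then gives exactly the multiset $\{v^{(1)},\ldots,v^{(\ell)}\}$, and re-sorting by increasing pivot (which simply restores the natural order $v^{(1)},v^{(2)},\ldots,v^{(\ell)}$) yields $\Upsilon_q(w)=v^{(1)}v^{(2)}\cdots v^{(\ell)}=v$, completing the proof.
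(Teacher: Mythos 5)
Your proof is correct and follows essentially the same route as the paper's: for $\im\Upsilon_q\subseteq\Base_q$ you identify the pivot cycle type of $\Upsilon_q(w)$ with $(|w(1)|,\ldots,|w(k)|)\wref q$, and for $\Base_q\subseteq\im\Upsilon_q$ you build a preimage by concatenating the pivot words of $v$ in groups of total length $q_j$, within each group in increasing pivot order, exactly as the paper does. You spell out the verification that the $q$-segments of the constructed word have the $v^{(i)}$ as their pivot words (which the paper leaves as an assertion), but the underlying argument is the same.
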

\begin{proof} Let $w\in\sym{n}$. By definition, $\Upsilon_q(w)=w(1)w(2)\cdots w(k)$ with pivot cycle type $c=(|w(1)|,|w(2)|,\ldots,|w(k)|)$. Since $w(1),\ldots,w(k)$ are the rearrangements of the pivot words of the $q$-segments of $w$, we get that $c$ is a weak refinement of $q$. Therefore $\im\Upsilon_q\subseteq \Base_q$.

Suppose now that $w$ has pivot cycle type a weak refinement of $q$. Let $w^{(1)},\ldots,w^{(\ell)}$ be the pivot words of $w$. By assumption, there is a rearrangement of the pivot words of $w$, say $w(1),\ldots,w(\ell)$, and some $j_0=0<j_1<\cdots<j_{\ell(q)}$ such that, for each $i\in [1,\ell(q)]$, \[q_i=|w(1+j_{i-1})|+|w(2+j_{i-1})|+\cdots+|w(j_i)|\] and the pivots of $w(1+j_{i-1}),w(2+j_{i-1}),\ldots, w(j_i)$ are in increasing order. Let $u=w(1)\cdots w(\ell)$. By the construction of $u$, the $q$-segments of $u$ are the subwords $w(1+j_{i-1}),w(2+j_{i-1}),\ldots, w(j_i)$ one for each $i\in[1,\ell(q)]$ and the pivot words of the $q$-segments of $u$ are precisely $w^{(1)},\ldots,w^{(\ell)}$.  Since the pivots of $w^{(1)},\ldots,w^{(\ell)}$ are increasing, we have $\Upsilon_q(u)=w$. The proof is now complete.
\end{proof}

In view of the proof of the previous lemma, we define the following set.

\begin{defn}\label{D: compatible} Let $q\vDash n$, $w\in\Base_q$ and $w^{(1)},\ldots,w^{(\ell)}$ be the pivot words of $w$. A rearrangement $(t_1,\ldots,t_\ell)$ of the ordered set $[1,\ell]$ is called $(q,w)$-compatible if there exist $j_0=0<j_1<\cdots<j_{\ell(q)}=\ell$ (a subsequence of $[1,\ell]$) such that, for each $i\in [1,\ell(q)]$, $w^{(t_{1+j_{i-1}})},w^{(t_{2+j_{i-1}})},\ldots, w^{(t_{j_i})}$ are precisely the pivot words of the word \[w(i)=w^{(t_{1+j_{i-1}})}w^{(t_{2+j_{i-1}})}\cdots w^{(t_{j_i})}\] (or equivalently, $t_{1+j_{i-1}}<t_{2+j_{i-1}}<\cdots<t_{j_i}$) and $q_i=|w(i)|$. We define the subset \[\F_q(w)=\{w^{(t_1)}\cdots w^{(t_\ell)}:\text{$(t_1,\ldots,t_\ell)$ is $(q,w)$-compatible}\}.\index{$\F_q$}\]
\end{defn}

\begin{eg} Let $w=416235$ and $q=(3,2,1)$. The pivot words are $w^{(1)}=41$, $w^{(2)}=62$, $w^{(3)}=3$ and $w^{(4)}=5$ and the $(q,w)$-compatible rearrangements are \[ (1,3,2,4), (2,3,1,4), (1,4,2,3), (2,4,1,3). \] Therefore $\F_q(w)=\{413625,623415,415623,625413\}$.
\end{eg}

\begin{eg} In Example \ref{Eg: (2,2)}, we see that $\F_{(2,2)}(1324)=\{1432,3214\}=\Upsilon_{(2,2)}^{-1}(1324)$. Notice that $1324\not\in \F_{(2,2)}(1324)$.
\end{eg}

As mentioned earlier, we can now partition $\sym{n}$ into disjoint unions.

\begin{cor}\label{C: fiber disjoint union} For each $q\vDash n$ and $v\in\Base_q$, we have $\F_q(v)=\Upsilon_q^{-1}(v)$. Furthermore, we have a disjoint union \[\sym{n}=\bigsqcup_{v\in \Base_q}\F_q(v).\]
\end{cor}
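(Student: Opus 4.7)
The plan is to establish the two set-equalities $\F_q(v)=\Upsilon_q^{-1}(v)$ one inclusion at a time, after which the disjoint-union statement is automatic: since $\Upsilon_q$ is a well-defined function on $\sym{n}$, its fibres over the image $\im\Upsilon_q$ partition $\sym{n}$, and by Lemma~\ref{L: B = U} the image is exactly $\Base_q$. So the only real content is the identification of the fibre $\Upsilon_q^{-1}(v)$ with the combinatorially defined set $\F_q(v)$.

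For the inclusion $\Upsilon_q^{-1}(v)\subseteq \F_q(v)$, take $u\in\sym{n}$ with $\Upsilon_q(u)=v$, and list the pivot words of the $q$-segments $u_{q,1},\ldots,u_{q,\ell(q)}$ of $u$. By the very definition of $\Upsilon_q$, the word $v$ is obtained by concatenating these pivot words, rearranged in increasing order of their (unique) pivots. Since the pivot words of a word are automatically listed in increasing pivot order, this means the pivot words $v^{(1)},\ldots,v^{(\ell)}$ of $v$ are exactly (a reindexing of) the pivot words of the $q$-segments of $u$. Reading back, $u$ itself must then be of the form $v^{(t_1)}\cdots v^{(t_\ell)}$ for some permutation $(t_1,\ldots,t_\ell)$ of $[1,\ell]$, and the fact that the first $q_1$ letters come from $u_{q,1}$, the next $q_2$ from $u_{q,2}$, etc., together with the increasing-pivot property within each $q$-segment, gives precisely the cut points $j_0<j_1<\cdots<j_{\ell(q)}$ required by Definition~\ref{D: compatible}. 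Hence $u\in\F_q(v)$.

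For the reverse inclusion $\F_q(v)\subseteq \Upsilon_q^{-1}(v)$, take a $(q,v)$-compatible tuple $(t_1,\ldots,t_\ell)$ with cut points $j_0<\cdots<j_{\ell(q)}$ and set $u=v^{(t_1)}\cdots v^{(t_\ell)}$. The compatibility condition says the $i$th $q$-segment of $u$ is exactly the block $w(i)=v^{(t_{1+j_{i-1}})}\cdots v^{(t_{j_i})}$, and that within this block the listed words are the pivot words of $w(i)$ (this is the content of $t_{1+j_{i-1}}<\cdots<t_{j_i}$ together with each $v^{(t)}$ having its minimum at its last letter, inherited from $v$). Thus the multiset of pivot words of the $q$-segments of $u$ is $\{v^{(1)},\ldots,v^{(\ell)}\}$, and $\Upsilon_q(u)$ rearranges them in increasing pivot order. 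But by construction of pivot words of $v$, the pivots of $v^{(1)},\ldots,v^{(\ell)}$ are already strictly increasing, so $\Upsilon_q(u)=v^{(1)}\cdots v^{(\ell)}=v$.

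The only subtle point, and the step I expect to need the most care, is verifying in the second inclusion that each $v^{(t)}$ really is one of the pivot words of its enclosing block $w(i)$ in $u$ (rather than merely being a contiguous substring). This is where the condition that the pivots of the $v^{(t)}$ appear in increasing order along $w(i)$, together with $v^{(t)}$ already having its minimum at the final letter, is used to show that the pivot-finding algorithm applied to $w(i)$ cuts it exactly at these boundaries. Once this is confirmed the two inclusions combine to give $\F_q(v)=\Upsilon_q^{-1}(v)$, and the disjoint-union statement then follows immediately from $\sym{n}=\bigsqcup_{v\in\im\Upsilon_q}\Upsilon_q^{-1}(v)$ and Lemma~\ref{L: B = U}.
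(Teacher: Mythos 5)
Your proof is correct and takes the same approach as the paper's: both inclusions $\Upsilon_q^{-1}(v)\subseteq\F_q(v)$ and $\F_q(v)\subseteq\Upsilon_q^{-1}(v)$ are established by matching the pivot words of the $q$-segments of $u$ with the pivot words of $v$, and the disjoint union follows from the fibre decomposition of $\Upsilon_q$ together with Lemma~\ref{L: B = U}. The only stylistic difference is that you spell out why the increasing-index condition $t_{1+j_{i-1}}<\cdots<t_{j_i}$ forces the pivot-finding algorithm to cut $w(i)$ at the block boundaries, whereas the paper absorbs this into the parenthetical ``or equivalently'' clause of Definition~\ref{D: compatible} and cites the definition directly.
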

\begin{proof} Fix a composition $q\vDash n$. Suppose that $\Upsilon_q(w)=v$. Let
%\[W_q=\{w_{q,i}^{(j)}:i\in[1,\ell(q)],\ j\in[1,\rho_i]\}\] be the pivot words as in Definition \ref{D: Upsilon} so that $W_q$ is a rearrangement
\[w_{q,1}^{(1)},\ldots,w_{q,1}^{(\rho_1)},\ldots,w_{q,\ell(q)}^{(1)},\ldots,w_{q,\ell(q)}^{(\rho_{\ell(q)})}\] be the sequence of the pivot words obtained as in Definition \ref{D: Upsilon}. By the definition, it is a rearrangement of the sequence of pivot words $v^{(1)},\ldots, v^{(\rho)}$ of $v$ where $\rho=\sum \rho_i$ and let $t:=(t_1,\ldots,t_\rho)$ denote such rearrangement. The rearrangement $t$ of $[1,\rho]$ is $(q,v)$-compatible where, in Definition \ref{D: compatible}, we have, for each $s\in [1,\ell(q)]$, $j_s=\sum_{i=1}^s\rho_i$ and the pivot words of $w_{q,i}$ are precisely $w_{q,i}^{(1)},\ldots,w_{q,i}^{(\rho_i)}$ by definition. Therefore $w\in \F_q(v)$. Conversely, suppose that $w\in \F_q(v)$, i.e., $w=v^{(t_1)}\cdots v^{(t_\ell)}$ for some $(q,v)$-compatible rearrangement of $[1,\ell]$ with the existence of the $j_0=0<j_1<\cdots<j_{\ell(q)}=\ell$. By Definition \ref{D: Upsilon}, for each $i\in [1,\ell(q)]$, we have $\rho_i=j_i-j_{i-1}$, \[w_{q,i}=v^{(1+t_{j_{i-1}})}v^{(2+t_{j_{i-1}})}\cdots v^{(t_{j_i})}\] and the pivot words of $w_{q,i}$ are precisely $v^{(1+t_{j_{i-1}})},v^{(2+t_{j_{i-1}})},\ldots, v^{(t_{j_i})}$. It is clear that the least word containing the subwords $v^{(1)},\ldots,v^{(\ell)}$ is $v$. So $\Upsilon_q(w)=v$.

We now prove the second assertion. For any $w\in\sym{n}$, we have $v=\Upsilon_q(w)\in \Base_q$ by Lemma \ref{L: B = U}. So $w\in \Upsilon_q^{-1}(v)$. For different $v,v'\in \Base_q$, the fibers $\Upsilon_q^{-1}(v),\Upsilon_q^{-1}(v')$ are clearly disjoint.
\end{proof}

To conclude the section, we give a proposition concerning the size of $\F_q(v)$ as an independent interest.

\begin{prop} Let $q\vDash n$ and suppose that $v\in\Base_q$ with pivot cycle type $r$. Then $|\F_q(v)|=|\Upsilon^{-1}_q(v)|\geq \facmulti{q}$ and with equality if $r\approx q$.
\end{prop}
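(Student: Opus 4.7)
The plan is to translate the question about $\F_q(v)$ into a combinatorial count on ordered set partitions, where the relevant group action gives both the lower bound and the equality case.

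Using Corollary \ref{C: fiber disjoint union}, I have $\F_q(v)=\Upsilon_q^{-1}(v)$. Unwinding Definition \ref{D: compatible}, I would observe that a $(q,v)$-compatible rearrangement $(t_1,\ldots,t_\ell)$ of $[1,\ell]$ is completely determined by the ordered set partition $(S_1,\ldots,S_{\ell(q)})$ of $[1,\ell]$ into nonempty blocks, where $S_i = \{t_{1+j_{i-1}}<\cdots<t_{j_i}\}$, and conversely any such ordered set partition satisfying $\sum_{j\in S_i}r_j=q_i$ (for all $i$) arises from a unique $(q,v)$-compatible rearrangement (order each block increasingly). Thus $|\F_q(v)|$ equals the number $N_q(r)$ of such ordered set partitions $(S_1,\ldots,S_{\ell(q)})$ of $[1,\ell]$. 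Note that $N_q(r)\geq 1$ since $v\in\Base_q=\im\Upsilon_q$ guarantees $\F_q(v)\neq\emptyset$.

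For the lower bound, I will exhibit a free action of $\prod_{i=1}^\infty \sym{m_i(q)}$ on the set of valid ordered partitions. Given $\sigma\in\prod_i\sym{m_i(q)}$, regard $\sigma$ as a permutation of $[1,\ell(q)]$ fixing each fiber $\{k:q_k=i\}$ set-wise; it then acts by $(S_1,\ldots,S_{\ell(q)})\mapsto (S_{\sigma^{-1}(1)},\ldots,S_{\sigma^{-1}(\ell(q))})$. The required sum condition is preserved since $q_{\sigma^{-1}(k)}=q_k$, and the action is clearly free because different orderings of the blocks give different ordered partitions. Hence $|\F_q(v)|\geq |\prod_i\sym{m_i(q)}|=\facmulti{q}$.

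For the equality case when $r\approx q$, I will exploit that $\ell(r)=\ell(q)$. Since each of the $\ell(q)$ blocks $S_i$ is nonempty and they partition a set of size $\ell(r)=\ell(q)$, each $|S_i|$ must equal $1$. Writing $S_i=\{t_i\}$, the condition $\sum_{j\in S_i}r_j=q_i$ becomes $r_{t_i}=q_i$, so the valid ordered partitions are in bijection with the permutations $t\in\sym{\ell(q)}$ satisfying $r_{t_i}=q_i$ for all $i$. Since $r\approx q$, the number of such permutations is exactly $\prod_i m_i(q)!=\facmulti{q}$, completing the proof.

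The only mildly subtle point is making the passage from compatible rearrangements to ordered set partitions rigorous, but this is straightforward from Definition \ref{D: compatible} and does not present a real obstacle; the main content is the free action argument, which will simultaneously deliver the bound and reveal the equality case.
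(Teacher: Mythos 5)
Your proof is correct and follows essentially the same combinatorial idea as the paper's, repackaged in the language of ordered set partitions. The paper picks a concrete preimage $w\in\Upsilon_q^{-1}(v)$ and permutes its $q$-segments of equal size to produce $\facmulti{q}$ distinct preimages, then, when $r\approx q$, argues by contradiction that every $q$-segment of any $w'\in\Upsilon_q^{-1}(v)$ must be its own pivot word; you instead parametrize $\F_q(v)$ by weight-constrained ordered set partitions of $[1,\ell]$ and exhibit the free action of $\prod_i\sym{m_i(q)}$ directly, and in the equality case simply note that $\ell(q)$ nonempty blocks of a set of size $\ell(r)=\ell(q)$ must all be singletons. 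These are the same observation expressed at two dual levels (positions of $q$-segments in $w$ versus distribution of the pivot words of $v$ into $q$-blocks), and your freeness and size arguments are slightly more explicit than the paper's "$\facmulti{q}$ different words" phrasing, but the content is the same.
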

\begin{proof} Let $w\in\Upsilon^{-1}_q(v)$ and let $w_{q,i}$'s be the $q$-segments of $w$ as in Definition \ref{D: Upsilon}. Notice that $|w_{q,i}|=q_i$. Permute the $q$-segments of $w$ of the same sizes yields $\facmulti{q}$ different words $u$ with the same $q$-segments as $w$ and therefore $\Upsilon_q(u)=v$ by definition. So $\facmulti{q}\leq |\Upsilon^{-1}_q(v)|$. Now assume that $r\approx q$. If $\Upsilon^{-1}_q(v)$ contains $w$ such that $w_{q,i}$ is not its own pivot word, then, by definition, $v=\Upsilon_q(w)$ has pivot cycle type $\swref{r}{q}$, contradicting $r\approx q$. Therefore, $\Upsilon^{-1}_q(v)$ contains nothing else except the permutation of the $q$-segments we have mentioned earlier.
\end{proof}

\section{The module $\Xi^qR\sym{n}$ and its structure}\label{S: Xi module}

Throughout this section, a word $w$ is again a word in $\NN$ with distinct alphabets and, for each composition $q\in\C(n)$, we denote \[X^q_R=\Xi^q R\sym{n}\index{$X^q_R$}\] where $\Xi^q\in R\sym{n}$ is the Solomon's descent element labelled by $q$ as in Subsection \ref{SS: descent}.

In this section, we study the modules $X^q_R$'s. We shall give a free basis for $X^q_R$ and study the structure of $X^q_F$ both in the ordinary and modular cases. It turns out that $X^q_F$'s are closely related to the projective modules generated by the modular idempotents of the descent algebra in Section \ref{S: mod idem} and the higher Lie modules in Section \ref{S: higher lie}.  As always expected, the modular case is significantly more difficult than the ordinary case and we could only take a glance at it. We split this section into three subsections. In the first subsection, we show that the right $R\sym{n}$-module $X^q_R$ has a free $R$-basis as in Theorem \ref{T: basis Xiq}. In the second part, we study the irreducible characters of $X^q_F$ when $p=0$. For the third part, we study the case when $p>0$.

\medskip
We begin with the following basic observation.

\begin{lem}\label{L: XiMod surj Lie} Let $q,r$ be compositions of $n$ such that $r\approx q$. Then $X^r_R\cong X^q_R$ and there is a surjection $X^r_R\twoheadrightarrow \Lie_R(q)$ given by the left multiplication by $\omega^q\sigma$ where $\sigma\Xi^r=\Xi^q$.
\end{lem}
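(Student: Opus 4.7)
The plan is to exploit the factorisation $\omega_q=\omega^q\Xi^q$ together with the known fact that $\Xi^q$ and $\Xi^r$ are translates of one another by an invertible element of $R\sym{n}$ whenever $q\approx r$.

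First I would invoke the construction in Subsection~\ref{SS: descent}: since $r\approx q$, there is a permutation $\tau\in\sym{\ell(r)}$ with $q=(r_{(1)\tau},\ldots,r_{(k)\tau})$ and the corresponding element $\sigma=\block{\tau}{r}\in\sym{n}$ defined by Equation~\ref{Eq: tau_r} satisfies $\sigma\Xi^r=\Xi^q$. This gives the element $\sigma$ referenced in the statement.

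Next I would exhibit the isomorphism $X^r_R\cong X^q_R$ via left multiplication by $\sigma$. The map $\mu_\sigma\colon X^r_R\to R\sym{n}$, $x\mapsto \sigma x$, is an $R\sym{n}$-module homomorphism (left multiplication commutes with right action), and it is injective because $\sigma$ is a unit in $R\sym{n}$. Its image is $\sigma\Xi^r R\sym{n}=\Xi^q R\sym{n}=X^q_R$, so $\mu_\sigma$ restricts to the required isomorphism $X^r_R\xrightarrow{\sim}X^q_R$.

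Finally, to produce the surjection onto $\Lie_R(q)$, I would consider left multiplication by $\omega^q\sigma$. For any $x\in R\sym{n}$,
\[
  (\omega^q\sigma)\cdot(\Xi^r x)=\omega^q(\sigma\Xi^r)x=\omega^q\Xi^q x=\omega_q x,
\]
using $\sigma\Xi^r=\Xi^q$ and the definition $\omega_q=\omega^q\Xi^q$ from Subsection~2.4. Hence left multiplication by $\omega^q\sigma$ defines an $R\sym{n}$-module homomorphism $X^r_R\to \Lie_R(q)$ sending $\Xi^r x\mapsto \omega_q x$. Its image is $\omega_q R\sym{n}=\Lie_R(q)$, so it is surjective. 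Since every step reduces to routine manipulations of the factorisation $\omega_q=\omega^q\Xi^q$ and the invertibility of $\sigma$, no step presents a genuine obstacle; the only care needed is to confirm that the $\sigma$ produced by Equation~\ref{Eq: tau_r} really does intertwine $\Xi^r$ and $\Xi^q$, which is immediate from the two-line description there.
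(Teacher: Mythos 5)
Your argument is correct and is exactly the paper's proof, merely spelled out in full: the paper invokes $\block{\tau}{r}\Xi^r=\Xi^q$ from Equation~\ref{Eq: tau_r} to get the isomorphism, then appeals to $\omega_q=\omega^q\Xi^q$ for the surjection, which is precisely the computation $(\omega^q\sigma)(\Xi^r x)=\omega^q\Xi^q x=\omega_q x$ that you carry out.
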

\begin{proof} By Equation \ref{Eq: tau_r}, we get $\tau_r\Xi^r=\Xi^q$ and the isomorphism. Now the lemma follows by the definition $\omega_q=\omega^q\Xi^q$.
\end{proof}

\subsection{A free basis}\label{SS: Xiq basis}

We now introduce a few notations we shall be using throughout this subsection. For an element $\alpha\in R\sym{n}$, we write $\supp(\alpha)$ \index{$\supp$}for the subset of $\sym{n}$ consisting of the words or permutations appearing in $\alpha$ with nonzero coefficient. For each $q\vDash n$, corresponding to the set $\Base_q$ in Definition \ref{D: Base q}, we let \[\B_q=\{\Xi^qw:w\in\Base_q\}\subseteq X^q_R.\index{$\B_q$}\] The main result is the following free basis for $X^q_R$.

\begin{thm}\label{T: basis Xiq} As $R$-module, $X^q_R$ has an $R$-basis $\B_q$, and \[|\B_q|=\sum_{\P(n)\ni\lambda\wref q}\frac{n!}{\lambda?},\] i.e., the number of permutations of $\sym{n}$ with cycle types weakly refining $q$. %Furthermore, as right $R\sym{n}$-modules, it is isomorphic to the quotient %of the regular module by the submodule spanned by the Garsia-Reutenauer relations.
\end{thm}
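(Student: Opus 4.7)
The dimension formula $|\B_q| = \sum_{\lambda \in \P(n), \lambda \wref q} n!/\lambda?$ falls out immediately from Definition \ref{D: Base q} combined with Lemma \ref{L: Phi bijective}: the bijection $\Phi$ sends a permutation of cycle type $\lambda$ to a word of pivot cycle type equal (as a partition) to $\lambda$, and the conjugacy class $\ccl{\lambda}$ has size $n!/\lambda?$. So I would dispatch the cardinality claim first and then split the basis assertion into spanning and linear independence.

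For spanning, the plan is to show that $\Xi^q\sigma \in R$-span$(\B_q)$ for every $\sigma \in \sym{n}$. By Corollary \ref{C: fiber disjoint union}, each $\sigma$ belongs to a unique fiber $\F_q(v) = \Upsilon_q^{-1}(v)$ for some $v \in \Base_q$. I would induct on a statistic measuring how far $\sigma$ sits from $v$ inside its fiber --- for instance the reverse-colex order of Subsection \ref{SS: generality}, so that the base case $\sigma \in \Base_q$ is trivial. The inductive step requires a combinatorial identity rewriting $\Xi^q\sigma$ as $\Xi^q v$ plus $\Xi^q w$'s for $w$ strictly smaller in the chosen order; such identities should come from adjacent swaps of entries inside the $q$-segments of $\sigma$, leveraging that $\Xi^q$ already symmetrises within blocks of sizes $q_1, q_2, \ldots$ so that many terms cancel.

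For linear independence, I would use that $X^q_\Z$ is torsion-free as a submodule of the free abelian group $\Z\sym{n}$, so it suffices to verify $\Q$-linear independence of $\B_q$. Since $\B_q$ spans and has cardinality $|\Base_q|$, the needed estimate is $\dim_\Q X^q_\Q \geq |\Base_q|$. The natural route is a leading-term argument: for each $v \in \Base_q$, identify a distinguished word $L(v) \in \supp(\Xi^q v)$ so that $L$ is injective on $\Base_q$ and the $\Base_q \times \Base_q$ coefficient matrix $\bigl(\text{coeff of } L(w) \text{ in } \Xi^q v\bigr)$ is unitriangular with respect to a total order on $\Base_q$ refining the partial order $\wref$ on pivot cycle types. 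Lemma \ref{L: least permutation} is the natural ingredient for constructing $L$, as it identifies the reverse-colex least permutation containing prescribed pivot-ending subwords.

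The main obstacle will be the spanning step: producing an explicit, manageable reduction of $\Xi^q\sigma$ to the basis. Choosing the correct statistic on the fiber and the precise rewriting identity both demand careful bookkeeping with the P\'{o}lya action, but the pivot machinery of Section \ref{S: pivot} --- in particular the fiber decomposition $\F_q(v) = \Upsilon_q^{-1}(v)$ and the explicit parametrisation of $\F_q(v)$ by $(q,v)$-compatible rearrangements in Definition \ref{D: compatible} --- is tailored precisely for this purpose.
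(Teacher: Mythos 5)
Your cardinality argument is correct and matches the paper. Your linear independence argument is essentially the paper's own: the paper takes $L(v) = v$ (note $v\in\supp(\Xi^q v)$) and invokes Lemma \ref{L: supp} --- which is exactly the consequence of Lemma \ref{L: least permutation} you anticipate --- together with the fiber decomposition of Corollary \ref{C: fiber disjoint union} to get unitriangularity with respect to reverse colex on $\Base_q$; your detour through $\Q$ via torsion-freeness of $X^q_\Z$ is not needed but is a harmless alternative.

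The spanning step is where you have a genuine gap, and your guessed mechanism is not the right one. You suggest reductions via ``adjacent swaps of entries inside the $q$-segments of $\sigma$'' exploiting that $\Xi^q$ symmetrises within blocks, but that is not how the cancellation arises, and it is not clear it could produce a rewriting of $\Xi^q\sigma$ into strictly $\wleq$-smaller terms. The paper's reduction instead exploits the pivot structure of $w$ and the Garsia--Reutenauer relation (Theorem \ref{T: GR 2.1}): write $w=w^{(1)}\cdots w^{(\ell)}$ in its pivot words and, reversing the alphabets of each $w^{(i)}$ to form an ordered set $S_i$, consider the Lie monomial $Q_{S_1}\cdots Q_{S_\ell}$. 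When $w\notin\Base_q$ its pivot cycle type $(|S_1|,\dots,|S_\ell|)$ is not a weak refinement of $q$, so Theorem \ref{T: GR 2.1} forces $\Xi^q Q_{S_1}\cdots Q_{S_\ell}=0$. The paper's Lemma \ref{L: max element} then supplies the crucial triangularity: $w$ is the $\wleq$-largest word in $\supp(Q_{S_1}\cdots Q_{S_\ell})$ and it appears with a unit coefficient, so
\[
\Xi^q w \;=\; \Xi^q\bigl(w\pm Q_{S_1}\cdots Q_{S_\ell}\bigr)
\]
is an $R$-linear combination of $\Xi^q u$ with $u\swleq w$, closing the induction. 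Without identifying this vanishing identity and the leading-term statement of Lemma \ref{L: max element}, the inductive step does not go through. Also note the induction is on the global reverse-colex order of $\sym{n}$ (with base case $12\cdots n$, whose pivot cycle type is $(1^n)\wref q$), not on a statistic internal to a single fiber $\F_q(v)$: the rewriting does not stay inside a fiber.
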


To show the linear independence part for Theorem \ref{T: basis Xiq}, we need the following lemma. %An improved version is given in Proposition \ref{P: compatibility} as an independent interest.

\begin{lem}\label{L: supp} Let $v\in\sym{n}$ and suppose that $w\in\supp(\Xi^qv)$. Then $\Upsilon_q(w)\wleq v$.
\end{lem}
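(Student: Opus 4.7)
The plan is to unpack $\Xi^q v$ using the descent-set description of $\Xi^q$ from Subsection~\ref{SS: descent}. The sum $\Xi^q=\sum_\sigma \sigma$ runs over the permutations $\sigma$ whose one-line word $\sigma_1\sigma_2\cdots\sigma_n$ (with $\sigma_i=(i)\sigma$) is strictly increasing on each of the blocks $[1+q_{i-1}^+,q_i^+]$. If $w\in\supp(\Xi^q v)$, then $w=\sigma v$ for some such $\sigma$, and the identification of multiplication with the P\'olya action stated in Subsection~\ref{SS: generality} yields $w_j=v_{\sigma_j}$ for every $j\in[1,n]$.

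The key observation is then that for each $i\in[1,\ell(q)]$, the $q$-segment
\[
 w_{q,i}=w_{1+q_{i-1}^+}\cdots w_{q_i^+}=v_{\sigma_{1+q_{i-1}^+}}\cdots v_{\sigma_{q_i^+}}
\]
is obtained by reading the letters of $v$ at the strictly increasing sequence of positions $\sigma_{1+q_{i-1}^+}<\cdots<\sigma_{q_i^+}$. Hence each $q$-segment of $w$, and in particular every pivot word $w_{q,i}^{(j)}$ of every $q$-segment, occurs as a subword of $v$.

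By the very construction in Definition~\ref{D: Upsilon}, $\Upsilon_q(w)$ is the concatenation of all of the pivot words $w_{q,i}^{(j)}$ with their (unique) pivots arranged in increasing order. Lemma~\ref{L: least permutation} then identifies $\Upsilon_q(w)$ as the $\wleq$-least permutation of $\sym{n}$ that contains all of these pivot words as subwords. Since $v$ is itself such a permutation, we conclude $\Upsilon_q(w)\wleq v$.

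The only genuine care needed is to keep the P\'olya-action conventions straight, so that the condition $\des(\sigma)\subseteq\{q_i^+\}$ really translates into selecting letters of $v$ at increasing positions within each $q$-block; once that bookkeeping is in place, no further obstacle remains and the conclusion follows directly from Lemma~\ref{L: least permutation}.
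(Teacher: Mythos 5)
Your proof is correct and follows essentially the same route as the paper's: you observe that each $q$-segment of $w$, hence each pivot word of each $q$-segment, occurs as a subword of $v$, and then invoke Lemma~\ref{L: least permutation} together with Definition~\ref{D: Upsilon} to conclude. The paper phrases the subword observation in terms of ``row standard $q$-tableaux with respect to the order $v_1<\cdots<v_n$,'' whereas you make the same point by unpacking $w=\sigma v$ via the P\'olya action and the descent condition $\des(\sigma)\subseteq\{q_i^+\}$; this is just a more explicit bookkeeping of the identical fact.
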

\begin{proof} Let $u=\Upsilon_q(w)$. By definition, we have $w=u^{(i_1)}\cdots u^{(i_\ell)}$ where $u^{(1)},\ldots,u^{(\ell)}$ are the pivot words of $u$ and $\{i_1,\ldots,i_\ell\}=[1,\ell]$. On the other hand, since $\Xi^q$ is a sum of words associated to the row standard
$q$-tableaux (with respect to the numbers $1,2,\ldots,n$), $\Xi^qv$ is a sum of words associated to the row standard $q$-tableaux with respect to the order $v_1<\cdots<v_n$ where $v=v_1v_2\cdots v_n$. As $w\in\supp(\Xi^qv)$, we get that $v$ is a permutation containing the subwords $u^{(i_1)},\ldots,u^{(i_\ell)}$. By definition (see Definition \ref{D: Upsilon} and Lemma \ref{L: least permutation}), $u$ is the least of such permutations and therefore $u\wleq v$.
\end{proof}

Recall the notations $Q_S$ and $s_\mathbf{j}$ introduced in Subsection \ref{SS: descent}. To motivate the next lemma and the proof of the spanning part of Theorem \ref{T: basis Xiq}, we give an example. Let $w=231$. We reverse the alphabets in $w$ and consider \[Q_{132}=Q_{\{1,3,2\}}=132-312-213+231.\] Notice that $u\wleq 231$ for any word $u$ appearing in $Q_{132}$. Recall that, using Theorem \ref{T: GR 2.1}, $\Xi^{(2,1)}Q_{132}=0$ since $(3)\not\wref (2,1)$. Therefore, \[\Xi^{(2,1)}231= \Xi^{(2,1)}312+\Xi^{(2,1)}213-\Xi^{(2,1)}132.\] %This observation is basically the essence of the proof of the `spanning-part' of Theorem \ref{T: basis Xiq}.

\begin{lem}\label{L: max element} Let $w^{(1)},\ldots, w^{(\ell)}$ be the pivot words of $w\in\sym{n}$. For each $i\in[1,\ell]$, let $S_i=\{w^{(i)}_{c_i},\ldots,w^{(i)}_2,w^{(i)}_1\}$ be the ordered set with respect to the pivot word $w^{(i)}=w^{(i)}_1\ldots w^{(i)}_{c_i}$, i.e., $S_i$ reverses the alphabets in $w^{(i)}$. Then the largest word, with respect to the reverse colexicographic order $\wleq$, appearing in $Q_{S_1}\cdots Q_{S_\ell}\in R\sym{n}$ is $w$ with coefficient $(-1)^{\ell+\sum_{i=1}^\ell c_i}$.
\end{lem}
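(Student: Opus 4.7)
The plan is to first prove the lemma for a single pivot word ($\ell=1$) and then reduce the general case to this via concatenation, using the fact that the alphabets of $S_1,\ldots,S_\ell$ partition $[1,n]$.

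For a single pivot word $w^{(i)}$ of length $c_i$, note that $w^{(i)}_{c_i}$ is the smallest alphabet of $w^{(i)}$ by the very definition of a pivot word; hence it occupies the first position of the reversed ordered set $S_i$, that is, $(S_i)_1=w^{(i)}_{c_i}$. From $Q_{S_i}=\omega_{c_i}\cdot S_i=\sum_{\mathbf{j}\sqsubseteq[2,c_i]}(-1)^{|\mathbf{j}|}s_{\mathbf{j}}\cdot S_i$ together with the formula $s_{\mathbf{j}}\cdot(a_1\cdots a_{c_i})=a_{j_t}\cdots a_{j_1}a_{k_1}\cdots a_{k_{c_i-t}}$, where $\{k_1<\cdots<k_{c_i-t}\}=[1,c_i]\setminus\mathbf{j}$, I read off that the last letter of $s_{\mathbf{j}}\cdot S_i$ is $(S_i)_{\max([1,c_i]\setminus\mathbf{j})}$. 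For this to equal the smallest letter $(S_i)_1$, one must have $\max([1,c_i]\setminus\mathbf{j})=1$, which forces $\mathbf{j}=[2,c_i]$. The corresponding term is the reversal $s_{[2,c_i]}\cdot S_i=w^{(i)}$, contributed with coefficient $(-1)^{c_i-1}$. Hence $w^{(i)}$ is the unique word in $\supp(Q_{S_i})$ ending with the smallest letter of $S_i$, and this already forces $w^{(i)}$ to be the unique $\wleq$-maximum of $\supp(Q_{S_i})$: any other word has a strictly larger last letter, and $\wleq$ compares sequences from the right.

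For the general $\ell$, the pairwise disjointness of the $S_i$'s means that the product $Q_{S_1}\cdots Q_{S_\ell}$ expands as the sum of concatenations $u^{(1)}\cdots u^{(\ell)}$ over all tuples with $u^{(i)}\in\supp(Q_{S_i})$, weighted by the product of the individual coefficients, with no cancellations between different tuples (different $(u^{(1)},\ldots,u^{(\ell)})$ yield different words since the alphabet blocks are fixed). Given such a concatenation $u\neq w:=w^{(1)}\cdots w^{(\ell)}$, let $i$ be the largest index with $u^{(i)}\neq w^{(i)}$. Then $u$ and $w$ agree on every position to the right of the $i$-th block, so the least index at which they differ lies inside that block; by the single-word case $u^{(i)}\swleq w^{(i)}$, so at the rightmost disagreement $u$ has a strictly larger letter than $w$, giving $u\swleq w$. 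This shows $w$ is the unique $\wleq$-largest word in $\supp(Q_{S_1}\cdots Q_{S_\ell})$, and its coefficient is $\prod_{i=1}^{\ell}(-1)^{c_i-1}=(-1)^{\sum_{i=1}^\ell c_i-\ell}=(-1)^{\ell+\sum_{i=1}^{\ell}c_i}$, as claimed.

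No step poses a serious obstacle; the only delicate point is the $\wleq$ tail-comparison across the concatenated blocks, but this is handled cleanly by the single-word reduction above, whose proof hinges solely on the pivot property $w^{(i)}_{c_i}=\min S_i$.
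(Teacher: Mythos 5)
Your proof is correct and follows essentially the same line of reasoning as the paper's. Both arguments hinge on the same two observations: (a) because $w^{(i)}_{c_i}$ is the pivot, hence the minimum of $S_i$, the only $\mathbf{j}\sqsubseteq[2,c_i]$ for which $s_{\mathbf{j}}\cdot S_i$ ends in $w^{(i)}_{c_i}$ is $\mathbf{j}=[2,c_i]$, yielding $w^{(i)}$ with coefficient $(-1)^{c_i-1}$; and (b) since the supports $S_1,\ldots,S_\ell$ are disjoint, the product $Q_{S_1}\cdots Q_{S_\ell}$ is a cancellation-free sum over concatenations, so comparing a competing word $u$ to $w$ from the right reduces to the block-by-block comparison. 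The only organizational difference is that you isolate the $\ell=1$ case as a lemma-within-the-proof and then invoke it at the largest block where $u$ and $w$ differ, whereas the paper runs the reverse induction inline starting from the hypothesis $u\wgeq w$; these are two presentations of the same reverse-from-the-right argument, and both correctly compute the coefficient as $\prod_{i=1}^\ell(-1)^{c_i-1}=(-1)^{\ell+\sum c_i}$.
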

\begin{proof} Let us use $\overline{w^{(i)}}=w^{(i)}_{c_i}\ldots w^{(i)}_1$ and let $u$ be a word appearing in $Q_{S_1}\cdots Q_{S_\ell}$. Then it is of the form $u=u(1)\cdots u(\ell)$ where, for each $i\in[1,\ell]$, $u(i)$ is a word appearing in \[Q_{S_i}=\sum (-1)^{|\mathbf{j}|}s_{\mathbf{j}}\cdot \overline{w^{(i)}}=\sum (-1)^{|\mathbf{j}|}w^{(i)}_{c_i-j_s+1}\ldots w^{(i)}_{c_i-j_1+1}w^{(i)}_{c_i-k_1+1}\ldots w^{(i)}_{c_i-k_{n-s}+1}\] where both sums are taken over all $\mathbf{j}=\{j_1<\cdots<j_s\}\sqsubseteq [2,c_i]$ where $\{1=k_1<\cdots<k_{n-s}\}\sqcup \mathbf{j}=[1,c_i]$. Suppose that $u\wgeq w$.  In particular, we have \[S_\ell\ni u(\ell)_{c_\ell}=u_n\leq w_n=w^{(\ell)}_{c_\ell}.\] Since $w^{(\ell)}$ is a pivot word, we have $w^{(\ell)}_{c_\ell}=\min S_\ell$ and therefore $u(\ell)_{c_\ell}=w^{(\ell)}_{c_\ell}$. Consider the particular case $i=\ell$. The only $\mathbf{j}\sqsubseteq [2,c_\ell]$ such that $s_{\mathbf{j}}\cdot \overline{w^{(\ell)}}$ ends with $w^{(\ell)}_{c_\ell}$ is $\mathbf{j}=[2,c_\ell]$. Therefore, $u(\ell)=s_{[2,c_\ell]}\cdot \overline{w^{(\ell)}}=w^{(\ell)}$. We now work by reverse induction to conclude that $u(i)=w^{(i)}$ for all $i\in[1,\ell]$. Notice that, for each $i\in [1,\ell]$, the coefficient of $u(i)=s_{[2,c_i]}\cdot \overline{w^{(i)}}=w^{(i)}$ in $Q_{S_i}$ is $(-1)^{c_i-1}$.
\end{proof}

We are now ready to prove the theorem.

\begin{proof}[Proof of Theorem \ref{T: basis Xiq}] Let $T$ be the $R$-linear span of the set $\B_q$. We argue by induction on the reverse colexicographic order to show that $\Xi^qw$ lies in $T$ for any word $w\in\sym{n}$. If $w=12\ldots n$ then $w\in \Base_q$ because the pivot cycle type of $w$ is $(1^n)$ and hence $\Xi^qw\in T$. Let $w$ be a word in $\sym{n}$. If $w\in \Base_q$ then $\Xi^qw\in T$. Suppose that $w\not\in \Base_q$. Let $w=w^{(1)}\cdots w^{(\ell)}$ be written in the pivot words. For each $i\in [1,\ell]$, let $S_i$ be the ordered set with respect to $w^{(i)}$ as in Lemma \ref{L: max element}. Since $w\not\in \Base_q$, i.e., the composition $(|S_1|,\ldots,|S_\ell|)$ is not a weak refinement of $q$, by Theorem \ref{T: GR 2.1}, we have \[\Xi^qQ_{S_1}\cdots Q_{S_\ell}=0.\] By Lemma \ref{L: max element}, we can rewrite \[\Xi^qw=\Xi^q(w\pm Q_{S_1}\cdots Q_{S_\ell})\] where the right hand side is a sum of elements of the form $\Xi^qu$ such that $u\swleq w$. By induction, $\Xi^qw\in T$. This shows that $T=X^q_R$.

We order $\B_q$ according to the reverse colexicographic on $\Base_q$. By Lemma \ref{L: supp}, for any $v\in \Base_q$, any word $w\in \supp(\Xi^qv)$ satisfies $\Upsilon_q(w)\wleq v$, i.e., $\supp(\Xi^qv)$ contains words in $\F_q(u)$ with $\Base_q\ni u\wleq v$. Clearly, $v\in \supp(\Xi^qv)$. Therefore $\B_q$ is linearly independent over $R$ using Corollary \ref{C: fiber disjoint union}.
\end{proof}

\subsection{The ordinary case}

For this subsection, we assume throughout that $p=0$. We study the decomposition problem of $X^q_F$ in the ordinary case.  More precisely, we study the module $X^q_F$ and write it as the direct sum of higher Lie modules (see Theorem \ref{T: Xi as higher Lie} and cf. Theorem \ref{T: basis Xiq}).  As the irreducible constituents of the higher Lie modules have been described by Schocker in Theorem \ref{T: Schoc Lie(q)}, we get the complete knowledge about $X^q_F$ in this case.

We remind the readers about the elements $I_q$'s and $E_\lambda$'s  in Subsection \ref{SS: ord idem}.

\begin{lem}\label{L: E isom} Let $\lambda\in\P(n)$ and $\alpha=\sum_{\lambda(q)=\lambda} c_qI_q$ for some $c_q\in F$ such that $\sum_{\lambda(q)=\lambda}c_q\neq 0$. Then $E_\lambda F\sym{n}\cong \alpha F\sym{n}$ as $F\sym{n}$-modules. In particular, for $q\in\C(n)$ such that $\lambda(q)=\lambda$, we have $E_\lambda F\sym{n}\cong I_qF\sym{n}$.
\end{lem}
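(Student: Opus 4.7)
The plan is to exhibit mutually inverse $F\sym{n}$-module homomorphisms between $E_\lambda F\sym{n}$ and $\alpha F\sym{n}$ given by left multiplication by suitable elements of $\Des{n}{F}$. Both modules are right $F\sym{n}$-modules, so left multiplication by any fixed element of $F\sym{n}$ is automatically a right module homomorphism; the work lies entirely in exploiting the algebraic identities of Theorem \ref{T: EI=0}.

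First I would record the two key identities. Summing $I_qE_\lambda=I_q$ (Theorem \ref{T: EI=0}(ii)) over all $q$ with $\lambda(q)=\lambda$ and weighting by $c_q$ yields $\alpha E_\lambda=\alpha$. Similarly, summing $E_\lambda I_q=\facmulti{\lambda}E_\lambda$ (Theorem \ref{T: EI=0}(iii)) gives
\[
E_\lambda\alpha=\facmulti{\lambda}\Bigl(\sum_{\lambda(q)=\lambda}c_q\Bigr)E_\lambda=:cE_\lambda,
\]
and by hypothesis together with $p=0$ (so $\facmulti{\lambda}\neq 0$ in $F$) we have $c\neq 0$.

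Next I would define $\phi\colon E_\lambda F\sym{n}\to\alpha F\sym{n}$ by $\phi(x)=\alpha x$, which is well defined since $\alpha x\in\alpha F\sym{n}$, and $\psi\colon\alpha F\sym{n}\to E_\lambda F\sym{n}$ by $\psi(y)=c^{-1}E_\lambda y$. Both are right $F\sym{n}$-module homomorphisms. For any $y\in F\sym{n}$,
\[
(\psi\circ\phi)(E_\lambda y)=c^{-1}E_\lambda\alpha E_\lambda y=c^{-1}\cdot cE_\lambda\cdot E_\lambda y=E_\lambda y,
\]
using $E_\lambda\alpha=cE_\lambda$ and $E_\lambda^2=E_\lambda$, while
\[
(\phi\circ\psi)(\alpha y)=c^{-1}\alpha E_\lambda\alpha y=c^{-1}\alpha\cdot cy=\alpha y,
\]
using $\alpha E_\lambda=\alpha$ and again $E_\lambda\alpha=cE_\lambda$ (or just $\alpha E_\lambda \alpha = c\alpha$). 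This establishes that $\phi$ and $\psi$ are mutually inverse, so $E_\lambda F\sym{n}\cong\alpha F\sym{n}$.

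Finally, the ``in particular'' statement is immediate upon specialising $\alpha=I_q$, i.e.\ taking $c_q=1$ and $c_r=0$ for $r\not=q$ with $\lambda(r)=\lambda$; then $\sum c_r=1\neq 0$. There is no genuine obstacle in this argument — everything follows mechanically from Theorem \ref{T: EI=0} — but the only subtle point to flag is the need for $\sum c_q\neq 0$ (which guarantees $c\neq 0$ and hence that $\psi$ can be defined); without this hypothesis $\alpha F\sym{n}$ could collapse to a proper submodule of $E_\lambda F\sym{n}$.
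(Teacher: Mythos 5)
Your proof is correct and is essentially the same argument as the paper's: both use left multiplication by $\alpha$ as the candidate isomorphism and rely on the identities $\alpha E_\lambda=\alpha$ and $E_\lambda\alpha=\facmulti{\lambda}\bigl(\sum c_q\bigr)E_\lambda$ from Theorem \ref{T: EI=0}. The only cosmetic difference is that you exhibit an explicit two-sided inverse $\psi$, whereas the paper verifies surjectivity and injectivity of $\phi$ separately from the same two identities.
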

\begin{proof} Define $\phi:E_\lambda F\sym{n}\to \alpha F\sym{n}$ as $\phi(E_\lambda\gamma)=\alpha E_\lambda\gamma$ for any $\gamma\in F\sym{n}$. Since the map $\phi$ is defined by multiplying $\alpha$ on the left, $\phi$ is a right $F\sym{n}$-module homomorphism. By Theorem \ref{T: EI=0}(ii), \[\alpha E_\lambda\gamma=\left (\sum_{\lambda(q)=\lambda} c_qI_q\right )E_\lambda\gamma=\left (\sum_{\lambda(q)=\lambda} c_qI_q\right )\gamma=\alpha\gamma.\] Therefore $\phi$ is surjective. Suppose that $\phi(E_\lambda\gamma)=0$, i.e., $\alpha\gamma=0$. Multiplying on the left by $E_\lambda$, by Theorem \ref{T: EI=0}(iii), we have
\begin{align*}
  0&=E_\lambda\alpha\gamma=E_\lambda\left (\sum_{\lambda(q)=\lambda} c_qI_q\right )\gamma=\left (\facmulti{\lambda}\sum_{\lambda(q)=\lambda} c_q\right )E_\lambda\gamma.
\end{align*} So $E_\lambda\gamma=0$. This shows that $\phi$ is injective. The last assertion now follows.
\end{proof}

\begin{lem}\label{L: I direct sum} For each partition $\lambda$ of $n$, let $\alpha_\lambda=\sum_{\lambda(q)=\lambda} c_{\lambda,q}I_q$ for some $c_{\lambda,q}\in F$ such that $\sum_{\lambda(q)=\lambda}c_{\lambda,q}\neq 0$ whenever $\alpha_\lambda\neq 0$. The sum $\sum_{\lambda\vdash n}\alpha_\lambda F\sym{n}$ in $F\sym{n}$ is direct.
\end{lem}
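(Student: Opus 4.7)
The plan is to show directly that if $\sum_{\lambda\vdash n}\alpha_\lambda\gamma_\lambda=0$ for some $\gamma_\lambda\in F\sym{n}$, then each summand $\alpha_\lambda\gamma_\lambda$ vanishes. I will peel off the summands one at a time by left-multiplying the relation by the idempotents $E_\mu$ in a carefully chosen order on $\P(n)$.

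The engine behind the peeling is the orthogonality $E_\mu\alpha_\lambda=0$ whenever $\lambda\not\wref\mu$. Indeed, expanding $\alpha_\lambda=\sum_{\lambda(q)=\lambda}c_{\lambda,q}I_q$ and observing that $\wref$ on compositions depends only on the underlying partitions, each composition $q$ appearing in $\alpha_\lambda$ satisfies $q\not\wref\mu$, so Theorem~\ref{T: GR 4.1}(iii) (with its ``$q$'' taken to be $\mu$) gives $E_\mu I_q=0$, and linearity yields $E_\mu\alpha_\lambda=0$. On the diagonal, Theorem~\ref{T: EI=0}(iii) gives
\[E_\lambda\alpha_\lambda=\facmulti{\lambda}\Bigl(\sum_{\lambda(q)=\lambda}c_{\lambda,q}\Bigr)E_\lambda=:d_\lambda E_\lambda,\]
and $d_\lambda\neq0$ whenever $\alpha_\lambda\neq0$ since $p=0$ (so $\facmulti{\lambda}\neq0$) and $\sum_{\lambda(q)=\lambda}c_{\lambda,q}\neq0$ by hypothesis.

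To finish, I fix a total order $\leq$ on $\P(n)$ refining $\wref$ and write $\P(n)=\{\lambda_1<\lambda_2<\cdots<\lambda_N\}$. Proceeding by induction on $i$, at step $i$ the relation $\sum_{j\geq i}\alpha_{\lambda_j}\gamma_{\lambda_j}=0$ holds; left-multiplying by $E_{\lambda_i}$ kills every $j>i$ term by the orthogonality above (since $\lambda_j\not\wref\lambda_i$ by the choice of total order), leaving $d_{\lambda_i}E_{\lambda_i}\gamma_{\lambda_i}=0$. If $\alpha_{\lambda_i}=0$ there is nothing to show; otherwise $d_{\lambda_i}\neq0$ forces $E_{\lambda_i}\gamma_{\lambda_i}=0$, and then the identity $\alpha_{\lambda_i}E_{\lambda_i}=\alpha_{\lambda_i}$ from Theorem~\ref{T: EI=0}(ii) yields $\alpha_{\lambda_i}\gamma_{\lambda_i}=\alpha_{\lambda_i}(E_{\lambda_i}\gamma_{\lambda_i})=0$, completing the induction. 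The only delicate point is the one-sided orthogonality $E_\mu\alpha_\lambda=0$ for $\lambda\not\wref\mu$, but that falls directly out of Theorem~\ref{T: GR 4.1}(iii), so I do not expect any further obstacle.
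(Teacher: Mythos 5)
Your proposal is correct and matches the paper's proof in all essential respects: both arguments induct along the weak-refinement order, use the annihilation $E_\mu I_q=0$ for $q\not\wref\mu$ (Theorem~\ref{T: GR 4.1}(iii)) to kill off-diagonal terms, use $E_\mu I_q=\facmulti{\mu}E_\mu$ for $\lambda(q)=\mu$ (Theorem~\ref{T: EI=0}(iii)) to isolate the diagonal scalar, and close with $\alpha_\mu E_\mu=\alpha_\mu$ from Theorem~\ref{T: EI=0}(ii). The only cosmetic difference is that you linearize the induction via a total order refining $\wref$, whereas the paper inducts directly on the poset $(\P(n),\wref)$ and relies on the inductive hypothesis to discard the already-handled $\xi\wref\mu$ terms; these are equivalent bookkeeping choices.
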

\begin{proof} Suppose that
\begin{equation}\label{Eq: 1} \sum_{\lambda\vdash n}\alpha_\lambda\gamma_\lambda=0
\end{equation} for some $\gamma_\lambda\in F\sym{n}$, one for each $\lambda\vdash n$. We want to prove that $\alpha_\lambda\gamma_\lambda=0$ for all partitions $\lambda$. We argue by reverse induction with respect to the weak refinement on $\P(n)$. Let $\mu=(1^n)$. Multiply $E_\mu$ on the left of Equation \ref{Eq: 1}, by Theorems \ref{T: GR 4.1}(iii) and \ref{T: EI=0}(iii), since $I_\mu=n!E_\mu=\Xi^\mu$, we have \[0=E_\mu c_{\mu,\mu} I_\mu\gamma_\mu=c_{\mu,\mu} I_\mu\gamma_\mu=\alpha_\mu\gamma_\mu.\] Now fix an arbitrary $\mu\in\P(n)$ and suppose that, for any partition $\xi\neq \mu$ such that $\xi$ is a weak refinement of $\mu$, we have $\alpha_\xi\gamma_\xi=0$. Multiply $E_\mu$ on the left of Equation \ref{Eq: 1}, by Theorems \ref{T: GR 4.1}(iii) and \ref{T: EI=0}(iii), we have
\begin{align*}
0=E_\mu\sum_{\lambda\vdash n}\alpha_\lambda\gamma_\lambda=\sum_{\text{$\lambda\wref\mu$}} E_\mu\alpha_\lambda\gamma_\lambda=E_\mu\alpha_\mu\gamma_\mu&=\sum_{\lambda(q)=\mu}c_{\mu,q}E_\mu I_q\gamma_\mu=\left (\sum_{\lambda(q)=\mu}\facmulti{\mu}c_{\mu,q}\right )E_\mu\gamma_\mu.
\end{align*} Therefore, since $\sum_{\lambda(q)=\mu}c_{\mu,q}\neq 0$, we have $E_\mu\gamma_\mu=0$ and hence, by Theorem \ref{T: EI=0}(ii), \[\alpha_\mu\gamma_\mu=\sum_{\lambda(q)=\mu} c_{\mu,q}I_q\gamma_\mu=\sum_{\lambda(q)=\mu} c_{\mu,q}I_qE_\mu\gamma_\mu=0.\] The proof is now complete.
\end{proof}

\begin{comment} Recall the Lie idempotent $\nu_q=\frac{1}{q?}\omega_q$ one for each $q\vDash n$. In the following lemma, it basically shows that $\frac{1}{\facmulti{q}}I_q$ is also a Lie idempotent. %This is a crucial step to describe the decomposition $X^q_F$ as higher Lie modules.

\begin{lem}\label{L: Lie idem} Let $q\vDash n$. We have $\omega_qI_q=q?I_q$ and $I_q\omega_q=\facmulti{q}\omega_q$. In particular, $I_qF\sym{n}\cong \Lie_F(q)$.
\end{lem}
\begin{proof} Recall from Subsection \ref{SS: generality} that $F_q(q)$ is the product of all parts of $q$ so that $q?=\facmulti{q}F_q(q)$. By Theorems \ref{T: BL results}(i), \ref{T: GR 4.1}(i) and \ref{T: EI=0}(i), we have
\begin{align*}
  \omega_qI_q=\left ((-1)^{|\ell(q)|}\sum_{r\leq q}(-1)^{|\ell(r)|}F_q(r)\Xi^r\right )I_q&=(-1)^{|\ell(q)|}(-1)^{|\ell(q)|}F_q(q)\Xi^qI_q\\
  &=F_q(q)\facmulti{q}I_q=q?I_q.
\end{align*} For the second equation, using Lemma \ref{L: basic coef approx} and Theorem \ref{T: BL results}(ii) twice, we have
\begin{align*}
  I_q\omega_q&=\left (\sum_{r\leq q}\frac{(-1)^{\ell(r)-\ell(q)}}{\ell(r,q)}\Xi^r\right )\omega_q=\frac{(-1)^{\ell(q)-\ell(q)}}{\ell(q,q)}\Xi^q\omega_q=\sum_{q\approx s\leq q} |\overline{N_{q,q}^s}|\omega_s=\facmulti{q}\omega_q.
\end{align*} The last assertion now follows.
\end{proof}

Combine Lemmas \ref{L: E isom} and \ref{L: Lie idem}, we get the following corollary.

\begin{cor}\label{C: E iso nu} Let $q\vDash n$ and $\lambda(q)=\lambda$. Then $E_\lambda F\sym{n} \cong  \Lie_F(q)$ as $F\sym{n}$-modules.
\end{cor}
\end{comment}

We are now ready to state and prove the main result in this subsection.

\begin{thm}\label{T: Xi as higher Lie} Let $q$ be a composition of $n$. We have $X^q_F\cong \bigoplus_{\P(n)\ni\lambda\wref q}\Lie_F(\lambda)$.
\end{thm}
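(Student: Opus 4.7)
The plan is to decompose $\Xi^q$ along the Garsia--Reutenauer basis, group the terms by the partition shape, and then show each grouped piece generates a higher Lie module while the sum is direct by dimension.

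First, by Theorem \ref{T: GR 3.4} we have the expansion
\[\Xi^q=\sum_{r\sref q}\frac{1}{\ell!(r,q)}I_r,\]
and regrouping by $\lambda(r)$ suggests defining, for each $\lambda\wref q$,
\[\alpha_\lambda=\sum_{\substack{r\sref q\\ \lambda(r)=\lambda}}\frac{1}{\ell!(r,q)}I_r,\]
so that $\Xi^q=\sum_{\lambda\wref q}\alpha_\lambda$. Each $\alpha_\lambda$ is a combination of $I_r$ with $\lambda(r)=\lambda$, and since its coefficients are all strictly positive rationals (as $r\sref q$ whenever it appears), the sum of coefficients $\sum_{r\sref q,\lambda(r)=\lambda}\frac{1}{\ell!(r,q)}$ is nonzero in $F$.

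Next, I invoke Lemma \ref{L: I direct sum} (applied with $c_{\lambda,r}=\frac{1}{\ell!(r,q)}$ if $r\sref q$ and $0$ otherwise) to conclude that the sum
\[\sum_{\lambda\wref q}\alpha_\lambda F\sym{n}\]
is direct inside $F\sym{n}$. Since $\Xi^q\in\sum_{\lambda\wref q}\alpha_\lambda F\sym{n}$, we obtain the inclusion
\[X^q_F=\Xi^qF\sym{n}\subseteq \bigoplus_{\lambda\wref q}\alpha_\lambda F\sym{n}.\]
By Lemma \ref{L: E isom}, each summand $\alpha_\lambda F\sym{n}$ is isomorphic to $E_\lambda F\sym{n}$, and by Proposition \ref{P: E iso nu} the latter is isomorphic to $\Lie_F(\lambda)$, which has dimension $n!/\lambda?$ in the $p=0$ case.

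Finally, a dimension count closes the argument: Theorem \ref{T: basis Xiq} gives
\[\dim_F X^q_F=\sum_{\P(n)\ni\lambda\wref q}\frac{n!}{\lambda?}=\sum_{\lambda\wref q}\dim_F\Lie_F(\lambda)=\dim_F\bigoplus_{\lambda\wref q}\alpha_\lambda F\sym{n},\]
so the inclusion above is an equality, yielding the desired isomorphism. The only mild subtlety is verifying that the coefficient sum inside each $\alpha_\lambda$ is nonzero so that Lemmas \ref{L: E isom} and \ref{L: I direct sum} apply with these specific combinations; this is immediate from positivity in characteristic zero but would be the genuine obstacle in a modular analogue, which is presumably why the theorem is stated only for $p=0$.
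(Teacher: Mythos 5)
Your proof is correct and follows the same overall strategy as the paper — you define the same elements $\alpha_\lambda=\sum_{r\sref q,\,\lambda(r)=\lambda}\frac{1}{\ell!(r,q)}I_r$, invoke the same Lemmas \ref{L: E isom}, \ref{L: I direct sum} and Proposition \ref{P: E iso nu} — but your closing step is genuinely different. The paper proves the equality $X^q_F=\bigoplus_{\lambda\wref q}\alpha_\lambda F\sym{n}$ by showing both inclusions: the easy one ($\subseteq$) exactly as you do, and the harder one by establishing the identity $\alpha_\lambda=\Xi^q E_\lambda$ (via Theorem \ref{T: GR 4.1} and the fact that the $I_r$'s form a basis), hence $\alpha_\lambda\in X^q_F$. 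You avoid this structural argument altogether and instead close by a dimension count, using the formula $\dim_F X^q_F=\sum_{\P(n)\ni\lambda\wref q}\frac{n!}{\lambda?}$ from Theorem \ref{T: basis Xiq} together with $\dim_F E_\lambda F\sym{n}=\dim_F\Lie_F(\lambda)=\frac{n!}{\lambda?}$. Both routes are valid; yours is slightly shorter and more self-contained on this point, whereas the paper's argument has the advantage of exhibiting the explicit projection $\alpha_\lambda=\Xi^q E_\lambda$, which is a useful fact in its own right (and is the form that would most plausibly admit a modular analogue — compare the conjectures at the end of Section \ref{S: Xi module}). Your closing remark about why the positivity of the coefficients is what makes this work over $\Q$ and why it breaks down modularly is exactly on target.
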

\begin{proof} For each $n\dashv\lambda\wref q$, let
\begin{align*}
  \alpha_\lambda&=\sum_{r\sref q,\ \lambda(r)=\lambda}\frac{1}{\k!(r,q)}I_r
\end{align*} and $W=\sum_{n\dashv\lambda\wref q} \alpha_\lambda F\sym{n}$. Clearly, $\sum_{r\sref q, \lambda(r)=\lambda}\frac{1}{\k!(r,q)}\neq 0$. We first prove that $X^q_F=W$. By Theorem \ref{T: GR 3.4}, since $\Xi^q=\sum_{n\dashv\lambda\wref q}\alpha_\lambda$, we have $X^q_F\subseteq W$. We now check that $W\subseteq X^q_F$. It suffices to show that $\alpha_\lambda\in X^q_F$. Fix now $\lambda\wref q$. By Theorem \ref{T: GR 4.1}, when $s\wref q$ and $\lambda(s)=\lambda$, we have $\Xi^qI_s=\sum_{r\sref q,\ \lambda(r)=\lambda} d_{r,s,q}I_r$ for some suitable integers $d_{r,s,q}$. Therefore \[\Xi^qE_\lambda=\frac{1}{\ell(\lambda)!}\sum_{\lambda(s)=\lambda}\Xi^qI_s=\sum_{\lambda(r)=\lambda} d_{\lambda,q}I_r\] for some suitable integers $d_{\lambda,q}$. Furthermore, using Theorem \ref{T: GR 4.1}(i) and the definition of $E_\lambda$, we have
\begin{align}\label{Eq: 6.9}
\Xi^q=\Xi^q\cdot \sum_{\lambda\vdash n}E_\lambda=\sum_{n\dashv\lambda\wref q}\Xi^qE_\lambda=\sum_{n\dashv\lambda\wref q}\sum_{\lambda(r)=\lambda} d_{\lambda,q}I_r.
\end{align} By Theorem \ref{T: GR 3.4}, $\{I_r:r\vDash n\}$ forms a basis for $\Des{n}{F}$ and compare Equation \ref{Eq: 6.9} with $\Xi^q=\sum_{n\dashv \lambda\wref q}\alpha_\lambda$, we conclude that \[\alpha_\lambda=\sum_{\lambda(r)=\lambda} d_{\lambda,q}I_r=\Xi^qE_\lambda\in X^q_F.\] By Lemmas \ref{L: E isom} and \ref{L: I direct sum}, the sum in $W$ is direct and $W\cong \bigoplus_{n\dashv \lambda\wref q}E_\lambda F\sym{n}$. The isomorphism in the statement now follows from Proposition \ref{P: E iso nu}.
\end{proof}

In the example below, we demonstrate how the various right ideals $\Xi^\lambda F\sym{4}$, where $\lambda\in\P(4)$, stack up one on top of the others.

\begin{eg} Let $n=4$. The numbers beside the modules $X^q_F$ are their dimensions and $\lambda_q$ denotes $\Lie_F(q)$.
\[\xymatrix{&\ \ \ \ \ X^{(4)}_F\ \ \text{\textcolor{black}{24}} &\\ \ \ \ \ \  X^{(3,1)}_F \ \ \text{\textcolor{black}{15}} \ar@{-}[ru]^{\lambda_{(4)}+\lambda_{(2,2)}}&&\ \ \ \ \ X^{(2,2)}_F \ \ \text{\textcolor{black}{10}} \ar@{-}[lu]_{\lambda_{(4)}+\lambda_{(3,1)}}\\ &\ \ \ \ \  X^{(2,1,1)}_F\ \ \text{\textcolor{black}{7}} \ar@{-}[ru]^{\lambda_{(2,2)}}\ar@{-}[lu]_{\lambda_{(3,1)}}&\\ &\ \ \ \ \  X^{(1^4)}_F \ \ \text{\textcolor{black}{1}}\ar@{-}[u]^{\lambda_{(2,1,1)}}&\\ &0\ar@{-}[u]^{\lambda_{(1^4)}}& }\]
\end{eg}

As a consequence of Theorems \ref{T: Xi as higher Lie} and  \ref{T: Schoc Lie(q)}, we have the following corollary.

\begin{cor} The multiplicity of the irreducible character $\zeta^\mu$ in $X^q_F$ is \[\sum_{\P(n)\ni\lambda\wref q}C^\mu_\lambda\] where $C^\mu_\lambda$ is the number given as in Theorem \ref{T: Schoc Lie(q)}.
\end{cor}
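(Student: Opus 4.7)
The plan is to simply combine the two main ingredients already assembled in this subsection: the decomposition of $X^q_F$ as a direct sum of higher Lie modules given by Theorem \ref{T: Xi as higher Lie}, and Schocker's formula for the ordinary irreducible constituents of a higher Lie module given by Theorem \ref{T: Schoc Lie(q)}. Since we are in the ordinary setting ($p=0$), the group algebra $F\sym{n}$ is semisimple, so computing multiplicities of irreducible characters is additive on direct sums and we are allowed to work purely character-theoretically.

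The only step is to take the character of both sides of the isomorphism
\[
X^q_F \;\cong\; \bigoplus_{\P(n)\ni\lambda\wref q}\Lie_F(\lambda)
\]
from Theorem \ref{T: Xi as higher Lie}, and then for each $\lambda\wref q$ appearing in the sum, apply Theorem \ref{T: Schoc Lie(q)} (with the composition $q$ there replaced by the partition $\lambda$) to read off the multiplicity of $\zeta^\mu$ in $\Lie_F(\lambda)$ as $C^\mu_\lambda$. Summing these multiplicities over $\lambda$ yields the claimed formula.

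There is no real obstacle here: the corollary is a direct numerical consequence of the two theorems cited, and no additional combinatorics is needed beyond noting that $\lambda$ is ranging over partitions (so $\lambda(\lambda)=\lambda$ and the invocation of Theorem \ref{T: Schoc Lie(q)} with $q=\lambda$ is immediate). One could if desired also rewrite the resulting expression by expanding $C^\mu_\lambda$ via the formula
\[
C^\mu_\lambda \;=\; \sum_{\mu(i)\vdash im_i(\lambda)} c^{\mu}_{\mu(1),\ldots,\mu(n)}\prod^n_{i=1}\sch^{\mu(i)}_{i,m_i(\lambda)},
\]
but this would only restate Theorem \ref{T: Schoc Lie(q)} and is not needed for the proof of the corollary.
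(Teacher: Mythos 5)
Your proof is correct and matches the paper's intent exactly: the paper simply states the corollary "as a consequence of" Theorems \ref{T: Xi as higher Lie} and \ref{T: Schoc Lie(q)}, which is precisely the combination you spell out. Nothing further is needed.
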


\subsection{The modular case} In this subsection, we assume throughout that $p>0$. As expected, since, for example, $X^{(n)}_F$ is the regular module, the decomposition into indecomposable summands and composition factors of such modules $X^q_F$'s are difficult to describe in general. In the main result in this subsection (see Theorem \ref{T: proj summand Xi}), we identify a projective summand of $X^q_F$ under the assumption that $q$ is coprime to $p$ and $\lambda(q)$ is $p$-regular.

We refer the reader to the proof of \cite[Proposition 7]{ES} for the simple modules $\{M_{\lambda,F}:\lambda\in\pReg(n)\}$ \index{${M}_{\lambda,F}$}of the descent algebra $\Des{n}{F}$ in the argument below.

%Consider $X^q_F$; namely over a field $F$. As mentioned in the introduction, we aim to show that $X^q_F$ decomposes into some higher Lie modules when $p=0$. Before we do so, we demonstrate with some examples below that the decomposition of $X^q_F$ when $p>0$ is not that straightforward.

\begin{lem}\label{L: dim Xie} Let $\mu\in\pReg(n)$ and suppose that $\varphi^{q,F}(\mu)\neq 0$. Then $\lambda(q)\in\pReg(n)$, $\mu\wref q$,  $\dim_F\Xi^qe_{\mu,F}F\sym{n}=|\ccl{\mu,p}|$ and $\Xi^qe_{\mu,F}F\sym{n}\cong e_{\mu,F}F\sym{n}$.
%\begin{align*}
%\dim_F\Xi^qe_{\mu,F}F\sym{n}=|\ccl{\mu,p}|
%=\ &\text{number of permutations $p$-equivalent to $\mu$.}
%\end{align*} and $\Xi^qe_{\mu,F}F\sym{n}\cong e_{\mu,F}F\sym{n}$.
\end{lem}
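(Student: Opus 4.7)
The four claims split cleanly. First, the $p$-regularity of $\lambda(q)$ and the weak refinement $\mu\wref q$ fall out of existing structural results: by Theorem~\ref{T: Sol epi}, $\Xi^q\in\ker c_{n,F}=\rad\Des{n}{F}$ whenever $\lambda(q)\notin\pReg(n)$, so that $\varphi^{q,F}=c_{n,F}(\Xi^q)$ would vanish identically, contradicting our hypothesis. Meanwhile $\varphi^{q,F}(\mu)=\varphi^q(\mu)\cdot 1_F$, so $\varphi^q(\mu)\neq 0$ in $\Z$ and Lemma~\ref{L: phi nonzero} forces $\mu\wref q$.

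The main work is the isomorphism $\Xi^qe_{\mu,F}F\sym{n}\cong e_{\mu,F}F\sym{n}$, from which the dimension statement follows instantly by Equation~\ref{Eq: proj dim eFS}. My plan is to produce a left inverse to the obvious surjection of right $F\sym{n}$-modules
\[\psi: e_{\mu,F}F\sym{n}\twoheadrightarrow \Xi^qe_{\mu,F}F\sym{n},\qquad z\mapsto \Xi^q z.\]
The candidate inverse lives in the corner algebra $e_{\mu,F}\Des{n}{F}e_{\mu,F}$. Since $e_{\mu,F}$ is a primitive idempotent of $\Des{n}{F}$ (Theorem~\ref{T: modular idem}), this corner is a local $F$-algebra with identity $e_{\mu,F}$, and the algebra homomorphism $c_{n,F}$ identifies its residue field with $F\cdot\Char_{\mu,F}$ (since $\Char_{\mu,F}\Ccl{n}{F}\Char_{\mu,F}=F\cdot\Char_{\mu,F}$ by orthogonality of the $\Char_{\lambda,F}$). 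Set $a=e_{\mu,F}\Xi^qe_{\mu,F}$. Applying $c_{n,F}$ and using again that the $\Char_{\lambda,F}$ are orthogonal idempotents of $\Ccl{n}{F}$ (so $\varphi^{q,F}\Char_{\mu,F}=\varphi^{q,F}(\mu)\Char_{\mu,F}$), I obtain
\[c_{n,F}(a)=\Char_{\mu,F}\,\varphi^{q,F}\,\Char_{\mu,F}=\varphi^{q,F}(\mu)\,\Char_{\mu,F},\]
which is a nonzero scalar multiple of the identity in the residue field. Hence $a$ is not in $\rad(e_{\mu,F}\Des{n}{F}e_{\mu,F})$, and is therefore a unit.

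Picking $y\in e_{\mu,F}\Des{n}{F}e_{\mu,F}$ with $ya=e_{\mu,F}$ and using $y=ye_{\mu,F}$, the equation $ya=e_{\mu,F}$ rearranges to $y\Xi^qe_{\mu,F}=e_{\mu,F}$. Then for any $z=e_{\mu,F}x$,
\[y\,\psi(z)=y\Xi^qe_{\mu,F}x=e_{\mu,F}x=z,\]
so multiplication by $y$ is a left inverse to $\psi$, giving injectivity and thus the isomorphism. The only nontrivial step is the identification of the residue field of the corner algebra and the subsequent computation $c_{n,F}(a)=\varphi^{q,F}(\mu)\,\Char_{\mu,F}$; once $\varphi^{q,F}(\mu)\neq 0$ is recorded as a statement about the residue field, the lifting of inverses across the Jacobson radical of a finite-dimensional local algebra is automatic.
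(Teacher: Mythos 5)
Your proof is correct, and the main part of it takes a genuinely different route from the paper's. For the preliminary claims ($\mu\wref q$ and $\lambda(q)\in\pReg(n)$), you and the paper make essentially the same observations. For the dimension and isomorphism, however, the paper first \emph{counts}: it computes $c_{n,F}(\Xi^q e_{\mu,F})=\varphi^{q,F}(\mu)\Char_{\mu,F}$, observes that $\Xi^q e_{\mu,F}$ therefore acts on the simple left $\Des{n}{F}$-module $M_{\lambda,F}$ by $\delta_{\lambda,\mu}\varphi^{q,F}(\mu)$, and then cites the proof of \cite[Proposition 7]{ES} to equate $\dim_F\Xi^q e_{\mu,F}F\sym{n}$ with the multiplicity of $M_{\mu,F}$ in $F\sym{n}$ (which is $\dim_F e_{\mu,F}F\sym{n}=|\ccl{\mu,p}|$), and only afterwards concludes that the surjection $\psi$ is an isomorphism by equality of dimensions. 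You instead establish the isomorphism directly and get the dimension for free: you place $a=e_{\mu,F}\Xi^q e_{\mu,F}$ inside the local corner algebra $e_{\mu,F}\Des{n}{F}e_{\mu,F}$, use orthogonality of the $\Char_{\lambda,F}$ to compute $c_{n,F}(a)=\varphi^{q,F}(\mu)\Char_{\mu,F}\neq 0$ so $a\notin\rad$, hence $a$ is a unit, and an inverse $y$ gives an explicit left inverse to $\psi$. Your argument is more self-contained (it avoids re-deriving the Erdmann--Schocker multiplicity count and does not need $F\sym{n}$ as a left $\Des{n}{F}$-module), and in fact produces the splitting explicitly; the paper's route, while relying on \cite{ES}, fits more smoothly into the composition-factor machinery it has already invoked. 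Both are valid; yours is a clean alternative.
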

\begin{proof} Since $\varphi^{q,F}(\mu)\neq 0$, we have $\mu\wref q$ by Lemma \ref{L: phi nonzero}. By Theorem \ref{T: Sol epi}, we have \[(c_{n,F}(\Xi^qe_{\mu,F}))(\lambda)=(\varphi^{q,F}\Char_{\mu,F})(\lambda)=\delta_{\lambda,\mu}\varphi^{q,F}(\mu).,\] As such, we have $\Xi^qe_{\mu,F}\not\in\ker c_{n,F}$ and therefore $\lambda(q)\in\pReg(n)$. For any $\lambda\in\pReg(n)$, $\Xi^q e_{\mu,F}$ acts on the simple left $\Des{n}{F}$-module $M_{\lambda,F}$ as multiplication by $\varphi^{q,F}(\mu)$ if $\lambda=\mu$ and annihilates it if $\lambda\neq \mu$.  Hence $\Xi^qe_{\mu,F}M_{\lambda,F}=\delta_{\lambda,\mu}M_{\lambda,F}$. Therefore, $\Xi^qe_{\mu,F}F\sym{n}$ has dimension the number of composition factors isomorphic to $M_{\mu,F}$ in $F\sym{n}$ which is the dimension of $e_{\mu,F}F\sym{n}$ as pointed out in the proof of \cite[Proposition 7]{ES}. The final isomorphism is now clear since the map $\psi:e_{\mu,F}F\sym{n}\to \Xi^qe_{\mu,F}F\sym{n}$ given by the left multiplication by $\Xi^q$ is surjective and both right ideals have the same dimension.
\end{proof}

We remark that, if $r$ is $p$-equivalent to $\mu\in\pReg(n)$, then $r\wref \mu$ (see \cite[\S2]{ES}). Therefore, under the hypothesis given in Lemma \ref{L: dim Xie}, we have that $\dim_F\Xi^qe_{\mu,F}F\sym{n}$ is also equal to the number of permutations with cycle types both $p$-equivalent to $\mu$ and a weak refinement of $q$ (with the second condition being superfluous in this case). We conjecture that the equality holds in general (see Conjecture \ref{Con: dim}) and the importance is illustrated in Conjecture \ref{Con: direct sum}.

We are now ready to state and prove the main result for this subsection.

\begin{thm}\label{T: proj summand Xi} Suppose that $(q,p)=1$ and $\lambda=\lambda(q)\in\pReg(n)$. Then we have isomorphism of modules \[\Xi^qe_{\lambda,F}F\sym{n}\cong \Lie_F(q)\cong e_{\lambda,F}F\sym{n}.\] In particular, $\Xi^qe_{\lambda,F}F\sym{n}$ is a projective summand of $X^q_F$ of dimension $|\ccl{\lambda,p}|=|\ccl{\lambda}|$.
\end{thm}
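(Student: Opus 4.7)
The plan is to reduce everything to results already in place. Under the stated hypotheses, Corollary~\ref{C: Lie iso proj} already supplies $\Lie_F(q)\cong e_{\lambda,F}F\sym{n}$, so it suffices to establish (a) $\Xi^qe_{\lambda,F}F\sym{n}\cong e_{\lambda,F}F\sym{n}$, (b) the dimension equals $|\ccl{\lambda,p}|=|\ccl{\lambda}|$, and (c) the submodule $\Xi^qe_{\lambda,F}F\sym{n}$ splits off as a direct summand of $X^q_F$.

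For (a) and (b), the first move is to check that the Young character value $\varphi^{q,F}(\lambda)$ is nonzero in $F$. Since $\lambda=\lambda(q)$, Lemma~\ref{L: phi nonzero} gives $\varphi^{q,F}(\lambda)=\facmulti{\lambda}\cdot 1_F$, which is nonzero precisely because $\lambda\in\pReg(n)$. Lemma~\ref{L: dim Xie} then applies with $\mu=\lambda$ and immediately yields both the isomorphism $\Xi^qe_{\lambda,F}F\sym{n}\cong e_{\lambda,F}F\sym{n}$ and the dimension formula $\dim_F \Xi^qe_{\lambda,F}F\sym{n}=|\ccl{\lambda,p}|$. The identification $|\ccl{\lambda,p}|=|\ccl{\lambda}|$ is exactly Lemma~\ref{L: p-equiv conj}, whose hypothesis $(\lambda,p)=1$ is built into our assumption $(q,p)=1$. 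Composing the resulting isomorphism with Corollary~\ref{C: Lie iso proj} produces the full chain $\Xi^qe_{\lambda,F}F\sym{n}\cong\Lie_F(q)\cong e_{\lambda,F}F\sym{n}$.

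The one point that requires a moment of thought is (c). The submodule $\Xi^qe_{\lambda,F}F\sym{n}$, being isomorphic to $e_{\lambda,F}F\sym{n}$ and the latter a direct summand of the regular module (since $e_{\lambda,F}$ is an idempotent), is projective. Now $F\sym{n}$ is the group algebra of a finite group, hence a symmetric, in particular self-injective, $F$-algebra; finitely generated projective and injective $F\sym{n}$-modules therefore coincide, and so $\Xi^qe_{\lambda,F}F\sym{n}$ is also injective. Any injective submodule of any module is automatically a direct summand: extend its identity map through the inclusion via injectivity to obtain a retraction. Applied to the inclusion $\Xi^qe_{\lambda,F}F\sym{n}\hookrightarrow X^q_F$, this produces the required splitting. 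There is no genuine obstacle at the level of this theorem; the real work has already been done earlier, in the action-on-simple-modules argument of Lemma~\ref{L: dim Xie} and in the idempotent-lifting construction of $e_{\lambda,F}$ in Section~\ref{S: mod idem}.
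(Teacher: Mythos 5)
Your proposal is correct, and all the lemma invocations are legitimate: the hypothesis $\varphi^{q,F}(\lambda)\neq 0$ is indeed satisfied (since $\varphi^q=\varphi^\lambda$ for $q\approx\lambda$ and $\varphi^\lambda(\lambda)=\facmulti{\lambda}\neq 0$ in $F$ precisely because $\lambda\in\pReg(n)$), so Lemma~\ref{L: dim Xie} applies with $\mu=\lambda$, and the coprimality needed for Lemma~\ref{L: p-equiv conj} and Corollary~\ref{C: Lie iso proj} is built into $(q,p)=1$. The self-injectivity argument for the splitting is exactly the paper's.

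The route does differ from the paper's in one genuine respect: the paper does not merely invoke the abstract isomorphism $\Xi^qe_{\lambda,F}F\sym{n}\cong e_{\lambda,F}F\sym{n}$ supplied by Lemma~\ref{L: dim Xie}; instead it constructs an \emph{explicit} map $f:\Xi^qe_{\lambda,F}F\sym{n}\to\omega_\lambda F\sym{n}=\Lie_F(\lambda)$ by left multiplication by $\sigma\omega^q$ (with $\sigma=\tau_q$), verifies well-definedness and surjectivity via the identity $\lambda?\,\omega_\lambda=\omega_\lambda e_{\lambda,F}\omega_\lambda$ (which uses Theorem~\ref{T: BL results}(iii) and Corollary~\ref{C: e * omega}), and then appeals to Lemma~\ref{L: dim Xie} only for the dimension count. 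Your version is more economical — you chain abstract isomorphisms $\Xi^qe_{\lambda,F}F\sym{n}\cong e_{\lambda,F}F\sym{n}\cong\Lie_F(q)$ without building any new map — and arguably cleaner, since you make explicit the verification that $\varphi^{q,F}(\lambda)\neq 0$, which the paper leaves implicit. What the paper's construction buys is a concrete, natural isomorphism onto $\Lie_F(q)$ given by Lie elements, which is the kind of structural information that could be useful for further computation even though it is not logically required here.
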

\begin{proof} The second isomorphism is given in Corollary \ref{C: Lie iso proj}. Let $f:\Xi^q e_{\lambda,F}F\sym{n}\to \omega_\lambda F\sym{n}$ be defined by the left multiplication with $\sigma\omega^q$ where $\sigma=\tau_q$ so that $\sigma\omega_q=\omega_\lambda$ (see Equation \ref{Eq: tau_r}). It is well-defined because \[f(\Xi^q e_{\lambda,F})=\sigma\omega^q\Xi^q e_{\lambda,F}=\sigma\omega_qe_{\lambda,F}=\omega_\lambda e_{\lambda,F}.\] Using Theorem \ref{T: BL results}(iii) and Corollary \ref{C: e * omega}, we have \[\lambda?\omega_\lambda=\omega_\lambda\omega_\lambda=\omega_\lambda e_{\lambda,F}\omega_\lambda.\] Since $\lambda$ is both $p$-regular and coprime to $p$, we have $\lambda?\neq 0$ in $F$ and therefore $f(\frac{1}{\lambda?}\Xi^qe_{\lambda,F}\omega_\lambda)=\omega_\lambda$. As such, $f$ is surjective and hence an isomorphism due to Lemma \ref{L: dim Xie} and Theorem \ref{T: basis Xiq}. The last assertion follows since $\Xi^qe_{\lambda,F}F\sym{n}$ is a projective (equivalently, injective) submodule of $X^q_F$.
\end{proof}

We now demonstrate the decomposition of $X^q_F$ with a few examples below.

\begin{eg} The module $X^{(2)}_F$ is the regular $F\sym{2}$-module. In the case when $p=2$, $\omega_{(2)}=\omega_{(1,1)}=12+21$ and therefore $\Lie_F((2))\cong F\cong \Lie_F((1,1))$. In the case when $p\neq 2$, we have $\Lie_F((2))\cong \sgn(2)$ and $\Lie_F((1,1))\cong F$. Therefore, \[X^{(2)}_F\cong \left \{\begin{array}{ll} \Lie_F((2))\oplus \Lie_F((1))&p\neq 2,\\ {\begin{bmatrix}\Lie_F((2))\\ \Lie_F((1,1))\end{bmatrix}}&p=2.\end{array}\right .\] In the $p=2$ case above, it means $X^{(2)}_F$ is indecomposable with its head and socle both isomorphic to the trivial module and the choice of $\Lie_F((2))$ on its top is due to Lemma \ref{L: XiMod surj Lie}.
\end{eg}

\begin{eg} Let $p=2$. The regular module $X^{(3)}_F$ has dimension strictly bigger than $\sum_{\lambda\in\P(3)}\Lie_F(\lambda)$ (see Appendix \ref{Appen C}). Therefore Theorem \ref{T: Xi as higher Lie} would not hold without the assumption $p=0$, even with the direct-sum condition relaxed and the extra assumption that $q$ is coprime to $p$.
\end{eg}

\begin{eg} %In this example, we use the labelling of the simple modules for symmetric groups as in \cite{JamesKerber}; namely, $D^\lambda$ is the head of the Specht module $S^\lambda$ whenever $\lambda\in\pReg(n)$. Also, we use the labeling of the Young modules as in \cite{GJ84}; namely, $Y^\lambda$ is the unique summand of $M^\lambda_F$ containing $S^\lambda$ as a submodule up to isomorphism.
Consider $V=X^{(2,1)}_F$. Notice that $\dim_FV=4$ and, following Theorem \ref{T: basis Xiq}, $V$ has a basis $\B_{(2,1)}=\{\Xi^{(2,1)},\Xi^{(2,1)}213,\Xi^{(2,1)}312,\Xi^{(2,1)}132\}$. By Lemmas \ref{L: basic Xi} and \ref{L: XiMod surj Lie}, there are surjections of $V$ onto $X^{(1,1,1)}_F\cong F$ and $\Lie_F((2,1))$ respectively.

Suppose first that $p=2$. Under this assumption, $(2,1)$ is not coprime to $2$ and $\omega_{(2,1)}=2\Xi^{(2,1)}-\Xi^{(1,1,1)}=\Xi^{(1,1,1)}$. Therefore we have $X^{(1,1,1)}_F\cong F\cong \Lie_F((2,1))$. On the other hand, using the idea as in the proof of Theorem \ref{T: basis Xiq}, we can compute the matrix representation of the element $(1,2)$ on $V$ with respect to the basis $\B_{(2,1)}$, that is \[[(1,2)]_{\B_{(2,1)}}=\begin{pmatrix}
  0&1&0&0\\ 1&0&0&0\\ -1&1&1&0\\ 0&1&1&-1
\end{pmatrix}.\] Since the matrix $[(1,2)]_{\B_{(2,1)}}-I_4$ has rank 2 and $\langle(1,2)\rangle$ is a Sylow $2$-subgroup of $\sym{3}$, the module $V$ is projective. Since $V$ is a submodule of the regular module \[F\sym{3}\cong Y^{(2,1)}\oplus Y^{(2,1)}\oplus Y^{(1,1,1)},\] it must be a direct sum of some of the Young modules in the decomposition where $Y^{(2,1)}\cong D^{(2,1)}$ and $Y^{(1,1,1)}\cong\begin{bmatrix}
  F\\ F
\end{bmatrix}$. Comparing the dimensions and using the fact $V$ surjects onto $F$, we must have \[V\cong Y^{(2,1)}\oplus Y^{(1,1,1)}\cong D^{(2,1)}\oplus \begin{bmatrix}
  F\\ F
\end{bmatrix}.\] We have seen that $\Lie_F((2,1))\cong F$. In fact, $\Lie_F((1,1,1))\cong F$ too. It is well-known that $\Lie_F((3))$ is projective (see also Corollary \ref{C: Complexity Lie}) and has dimension 2. An easy calculation shows that $\Lie_F((3))$ does not admit a trivial submodule and therefore it is isomorphic to $Y^{(2,1)}\cong D^{(2,1)}$. As such, the composition factors of $V$ must involve the simple module $\Lie_F((3))$ and not just $\Lie_F((2,1))$ and $\Lie_F((1,1,1))$. This example shows that, in some modular cases, $X^q_F$ may involve part, if not all, of $\Lie_F(\lambda)$ of some partition $\lambda\not\wref q$ (cf. Theorem \ref{T: Xi as higher Lie}).

Suppose now that $p\neq 2$. We first examine $\Lie_F((2,1))$. Since $(2,1)$ is coprime to $p$ and $p$-regular, by Theorem \ref{T: proj summand Xi}, we have \[\Xi^{(2,1)}e_{(2,1),F}F\sym{3}\cong \Lie_F((2,1))\] is a projective summand of $\Xi^{(2,1)}F\sym{3}$ of dimension 3. One could use Corollary \ref{C: proj higher Lie decomp} to get \[\Lie_F((2,1))\cong \left \{\begin{array}{ll} P((2,1))&p=3,\\ P((2,1))\oplus P((3))&p\geq 5.\end{array}\right .\] Alternatively, by Theorem \ref{T: higher lie mod}, \[\Lie_F((2,1))\cong \Ind_{\sym{1}\times\sym{2}}^{\sym{3}}(\Lie_F(1)\boxtimes \Lie_F(2))\cong \Ind_{\sym{1}\times\sym{2}}^{\sym{3}}(F\boxtimes \sgn(2))\] with Specht factors $S^{(2,1)}$ and $S^{(1,1,1)}$. Using the result of Brauer and Robinson \cite{Brau,Rob}; namely, the Nakayama Conjecture, these two factors lie in the same $p$-block if and only if $p=3$ (as $p\neq 2$ by our initial assumption). Therefore, \[\Lie_F((2,1))\cong \left \{\begin{array}{ll} \begin{bmatrix}
  D^{(2,1)}\\ F\\ D^{(2,1)}
\end{bmatrix}& p=3,\\ S^{(2,1)}\oplus S^{(1,1,1)}& p\neq 3,\end{array}\right .\cong \left \{\begin{array}{ll} Y^{(1,1,1)}&p=3,\\ Y^{(2,1)}\oplus Y^{(1,1,1)}&p\neq 3,\end{array}\right .\] where both $S^{(2,1)}$ and $S^{(1,1,1)}$ are simple in the case of $p\neq 3$ and all Young modules appearing above are projective. Since $V$ also  %surjects onto $\Lie_F((2,1))$, we have $V\cong \Lie_F((2,1))\oplus V'$ for some one-dimensional module $V'$. However, $V$ also
surjects onto $X^{(1,1,1)}_F\cong F$ and $\mathrm{Hom}_{F\sym{3}}(\Lie_F((2,1)),F)=0$, we must have \[X^{(2,1)}_F\cong \Lie_F((2,1))\oplus \Lie_F((1,1,1)).\] %In the case $p=3$, $\Xi^{(2,1)}F\sym{3}$ is not projective. %In this case, the composition factors of $V$ are `covered' by the higher Lie modules $\Lie_F((2,1))$ and $\Lie_F((1,1,1))$.

To conclude our example, in terms of decomposition into Young modules and composition factors, we have
\begin{align*}
X^{(2,1)}_F&\cong \left \{\begin{array}{ll} Y^{(2,1)}\oplus Y^{(1^3)}&p=2,\\ Y^{(1^3)}\oplus Y^{(3)}&p=3,\\ Y^{(2,1)}\oplus Y^{(1^3)}\oplus Y^{(3)}&p\geq 5,\end{array}\right .\sim\left \{\begin{array}{ll} D^{(2,1)}+2D^{(3)}&p=2,\\ 2D^{(2,1)}+2D^{(3)}&p=3,\\ D^{(2,1)}+D^{(1^3)}+D^{(3)}&p\geq 5.\end{array}\right .\end{align*}
\end{eg}

As we have remarked right after Lemma \ref{L: dim Xie}, we give some computational data to further support the conjectures below.

\begin{eg}\label{Eg: dimXie} Let $p=2$. Using Magma \cite{Magma}, we obtain the following table.
\[\begin{array}{|c|c|c|c|c|} \hline \text{Partition $\lambda$}& \dim X^\lambda_F& \dim \Xi^\lambda e_{(5),F}F\sym{5}& \dim \Xi^\lambda e_{(4,1),F}F\sym{5}& \dim \Xi^\lambda e_{(3,2),F}F\sym{5}\\ \hline
( 5 )& 120& 24& 56& 40\\
\hline( 4, 1 )& 76& 0& 56& 20\\
\hline( 3, 2 )& 66& 0& 26& 40\\
\hline( 3, 1, 1 )& 31& 0& 11& 20\\
\hline( 2, 2, 1 )& 26& 0& 26& 0\\
\hline( 2, 1, 1, 1 )& 11& 0& 11& 0\\
\hline( 1, 1, 1, 1, 1 )& 1& 0& 1& 0\\ \hline\end{array}\] Observe that $\dim \Xi^\lambda e_{\mu,F}F\sym{5}$ is the number of permutations with cycle types weak refinements of $\lambda$ and $2$-equivalent to $\mu$.
\end{eg}

\begin{conj}\label{Con: dim} Let $\mu\in \pReg(n)$ and $q\in\C(n)$. The dimension of $\Xi^qe_{\mu,F}F\sym{n}$ is the number of permutations with cycle types both weak refinements of $q$ and $p$-equivalent to $\mu$.
\end{conj}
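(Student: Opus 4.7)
The plan is to split the conjecture into two halves: (a) the sum $\sum_{\mu \in \pReg(n)} \Xi^q e_{\mu,F} F\sym{n}$ is direct inside $X^q_F$, and (b) each summand has dimension $N_\mu(q) := |\{\sigma \in \sym{n} : \rho(\sigma) \wref q, \rho(\sigma) \sim_p \mu\}|$, where $\rho(\sigma)$ denotes the cycle type of $\sigma$. Since $\sum_\mu e_{\mu,F} = 1$ by Theorem \ref{T: modular idem}, the sum equals $X^q_F$, and by Theorem \ref{T: basis Xiq}, $\dim X^q_F = \sum_\mu N_\mu(q)$. It therefore suffices to prove the lower bound $\dim \Xi^q e_{\mu,F} F\sym{n} \geq N_\mu(q)$ for each $\mu$: directness and equality then follow by a single dimension count.

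To pursue the lower bound, I would partition the index set of the basis in Theorem \ref{T: basis Xiq}. Set
\[
\Base_{q,\mu} = \{w \in \Base_q : \text{the pivot cycle type of } w \text{ is } \sim_p \mu\},
\]
so $\Base_q = \bigsqcup_{\mu \in \pReg(n)} \Base_{q,\mu}$ and $|\Base_{q,\mu}| = N_\mu(q)$. The target is to show that $\{\Xi^q e_{\mu,F} w : w \in \Base_{q,\mu}\}$ is linearly independent inside $\Xi^q e_{\mu,F} F\sym{n}$. The expected mechanism is a refined leading-term argument: by Theorem \ref{T: modular idem}, $e_{\mu,F} = \tfrac{1}{\facmulti{\mu}} \Xi^\mu + \epsilon_\mu$ with $\epsilon_\mu$ supported on $\Xi^\xi$ for $\swref{\xi}{\mu}$, so Theorem \ref{T: GR 1.1} combined with Corollary \ref{C: product in Descent} gives
\[
\Xi^q e_{\mu,F} w = \tfrac{1}{\facmulti{\mu}} \Xi^q \Xi^\mu w + \text{(lower terms)},
\]
with ``lower'' measured by the weak refinement order. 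My hope is that the linear independence argument in the proof of Theorem \ref{T: basis Xiq} (which tracks $\Upsilon_q$-images under the reverse colexicographic order on $\sym{n}$) can be refined so that the simultaneous filtration by reverse-colex order on words and by $p$-equivalence class of the pivot cycle type separates the elements $\Xi^q e_{\mu,F} w$ both within a fixed $\mu$ and across different $\mu$.

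The main obstacle is that Lemma \ref{L: dim Xie} breaks down precisely when $\varphi^{q,F}(\mu) = 0$, and the example $p=2$, $q = \mu = (2,2)$ shows this vanishing is possible even with $\mu \wref q$; in such cases the conjecture predicts a dimension strictly smaller than $|\ccl{\mu,p}|$ (namely $10$ rather than $16$). There, the simple left $\Des{n}{F}$-module $M_{\mu,F}$ is annihilated by $\Xi^q e_{\mu,F}$, so the dimension is invisible on the semisimple quotient $\Ccl{n}{F}$ and must be computed via the interaction of $\Xi^q e_{\mu,F}$ with a genuinely filtered portion of $F\sym{n}$ viewed as a left $\Des{n}{F}$-module, using the chain of ideals $Y_\lambda^\circ \subset Y_\lambda$ from Subsection \ref{SS: descent}. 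Controlling the leading-term cancellations throughout this deeper filtration---in particular, ruling out that the ``lower terms'' above contribute to the head of a different $\Xi^q e_{\nu,F} w'$---is where I would expect the essential difficulty of the conjecture to lie.
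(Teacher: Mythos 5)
Note first that Conjecture \ref{Con: dim} is an open conjecture in the paper, supported only by the computational evidence in Example \ref{Eg: dimXie}; there is no proof in the paper to compare against. The logical reduction you propose runs in the wrong direction. Writing $V_\mu := \Xi^q e_{\mu,F}F\sym{n}$ and $N_\mu(q)$ as you define it, the facts $\sum_\mu V_\mu = X^q_F$, $\sum_\mu N_\mu(q) = \dim X^q_F$, and $\dim V_\mu \geq N_\mu(q)$ only reproduce the trivial inequality $\dim X^q_F \leq \sum_\mu \dim V_\mu$ and do not force either directness or equality (one can have, say, $V_1 = V_2 = X^q_F$ with both lower bounds satisfied). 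What a single dimension count actually buys is the converse: if you prove the \emph{upper} bound $\dim V_\mu \leq N_\mu(q)$ for every $\mu$ — for example by showing $\{\Xi^q e_{\mu,F}w : w\in\Base_{q,\mu}\}$ \emph{spans} $V_\mu$ — then $\dim X^q_F \leq \sum_\mu\dim V_\mu \leq \sum_\mu N_\mu(q)=\dim X^q_F$ forces equality throughout and yields Conjecture \ref{Con: direct sum} as well. Alternatively one could aim at the stronger claim that the full set $\bigcup_\mu\{\Xi^q e_{\mu,F}w : w\in\Base_{q,\mu}\}$ (of cardinality $\dim X^q_F$) is linearly independent \emph{across all $\mu$ simultaneously}; your last sentence gestures at this, but your stated reduction does not ask for it, and it is genuinely harder because a leading-term argument must then control cancellation between the tails $\epsilon_\mu$ of \emph{different} idempotents within a single linear relation.

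Two smaller issues. The example ``$p=2$, $q=\mu=(2,2)$'' is ill-posed since $(2,2)$ is not $2$-regular and hence not an admissible $\mu$; the numbers $10$ and $16$ you quote are correct for $q=(2,2)$ with $\mu=(4)$, but there $\varphi^{(2,2),F}((4))=0$ already over $\Z$ because $(4)\not\wref(2,2)$ (Lemma \ref{L: phi nonzero}), so this is not an instance of characteristic-$p$ vanishing in the Young character. More substantively, the spanning step in the proof of Theorem \ref{T: basis Xiq} rests on the exact vanishing $\Xi^q Q_{S_1}\cdots Q_{S_\ell}=0$ from Theorem \ref{T: GR 2.1} when the pivot cycle type fails to weakly refine $q$. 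To run the analogous argument for $V_\mu$ you would need a vanishing criterion for $\Xi^q e_{\mu,F}Q_{S_1}\cdots Q_{S_\ell}$ that is also conditioned on the $p$-equivalence class of the pivot type, and it is not clear from your sketch where such a criterion would come from — the idempotent $e_{\mu,F}$ does not commute past $\Xi^q$, and this is exactly where the essential difficulty of the conjecture seems to lie.
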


Since \[X^q_F=\sum_{\mu\in\pReg(n)}\Xi^qe_{\mu,F}F\sym{n},\] Conjecture \ref{Con: dim} implies the following conjecture.

\begin{conj}\label{Con: direct sum} Let $q\in\C(n)$. Then we have a direct sum decomposition of right $F\sym{n}$-modules \[X^q_F=\bigoplus_{\mu\in\pReg(n)}\Xi^qe_{\mu,F}F\sym{n}.\]
\end{conj}

There is another motivation for Conjecture \ref{Con: dim}. It implies the following conjecture which generalizes the short exact sequence obtained by Erdmann-Schocker (see Theorem \ref{T: SES}) in the case of $k=1$.

\begin{conj} Let $\ell\in\NN$ and $\xi=(p^{\ell-1},\ldots,p^{\ell-1})\vDash p^\ell$. We have a short exact sequence of $F\sym{p^\ell}$-modules \[\xymatrix{0\ar[r]&\mathrm{Lie}(p^\ell)\ar[r]^-\alpha&e_{(p^\ell),F}F\sym{p^\ell}\ar[r]^-\beta& \Xi^\xi e_{(p^\ell),F}F\sym{p^\ell}\ar[r]&0}\] where $\alpha(\omega_{p^\ell}\gamma)=e_{(p^\ell),F}\omega_{p^\ell}\gamma=\omega_{p^\ell}\gamma$ and $\beta(e_{(p^\ell),F}\gamma)=\Xi^\xi e_{(p^\ell),F}\gamma$ for any $\gamma\in F\sym{p^\ell}$.
\end{conj}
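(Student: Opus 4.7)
The plan is to verify that $\alpha$ is injective, $\beta$ is surjective and $\beta\circ\alpha=0$, and then reduce exactness at the middle term to a dimension identity. Since $\lambda((p^\ell))=(p^\ell)$, Corollary \ref{C: e * omega} immediately gives $e_{(p^\ell),F}\omega_{(p^\ell)}=\omega_{(p^\ell)}$, so $\Lie_F(p^\ell)=\omega_{(p^\ell)}F\sym{p^\ell}$ is a submodule of $e_{(p^\ell),F}F\sym{p^\ell}$ and the stated $\alpha$ is just this inclusion, which is injective; the map $\beta$ is plainly surjective by construction. For $\beta\circ\alpha=0$, Theorem \ref{T: BL results}(ii) gives
\[
\Xi^\xi\omega_{(p^\ell)}=\sum_{(p^\ell)\approx s\sref\xi}|\overline{N^s_{\xi,(p^\ell)}}|\,\omega_s,
\]
and the indexing set is empty, because the single part $p^\ell$ strictly exceeds each part $p^{\ell-1}$ of $\xi$, so $(p^\ell)$ cannot be a strong refinement of $\xi$.

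From these three observations, $\beta$ factors through a surjection
\[
\bar\beta\colon e_{(p^\ell),F}F\sym{p^\ell}\big/\Lie_F(p^\ell)\twoheadrightarrow \Xi^\xi e_{(p^\ell),F}F\sym{p^\ell},
\]
and exactness will follow once $\bar\beta$ is shown to preserve dimensions. By Equation \ref{Eq: proj dim eFS}, $\dim_F e_{(p^\ell),F}F\sym{p^\ell}=|\ccl{(p^\ell),p}|$, which counts those permutations of $\sym{p^\ell}$ whose cycle type consists of $p$-power parts (since the $p'$-part of such a type must be $(1^{p^\ell})$); and Proposition \ref{P: Lie(n) basis} applied with $R=F$ gives $\dim_F\Lie_F(p^\ell)=(p^\ell-1)!=|\ccl{(p^\ell)}|$. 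A bin-packing lemma for $p$-powers, which I would prove by induction on $\ell$ (absorb the parts of size $p^{\ell-1}$ first; the remaining parts then have size $\leq p^{\ell-2}$ and, by the inductive hypothesis, can be grouped into sub-bins of size $p^{\ell-1}$, which pack $p$ at a time into the residual bins of capacity $p\cdot p^{\ell-2}$), shows that every partition of $p^\ell$ into $p$-power parts other than $(p^\ell)$ is automatically a weak refinement of $\xi$. Hence the desired dimension identity reduces to
\[
\dim_F\Xi^\xi e_{(p^\ell),F}F\sym{p^\ell}=|\ccl{(p^\ell),p}|-|\ccl{(p^\ell)}|,
\]
which is precisely what Conjecture \ref{Con: dim} predicts for $\mu=(p^\ell)$ and $q=\xi$.

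The hard part is this last equality. For $\ell=1$ one can prove it directly: the identity $\Xi^{(1^p)}\Xi^q=\binom{p}{q_1,\ldots,q_{\ell(q)}}\Xi^{(1^p)}$ (read off from Theorem \ref{T: GR 1.1}) vanishes in $F$ whenever $\ell(q)\geq 2$, so, combined with the leading term $\Xi^{(p)}=1$ of $e_{(p),F}$ from Theorem \ref{T: modular idem}, one gets $\Xi^{(1^p)}e_{(p),F}=\Xi^{(1^p)}$, making $\Xi^{(1^p)}e_{(p),F}F\sym{p}$ a one-dimensional trivial module and recovering Theorem \ref{T: SES} with $k=1$. For $\ell\geq 2$, the naive equality $\Xi^\xi e_{(p^\ell),F}=\Xi^\xi$ already fails (for $p=2$, $\ell=3$ a comparison of cycle-type counts shows $\dim_F X^\xi_F$ strictly exceeds the target), so a sharper analysis is needed. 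My proposed strategy is to build an explicit spanning set for $\Xi^\xi e_{(p^\ell),F}F\sym{p^\ell}$ of the predicted cardinality by refining the free $R$-basis $\B_\xi$ of $X^\xi_R$ from Theorem \ref{T: basis Xiq} together with the triangular form of $e_{(p^\ell),F}$ from Theorem \ref{T: modular idem}; an alternative is to attempt an inductive proof modelled on Erdmann--Schocker's symmetrisation argument for Theorem \ref{T: SES}, the main obstacle being that $p^{\ell-1}$ fails to be coprime to $p$ when $\ell\geq 2$, so the higher Lie machinery of Section \ref{S: higher lie} does not apply verbatim.
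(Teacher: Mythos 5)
The target statement is labeled a conjecture and is not proved in the paper: the only verification offered is the one-line remark that $\beta\alpha=0$ (via Theorem \ref{T: BL results}(ii)), and the surrounding prose states that Conjecture \ref{Con: dim} implies this conjecture and that $\ell=1$ recovers Theorem \ref{T: SES}. Your proposal correctly reconstructs the whole picture. The injectivity of $\alpha$ via $e_{(p^\ell),F}\omega_{p^\ell}=\omega_{p^\ell}$ (Corollary \ref{C: e * omega}), the surjectivity of $\beta$, and $\beta\alpha=0$ via $\Xi^\xi\omega_{p^\ell}=0$ are all correctly handled, and your reduction of exactness at the middle term to the dimension identity
\[
\dim_F\Xi^\xi e_{(p^\ell),F}F\sym{p^\ell}=|\ccl{(p^\ell),p}|-(p^\ell-1)!
\]
is precisely the implication from Conjecture \ref{Con: dim} at $\mu=(p^\ell)$, $q=\xi$. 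The bin-packing lemma you invoke is correct: for a multiset of $p$-powers summing to a multiple of $p^{\ell-1}$, each prefix sum (in decreasing order) is divisible by the next part, which forces the running total to hit every multiple of $p^{\ell-1}$ exactly, so any such partition other than $(p^\ell)$ weakly refines $\xi$. Your $\ell=1$ argument ($\Xi^{(1^p)}e_{(p),F}=\Xi^{(1^p)}$ because $\Xi^{(1^p)}\Xi^q=\binom{p}{q_1,\ldots,q_{\ell(q)}}\Xi^{(1^p)}$ vanishes mod $p$ when $\ell(q)\geq 2$, and $e_{(p),F}=\Xi^{(p)}+\epsilon_{(p)}=1+\epsilon_{(p)}$ with $\epsilon_{(p)}$ supported on $\Xi^q$ with $\ell(q)\geq 2$ by Theorem \ref{T: modular idem}) does recover Theorem \ref{T: SES} at $k=1$, and your numerical sanity check that the naive equality $\Xi^\xi e_{(p^\ell),F}=\Xi^\xi$ fails at $p=2$, $\ell=3$ is verifiable: $\dim X^{(4,4)}_F=13616$ while the target from Conjecture \ref{Con: dim} is $11264-5040=6224$.

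You honestly and correctly leave the dimension identity for $\ell\geq 2$ open. Since the statement is a conjecture, this is not a gap in your reasoning but the actual state of the problem; the paper leaves it open for the same reason, offering only the same reduction you have carried out in detail.
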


The fact $\beta\alpha=0$ follows from $\Xi^\xi\omega_{p^\ell}=0$ using Theorem \ref{T: BL results}(ii).

\newpage
\appendix

\section{Modular Idempotents when $p=2$ and $n=2,3,4,5,6$}\label{Appen A}

{\[\def\arraystretch{1.3}\begin{array}{|l|r|l|} \hline \text{$n$}&\text{$2$-regular}&\text{Modular Idempotents $e_{\lambda,F}$ in terms of $\Xi^q$'s}\\ &\text{partitions $\lambda$}& \\ \hline
2&(2)&\Xi^{(2)}\\ \hline
3&(3)&\Xi^{(3)}+\Xi^{(2,1)}+\Xi^{(1,1,1)}\\ \cline{2-3}
&(2,1)&\Xi^{(2,1)}+\Xi^{(1,1,1)}\\ \hline
4&(4)&\Xi^{(4)}+\Xi^{(3,1)}+\Xi^{(2,1,1)}+\Xi^{(1,1,1,1)}\\ \cline{2-3}
&(3,1)&\Xi^{(3,1)}+\Xi^{(2,1,1)}+\Xi^{(1,1,1,1)}\\ \hline
5&(5)&\Xi^{(5)}+\Xi^{(4,1)}+\Xi^{(3,2)}+\Xi^{(3,1,1)}+\Xi^{(2,2,1)}+\\
&&\Xi^{(2,1,2)}+\Xi^{(1,2,2)}+\Xi^{(1,2,1,1)}+\Xi^{(1,1,1,1,1)}\\ \cline{2-3}
&(4,1)&\Xi^{(4,1)}+\Xi^{(3,1,1)}+\Xi^{(2,2,1)}+\Xi^{(2,1,1,1)}+\Xi^{(2,1,2)}+\Xi^{(1,1,1,2)}+\\
&&\Xi^{(1,1,1,1,1)}\\ \cline{2-3}
&(3,2)&\Xi^{(3,2)}+\Xi^{(1,2,2)}+\Xi^{(2,1,1,1)}+\Xi^{(1,2,1,1)}+\Xi^{(1,1,1,2)}\\ \hline
6&(6)&\Xi^{( 6 )}+\Xi^{( 1, 1, 2, 1, 1 )}
        +\Xi^{( 4, 1, 1 )}
        +\Xi^{( 4, 2 )}
        +\Xi^{( 2, 2, 1, 1 )}
        +\Xi^{( 5, 1 )}+\\
&&
        \Xi^{( 3, 1, 2 )}+\Xi^{( 1, 1, 1, 2, 1 )}
        +\Xi^{( 2, 1, 2, 1 )}
        +\Xi^{( 1, 1, 1, 1, 1, 1 )}+\\
&&        \Xi^{( 2, 2, 2 )}
        +\Xi^{( 1, 2, 1, 2 )}\\ \cline{2-3}
&(5,1)& \Xi^{( 5, 1 )}+\Xi^{( 4, 1, 1 )}
        +\Xi^{( 2, 2, 1, 1 )}
        +\Xi^{( 1, 2, 1, 1, 1 )}
        +\Xi^{( 2, 1, 2, 1 )}+\\
&&       \Xi^{( 3, 1, 1, 1 )}+ \Xi^{( 1, 1, 1, 1, 1, 1 )}
        +\Xi^{( 1, 2, 2, 1 )}
        +\Xi^{( 3, 2, 1 )}\\ \cline{2-3}
&(4,2)&\Xi^{( 4, 2 )}+\Xi^{( 1, 1, 2, 1, 1 )}
        +\Xi^{( 3, 1, 2 )}
        +\Xi^{( 1, 1, 1, 1, 1, 1 )}
        +\Xi^{( 2, 2, 2 )}+\\
&&        \Xi^{( 1, 2, 1, 2 )}\\ \cline{2-3}
&(3,2,1)&\Xi^{( 3, 2, 1 )}+\Xi^{( 1, 2, 1, 1, 1 )}
        +\Xi^{( 1, 1, 1, 2, 1 )}
        +\Xi^{( 3, 1, 1, 1 )}+\\
&&       \Xi^{( 1, 1, 1, 1, 1, 1 )}
        +\Xi^{( 1, 2, 2, 1 )}\\ \hline
\end{array}\]}
\newpage

\section{Modular Idempotents when $p=3$ and $n=2,3,4,5,6$}\label{Appen B}

{\[\def\arraystretch{1.3}\begin{array}{|l|r|l|} \hline \text{$n$}&\text{$3$-regular}&\text{Modular Idempotents $e_{\lambda,F}$ in terms of $\Xi^q$'s}\\ &\text{partitions $\lambda$}& \\ \hline
2&(2)&\Xi^{(2)}+\Xi^{(1,1)}\\ \cline{2-3}
&(1,1)&2\Xi^{(1,1)}\\ \hline
3&(3)&\Xi^{(3)}+2\Xi^{(2,1)}+2\Xi^{(1,1,1)}\\ \cline{2-3}
&(2,1)&\Xi^{(2,1)}+\Xi^{(1,1,1)}\\ \hline
4&(4)&\Xi^{( 4 )}+ \Xi^{( 2, 2 )}+2 \Xi^{( 3, 1 )}+2 \Xi^{( 1, 1, 1, 1 )}+ \Xi^{( 1, 1, 2 )}\\ \cline{2-3}
&(3,1)&\Xi^{(3,1)}+2\Xi^{(2,1,1)}\\  \cline{2-3}
&(2,2)&2\Xi^{( 2, 2)}+
        2\Xi^{( 1, 1, 1, 1)}+
        2\Xi^{( 2, 1, 1)}+
        2\Xi^{( 1, 1, 2)}\\ \cline{2-3}
&(2,1,1)&2\Xi^{(2,1,1)}+2\Xi^{(1,1,1,1)}\\ \hline
5&(5)&\Xi^{( 5 )}+
        2\Xi^{( 4, 1 )}+
        \Xi^{( 2, 1, 2 )}+
        \Xi^{( 2, 1, 1, 1 )}+
        \Xi^{( 3, 1, 1 )}+
        2\Xi^{( 3, 2 )}+\\
        &&
        2\Xi^{( 1, 1, 1, 1, 1 )}+
        \Xi^{( 1, 1, 1, 2 )}+
        2\Xi^{( 1, 1, 2, 1 )}+
        \Xi^{( 1, 2, 1, 1 )}\\ \cline{2-3}
&(4,1)& \Xi^{( 4, 1 )}+
        2\Xi^{( 2, 1, 1, 1 )}+
        2\Xi^{( 3, 1, 1 )}+
        2\Xi^{( 1, 1, 1, 1, 1 )}+
        \Xi^{( 2, 2, 1 )}+
        2\Xi^{( 1, 2, 1, 1 )}\\ \cline{2-3}
&(3,2)&\Xi^{( 3, 2 )}+ 2\Xi^{( 2, 1, 2 )}+
        2\Xi^{( 2, 1, 1, 1 )}+
        \Xi^{( 3, 1, 1 )}+
        2\Xi^{( 1, 1, 1, 1, 1 )}+ 2\Xi^{( 1, 1, 1, 2 )}+ \\
        &&
        2\Xi^{( 1, 1, 2, 1 )}+
        \Xi^{( 1, 2, 1, 1 )}\\ \cline{2-3}
&(3,1,1)& 2\Xi^{( 3, 1, 1 )}+2\Xi^{( 2, 1, 1, 1 )}+
        \Xi^{( 1, 1, 1, 1, 1 )}+
        2\Xi^{( 1, 2, 1, 1 )}\\ \cline{2-3}
&(2,2,1)&2\Xi^{( 2, 2, 1 )}+2\Xi^{( 2, 1, 1, 1 )}+
        2\Xi^{( 1, 1, 1, 1, 1 )}+
        2\Xi^{( 1, 1, 2, 1 )}\\ \hline
6&(6)&\Xi^{( 6 )}+
        \Xi^{( 3, 3 )}+
        \Xi^{( 1, 1, 2, 1, 1 )}+
        \Xi^{( 4, 1, 1 )}+
        2\Xi^{( 1, 1, 1, 1, 2 )}+
        2\Xi^{( 1, 1, 2, 2 )}+ \\ &&
        2\Xi^{( 4, 2 )}+
        2\Xi^{( 2, 2, 1, 1 )}+
        2\Xi^{( 5, 1 )}+
        \Xi^{( 3, 1, 2 )}+
        2\Xi^{( 1, 1, 1, 2, 1 )}+
        \Xi^{( 2, 1, 2, 1 )}+ \\ &&
        2\Xi^{( 3, 1, 1, 1 )}+
        2\Xi^{( 2, 1, 3 )}+
        2\Xi^{( 2, 2, 2 )}+
        \Xi^{( 1, 1, 3, 1 )}+
        2\Xi^{( 1, 3, 1, 1 )}+ \\ &&
        2\Xi^{( 3, 2, 1 )}+
        \Xi^{( 2, 1, 1, 1, 1 )}\\ \cline{2-3}
&(5,1)&\Xi^{( 5, 1 )}+2\Xi^{( 4, 1, 1 )}+
        2\Xi^{( 1, 2, 1, 1, 1 )}+
        2\Xi^{( 1, 1, 1, 2, 1 )}+
        \Xi^{( 2, 1, 2, 1 )}+ \\&&
        2\Xi^{( 1, 1, 1, 1, 1, 1 )}+
        2\Xi^{( 1, 1, 3, 1 )}+
        2\Xi^{( 1, 3, 1, 1 )}+
        2\Xi^{( 3, 2, 1 )}+
        \Xi^{( 2, 1, 1, 1, 1 )}\\ \cline{2-3}
&(4,2)& \Xi^{( 4, 2 )}+ \Xi^{( 1, 1, 2, 1, 1 )}+
        \Xi^{( 4, 1, 1 )}+
        2\Xi^{( 1, 1, 1, 1, 2 )}+
        \Xi^{( 1, 1, 2, 2 )}+ \\&&
        \Xi^{( 2, 2, 1, 1 )}+
        2\Xi^{( 3, 1, 2 )}+
        2\Xi^{( 3, 1, 1, 1 )}+
        2\Xi^{( 1, 1, 1, 1, 1, 1 )}+
        \Xi^{( 2, 2, 2 )}\\ \cline{2-3}
&(3,3)&2\Xi^{( 3, 3 )}+
        2\Xi^{( 1, 1, 1, 1, 2 )}+
        \Xi^{( 1, 1, 1, 2, 1 )}+
        2\Xi^{( 2, 1, 2, 1 )}+
        \Xi^{( 2, 1, 3 )}+ \\ &&
        2\Xi^{( 1, 3, 1, 1 )}+
        \Xi^{( 3, 2, 1 )}+
        \Xi^{( 2, 1, 1, 1, 1 )}\\ \cline{2-3}
&(4,1,1)&2\Xi^{( 4, 1, 1 )}+
        2\Xi^{( 2, 2, 1, 1 )}+
        \Xi^{( 1, 2, 1, 1, 1 )}+
        \Xi^{( 3, 1, 1, 1 )}+
        \Xi^{( 1, 1, 1, 1, 1, 1 )}+ \\ &&
        \Xi^{( 2, 1, 1, 1, 1 )}\\  \cline{2-3}
&(3,2,1)& \Xi^{( 3, 2, 1 )}+\Xi^{( 1, 1, 1, 2, 1 )}+
        2\Xi^{( 2, 1, 2, 1 )}+
        \Xi^{( 3, 1, 1, 1 )}+
        \Xi^{( 2, 1, 1, 1, 1 )}\\  \cline{2-3}
&(2,2,1,1)&\Xi^{( 2, 2, 1, 1 )}+\Xi^{( 1, 1, 2, 1, 1 )}+
        \Xi^{( 1, 1, 1, 1, 1, 1 )}+
        \Xi^{( 2, 1, 1, 1, 1 )}\\ \hline
\end{array}\]}

\section{$\dim_F\Lie_F(q)$ when $p=0,2,3$ and $n=2,3,4,5,6$}\label{Appen C}

\[\begin{array}{|c|c|c|c|c|}
  \hline n&\text{Partition $q$}&p=0&p=2&p=3\\ \hline
  2&(2)&1&1&1\\ \cline{2-5}
  &(1,1)&1&1&1\\ \hline
  3&(3)&2&2&2\\ \cline{2-5}
  &\textcolor{black}{(2,1)}&3&1&3\\ \cline{2-5}
  &(1,1,1)&1&1&1\\ \hline
  4&(4)&6&6&6\\ \cline{2-5}
  &\textcolor{black}{(3,1)}&8&8&7\\ \cline{2-5}
  &\textcolor{black}{(2,2)}&3&1&3\\ \cline{2-5}
  &\textcolor{black}{(2,1,1)}&6&1&6\\ \cline{2-5}
  &(1,1,1,1)&1&1&1\\ \hline
  5&(5)&24&24&24\\  \cline{2-5}
  &\textcolor{black}{(4,1)}&30&21&30\\ \cline{2-5}
  &(3,2)&20&20&20\\ \cline{2-5}
  &\textcolor{black}{(3,1,1)}&20&20&11\\ \cline{2-5}
  &\textcolor{black}{(2,2,1)}&15&1&15\\ \cline{2-5}
  &\textcolor{black}{(2,1,1,1)}&10&1&10\\ \cline{2-5}
  &(1,1,1,1,1)&1&1&1\\ \hline
  6&(6)&120&120&120\\ \cline{2-5}
  &(5,1)&144&144&144\\ \cline{2-5}
  &\textcolor{black}{(4,2)}&90&41&90\\ \cline{2-5}
  &\textcolor{black}{(4,1,1)}&90&41&90\\ \cline{2-5}
  &(3,3)&40&40&40\\ \cline{2-5}
  &\textcolor{black}{(3,2,1)}&120&40&60\\ \cline{2-5}
  &\textcolor{black}{(3,1,1,1)}&40&40&16\\ \cline{2-5}
  &\textcolor{black}{(2,2,2)}&15&1&15\\ \cline{2-5}
  &\textcolor{black}{(2,2,1,1)}&45&1&45\\ \cline{2-5}
  &\textcolor{black}{(2,1,1,1,1)}&15&1&15\\ \cline{2-5}
  &(1,1,1,1,1,1)&1&1&1\\ \hline
\end{array}\]

\printindex

\end{document}